\newtheorem{theorem}{Theorem}[section]
\newtheorem{proposition}[theorem]{Proposition}
\newtheorem{corollary}[theorem]{Corollary}
\newtheorem{lemma}[theorem]{Lemma}
\theoremstyle{definition}
\newtheorem{definition}[theorem]{Definition}
\newtheorem{example}[theorem]{Example}
\newtheorem{remark}[theorem]{Remark}
\newtheorem{notation}[theorem]{Notation}
\newtheorem{conjecture}[theorem]{Conjecture}
\DeclareMathOperator{\vertex}{Vt}
\DeclareMathOperator{\edge}{Ed}
\newcommand{\ord}{\operatorname{ord}}
\DeclareMathOperator{\legs}{Legs}
\DeclareMathOperator{\nbhd}{Nbhd}
\DeclareMathOperator{\inp}{in}
\DeclareMathOperator{\out}{out}
\DeclareMathOperator{\Ob}{Ob}
\DeclareMathOperator{\interior}{Int}
\DeclareMathOperator{\image}{Im}
\DeclareMathOperator{\prof}{Prof}
\newcommand{\unrootedtrees}{\mathtt{tree}}
\DeclareMathOperator{\Iso}{Iso}
\DeclareMathOperator{\Aut}{Aut}
\DeclareMathOperator{\id}{id}
\DeclareMathOperator{\sbgph}{Sbgph}
\newcommand{\Set}{\mathbf{Set}}
\newcommand{\Cyc}{\mathbf{Cyc}}
\newcommand{\Operad}{\mathbf{Op}}
\newcommand{\SSet}{\mathbf{sSet}}
\DeclareMathOperator*{\colim}{colim}
\newcommand{\fC}{\mathfrak{C}}
\newcommand{\fD}{\mathfrak{D}}
\newcommand{\uc}{{\underline c}}
\DeclareMathOperator{\colset}{Col}
\newcommand{\rootify}{\mathscr{T}}
\newcommand{\amalgamate}{\mathscr{A}}
\newcommand{\findroot}{\odot}
\newcommand{\lifting}{\mathscr{L}}
\newcommand{\cstar}{\medstar}
\newcommand{\Rr}{\mathbb{R}}
\newcommand{\rlp}[1]{{#1}^\boxslash}
\newcommand{\llp}[1]{{}^\boxslash\!{#1}}
\newcommand{\red}{\mathscr{R}}
\newcommand{\incl}{\mathscr{I}}
\newcommand{\Seg}{\mathscr{S}}
\DeclareMathOperator{\map}{map}
\DeclareMathOperator{\Sc}{Sc}
\newcommand{\boundarymap}{\eth}
\title{Higher cyclic operads}
\author[P. Hackney]{Philip Hackney}
\address{Institut f\"ur Mathematik \\ Universit\"at Osnabr\"uck \\ Osnabr\"uck, Germany}
\address{Max-Planck-Institut f\"ur Mathematik \\ Bonn, Germany}
\curraddr{Department of Mathematics\\ University of Louisiana at Lafayette\\ Lafayette, LA\\ USA}
\email{philip@phck.net} 
\urladdr{http://phck.net}
\author[M. Robertson]{Marcy Robertson}
\address{School of Mathematics and Statistics \\ The University of Melbourne \\ Melbourne, Victoria, Australia}\email{marcy.robertson@unimelb.edu.au}
\author[D. Yau]{Donald Yau}
\address{Department of Mathematics\\
The Ohio State University at Newark\\
Newark, OH \\ USA}
\email{dyau@math.ohio-state.edu}
\begin{document}

\begin{abstract}
We introduce a convenient definition for weak cyclic operads, which is based on unrooted trees and Segal conditions.
More specifically, we introduce a category $\Xi$ of trees, which carries a tight relationship to the Moerdijk--Weiss category of rooted trees $\Omega$.
We prove a nerve theorem exhibiting colored cyclic operads as presheaves on $\Xi$ which satisfy a Segal condition.
Finally, we produce a Quillen model category whose fibrant objects satisfy a weak Segal condition, and we consider these objects as an up-to-homotopy generalization of the concept of cyclic operad.
\end{abstract}

\maketitle

For certain operads, such as the moduli space of Riemann spheres with labeled punctures or the endomorphism operad of a vector space $V$ equipped with a non-degenerate bilinear form, there is not really a qualitative difference between the notion of input and output.
Indeed, in the former case, the `output' of a given element arises solely from our choice of labels and not the underlying geometry, while in the latter case we have natural isomorphisms \[
	\operatorname{End}_V(n) = \hom(V^{\otimes n}, V) = \hom(V^{\otimes n}, V^*) = \hom(V^{\otimes n+1}, k).
\]

\begin{figure}
\labellist
\small\hair 2pt
 \pinlabel {$1$} at 13 219
 \pinlabel {$2$} at 71 219
 \pinlabel {$3$} at 133 219
 \pinlabel {$4$} at 195 219
 \pinlabel {$5$} at 260 219
 \pinlabel {$2$} at 376 219
 \pinlabel {$3$} at 439 219
 \pinlabel {$4$} at 500 219
 \pinlabel {$5$} at 567 219
 \pinlabel {$0$} at 596 219
 \pinlabel {$0$} at 145 21
 \pinlabel {$1$} at 316 21
\endlabellist
\centering
\includegraphics[width=0.9\textwidth]{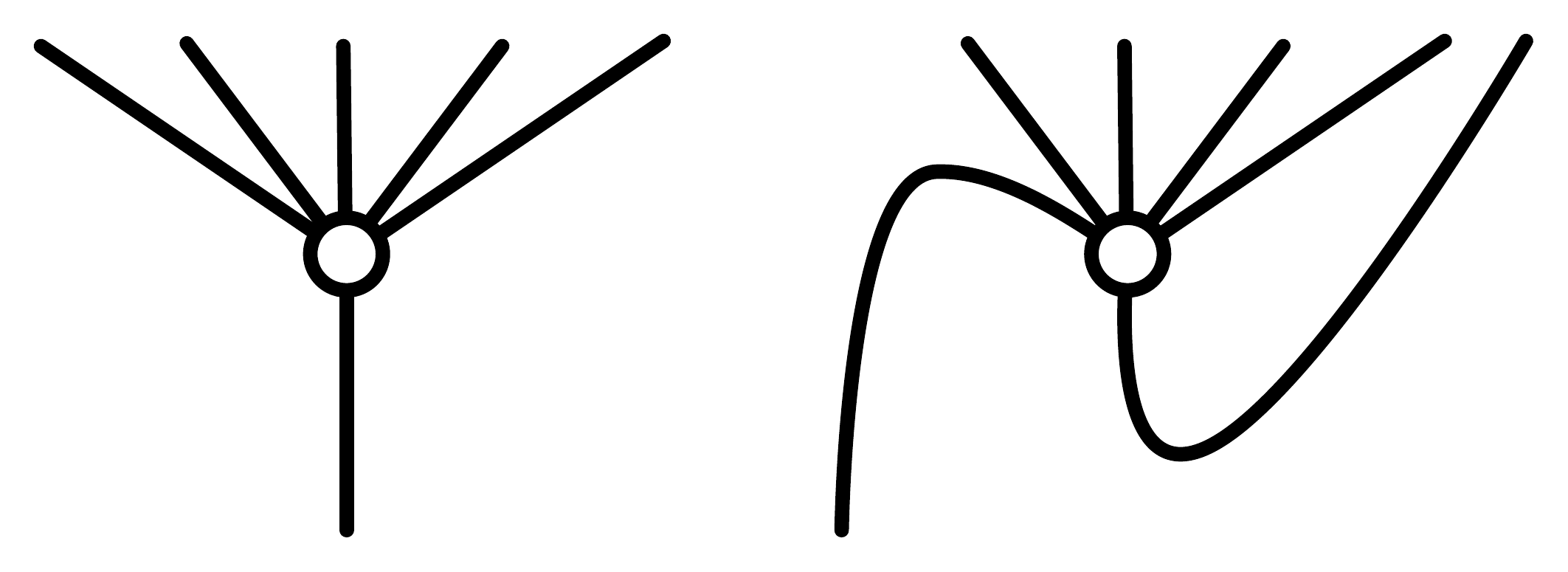}
\caption{$f$ and $f\tau$}\label{small flip flop}
\end{figure}
This consideration leads directly to the notion of \emph{cyclic operad}, introduced by Getzler and Kapranov in \cite{MR1358617} (although we add the axiom due to van der Laan, see \cite[\S 11]{vanderlaan} and \cite[\S II.5.1]{mss}).
A cyclic operad is an operad $O$ with extra structure, namely an action of the cyclic group $C_{n+1} = \langle \tau \rangle$ on the space $O(n)$.
Applied to an element $f \in O(n)$, we should regard $f\tau \in O(n)$ as $f$ with the first input changed to the output and the output changed to the last input, as in Figure \ref{small flip flop}.
To get a feel for how this cyclic operator should act on compositions, one should look at trees with several vertices like the one in Figure \ref{big flip flop}.
\begin{figure}
\labellist
\small\hair 2pt
 \pinlabel {$f$} [l] at 96 73
 \pinlabel {$g$} [l] at 60 126
 \pinlabel {$h$} [l] at 93 179
 \pinlabel {$f\tau$} [l] at 305 73
 \pinlabel {$g\tau$} [l] at 269 126
 \pinlabel {$h$} [l] at 304 179
 \pinlabel {$f\tau$} [l] at 528 123
 \pinlabel {$g\tau$} [l] at 491 73
 \pinlabel {$h$} at 461 163
\endlabellist
\centering
\includegraphics[width=\textwidth]{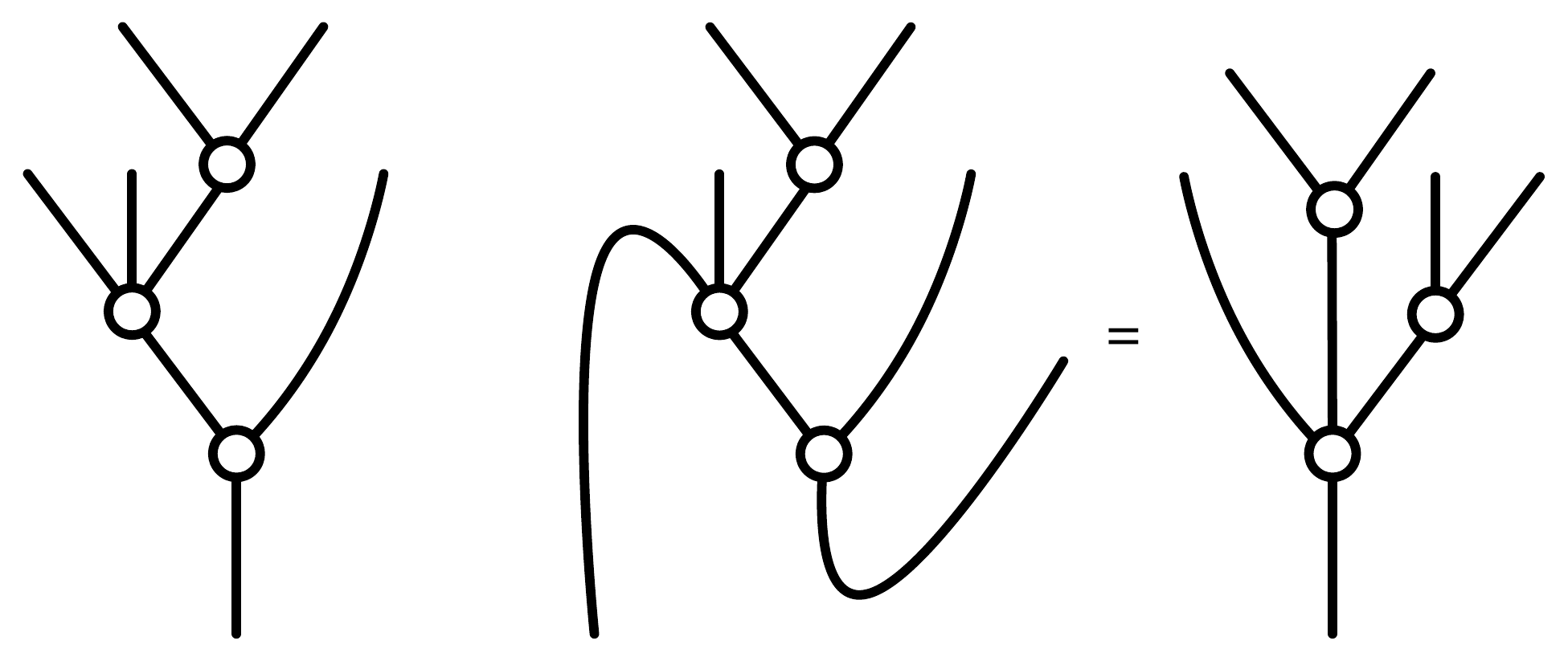}
	\caption{A composition of $f,g,h$, and the action of $\tau$ on this composition}\label{big flip flop}
\end{figure}
Given a cyclic operad, one can begin talking about graph homology (see \cite{kontsevich} as well as the generalizations of Conant \& Vogtmann \cite{MR2026331}), whereas algebras over cyclic operads admit a cyclic homology theory \cite{MR1358617}.

Further examples of cyclic operads include the associative, Lie, and commutative operads, (certain models for) the framed little $n$-disks operad \cite{budney,ksv}, the $A_\infty$ operad \cite{MR1358617}, and also any monoid with involution \cite{lindahl1971,MR0068138} (that is, the involution $x\mapsto x^\dagger$ satisfies $x^\dagger y^\dagger = (yx)^\dagger$) regarded as an operad concentrated in degree 1.
The last of these is useful for giving small examples (see Example \ref{cyclic example} and Proposition \ref{not q equiv}), but also gives a connection with another interesting class of mathematical objects.

A dagger category \cite[Definition 2.2]{SELINGER2007139} is a category $\mathcal C$ together with an involutive functor $\dagger : \mathcal C^{op} \to \mathcal C$ which is the identity on objects. 
In other words, a dagger structure on $\mathcal C$ is an assignment $(f : X \to Y) \mapsto (f^\dagger : Y \to X)$ satisfying $f^{\dagger \dagger} = f$ and $f^\dagger g^\dagger = (gf)^\dagger$; this is the many-objects version of a monoid with involution.
As Baez argued eloquently in \cite{quandries}, to understand the similarities between general relativity and quantum theory, one should begin by considering the natural dagger structure on the category of $n$-cobordisms and on the category of Hilbert spaces, respectively.
Other important examples of dagger categories include any groupoid, categories of relations, and categories of correspondences. 

Colored cyclic operads are a simultaneous generalization of cyclic operads (which we might term `monochrome cyclic operads') and of dagger categories.
There are additional examples in the literature (e.g., \cite[\S 5.4]{cohenvoronov} \cite[\S 3.11.1]{MR2986860}), and the concept provides a bridge to approaches to higher operads based on colored operads.
If $O$ is a $\fC$-colored operad and $n\geq 0$, then the object $O_n = \coprod_{c,c_1,\dots, c_n \in \fC} O(c_1, \dots, c_n; c)$ admits a right action by $\Sigma_n = \Aut\{1,\dots, n\}$ compatible with operadic composition. Write $\Sigma_n^+ = \Aut\{0,1,\dots, n\}$ and identify $\Sigma_n$ as the isotropy group of $0$.
\begin{definition}\label{defn cyclic operad}
A \emph{cyclic structure} on a $\fC$-colored operad $O$ is a collection of maps $-\cdot \sigma : O(c_1, \dots, c_n; c_0) \to O(c_{\sigma(1)}, \dots, c_{\sigma(n)}; c_{\sigma(0)})$ for $\sigma \in \Sigma_n^+$ and $c_i \in \fC$. These should satisfy two conditions. First, they assemble into a right $\Sigma_n^+$ action on $O_n$, which agrees with the existing $\Sigma_n$ action. For the second condition, let $\tau_{n+1} \in \Sigma_n^+$ be the element with $\tau_{n+1}(n) = 0$ and $\tau_{n+1}(i) = i +1$ for $0 \leq i < n$. We insist that if $g\in O(c_1, \dots, c_k; c_0)$ and $f \in O(d_1, \dots, d_\ell; c_i)$ are composable at position $i$, then\footnote{
	We exclude the case $i=1, \ell=0$ from \eqref{equation cyclic operad condition}, as the formula $(g\circ_1 f)\cdot \tau = (g\cdot \tau^2) \circ_k f$ follows from the first case. Indeed,
	\[
		[(g\cdot \tau_{k+1}^2) \circ_k f] \cdot \tau_k^{k-1} = (g\cdot \tau_{k+1}^{2+k-1}) \circ_{k-(k-1)} f = (g\cdot \tau_{k+1}^{k+1}) \circ_{1} f = g\circ_1 f.
	\]
}
\begin{equation}\label{equation cyclic operad condition}
(g\circ_i f) \cdot \tau_{k+\ell} = \begin{cases}
(g\cdot \tau_{k+1}) \circ_{i-1} f & \text{if $2 \leq i \leq k$} \\
(f \cdot \tau_{\ell+1}) \circ_\ell (g\cdot \tau_{k+1}) & \text {if $i=1$ and $\ell \neq 0$.} 
\end{cases}
\end{equation}
Informally, when $O$ is equipped with a cyclic structure, we will say that $O$ is a \emph{$\fC$-colored cyclic operad.}\footnote{We should note that our definition is not a symmetric version of the `cyclic multicategories' of Cheng, Gurski, and Riehl \cite{MR3189430}.
A non-symmetric colored cyclic operad is a non-symmetric colored operad $O$ together with an action of the subgroup $\langle \tau_{n+1} \rangle \leq \Sigma_n^+$ on $O_n$, so that \eqref{equation cyclic operad condition} holds. These form a reflective subcategory of the category of cyclic multicategories.}
\end{definition}

The up-to-homotopy cyclic operads that we develop in this paper are a variation on `dendroidal models' for $\infty$-operads (cf. \cite{bh1,cm-ho,cm-ds,cm-simpop,mw2}).
The dendroidal category $\Omega$ is a category of rooted trees \cite{mw}; each such rooted tree $T$ (with edge set $\edge(T)$) can be regarded as a free object in the category of $\edge(T)$-colored operads.
The dendroidal category is then defined to be the full subcategory of the category of all colored operads whose objects are the rooted trees.
Not only is $\Omega$ defined as a subcategory of colored operads, but it turns out that there is a model structure on the category of presheaves of $\Omega$ (see \cite[Theorem 2.4]{cm-ho}) that is Quillen equivalent to a model structure on the category of simplicially-enriched colored operads (see \cite{cm-simpop}).
This is an extension of the equivalence between the Joyal model structure on simplicial sets and the Bergner model structure on simplicially-enriched categories (see \cite{juliesurvey} for references). 

It would be a very ambitious project to attempt to do all of the above for colored cyclic operads, and we are skeptical that the full Cisinski--Moerdijk program can be carried out in the cyclic case. 
A key difficulty is that the adjunction between categories and dagger categories is badly behaved, in particular with respect to equivalences.
Thus, in the present paper we limit what is said about colored cyclic operads.
It is true that every unrooted tree $S$ freely generates an $\edge(S)$-colored cyclic operad $C(S)$ (see Section \ref{towards cyclic}), but we do not ever consider the full-subcategory of colored cyclic operads spanned by the unrooted trees. 
The cyclic operad $C(S)$ is nearly always infinite, even when $S$ is a linear tree, and arbitrary maps $C(S) \to C(R)$ do not admit decompositions into cofaces and codegeneracies, as they do in the dendroidal setting.
Instead, we directly construct a category $\Xi$ of unrooted trees that is reminiscent of $\Omega$. 
The assignment $S \mapsto C(S)$ gives a faithful, non-full functor from $\Xi$ to $\Cyc$ (Theorem \ref{C is faithful}, Example \ref{nonfull example}).
We use this to prove a nerve theorem for colored cyclic operads (Theorem \ref{cyclic dendroidal nerve theorem}).

Our main goal is to propose a model for weak monochrome cyclic operads. 
These are called \emph{Segal cyclic operads} in Section \ref{section segal cyclic}, and they are certain reduced presheaves satisfying a Segal condition. 
The Segal cyclic operads are patterned after the Segal operads appearing in the work of Bergner and the first author \cite{bh1}, which have become important in current work of Boavida, Horel, and the second author on profinite completions of the framed little disks operad.

The profinite completion of a product of spaces is weakly equivalent, but in general not isomorphic, to the product of the profinite completions. 
For this reason, the profinite completion of an operad does not yield an operad on the nose but rather an $\infty$-operad. 
This fact has played a crucial role in work of Horel \cite{1504.01605} when he generalized work of Fresse \cite{FresseBook1} and computed a profinite version of the Grothendieck--Teichm\"uller group $\widehat{GT}\cong \pi_0\operatorname{End}^h(\widehat{D_2})$, where $D_2$ is the little 2-disks operad. 
In work by Boavida, Horel and the second author, they show that considering the framed little 2 disks as an operad, they recover exactly the same result, i.e., $\widehat{GT}\cong \pi_0\operatorname{End}^h(\widehat{D_2})\cong \pi_0\operatorname{End}^h(\widehat{fD_2})$.  
Considering $fD_2$ as a cyclic operad would necessarily result in a smaller set of endomorphisms and conjecturally would provide refinement on these computations; of course one would expect the profinite completion of a cyclic operad to be some type of infinity cyclic operad. 
Providing a good foundation for this project is one of the major motivations for the present paper.

\subsection*{Overview} We give a brief outline of the paper. Each section begins with a more substantial summary of its contents.

The first section is dedicated to the construction of the category $\Xi$ of unrooted trees.
In the second section, we examine exactly how close $\Xi$ is to the category $\Omega$ of rooted trees.
The third and fourth sections are devoted to two structures on the category $\Xi$: a generalized Reedy structure and an active / inert (or generic / free) weak factorization system. 

The next two sections deal with the relationship of $\Xi$ to colored cyclic operads. 
In the fifth section we construct the functor $\Xi \to \Cyc$, and in the sixth we prove a nerve theorem for colored cyclic operads.

The final two sections are devoted to model-categorical matters. 
The penultimate section is about the model structure on diagrams indexed by a generalized Reedy category, and at the beginning of the section we show that this model structure usually has properties which ensure that Bousfield localizations exist.
We then restrict to the case when the base category is the category of simplicial sets. In Section \ref{reduced presheaves} we discuss certain cases when categories of \emph{reduced} presheaves of simplicial sets admit model structures.
In Section \ref{simplicial model structures} we show that these model categories are in fact simplicial model categories.

In the last section we prove the existence of a model structure on reduced $\Xi$-presheaves in simplicial sets whose fibrant objects, the Segal cyclic operads, satisfy a Segal condition.
We show that there is a Quillen adjunction (which is not a Quillen equivalence) between this model structure and the model structure for Segal operads from \cite{bh1}.

Finally, in an appendix, we discuss certain additional (co)tensorings by $\Sigma_2$-simplicial sets, which exists for $\Xi$-presheaves which vanish on non-linear trees.

\subsection*{Notational conventions}
If $\mathcal C$ is a category, we will write $\mathcal C(x, y)$ or $\hom(x,y)$ for the set of morphisms from $x$ to $y$, depending on if the name of our category is short (e.g., $\mathcal C = \Xi$) or long (e.g., $\mathcal C = \SSet^{\Xi^{op}}_\ast$).
We will write $\Iso_{\mathcal C}(x,y)$ for the isomorphisms from $x$ to $y$, $\Aut_{\mathcal C}(x) := \Iso_{\mathcal C}(x,x)$ for the invertible self-maps of $x$, and $\Iso(\mathcal C)$ for the wide subcategory of $\mathcal C$ consisting of all of the isomorphisms.
In all adjunctions $\mathcal C \rightleftarrows \mathcal D$, the top arrow denotes the left adjoint.

Throughout this paper we use freely the language of Quillen model categories and take the book of Hirschhorn \cite{hirschhorn} as our standard reference.

\subsection*{Acknowledgments}
We are grateful to the Hausdorff Research Institute for Mathematics and the Max Planck Institute for Mathematics for their hospitality during the fall of 2016.

The authors have had many interesting discussions about parts of this paper since its conception, but we would like especially to thank Clark Barwick, Julie Bergner, and Richard Garner for some helpful insights that came at precisely the right time. 
We would also like to thank the participants of the workshop \href{http://www.him.uni-bonn.de/en/programs/past-programs/past-junior-trimester-programs/topology-2016/workshop-interactions-between-operads-and-motives/}{`Interactions between operads and motives'} at HIM for useful feedback and questions.
Finally, we would like to express our appreciation to the anonymous referees of this paper for their generous and extensive feedback.

\section{The unrooted tree category \texorpdfstring{$\Xi$}{Ξ}}

The main goal of this section is to define a category of unrooted trees $\Xi$.
We will begin with a formalism for general graphs, before defining the objects of $\Xi$ in Definition \ref{def trees}. 
We give two distinct descriptions of the morphisms of $\Xi$ in Definition~\ref{definition complete morphism} and Definition \ref{maps of xi}.
Each has its own advantage: morphisms in the former sense (here called \emph{complete}) immediately form a category, while morphisms in the latter sense are specified by a smaller set of data, and are easier to work with in most situations.
We then embark on a sustained study of the nature of these morphisms; key tools are the notions of distance and a (minimal) path in a tree.
Along the way, we recover the Moerdijk--Weiss dendroidal category $\Omega$.
Finally, in Proposition~\ref{bijection complete and kernel}, we show that the two definitions of morphisms coincide.

At the heart of this work is the notion of `graph with legs'.
One can choose several formalisms; for concreteness, let us say that an \emph{undirected graph with legs} consists of two finite sets $E$ and $V$ and a function $\nbhd: V \to \mathcal P(E)$ (the set of subsets of $E$).
		This data should satisfy one axiom, namely that, for each $e \in E$,
		\[ \left|\{ v \in V \, |\, e \in \nbhd(v) \} \right| \leq 2. \]
We will package the triple $(E,V,\nbhd)$ into a single symbol $G$, and write $\edge(G) = E$ and $\vertex(G) = V$.
Edges actually come in two types, namely interior edges
\[
	\interior(G) = \{ e\in E | e\in \nbhd(v) \cap \nbhd(w) \text{ for some } v\neq w \}
\]
and the set of legs
\[
	\legs(G) = \edge(G) \setminus \interior(G)
\]
which are edges incident to at most one vertex.\footnote{If $e \in \edge(G)$ is not incident to any vertex, then one should really think that $e$ appears twice in $\legs(G)$. Since we are only concerned with connected graphs for the bulk of this paper, only one graph (see Example \ref{unrooted tree examples}) has an edge with this property, so we will just systematically single out that special case.}
If $v$ is a vertex of $G$, we also write $|v|$ for the valence of $v$, or the cardinality of the set $\nbhd(v)$.

Every graph has an underlying topological space, which can be described as follows.
See the left hand side of Figure \ref{rwb figure} for an example.
\begin{definition}[Space associated to a graph]
Fix an $\epsilon$ with $0 < \epsilon < 1$, which we can use to scale the closed unit disc $\mathbb D$ in the complex plain $\mathbb C$.
Define \[ \left|\cstar_0\right| = \epsilon \mathbb D = \{ r e^{i\theta} \,|\, 0 \leq r \leq \epsilon, 0\leq \theta \leq 2\pi \} \subsetneq \mathbb D \subsetneq \mathbb C\] and, for $n > 0$,
\[
	\left|\cstar_n \right| = \epsilon \mathbb D \cup \bigcup_{k=0}^{n-1} \{ r e^{\frac{k}{n}2\pi i} \, | \, 0\leq r \leq 1 \},
\]
considered as a subspace in the closed unit disc of the complex plane. 
If $G$ is an undirected graph with legs, fix bijections 
\[
	\kappa_v \colon \nbhd(v) \overset\cong\to \{ e^{\frac{k}{|v|}2\pi i} \} = S^1 \cap \left|\cstar_{|v|} \right| 
\]
and define 
\[
|G| = \left( \frac{\coprod\limits_{v\in \vertex(G)} \left|\cstar_{|v|} \right|}{\kappa_v(e) \sim \kappa_w(e)} \right) \amalg
Q \times [0,1]
\]
where $e\in \nbhd(v) \cap \nbhd(w)$ and $Q = \edge(G) \setminus \bigcup_{v\in \vertex(G)} \nbhd(v)$.
\end{definition}

Notice that the homeomorphism type of $|G|$ determines the isomorphism type of $G$. This would not be the case if we did not add some thickness at the centers of $\left|\cstar_n \right|$ by using the $\epsilon \mathbb D$. Indeed, a variation of realization with $\epsilon = 0$ produces the closed unit interval $[0,1]$ on both the graph $G_1$ with one edge and no vertices and on the graph $G_2$ with one vertex $v$, one edge $e$, and $\nbhd(v) = \{ e \}$.

\begin{figure}
	\includegraphics[width=\textwidth]{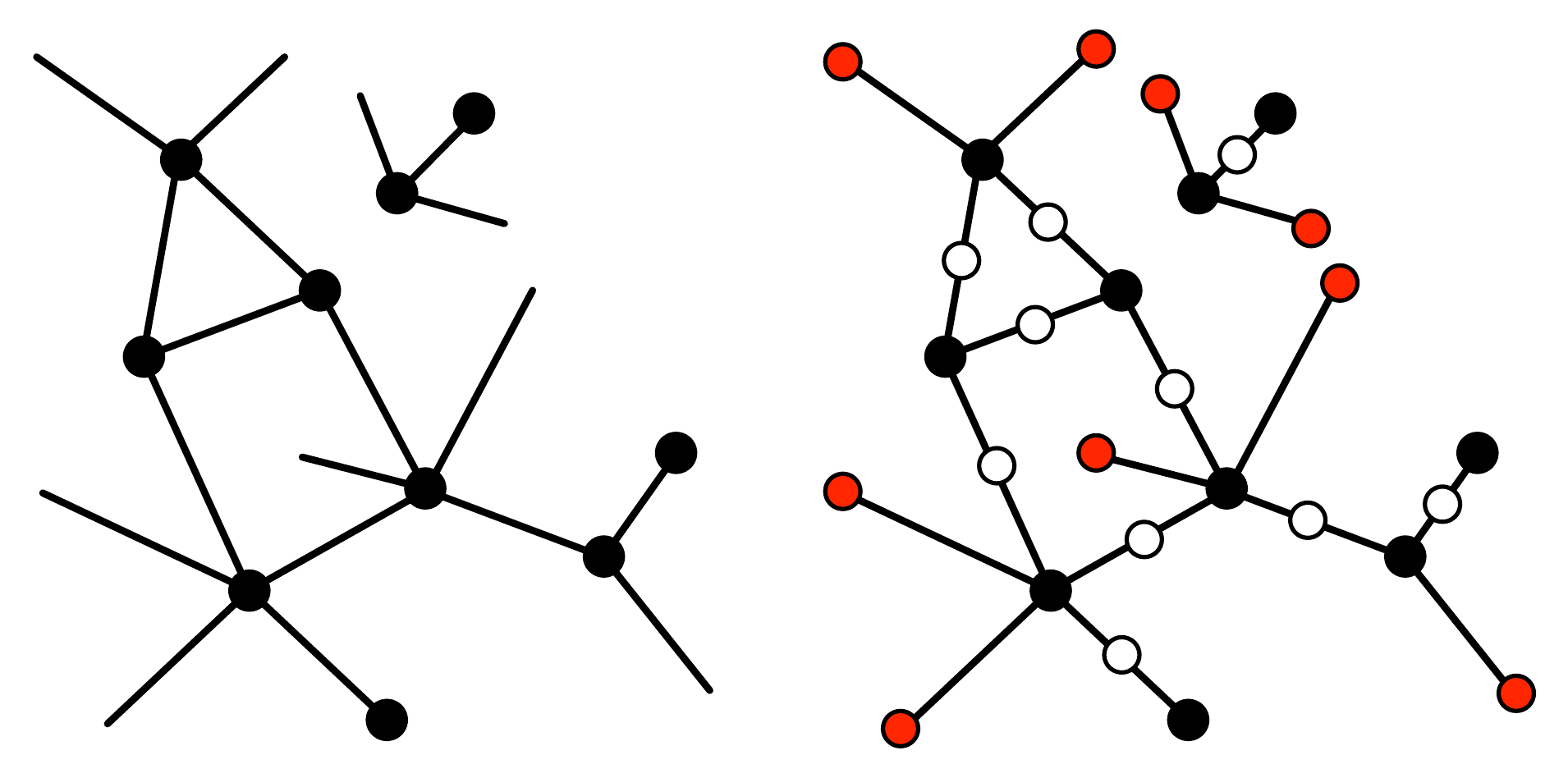}
	\caption{A graph with legs and its corresponding rwb graph}\label{rwb figure}
\end{figure}

The following is an alternative, equivalent formalism for graph with legs.
\begin{definition}[Red-white-black formalism]\label{rwb definition}
An \emph{rwb graph} is an ordinary undirected graph (see, for example \cite[\S 1.1]{diestel}) where each vertex is colored either red, white, or black and such that
\begin{itemize}
	\item red vertices are univalent,
	\item white vertices are bivalent and are only adjacent to black vertices, and
	\item a black vertex is not adjacent to any other black vertex.
\end{itemize}
\end{definition}

From a graph with legs, we can form an rwb graph by coloring all vertices black, adding a white vertex on each interior edge, and adding a red vertex to the loose end of each leg.
Each rwb graph determines a graph with legs by deleting the white vertices and joining the edges on either side and  deleting all of the red vertices. See Figure \ref{rwb figure} for an illustration of this correspondence.

\subsection{Trees}

The category $\Xi$ governing cyclic dendroidal sets has `unrooted' or `cyclic' trees as objects.

\begin{definition}\label{def trees}
	An \emph{unpinned tree} $S$ is an undirected graph with legs which is contractible, has at least one leg, 
	and is equipped with bijections 
	\[
		\ord^v : \{ 0, 1, \dots, n_v \} \overset{\cong}\to \nbhd(v),
	\]
	where $\nbhd(v) \subseteq \edge(S)$ is the set of vertices adjacent to $v$.
	A \emph{pinned tree}, or just \emph{tree}, has, in addition, a map \[ 
	\ord : \{ 0, 1, \dots, n \} \to \legs(S), \]
	where $\legs(S) \subseteq \edge(S)$ is the set of legs of $S$ which is a bijection if $S$ contains a vertex and is otherwise the unique map from $\{0,1\}$ to the single edge. 
\end{definition}

A typical example of such a graph is found in Figure \ref{figure basic tree example}.
In pictures of graphs, we will always draw the ordered set of legs $\nbhd(v)$ in a counterclockwise fashion. Using this convention, to specify the unpinned structure we only need to mark the edges $\{\ord^v({\color{red}0})\}_v$ in the figures.

\begin{figure}
\labellist
\small\hair 2pt
 \pinlabel {$u$} at 40 66
 \pinlabel {$v$} at 94 66
 \pinlabel {$w$} at 94 21
 \pinlabel {$u$} at 190 66
 \pinlabel {$v$} at 244 66
 \pinlabel {$w$} at 244 21
 \pinlabel {$a$} at 31 42
 \pinlabel {$b$} at 14 89
 \pinlabel {$c$} at 67 59
 \pinlabel {$f$} at 85 94
 \pinlabel {$d$} at 101 44
 \pinlabel {$e$} at 126 74
 \pinlabel {{\color{red}$1$}} at 185 85
 \pinlabel {{\color{red}$2$}} at 174 59
 \pinlabel {{\color{red}$0$}} at 205 59
 \pinlabel {{\color{red}$0$}} at 229 74
 \pinlabel {{\color{red}$1$}} at 238 50
 \pinlabel {{\color{red}$2$}} at 259 59
 \pinlabel {{\color{red}$3$}} at 251 85
 \pinlabel {{\color{red}$0$}} at 251 36
 \pinlabel {{\color{blue}$0$}} at 251 110
 \pinlabel {{\color{blue}$1$}} at 171 104
 \pinlabel {{\color{blue}$2$}} at 286 74
 \pinlabel {{\color{blue}$3$}} at 171 30
\endlabellist
\centering
\includegraphics[width=0.6\textwidth]{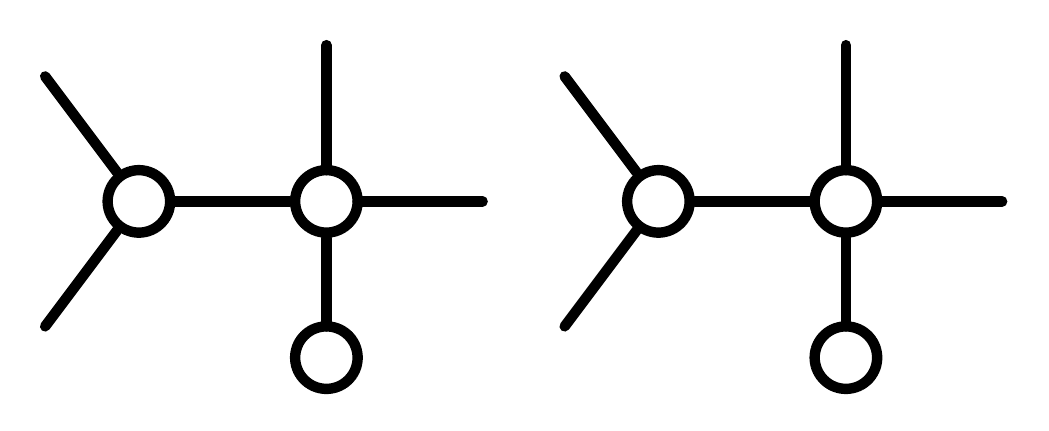}
\caption{Underlying graph (left), unpinned and pinned tree data (right)}\label{figure basic tree example}
\end{figure}

\begin{example}\label{unrooted tree examples}
Let us fix several foundational examples of trees (Figure \ref{figure unrooted trees examples}).
\begin{itemize}
	\item The graph with one edge and no vertices, which we write as $\eta$.
	\item For each $n > 0$, the graph $\cstar_n$. This graph has a single vertex $v$ and $n$ edges $\{ 0, 1, \dots, n-1 \}$ (and take $\ord^v = \ord = \id$).\footnote{Note the shift in index compared with \cite[p. 250]{mss}: they use the notation $\ast_{n}$ for what we call $\cstar_{n+1}$.}
	\item For $n \geq 0$, the linear graph $L_n$ with $n$ distinct vertices $\{v_1, \dots, v_n \}$, $n+1$ distinct edges $\{ e_0, \dots, e_n \}$, $\nbhd(v_i) = \{e_{i-1}, e_i \}$, 
	so that $\ord^{v_i}(t) = e_{i-1+t}$, $\ord(0) = e_0$, and $\ord(1) = e_n$. Note that $L_0 \cong \eta$.
\item We will call any tree with all vertices bivalent a linear graph.
\end{itemize}
\end{example}

\begin{figure}
\labellist
\small\hair 2pt
 \pinlabel {$0$} at 43 92
 \pinlabel {$0$} at 224 92
 \pinlabel {$1$} at 201 129
 \pinlabel {$2$} at 159 142
 \pinlabel {$3$} at 122 119
 \pinlabel {$4$} at 110 76
 \pinlabel {$5$} at 131 40
 \pinlabel {$6$} at 174 26
 \pinlabel {$7$} at 213 49
 \pinlabel {$1$} at 269 92
 \pinlabel {$1$} at 311 92
 \pinlabel {$0$} at 339 92
 \pinlabel {$1$} at 377 92
 \pinlabel {$0$} at 404 92
 \pinlabel {$0$} at 450 92
\endlabellist
\centering
\includegraphics[width=\textwidth]{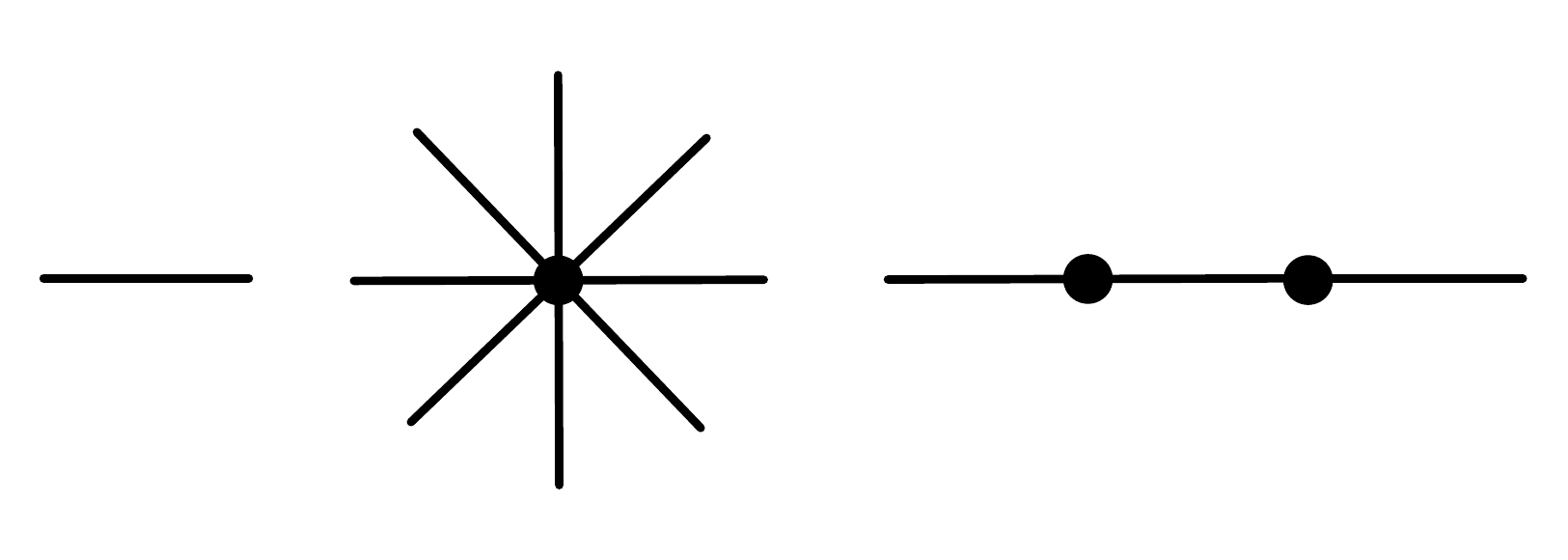}
	\caption{The trees $\eta=L_0$, $\cstar_8$, and $L_2$}\label{figure unrooted trees examples}
\end{figure}

\begin{remark}
A directed tree is a tree $S$ where each edge has an orientation (see Figure \ref{Theta figure}); another way to say this when $S\neq \eta$ is to say that there are partitions\footnote{
	Outside of this remark, we will only consider directed trees which are rooted, hence we will consider $\out(S)$ and $\out(v)$ as single edges, rather than the one element sets which contain them; cf., Definition~\ref{dendroidal category}.
}
\begin{align*}
	\nbhd(v) &= \out'(v) \amalg \inp(v) \\
	\legs(S) &= \out'(S) \amalg \inp(S) 
\end{align*}
so that $\out'(v) \cap \out'(w) = \varnothing = \inp(v) \cap \inp(w)$ for $v\neq w$ and $\out'(v) \cap \inp(S) = \varnothing = \inp(v) \cap \out'(S)$.
This description actually has a little bit more information floating around than we would like; namely a $(|\inp(S)|,|\out'(S)|)$-shuffle and, for each $v\in \vertex(S)$, a $(|\inp(v)|, |\out'(v)|)$-shuffle.
This is simply because for a directed graph we only need separate orderings on the inputs and outputs, not orderings on the entire neighborhoods.

Making a choice for the $(p,q)$-shuffles above, every directed tree determines a tree.
We will use the convention that the total order on $\nbhd(v)$ is determined by that on $\out'(v)$ and $\inp(v)$ by insisting, for $e \in \out'(v)$ and $e' \in \inp(v)$, that $e < e'$. Similarly, we get an order on $\legs(S)$ by saying $\out'(S) < \inp(S)$ (unless $S$ consists of a single edge).
This convention makes it so that for a rooted tree, the downward edge is always labeled by `$0$'.
This gives the map $\Ob(\Omega) \to \Ob(\Xi)$; 
we will actually define a variant of $\Omega$ in Definition \ref{dendroidal category}.
\end{remark}

\subsection{Morphisms of \texorpdfstring{$\Xi$}{Ξ}}

When discussing subgraphs of trees, we will always assume that they are nonempty, connected, and contain all edges incident to any of their vertices.

\begin{definition}
	A \emph{subgraph of a tree $S$} consists of 
	a pair of subsets
	\begin{align*}
		V &\subseteq \vertex(S)\\
		E &\subseteq \edge(S)
	\end{align*}
	so that
	\begin{itemize}
		\item if $v \in V$, then $\nbhd(v) \subseteq E$ (which means that $R = (V,E,\nbhd)$ constitutes the structure of an undirected graph without orderings),
		\item the underlying space of the graph $R = (V,E,\nbhd)$ is contractible.
	\end{itemize}
	Write $\sbgph(S)$ for the set of subgraphs of $S$.
\end{definition}

\begin{remark}
	Subgraphs of $S$ are naturally \emph{unpinned} trees.
	The orderings $\ord^v$ at each vertex $v$ are inherited from those in $S$.
\end{remark}

\begin{example}\label{edge subgraph}
	Each edge $e\in S$ constitutes a subgraph with $E = \{ e \}$ and $V = \varnothing$. We will write this subgraph as $|_e$.
\end{example}

\begin{example}\label{star subgraph}
	For each $v\in \vertex(S)$, there is a subgraph $\cstar_v$ with $V = \{ v \}$ and $E = \nbhd(v)$.
	Thus we have an inclusion $\cstar : \vertex(S) \hookrightarrow \sbgph(S)$.
	Notice that $\cstar_v$ has a preferred ordering with
	\[
		\ord^v_{\cstar_v} = \ord_{\cstar_v} = \ord^v_{S} \colon \{ 0, 1, \dots, n \} \overset\cong\to \nbhd(v) = \legs(\cstar_v).
	\]
\end{example}

\begin{proposition}\label{unions of subgraphs}
	If $R$ and $R'$ are subgraphs of $S$ and $R \cap R' \neq \varnothing$, then $R\cup R'$ is also a subgraph of $S$.
\end{proposition}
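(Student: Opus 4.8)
The plan is to check directly that $R \cup R' := (V \cup V', E \cup E')$ satisfies the two clauses in the definition of a subgraph, writing $R = (V,E)$ and $R' = (V', E')$. The key simplification is that, because $|S|$ is contractible and hence $S$ has no cycles, every subgraph of $S$ is automatically a forest; so for a subgraph of $S$ the contractibility clause is \emph{equivalent} to connectedness. This reduces the proposition to the assertion that $R \cup R'$ is connected, and it is precisely here that the hypothesis $R \cap R' \neq \varnothing$ is used.

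First I would dispatch the incidence clause. If $v \in V \cup V'$ then $v$ lies in $V$ or in $V'$, and in either case $\nbhd(v) \subseteq E$ or $\nbhd(v) \subseteq E'$ by the corresponding clause for $R$ or $R'$; hence $\nbhd(v) \subseteq E \cup E'$. Thus $(V \cup V', E \cup E', \nbhd)$ is a well-defined undirected graph which contains all edges incident to any of its vertices.

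Next I would establish connectedness. Since $R$ and $R'$ are contractible, the realizations $|R|$ and $|R'|$ are each connected, and $|R \cup R'| = |R| \cup |R'|$ inside $|S|$. The assumption $R \cap R' \neq \varnothing$ means $V \cap V' \neq \varnothing$ or $E \cap E' \neq \varnothing$; either a common vertex or a common edge produces a point lying in both $|R|$ and $|R'|$ (note that a common vertex $v$ in fact forces $\nbhd(v) \subseteq E \cap E'$, by the incidence clause). A union of two connected spaces sharing a point is connected, so $|R \cup R'|$ is connected. Finally, $R \cup R'$ is a connected subgraph of the tree $S$, hence acyclic --- any cycle in it would be a cycle in $S$ --- and a connected acyclic graph has contractible realization. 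This verifies the contractibility clause and finishes the argument.

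I do not expect a genuine obstacle: the content is the elementary topological fact that a connected subgraph of a tree is again a tree, together with the bookkeeping that $R \cap R' \neq \varnothing$ forces $|R|$ and $|R'|$ to meet. The only points demanding a little care are the identification $|R \cup R'| = |R| \cup |R'|$ as subspaces of $|S|$ and the passage between the set-level intersection condition and its topological counterpart. As a byproduct, feeding the now-established connectedness of $R \cup R'$ into the inclusion--exclusion $\chi(R \cup R') = \chi(R) + \chi(R') - \chi(R \cap R')$ (with each of the trees $R$, $R'$, $R\cup R'$ contributing $\chi = 1$ and each component of the forest $R \cap R'$ contributing $1$) shows that $R \cap R'$ is itself connected.
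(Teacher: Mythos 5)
Your argument is correct and follows essentially the same route as the paper: the incidence clause is immediate, the hypothesis $R\cap R'\neq\varnothing$ gives a common point of $|R|$ and $|R'|$ so that $|R\cup R'|$ is connected, and a connected subgraph of the contractible graph $S$ is itself contractible. You simply spell out the last step (connected $+$ acyclic $\Rightarrow$ contractible) in more detail than the paper does, and your Euler-characteristic remark about $R\cap R'$ being connected is a correct bonus not needed for the statement.
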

\begin{proof}
Write $R = (V,E)$ and $R'=(V',E')$.
The first condition we need to check for $R \cup R' = (V\cup V', E \cup E')$ is immediate, and does not require the hypothesis.
The hypothesis $R\cap R' \neq \varnothing$ means $(V\cap V') \cup (E \cap E') \neq \varnothing$, which implies that the underlying space of $R \cup R'$ is connected (since it is the union of the underlying spaces of $R$ and $R'$).
Thus it is a connected subspace of a contractible graph, hence is contractible as well.
\end{proof}

\begin{definition}[Boundary of a subgraph]
	Suppose that $S$ is a tree.
	\begin{itemize}
		\item If $X$ is a set, let $\mathbf{M}X = \coprod_{n\geq 0} X^{\times n} / \Sigma_n$ be the free commutative unital monoid on $X$ (that is, the set of unordered lists of elements of $X$).
		\item There is a function $\boundarymap : \sbgph(S) \to \mathbf{M}(\edge(S))$ with 
		\[
			\boundarymap(R) = \begin{cases}
				e^2 & \text{ if $R = |_e$ } \\
				\prod\limits_{e\in \legs(R)} e & \text{ otherwise.}
			\end{cases}
		\]
		We say that $\boundarymap(R)$ is the \emph{boundary} of the subgraph $R$.
	\end{itemize}
\end{definition}

If $R, T \in \sbgph(S)$, then the graph $R\cap T$ is either empty or it is also in $\sbgph(S)$. Further, $R\cup T\in \sbgph(S)$ if and only if $R\cap T$ is nonempty.
We will say that $R$ and $T$ \emph{overlap} if $R\cap T$ is nonempty (equivalently, if $R\cup T$ is connected).

\begin{definition}\label{definition complete morphism}
	Suppose that $S$ and $R$ are two trees. A \emph{complete morphism} $R \to S$ consists of two functions
	\begin{itemize}
		\item $\alpha_0 : \edge(R) \to \edge(S)$
		\item $\alpha_1 : \sbgph(R) \to \sbgph(S)$
	\end{itemize}
	that satisfy the following conditions.
	\begin{enumerate}
		\item The equation $\boundarymap \circ \alpha_1 = (\mathbf{M} \alpha_0) \circ \boundarymap$ holds.
		\item If $T, T' \in \sbgph(R)$ overlap, then so do $\alpha_1(T)$ and $\alpha_1(T')$. Furthermore,
		\begin{enumerate}
			\item $\alpha_1(T\cap T') = \alpha_1(T) \cap \alpha_1(T')$ and
			\item $\alpha_1(T\cup T') = \alpha_1(T) \cup \alpha_1(T')$.
		\end{enumerate}
	\end{enumerate}
\end{definition}

The set $\sbgph(R)$ is actually a partial lattice and the second condition just states that $\alpha_1$ is a map of partial lattices.
As the properties above are closed under function composition, there is a category of trees whose morphisms are complete morphisms.

Notice in particular that the existence of the function $\medstar : \vertex(R) \hookrightarrow \sbgph(R)$ means that every complete morphism has an associated function $\vertex(R) \to \sbgph(S)$.

\begin{definition}\label{maps of xi}
Suppose that $R$ and $S$ are trees.
\begin{itemize}
\item A \emph{morphism} $\phi : R \to S$ is defined to be a pair of maps
\begin{align*}
	\phi_0 \colon \edge(R) &\to \edge(S) \\
	\phi_1 \colon \vertex(R) &\to \sbgph(S)
\end{align*}
satisfying the following:
\begin{enumerate}
	\item \label{maps of xi bivalent} If $v$ is not bivalent (that is, $|\nbhd(v)| \neq 2$), then $\phi_0|_{\nbhd(v)}$ is injective. 
	\item \label{maps of xi interchange} For each vertex $v$, $\phi_0(\nbhd(v)) = \legs(\phi_1(v))$ (as unordered sets).
	\item \label{maps of xi separation} $\vertex(\phi_1(v)) \cap \vertex(\phi_1(w)) = \varnothing$ for $v\neq w$.
\end{enumerate}
\item The \emph{identity map} $\id_R: R \to R$ is given by letting $(\id_R)_0 = \id_{\edge(R)}$ and letting $(\id_R)_1$ be the inclusion $\cstar \colon \vertex(R) \to \sbgph(R)$.
\item More generally, a morphism $\phi : R \to S$ is an \emph{isomorphism} if $\phi_0$ is a bijection and if $\phi_1$ factors through $\vertex(S)$ as $\phi_1 = \cstar \widetilde \phi_1$ with $\widetilde \phi_1$ is a bijection.
\[ \begin{tikzcd}
\vertex(R) \rar{\phi_1} \arrow[dr, dashed, "\widetilde \phi_1" swap, "\cong" description] & \sbgph(S) \\
& \vertex(S) \uar{\cstar}
\end{tikzcd} \]
\end{itemize}
\end{definition}

In Proposition~\ref{bijection complete and kernel}, we show that precomposition with $\medstar$ constitutes a bijection between complete morphisms and morphisms. 
We will also transfer the composition of complete morphisms back to morphisms in Definition \ref{xi composition}, after which the reader may wish to verify that this definition of isomorphism is correct from a categorical standpoint.

\begin{example}\label{example unique iso}
If $R = R'$ except for orderings, then 
there is a unique isomorphism $\phi : R \to R'$ with $\phi_0 = \id$. 
We first note that since Definition \ref{maps of xi} does not mention orderings, the pair $(\id_{\edge(R)}, \cstar)$ constitutes a morphism $R \to R'$ of $\Xi$.

We now show that the only automorphism $\phi$ of $R$ which fixes the edges is $\id_R$. For this, we induct on the number of vertices $n$ of $R$; the case $n=0$ is clear since there is only one map $\eta \to \eta$.
Suppose that uniqueness has been established for all $m < n$; pick any $e_0\in \legs(R)$.
There is a unique $v_0 \in \vertex(R)$ with $e_0\in \nbhd(v_0)$.
By assumption $\phi_0(e_0) = e_0$, hence $\phi_1(v_0) = \cstar_{v_0}$. 
For each $e\in \nbhd(v_0) \setminus e_0$, there is a subgraph $R_e$ of $R$ consisting of all vertices and edges on all paths not containing $v$ but beginning at $e$.
By the induction hypothesis, $\phi|_{R_e} = \id_{R_e}$ is uniquely determined by the fact that $\phi_0$ fixes the edges. It follows that $\phi_1(v) = \cstar_v$ for all $v\in \vertex(R)$, so $\phi = \id_R$.
\end{example}

\begin{remark}
The argument for uniqueness in the previous example fails if we allow graphs without legs. Indeed, the graph \textbullet\!---\!\textbullet \, admits two distinct automorphisms $\phi$ with $\phi_0 = \id$.
\end{remark}

\begin{definition}[Cofaces and codegeneracies]\label{cofaces and codegens} We describe three basic types of morphisms of $\Xi$. Throughout, $S$ and $R$ will denote objects of $\Xi$, and $d(R)$ will refer to the number of vertices of $R$.
	\begin{itemize}
	\item Suppose that $S$ is a subgraph of $R$ and $d(S) = d(R) - 1$. Then we say the inclusion $S \to R$ is an \emph{outer coface}.
	Up to orderings, the tree $S$ is obtained from $R$ by selecting a pair $(v,e)$ with $v \in \vertex(R)$ and $e\in \nbhd(v)$ such that $\nbhd(v) \setminus \{e\} \subseteq \legs(R)$, and then deleting $v$ and all of the legs in $\nbhd(v) \setminus \{e\}$. We will write $\delta^v : S \to R$ for such a coface map if $d(R) > 1$, or $i : \eta \to R$ for the map that hits $\ord(i)$ when $d(R) = 1$.
	\item 
	A map $\phi : S \to R$ is an \emph{inner coface} if $d(S) = d(R) - 1$ and there is a vertex $v_0$ so that $\phi_1(v_0)$ has exactly two vertices and $\phi_1(v)$ has exactly one vertex for $v\in \vertex(S) \setminus \{ v_0\}$. The subgraph $\phi_1(v_0)$ has exactly one inner edge $e$, and we will often write $\delta^e : S \to R$ for such an inner coface map. The tree $S$ is obtained from $R$ by contracting an inner edge.
	\item 
	A map $\phi : S \to R$ is a \emph{codegeneracy} if $d(S) = d(R) + 1$ and there is a vertex $v_0$ (necessarily with $|v_0| = 2$) so that $\phi_1(v_0)$ is an edge and $\phi_1(v)$ has exactly one vertex for $v\in \vertex(S) \setminus \{ v_0\}$.
\end{itemize}
A \emph{coface} is a map which is either an inner coface or an outer coface.
\end{definition}

An example of each type of map is given in Figure \ref{outer inner degen}.

\begin{figure}
\labellist
\small\hair 2pt
 \pinlabel {$u$} at 40 170
 \pinlabel {$v$} at 94 170
 \pinlabel {$w$} at 157 170
 \pinlabel {$v$} at 289 274
 \pinlabel {$w$} at 352 274
 \pinlabel {$w$} at 352 168
 \pinlabel {$u$} at 235 59
 \pinlabel {$v$} at 289 59
 \pinlabel {$x$} at 328 59
 \pinlabel {$w$} at 369 59
 \pinlabel {$a$} at 26 139
 \pinlabel {$b$} at 14 192
 \pinlabel {$c$} at 68 163
 \pinlabel {$d$} at 128 180
 \pinlabel {$e$} at 86 199
 \pinlabel {$f$} at 101 140
 \pinlabel {$c$} at 260 267
 \pinlabel {$d$} at 323 283
 \pinlabel {$e$} at 282 304
 \pinlabel {$f$} at 296 247
 \pinlabel {$a$} at 266 152
 \pinlabel {$b$} at 262 191
 \pinlabel {$e$} at 296 199
 \pinlabel {$f$} at 296 136
 \pinlabel {$d$} at 323 177
 \pinlabel {$a$} at 225 31
 \pinlabel {$b$} at 208 81
 \pinlabel {$c$} at 263 52
 \pinlabel {$d'$} at 309 68
 \pinlabel {$d''$} at 348 68
 \pinlabel {$e$} at 282 93
 \pinlabel {$f$} at 296 27
 \pinlabel {{\color{blue}\huge$\delta^u$}} at 183 288
 \pinlabel {{\color{blue}\huge$\delta^c$}} at 220 189
\endlabellist
\centering
\includegraphics[width=0.8\textwidth]{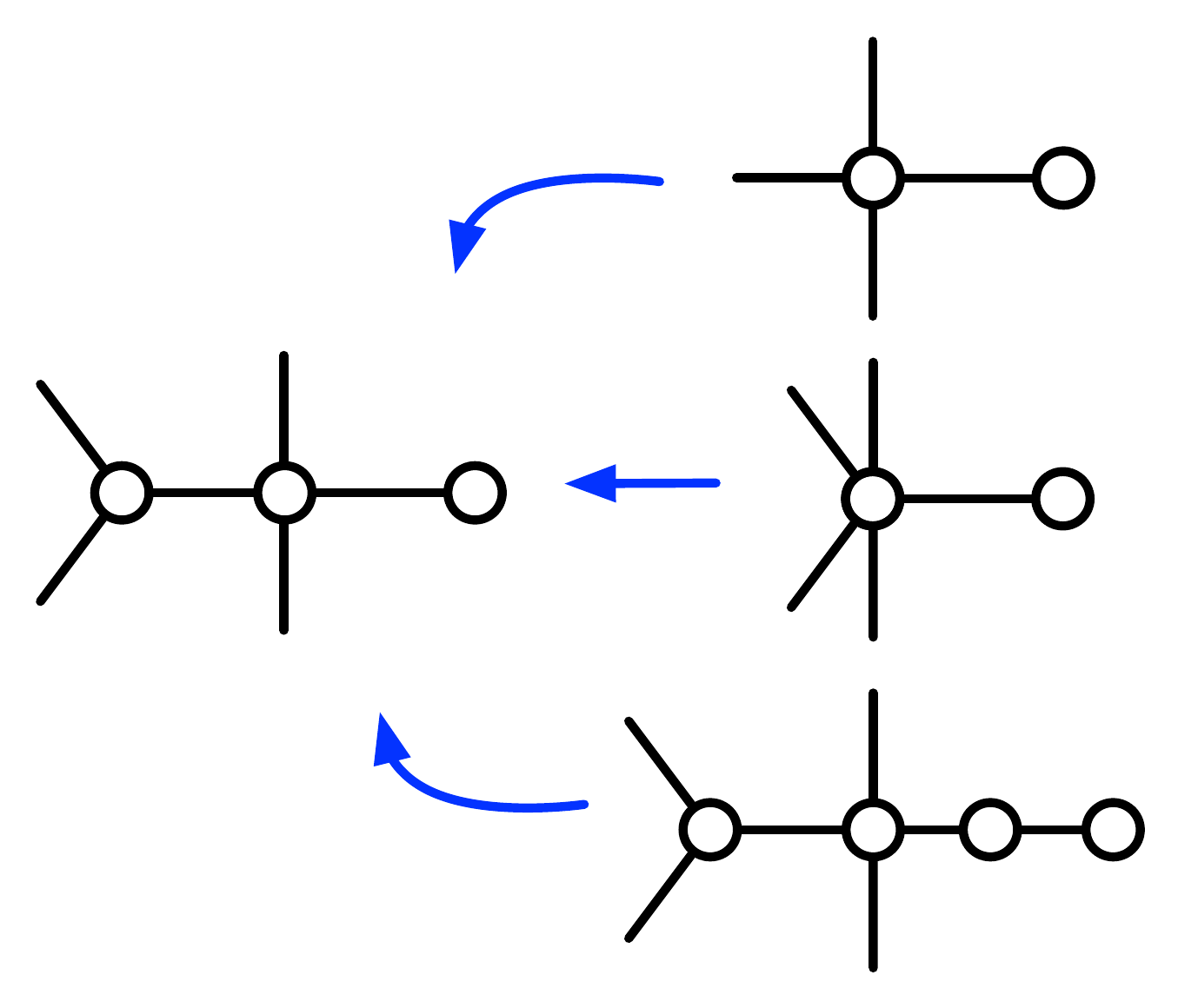}
	\caption{An outer coface, an inner coface, and a codegeneracy}\label{outer inner degen}
\end{figure}

A \emph{path} in a graph $G$ is an alternating word in the alphabet $\edge(G)\sqcup \vertex(G)$ which can only contain a subword $ve$ or $ev$ if $e\in \nbhd(v)$.
A path from a vertex $v$ to a vertex $w$ is a path of the form
\[
	P = ve_1v_1e_2\dots e_{n-1}v_{n-1}e_nw
\]
while a path from an edge $e$ to an edge $e'$ is a path of the form 
\[
	P = ev_1e_1v_2\dots v_{n-1}e_{n-1}v_ne'.
\]
The length of a path, denoted by $|P|$, is the length of the word.

We can concatenate paths $P$ and $P'$ if the last letter in $P$ is the first letter in $P'$ or if the last letter in $P$ is adjacent to the first letter in $P'$.
In the former case, we will remove the duplicate letter.

\begin{definition}
Let $G$ be a graph and $v,w \in \vertex(G)$.
Define the \emph{distance} from $v$ to $w$ by $d_G(v,w) = d(v,w) = \min_P \frac{|P| - 1}{2}$ 
where $P$ ranges over all paths in $G$ from $v$ to $w$. 
Any path realizing the distance is said to be a \emph{minimal path} (and such exists as long as $d(v,w)$ is defined).
Similarly, if $e,f \in \edge(G)$, one defines $d_G(e,f)$ and minimal paths between edges in $G$.
\end{definition}
If we consider the rwb graph $G_{rwb}$ associated to a graph $G$, then $\vertex(G)$ is the same as the set of black vertices of $G_{rwb}$ and $\edge(G)$ is the union of the sets of red and white vertices of $G_{rwb}$;
these distances are then half of the usual graph distance between vertices (see, e.g., \cite[\S 1.3]{diestel}) in $G_{rwb}$.

\begin{proposition}\label{minimal path prop}
Let $S$ be a tree.
If $v,w\in \vertex(S)$, then a minimal path from $v$ to $w$ exists and is unique. 
A similar statement applies to minimal paths between edges.
Minimal paths are characterized as those containing no repeated entries.
\end{proposition}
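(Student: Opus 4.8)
The plan is to lean on the two topological features a tree $S$ has by definition: its realization $|S|$ is connected and contractible. Connectedness immediately yields \textbf{existence}, since there is at least one path from $v$ to $w$, so the set of values $(|P|-1)/2$ over all such $P$ is a nonempty subset of the nonnegative integers and hence attains a minimum. Contractibility—equivalently, the absence of embedded cycles in $S$—will furnish uniqueness. I would argue everything for vertices and then deduce the statement for edges by running the same reasoning on the associated rwb graph, in which the elements of $\edge(S)$ appear as (red and white) vertices, and a minimal edge-path of $S$ is precisely a minimal vertex-path in that rwb graph.

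The first genuine step is an \emph{excision lemma}: any path can be shortened until it has no repeated entries, so in particular no minimal path repeats an entry. Suppose a letter $x \in \vertex(S) \sqcup \edge(S)$ occurs at two positions $i < j$ of a path $P$; these are necessarily of equal parity, so $x$ is a vertex in both or an edge in both. Deleting the segment strictly between the two occurrences and gluing the surviving initial and terminal pieces along the single retained copy of $x$ produces a strictly shorter alternating word from $v$ to $w$, and it is again a path because every retained adjacency already occurred in $P$. Iterating removes all repetitions while strictly decreasing length, which proves the implication ``minimal $\Rightarrow$ no repeated entries.''

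The crux is \textbf{uniqueness} of the no-repeated-entries path. If $P \neq P'$ were two such paths from $v$ to $w$, one locates the last letter $a$ of their maximal common initial segment and the first letter $b$ occurring after $a$ at which the two paths meet again (such $b$ exists because both terminate at the common letter $w$). Splicing the $a$-to-$b$ portion of $P$ onto the reverse of the $a$-to-$b$ portion of $P'$ produces a closed path repeating only its endpoints, i.e.\ an embedded cycle in $S$; this contradicts contractibility of $|S|$, since a contractible finite graph contains no cycle. Hence the no-repeated-entries path is unique. Combining the three steps: a minimal path exists, has no repeated entries, and therefore coincides with the unique such path; conversely that unique path is then minimal. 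This delivers at once the existence, the uniqueness, and the asserted characterization.

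The step I expect to be the \textbf{main obstacle} is the extraction of the embedded cycle in the uniqueness argument: carefully pinning down the divergence point $a$ and the reconvergence point $b$, and verifying that the spliced closed path has no internal repetitions, so that it realizes an honest circle in $|S|$ rather than a walk that merely backtracks. Once that embedded cycle is produced, the contradiction with contractibility—no cycles in a contractible graph—is immediate, and the rest of the proposition is bookkeeping.
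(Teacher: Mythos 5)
Your proof is correct, but it takes a more self-contained route than the paper, whose entire argument is a two-line reduction: every vertex and edge of $S$ is a vertex of the associated rwb graph $S_{rwb}$, which is a tree in the classical sense, and the whole proposition is then \cite[Theorem~1.5.1]{diestel} (existence and uniqueness of paths in a tree, with the no-repetition characterization). You instead reprove that textbook fact from scratch: existence from connectedness, the excision step showing minimal paths have no repeated entries, and uniqueness by splicing two distinct repetition-free paths at their divergence and reconvergence points to produce an embedded cycle, contradicting contractibility. The underlying mechanism is identical --- contractibility of $|S|$ is exactly the acyclicity hypothesis Diestel's theorem runs on --- so this is a difference of packaging rather than of idea. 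What your version buys is that it works directly with the paper's alternating-word definition of path, avoiding the mild bookkeeping of translating edge-paths in $S$ into vertex-paths in $S_{rwb}$ (where distances differ by a factor of two); what the paper's version buys is brevity and an explicit pointer to the standard reference. Your flagged ``main obstacle'' (verifying the spliced closed walk is a genuine embedded cycle) does need the observation that the two segments share only their endpoints $a$ and $b$ and that the first letters after $a$ differ, but that is exactly the standard argument and goes through; even the degenerate case of two parallel edges between $a$ and $b$ is excluded by contractibility.
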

\begin{proof}
Every vertex and edge of $S$ is a vertex in the corresponding rwb graph $S_{rwb}$.
This statement is then \cite[Theorem 1.5.1]{diestel} applied to the tree $S_{rwb}$.
\end{proof}

Existence and uniqueness of minimal paths leads to the following result.
\begin{corollary}
	If $R$ is a subgraph of $S$, then $d_R = d_S|R$, i.e., the distance in $R$ is the restriction of the distance in $S$, for all edges and vertices in $R$. \qed
\end{corollary}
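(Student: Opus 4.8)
The plan is to reduce the statement entirely to the characterization of minimal paths from Proposition~\ref{minimal path prop}. Fix two vertices $x,y\in\vertex(R)$; the argument for a pair of edges is identical after replacing vertices by edges throughout, and the degenerate case $R=|_e$ is trivial since then the only distance to measure is $d_R(e,e)=0=d_S(e,e)$. Because $R$ is connected and contractible, the argument of Proposition~\ref{minimal path prop} (which only uses that the associated rwb graph is a tree) applies verbatim to $R$ as well as to $S$.

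First I would produce the $R$-minimal path. By Proposition~\ref{minimal path prop} applied to $R$, there is a unique minimal path $P$ in $R$ from $x$ to $y$, characterized by the property that it has no repeated entries. The crucial point is that $P$ is simultaneously a path in $S$: its letters lie in $\edge(R)\sqcup\vertex(R)\subseteq\edge(S)\sqcup\vertex(S)$, and the adjacency requirement is preserved because a subgraph inherits its neighborhood function from $S$, so $\nbhd(v)$ is literally the same set whether $v$ is regarded as a vertex of $R$ or of $S$. Thus $P$ is a path in $S$ from $x$ to $y$ with no repeated entries.

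Next I would invoke the same characterization in the ambient tree. By the ``no repeated entries'' clause of Proposition~\ref{minimal path prop}, now applied to $S$, the path $P$ must be \emph{the} unique minimal path in $S$ from $x$ to $y$. Since the length $|P|$ does not depend on whether $P$ is viewed inside $R$ or inside $S$, this yields
\[
	d_R(x,y)=\frac{|P|-1}{2}=d_S(x,y),
\]
which is precisely the asserted equality $d_R=d_S|R$.

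I do not anticipate a serious obstacle: all of the content is supplied by Proposition~\ref{minimal path prop}, and the only thing requiring care is the verification that a path of $R$ is again a path of $S$, which comes down to the bookkeeping observation that passing to a subgraph does not alter neighborhood sets. It is worth flagging the logical shape of the argument, though: a priori one only gets the easy inequality $d_S(x,y)\le d_R(x,y)$ from the fact that an $R$-path is an $S$-path, and it is the uniqueness/characterization that upgrades this to equality --- equivalently, it shows that the $S$-minimal path between two elements of $R$ never leaves $R$, a convexity property of subgraphs that falls out for free.
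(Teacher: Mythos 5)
Your proof is correct and is exactly the argument the paper intends: the corollary is stated with only the remark that it follows from ``existence and uniqueness of minimal paths,'' and your elaboration --- a path in $R$ is a path in $S$ because subgraphs inherit neighborhoods, and the ``no repeated entries'' characterization of Proposition~\ref{minimal path prop} then forces the $R$-minimal path to be the $S$-minimal path --- is precisely the content being invoked. Nothing to add.
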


We now have the necessary tools to define the objects and morphisms of the dendroidal category $\Omega$. 
Though we do not make substantial use of $\Omega$ until Section \ref{section rooting}, we include this definition here, rather than after Definition~\ref{xi composition}, to indicate the usefulness of the notion of distance.

\begin{definition}[Dendroidal category]\label{dendroidal category} We now define (a variant of) $\Omega$ as a subcategory of $\Xi$.
\begin{itemize}
\item A \emph{rooted tree} is a tree $R$ satisfying the following condition:
Suppose $r_0 = \ord(0) \in \legs(R)$. If $v\in \vertex(R)$ and $k > 0$, then 
\[
	d(\ord^v(0), r_0) < d(\ord^v(k), r_0).
\]
If $R\neq \eta$, we set 
\begin{align*}
	\inp(v) &= \nbhd(v) \setminus \ord^v(0) & \out(v) &= \ord^v(0) \\
	\inp(R) &= \legs(R) \setminus \ord(0) & \out(R) &=\ord(0) = r_0
\end{align*}
while if $R=\eta$ we set $\inp(R) = \{ r_0 \}$ and $\out(R) = r_0$.
\item If $R$ and $S$ are rooted trees and $\phi : R \to S$ is a map in $\Xi$, we say that $\phi$ is \emph{oriented} if for each $v \in \vertex(R)$ and each $k > 0$,
\[
	d(\phi_0(\ord^v(0)), s_0) \leq d(\phi_0(\ord^v(k)), s_0).
\]
\item The category $\Omega$ is the subcategory of $\Xi$ whose objects are rooted trees and whose morphisms are the oriented maps between rooted trees.
\item We write $\iota : \Omega \to \Xi$ for the subcategory inclusion.
\end{itemize}
\end{definition}

Notice that if $\phi$ is an oriented map, then $\phi_1(v)$ is a rooted tree (without ordering of the leaves) with root $\phi_0(\ord^v(0)) = \phi_0(\out(v))$.

\begin{remark}
This definition of $\Omega$ is analogous to the equivalent category $\Omega'$ from \cite[Example 2.8]{bm}.
A rooted tree in our sense is equivalent to a rooted tree together with a planar structure and an ordering of the input edges, and morphisms do not need to preserve the planar structure.
\end{remark}

\begin{remark}
In the above definition we were able to \emph{recognize} rooted trees among all trees; we cannot do something similar for general directed trees (and hence for the category $\Theta$ from \cite[Remark 6.55]{hrybook}). Indeed, graphs which are linear as undirected graphs generally possess many directed structures, even controlling for the number of inputs and outputs. See Figure \ref{Theta figure}.
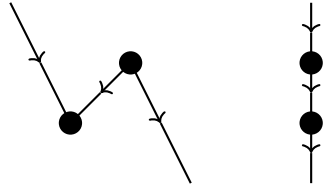
\begin{figure}
\begin{tikzpicture}[scale=2,thick]
\draw[->] (0,0) -- (.2,-.4);
\draw (.2,-.4) -- (.4,-.8); 
\draw[->] (.8,-.4) -- (.6, -.6);
\draw (.6, -.6) -- (.4,-.8);
\draw[->] (.8, -.4) -- (1, -.8);
\draw (1, -.8) -- (1.2, -1.2);
\filldraw 	(.4,-.8) circle [radius=2pt]
			(.8,-.4) circle [radius=2pt];
\draw[->] (2,0) -- (2,-.2);
\draw (2,-.2) -- (2,-.4);
\draw[->] (2,-.4) -- (2,-.6);
\draw (2,-.6) -- (2,-.8);
\draw[->] (2,-.8) -- (2,-1);
\draw (2,-1) -- (2,-1.2);
\filldraw 	(2,-.8) circle [radius=2pt]
			(2,-.4) circle [radius=2pt];
\end{tikzpicture}
\caption{Two different directed structures on the same undirected tree}\label{Theta figure}
\end{figure}
In short, there is a functor from (a legged-variant of) $\Theta$ to $\Xi$, but it is not injective on objects.
\end{remark}

\begin{lemma}\label{neighborhoods of vertices}
Let $\phi : R \to S$ be a morphism of $\Xi$. Then
	$\interior(\phi_1(v)) \cap \edge(\phi_1(w)) = \varnothing$ if $v\neq w$.
\end{lemma}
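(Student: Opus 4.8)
The plan is to argue by contradiction, isolating a single edge and tracing it along a path in $R$. Suppose, toward a contradiction, that some $e$ lies in $\interior(\phi_1(v)) \cap \edge(\phi_1(w))$ with $v \neq w$.

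First I would record two elementary facts. \textbf{(a)} Since $e$ is an \emph{interior} edge of the subgraph $\phi_1(v)$, it is incident to two distinct vertices $a,b$ of $\phi_1(v)$; as $S$ is a tree every edge meets at most two vertices, so $a,b$ are exactly the two endpoints of $e$ in $S$, and both lie in $\vertex(\phi_1(v))$. \textbf{(b)} A connected subgraph containing at least one vertex cannot contain an edge incident to none of its vertices, since otherwise the realization of that edge would be a separate component and the underlying space would be disconnected; likewise a subgraph with no vertices consists of exactly one edge. I would phrase (b) as a one-line connectivity observation drawn from the definition of subgraph (contractibility, hence connectedness, of the underlying space).

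Then comes the core argument. By Proposition~\ref{minimal path prop} there is a path in $R$ from $w$ to $v$, which I write as $w = x_0, g_1, x_1, \dots, g_k, x_k = v$ with $g_{i+1} \in \nbhd(x_i) \cap \nbhd(x_{i+1})$. I would show by induction that $\phi_0(g_{i+1}) = e$ for all $i$, and that each intermediate image $\phi_1(x_i)$ with $i < k$ is forced to be the vertexless edge subgraph $|_e$. For the base case: $e \in \edge(\phi_1(w))$ together with (a), (b), and condition~\ref{maps of xi separation} forces $\phi_1(w)$ to have no vertices, since any vertex of $\phi_1(w)$ incident to $e$ would be $a$ or $b$, contradicting $\vertex(\phi_1(v)) \cap \vertex(\phi_1(w)) = \varnothing$; hence $\phi_1(w) = |_e$, and condition~\ref{maps of xi interchange} gives $\phi_0(\nbhd(w)) = \{e\}$, so $\phi_0(g_1) = e$. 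For the inductive step, $g_{i+1} \in \nbhd(x_{i+1})$ yields $e \in \legs(\phi_1(x_{i+1}))$ by condition~\ref{maps of xi interchange}. If $x_{i+1} = v$ this already contradicts $e \in \interior(\phi_1(v))$, because legs and interior edges of a subgraph are disjoint. Otherwise condition~\ref{maps of xi separation} (the endpoints $a,b$ cannot lie in $\vertex(\phi_1(x_{i+1}))$) and (b) again force $\phi_1(x_{i+1}) = |_e$, whence $\phi_0(g_{i+2}) = e$ and the induction continues. Since the path is finite and ends at $x_k = v$, the final edge $g_k$ produces $e \in \legs(\phi_1(v))$, the contradiction sought.

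The step I expect to be the main obstacle is precisely the degenerate possibility that $\phi_1(w)$ (or some intermediate $\phi_1(x_i)$) is the vertexless subgraph $|_e$: the clean observation that the endpoints of $e$ lie in $\vertex(\phi_1(v))$ and so violate condition~\ref{maps of xi separation} only directly disposes of images that \emph{contain a vertex}. The path-propagation is what converts this degenerate case into a genuine contradiction, exploiting that a vertex collapsing onto the edge $e$ must send its entire neighborhood to $e$, thereby pushing the value $e$ forward one edge at a time until it reaches $v$.
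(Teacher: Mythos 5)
Your proof is correct and follows essentially the same route as the paper's: both arguments hinge on the observation that a vertex of an image subgraph incident to $e$ would have to be an endpoint of $e$ and hence lie in $\vertex(\phi_1(v))$, violating condition~\eqref{maps of xi separation}, and both handle the degenerate case $\phi_1(w)=|_e$ by walking along the unique path between $w$ and $v$ and pushing the value $e$ forward one vertex at a time (the paper phrases this as an induction on $d(v,w)$ rather than explicit forward propagation, but the content is the same). No gaps.
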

\begin{proof}
Suppose that $\phi_1(w)$ contains a vertex and $e\in \interior(\phi_1(v)) \cap \edge(\phi_1(w))$. Since $\phi_1(w) \neq |_e$, there is a vertex $w' \in \vertex(\phi_1(w))$ with $e\in \nbhd(w')$. Since $e \in \interior(\phi_1(v))$, we know that $w'\in \vertex(\phi_1(v))$, which implies $v=w$ by Definition \ref{maps of xi}\eqref{maps of xi separation}.

In the general case, we induct on $d(v,w)$. 
	Suppose we have vertices $v$ and $w$ with $d(v,w) = n > 0$ and let $ve_1v_1e_2v_2\dots e_{n-1}v_{n-1}e_nw$ be the shortest path from $v$ to $w$.
If $\phi_1(w)$ contains a vertex then we know $\interior(\phi_1(v)) \cap \edge(\phi_1(w)) = \varnothing$ by the first paragraph. 
	Suppose that $\phi_1(w) = |_e$ is a single edge. 
If $n=1$, then $e = \phi_0(e_1) \in \legs(\phi_1(v))$, which implies $e\notin \interior(\phi_1(v))$ by Definition~\ref{maps of xi}\eqref{maps of xi interchange}.
	Assume the statement of the lemma is true for vertices of distance equal to $n - 1$.
	Then $e = \phi_0(e_n) \in \legs(\phi_1(v_{n-1}))$, which implies that $e\notin \interior(\phi_1(v))$ by the induction hypothesis.
\end{proof}

\begin{lemma}\label{interiors disjoint from image}
Let $\phi : R \to S$ be a morphism of $\Xi$.
	For each vertex $v \in \vertex(R)$, $\interior(\phi_1(v)) \subseteq \edge(S) \setminus \image(\phi_0)$.
\end{lemma}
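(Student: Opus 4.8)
The plan is to argue by contradiction, with Lemma~\ref{neighborhoods of vertices} doing the essential work. Suppose some $e \in \interior(\phi_1(v))$ lies in $\image(\phi_0)$, say $e = \phi_0(e')$ for an edge $e' \in \edge(R)$. Since $\vertex(R)$ is nonempty (it contains $v$), the tree $R$ is not $\eta$, and connectedness of $R$ forces every edge of $R$ to be incident to at least one vertex; in particular $e'$ is incident to at least one vertex. This dispatches at the outset the degenerate possibility of an edge incident to no vertex.

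The key observation is that for \emph{every} vertex $w \in \vertex(R)$ with $e' \in \nbhd(w)$, condition~\eqref{maps of xi interchange} of Definition~\ref{maps of xi} gives $e = \phi_0(e') \in \phi_0(\nbhd(w)) = \legs(\phi_1(w)) \subseteq \edge(\phi_1(w))$. Hence $e \in \interior(\phi_1(v)) \cap \edge(\phi_1(w))$, and Lemma~\ref{neighborhoods of vertices} forces $w = v$. In other words, $v$ is the \emph{unique} vertex of $R$ incident to $e'$.

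It then remains to derive a contradiction by distinguishing the two types of edge $e'$. If $e'$ is an interior edge of $R$, then it is incident to two distinct vertices, contradicting the uniqueness just established. If instead $e'$ is a leg, then its unique incident vertex is $v$, so the previous paragraph yields $e \in \legs(\phi_1(v))$; but $e \in \interior(\phi_1(v))$ and $\legs(\phi_1(v)) \cap \interior(\phi_1(v)) = \varnothing$ by the very definition of legs, again a contradiction. I expect the argument to be routine once Lemma~\ref{neighborhoods of vertices} is in hand; the only point requiring care is feeding both possibilities for $e'$ correctly into that lemma and noticing that the leg case is closed off by the disjointness of legs and interior edges rather than by a further appeal to the lemma.
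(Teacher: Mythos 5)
Your proof is correct and follows essentially the same route as the paper's: both arguments rest on connectedness (so that $e'$ is incident to some vertex), the interchange condition $\phi_0(\nbhd(w)) = \legs(\phi_1(w))$, and Lemma~\ref{neighborhoods of vertices}, differing only in that you first pin down $v$ as the unique vertex incident to $e'$ and then split on whether $e'$ is a leg or an interior edge, whereas the paper splits directly on whether $e'$ is adjacent to $v$.
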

\begin{proof}
	Suppose that $\phi_0(e) \in \interior(\phi_1(v))$.
	Since the graph has a vertex $v$ and is connected, every edge is adjacent to at least one vertex. If $e$ is adjacent to $v$, then $\phi_0(e) \in \phi_0(\nbhd(v)) = \legs(\phi_1(v)) \subseteq \edge(S) \setminus \interior(\phi_1(v))$, so we conclude that $e$ is not adjacent to $v$.
	Thus there exists a $w\neq v$ with $e\in \nbhd(w)$.
	But now 
	\[ \phi_0(e) \in \interior(\phi_1(v)) \cap \legs(\phi_1(w)) \subseteq \interior(\phi_1(v)) \cap \edge(\phi_1(w)),\] 
	which is empty by Lemma \ref{neighborhoods of vertices}.
\end{proof}

\begin{lemma}\label{legs intersection}
Let $\phi : R \to S$ be a morphism of $\Xi$.
	If 
	\[ |\legs(\phi_1(v)) \cap \legs(\phi_1(w))| > 1,\] 
	then $v = w$.
\end{lemma}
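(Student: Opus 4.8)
The plan is to argue by contradiction (equivalently, by contraposition). Suppose $v \neq w$, yet $\legs(\phi_1(v)) \cap \legs(\phi_1(w))$ contains two distinct edges $a, b \in \edge(S)$. Since a leg of a subgraph is in particular one of its edges, both $a$ and $b$ are edges of the subgraph $\phi_1(v)$, and likewise both are edges of $\phi_1(w)$. Recall that subgraphs are by convention connected (indeed their underlying spaces are contractible), so each of $\phi_1(v)$ and $\phi_1(w)$ is itself a tree containing the two distinct edges $a$ and $b$.

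The core of the argument is to locate a vertex common to $\phi_1(v)$ and $\phi_1(w)$ using uniqueness of minimal paths. By Proposition~\ref{minimal path prop} there is a unique minimal path from the edge $a$ to the edge $b$ in $S$; because $a \neq b$, this path has the form $a\, u_1 e_1 u_2 \cdots u_m\, b$ with $m \geq 1$, so it passes through at least one vertex, namely $u_1 \in \vertex(S)$. I would then invoke the corollary following Proposition~\ref{minimal path prop}, which says that distance (and hence, by uniqueness, the minimal path) computed inside a subgraph agrees with the one computed in the ambient tree $S$. Applied to the subtree $\phi_1(v)$, which contains both $a$ and $b$, this shows the minimal $S$-path from $a$ to $b$ lies entirely inside $\phi_1(v)$; applied to $\phi_1(w)$ it shows the same path lies inside $\phi_1(w)$. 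In particular every vertex of that path, and so $u_1$ in particular, lies in $\vertex(\phi_1(v)) \cap \vertex(\phi_1(w))$.

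This directly contradicts the separation axiom of Definition~\ref{maps of xi}\eqref{maps of xi separation}, which forces $\vertex(\phi_1(v)) \cap \vertex(\phi_1(w)) = \varnothing$ whenever $v \neq w$. Hence no two distinct shared legs $a, b$ can exist, i.e.\ $|\legs(\phi_1(v)) \cap \legs(\phi_1(w))| \leq 1$, which is the contrapositive of the claim. The only delicate point — and thus the step I would be most careful about — is justifying that both shared edges, \emph{and} the vertices of the path joining them, genuinely sit inside each subgraph; this is exactly where connectivity of subgraphs together with the compatibility of distance under passage to subtrees does the real work. I also note that the degenerate case in which $\phi_1(v)$ or $\phi_1(w)$ equals a single edge $|_e$ requires no separate treatment, since then its set of legs is the singleton $\{e\}$ and cannot contain two distinct edges, so the hypothesis already rules this out.
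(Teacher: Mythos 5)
Your proof is correct and follows essentially the same route as the paper's: both arguments take the minimal path between the two shared legs, use uniqueness of minimal paths in a tree (together with the compatibility of distance under subgraph inclusion) to conclude that this path lies in both $\phi_1(v)$ and $\phi_1(w)$, and then derive a contradiction with the separation axiom of Definition~\ref{maps of xi}\eqref{maps of xi separation}. The only cosmetic difference is direction — the paper starts with the minimal paths inside each subgraph and identifies them in $S$, while you start with the minimal path in $S$ and push it into each subgraph — and your observation about the degenerate single-edge case is a harmless extra check.
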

\begin{proof}
Suppose $v\neq w$.
Let $e, e' \in \legs(\phi_1(v)) \cap \legs(\phi_1(w))$.
Let $P$ be the shortest path in $\phi_1(v)$ from $e$ to $e'$ and let $P'$ be the shortest path in $\phi_1(w)$ from $e$ to $e'$.
Since $P$ and $P'$ are also distance minimizing paths in $S$, uniqueness implies that $P= P'$.
If $e\neq e'$, this path contains a vertex, hence $\varnothing \neq \vertex(\phi_1(v)) \cap \vertex(\phi_1(w))$ and we see that $v=w$ by Definition \ref{maps of xi}\eqref{maps of xi separation}.
\end{proof}

\begin{lemma}\label{same value}
	Suppose that $\phi : R \to S$ is a morphism of $\Xi$.
	If $\phi_0(e) = \phi_0(e')$, then $e$ and $e'$ lie on a common linear subgraph (which may just mean $e=e'$), all of whose edges map to a common value.
\end{lemma}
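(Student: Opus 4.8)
The plan is to argue by induction on the distance $n = d_R(e,e')$. If $e = e'$ there is nothing to prove, so assume $e \neq e'$ and let
\[ P = e_0 v_1 e_1 v_2 \cdots v_n e_n \]
be the minimal path in $R$ from $e_0 = e$ to $e_n = e'$; it exists, is unique, and has no repeated entries by Proposition~\ref{minimal path prop}. Write $s = \phi_0(e)$. First I would reduce the lemma to the single statement that $\phi_0(e_i) = s$ for all $0 \le i \le n$. Indeed, if this holds then at each $v_i$ the distinct edges $e_{i-1}, e_i \in \nbhd(v_i)$ share the image $s$, so $\phi_0|_{\nbhd(v_i)}$ is not injective and $v_i$ is bivalent by Definition~\ref{maps of xi}\eqref{maps of xi bivalent}. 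Then $\nbhd(v_i) = \{e_{i-1}, e_i\}$ for each $i$, so the vertices $\{v_i\}$ and edges $\{e_i\}$ of $P$ constitute a linear subgraph of $R$ containing both $e$ and $e'$, on which $\phi_0$ is constantly equal to $s$ --- precisely the conclusion of the lemma.

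For $n \le 1$ the edge-identities $\phi_0(e_i) = s$ hold trivially (the only edges are $e_0 = e$ and $e_1 = e'$). So fix $n \ge 2$ and assume the result for all smaller distances. The heart of the step is to locate an intermediate index $1 \le j \le n-1$ with $\phi_0(e_j) = s$. Once such a $j$ is found, the sub-paths of $P$ from $e$ to $e_j$ and from $e_j$ to $e'$ are themselves the unique minimal paths between their endpoints, of lengths $j < n$ and $n - j < n$, and both pairs of endpoints have common image $s$; applying the inductive hypothesis to each shows that every edge $e_0, \dots, e_n$ maps to $s$, as needed.

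To produce the index $j$, I would use the geometry of the edge $s$ inside the tree $S$. By Definition~\ref{maps of xi}\eqref{maps of xi interchange} we have $s \in \legs(\phi_1(v_1))$ and $s \in \legs(\phi_1(v_n))$, and since $s \in \image(\phi_0)$, Lemma~\ref{interiors disjoint from image} tells us that $s$ is never an interior edge of any $\phi_1(w)$; equivalently, no subgraph $\phi_1(v_i)$ contains both endpoints of $s$. If $\phi_1(v_1) = |_s$ (or $\phi_1(v_n) = |_s$), then Definition~\ref{maps of xi}\eqref{maps of xi interchange} forces $\phi_0(e_1) = s$ (respectively $\phi_0(e_{n-1}) = s$), and we take $j = 1$ (respectively $j = n-1$). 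Otherwise both subgraphs contain vertices, and $s$ is incident to a vertex $a \in \vertex(\phi_1(v_1))$ and a vertex $b \in \vertex(\phi_1(v_n))$; these are distinct by Definition~\ref{maps of xi}\eqref{maps of xi separation}, so $s$ is an interior edge of $S$ with endpoints $a \neq b$, and deleting it separates $S$ into an $a$-side and a $b$-side.

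The main obstacle is then the following separation argument. Since no $\phi_1(v_i)$ contains both endpoints of $s$, each connected subgraph $\phi_1(v_i)$ lies entirely on one side of $s$; in particular $\phi_1(v_1)$ lies on the $a$-side and $\phi_1(v_n)$ on the $b$-side. If none of the shared edges $\phi_0(e_1), \dots, \phi_0(e_{n-1})$ were equal to $s$, then each consecutive pair $\phi_1(v_i), \phi_1(v_{i+1})$ would share the edge $\phi_0(e_i) \neq s$ and hence lie on the same side of $s$; propagating along the chain from $\phi_1(v_1)$ would place every $\phi_1(v_i)$ on the $a$-side, contradicting $\phi_1(v_n)$ being on the $b$-side. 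Therefore some $\phi_0(e_j) = s$, which closes the induction. I expect the delicate points to be making the notion of ``side of $s$'' precise for subgraphs of $S$ and correctly handling subgraphs meeting $s$ only in a leg; I would phrase these using contractibility of subgraphs and the uniqueness of minimal paths in $S$ from Proposition~\ref{minimal path prop}.
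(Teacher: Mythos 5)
Your proof is correct, and while it shares the paper's scaffolding --- induction on $d(e,e')$, reduction to the claim that every edge of the minimal path maps to $s$, and bivalence of the intermediate vertices via Definition~\ref{maps of xi}\eqref{maps of xi bivalent} --- the engine of your inductive step is genuinely different. The paper concatenates the minimal paths $P_i$ in $\phi_1(v_i)$ from $\phi_0(e_{i-1})$ to $\phi_0(e_i)$; by Lemma~\ref{neighborhoods of vertices} and condition \eqref{maps of xi separation} the word $P_1\cdots P_{n-1}$ has no repeated entries, hence is \emph{the} minimal path in $S$ from $s$ to $\phi_0(e_{n-1})$, and since both endpoints are legs of $\phi_1(v_n)$ this path must lie inside $\phi_1(v_n)$, forcing it to be vertex-free and hence $\phi_0(e_{n-1})=s$; one then peels off the last vertex and recurses on $d(e,e_{n-1})=n-1$. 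You instead run a separation argument on the two components of $S$ minus the edge $s$ to produce \emph{some} intermediate index $j$ with $\phi_0(e_j)=s$, then split the path in two. Both arguments rest on the same facts (uniqueness of minimal paths in the tree $S$ and disjointness of the sets $\vertex(\phi_1(v_i))$), but the paper's version is a bit more economical: it needs no case split on whether $\phi_1(v_1)$ or $\phi_1(v_n)$ equals $|_s$ (equivalently, on whether $s$ is a leg of $S$), and it never has to formalize ``side of $s$'' for subgraphs containing $s$ only as a leg --- exactly the delicate points you flag. Conversely, your route is more elementary graph theory and makes the geometric reason for the collapse transparent: the chain of images starts and ends at $s$, so in a tree it must double back through $s$. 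The points you defer are real but routine to fill in: since $s\in\image(\phi_0)$, Lemma~\ref{interiors disjoint from image} shows $s$ is interior to no $\phi_1(v_i)$, so each $\phi_1(v_i)$ with a vertex has all its vertices in a single component of $S$ minus $s$, and the shared legs $\phi_0(e_i)\neq s$ propagate that component along the chain, yielding the desired contradiction.
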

\begin{proof}
	Induct on $d(e,e')$. If $d(e,e') = 0$ then $e = e'$ and the result follows.
	If $d(e,e') = 1$ and $\phi_0(e) = \phi_0(e')$, then the vertex adjacent to both $e$ and $e'$ must be bivalent by Definition~\ref{maps of xi}\eqref{maps of xi bivalent}.
	Assume the result is known for $d(e,e') < n$.
	Suppose $\phi_0(e) = \phi_0(e') = s \in \edge(S)$ with $d(e,e') = n > 1$.
	Let $e_0v_1e_1v_2\dots v_{n-1}e_{n-1}v_ne_n$ be the distance minimizing path in $R$ from $e = e_0$ to $e' = e_n$.

	For each $i$, let $P_i$ be the shortest path in $\phi_1(v_i)$ from $\phi_0(e_{i-1})$ to $\phi_0(e_i)$.
	The path $P_1 \dots P_{n-1}$ contains no repeated entries by Lemma \ref{neighborhoods of vertices} and Definition \ref{maps of xi}\eqref{maps of xi separation}, hence is the unique length minimizing path (Proposition \ref{minimal path prop}) from $s$ to $\phi_0(e_{n-1})$.
	Both of these edges are in $\phi_1(v_n)$, hence $P_1\dots P_{n-1}$ is a path in $\phi_1(v_n)$.
	If $s \neq \phi_0(e_{n-1})$, then $P_1 \dots P_{n-1}$ contains a vertex, violating Definition \ref{maps of xi}\eqref{maps of xi separation}. Thus $\phi_0(e_n) = s = \phi_0(e_{n-1})$, so $v_n$ is bivalent.
	The result now follows from the induction hypothesis since $d(e, e_{n-1}) < d(e, e')$.
\end{proof}

We will momentarily (in \ref{image definition}) define the \emph{image} of a map, which is essentially the union of all of the subgraphs $\phi_1(v)$.
We first check that this union actually is a subgraph.

\begin{proposition} Suppose that $\phi : R \to S$ is a morphism in $\Xi$ and $R \neq \eta$. Then 
\[ \bigcup_{v\in \vertex(R)} \phi_1(v)\] is a subgraph of $S$.
\end{proposition}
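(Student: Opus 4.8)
The plan is to exhibit $\bigcup_{v} \phi_1(v)$ as an iterated union of the subgraphs $\phi_1(v)$, assembled one vertex at a time, and to apply Proposition~\ref{unions of subgraphs} at each stage. The observation that makes this work is that \emph{adjacent} vertices of $R$ have overlapping images: if $e \in \nbhd(v) \cap \nbhd(w)$ with $v \neq w$, then by Definition~\ref{maps of xi}\eqref{maps of xi interchange} we have $\phi_0(e) \in \phi_0(\nbhd(v)) = \legs(\phi_1(v))$ and likewise $\phi_0(e) \in \legs(\phi_1(w))$, so $\phi_0(e)$ is a common edge of $\phi_1(v)$ and $\phi_1(w)$; in particular these two subgraphs overlap.

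First I would fix an enumeration $v_1, \dots, v_n$ of $\vertex(R)$ (with $n \geq 1$, since $R \neq \eta$) having the property that each $v_{i+1}$ is adjacent in $R$ to some earlier $v_j$ with $j \leq i$. Such an ordering exists because $R$ is connected: concretely, choose any vertex as a root and list the vertices in order of nondecreasing distance from it, using Proposition~\ref{minimal path prop}. I then set $U_i = \phi_1(v_1) \cup \cdots \cup \phi_1(v_i)$.

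Next I would argue by induction on $i$ that each $U_i$ is a subgraph of $S$. The base case $U_1 = \phi_1(v_1)$ holds because $\phi_1$ takes values in $\sbgph(S)$. For the inductive step, let $e$ be an edge of $R$ joining $v_{i+1}$ to an earlier neighbor $v_j$. By the observation above, $\phi_0(e) \in \legs(\phi_1(v_{i+1})) \cap \legs(\phi_1(v_j))$, and since $\phi_1(v_j) \subseteq U_i$ this shows that $U_i$ and $\phi_1(v_{i+1})$ share the edge $\phi_0(e)$, so they overlap. Proposition~\ref{unions of subgraphs} then yields that $U_{i+1} = U_i \cup \phi_1(v_{i+1})$ is again a subgraph of $S$. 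Taking $i = n$ gives the claim, since $U_n = \bigcup_{v} \phi_1(v)$.

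I do not anticipate a serious obstacle, but the one point requiring genuine care is that Proposition~\ref{unions of subgraphs} produces a subgraph from a union \emph{only} when the two pieces overlap; the whole argument therefore hinges on choosing the vertex ordering compatibly with the tree structure of $R$, so that an overlap is guaranteed at every stage. The remaining subgraph axiom---that $U_n$ is closed under neighborhoods, i.e.\ that $s \in \vertex(U_n)$ forces $\nbhd(s) \subseteq \edge(U_n)$---is automatic, since any such $s$ already lies in some $\phi_1(v)$, which is itself a subgraph and hence already contains $\nbhd(s)$; contractibility is likewise handled internally by Proposition~\ref{unions of subgraphs} at each step.
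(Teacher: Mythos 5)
Your proof is correct and follows essentially the same strategy as the paper's: the paper enumerates the vertices along a walk that visits every vertex, so that consecutive terms are adjacent, and then applies Proposition~\ref{unions of subgraphs} inductively using exactly the same overlap observation (that $\phi_0(e)$ lies in $\legs(\phi_1(v)) \cap \legs(\phi_1(w))$ for adjacent $v,w$). Your breadth-first ordering is an inessential variant of that walk.
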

\begin{proof}
Suppose that $R$ contains a vertex.
Let $P = v_1 e_1 \dots v_{n-1} e_{n-1} v_n$ be a path in $R$ containing all vertices at least once. 
Then we have
\[
	\image(\phi) = \phi_1(v_1) \cup \phi_1(v_2) \cup \dots \cup \phi_1(v_n).
\]
Use induction. By Lemma \ref{unions of subgraphs} we know that
\[
	\Big( \phi_1(v_1) \cup \dots \cup \phi_1(v_{k}) \Big) \cup \phi_1(v_{k+1})
\]
is a subgraph since $\phi_1(v_1) \cup \dots \cup \phi_1(v_{k})$ and $\phi_1(v_{k+1})$ are (induction hypothesis) and 
\[
	\phi_0(e_k) \in \phi_1(v_{k}) \cap \phi_1(v_{k+1}).
\]

\end{proof}

\begin{definition}\label{image definition}
Let $\phi : R \to S$ be a map in $\Xi$.
Define the \emph{image} of $\phi$, denoted
$\image(\phi)$, to be the subgraph
\[
	\image(\phi) = \begin{cases} \, |_{\phi_0(e)} & R = \eta \\
	\bigcup_{v\in \vertex(R)} \phi_1(v) & \vertex(R) \neq \varnothing \end{cases}
\]
of $S$.
\end{definition}

\begin{proposition}\label{legs interchange}
 Suppose that $\phi : R \to S$ is in $\Xi$. Then $\phi_0(\legs(R)) = \legs(\image(\phi))$.
\end{proposition}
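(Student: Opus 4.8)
The plan is to analyse everything through the fibre $L_s := \phi_0^{-1}(s)$ of a fixed edge $s \in \edge(S)$. After disposing of the trivial case $R = \eta$ (where both sides of the claimed equality reduce to $\{\phi_0(e)\}$ for the unique edge $e$), I would first record that $L_s$ is the edge set of a single linear subgraph of $R$: Lemma \ref{same value} puts any two edges of $L_s$ on a common linear subgraph all of whose edges map to $s$, and overlapping such subgraphs assemble into one (Proposition \ref{unions of subgraphs}). The pivotal preliminary observation, call it $(\ast)$, is that an \emph{interior} vertex $a$ of this path is bivalent with both of its edges in $L_s$, so that $\phi_0(\nbhd(a)) = \{s\}$; reading Definition \ref{maps of xi}\eqref{maps of xi interchange} as an equality of leg \emph{multisets} — in which the bare edge $|_s$ contributes $s$ with multiplicity two — this forces $\phi_1(a) = |_s$. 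Consequently interior vertices of $L_s$ produce no vertex of $\image(\phi)$ lying on $s$.

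For the inclusion $\phi_0(\legs(R)) \subseteq \legs(\image(\phi))$, I would take a leg $\ell$ of $R$, incident to its unique vertex $v$, and put $s = \phi_0(\ell)$. Definition \ref{maps of xi}\eqref{maps of xi interchange} gives $s \in \legs(\phi_1(v)) \subseteq \edge(\image(\phi))$, so the only thing to exclude is $s \in \interior(\image(\phi))$. If $s$ were interior it would meet two distinct vertices $z_1 \neq z_2$ of $\image(\phi)$, say $z_i \in \vertex(\phi_1(w_i))$ with each $w_i$ unique by Definition \ref{maps of xi}\eqref{maps of xi separation}. Since $s \in \image(\phi_0)$, Lemma \ref{interiors disjoint from image} gives $s \notin \interior(\phi_1(w_i))$, which forces $w_1 \neq w_2$ and $s \in \legs(\phi_1(w_i))$, so each $w_i$ carries an edge $g_i \in \nbhd(w_i) \cap L_s$. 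By $(\ast)$ neither $w_i$ is an interior vertex of the path $L_s$, and the valence bound on $s$ (an edge meets at most two vertices) shows each $g_i$ is a terminal edge of $L_s$ to which $w_i$ is externally attached; as $w_1 \neq w_2$ these are the two distinct terminal edges, so \emph{both} ends of $L_s$ are incident to a vertex off $L_s$ and hence are interior edges of $R$. But $\ell \in L_s$ is a leg of $R$, and the only edges of $L_s$ that can be legs of $R$ are its terminal edges — the contradiction I want.

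For the reverse inclusion I would start from $s \in \legs(\image(\phi))$ and pick $w$ with $s \in \edge(\phi_1(w))$; were $s$ interior in $\phi_1(w)$ it would meet two vertices of $\image(\phi)$, so $s \in \legs(\phi_1(w)) = \phi_0(\nbhd(w))$ and in particular $L_s \neq \varnothing$. It then suffices to show that one terminal edge of the path $L_s$ is a leg of $R$. If not, each terminal edge is incident to an external vertex $b_i \notin L_s$, with $b_1 \neq b_2$ since $R$ is a tree. Here $\phi_1(b_i) \neq |_s$: by the multiplicity reading of Definition \ref{maps of xi}\eqref{maps of xi interchange} together with Definition \ref{maps of xi}\eqref{maps of xi bivalent}, the equality $\phi_1(b_i) = |_s$ would make $b_i$ bivalent with both edges in $L_s$, i.e.\ an interior vertex of $L_s$, contrary to $b_i$ being external. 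Hence $\phi_1(b_i)$ contains a vertex, necessarily incident to its leg $s$; by Definition \ref{maps of xi}\eqref{maps of xi separation} these two vertices are distinct, so $s$ meets two vertices of $\image(\phi)$, contradicting $s \in \legs(\image(\phi))$.

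The step I expect to be the genuine obstacle — indeed the point on which the statement turns — is observation $(\ast)$: one must know that a bivalent vertex both of whose edges map to $s$ has $\phi_1$-value exactly the bare edge $|_s$, rather than some star carrying a vertex on $s$. This is false if Definition \ref{maps of xi}\eqref{maps of xi interchange} is read as a bare equality of leg \emph{sets} (one can then build a map $L_2 \to S$ sending a leg to an interior edge of the image), and it is precisely the doubling convention for the leg of a bare edge that rules this out. The remaining difficulty is bookkeeping: organising $L_s$ as an honest path and tracking which of its terminal edges are legs versus interior edges of $R$, for which Proposition \ref{minimal path prop} and the valence axiom do the real work.
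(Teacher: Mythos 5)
Your proof is correct and follows essentially the same route as the paper's: both arguments are organized around the linear fibre $L_s = \phi_0^{-1}(s)$, both rest on the observation that a vertex of $L_s$ must have $\phi_1$-value the bare edge $|_s$ (your $(\ast)$; the paper's unlabelled ``Note that $\dots$''), and both derive the two inclusions by checking whether the terminal edges of $L_s$ are legs or interior edges of $R$ and using Definition~\ref{maps of xi}\eqref{maps of xi separation} to produce two distinct vertices of $\image(\phi)$ on $s$. The multiset reading of Definition~\ref{maps of xi}\eqref{maps of xi interchange} that you single out is indeed the crux --- it is exactly what the complete-morphism condition $\boundarymap \circ \alpha_1 = (\mathbf{M}\alpha_0)\circ\boundarymap$ encodes via $\boundarymap(|_e) = e^2$ --- and the paper's proof relies on it just as tacitly at the very same step, so your explicit treatment of it is a point in your favour rather than a divergence.
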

\begin{proof}
The desired identity is clear when $R = \eta$ is an edge.
We show that the desired equality holds when $R$ contains a vertex. If $s\in \edge(\image(\phi))$ and $\phi_0^{-1}(s) = \varnothing$, then $s\in \interior(\phi_1(v)) \subseteq \interior(\image(\phi))$ for some $v$, and also $s\notin \phi_0(\legs(R))$. 

Hence, for the remainder of the proof, we will only consider edges $s\in \edge(\image(\phi))$ so that $\phi_0^{-1}(s)$ is nonempty.
We will write $L_s$ for the linear subgraph of $R$, guaranteed by Lemma~\ref{same value}, with $\edge(L_s) = \phi_0^{-1}(s)$.
Note that if $\nbhd(v) \cap \edge(L_s)$ is not empty, then $v\notin \vertex(L_s)$ if and only if $\phi_1(v)$ is not an edge.

To show that $\legs(\image(\phi)) \subseteq \phi_0(\legs(R))$, we prove the equivalent statement: if $s\in \edge(\image(\phi))$ and $\phi_0^{-1}(s) \subseteq \interior(R)$, then $s\in \interior(\image(\phi))$. We have already established this when $\phi_0^{-1}(s) = \varnothing$.
If $\varnothing \neq \phi_0^{-1}(s) \subseteq \interior(R)$, then there exist distinct vertices $v_1,v_2 \in \vertex(R)$ with $v_i \notin \vertex(L_s)$ and $\nbhd(v_i) \cap \edge(L_s) \neq \varnothing$.
As $\phi_1(v_i)$ is not equal to $|_s$, it contains a vertex $w_i$ adjacent to $s\in \legs(\phi_1(v_i))$.
Since $\vertex(\phi_1(v_1)) \cap \vertex(\phi_1(v_2)) = \varnothing$, we know that $w_1\neq w_2$, hence $s \in \interior(\image(\phi))$.

Let us turn to the reverse inclusion $\phi_0(\legs(R)) \subseteq \legs(\image(\phi))$.
Suppose $r\in \edge(R)$ and suppose that $s= \phi_0(r) \in \interior(\image(\phi))$.
We must show $r\in \interior(R)$.
By assumption, there exist distinct vertices $w_1,w_2 \in \vertex(\image(\phi))$ with $s\in \nbhd(w_i)$.
There exist unique $v_1, v_2 \in \vertex(R)$ with $w_i \in \vertex(\phi_1(v_i))$. 
If $v_1$ were equal to $v_2$, then we would have $s\in \interior(\phi_1(v_1))$.
This is impossible by Lemma~\ref{interiors disjoint from image}, hence $v_1 \neq v_2$.
Now $\nbhd(v_i) \cap \edge(L_s) \neq \varnothing$ and $v_i \notin \vertex(L_s)$, hence we have $\edge(L_s) \subseteq \interior(R)$.
Since $r\in \edge(L_s)\subseteq \interior(R)$, we have completed our proof that $\phi_0^{-1}(\interior(\image(\phi)) \subseteq \interior(R)$.
\end{proof}

Notice that if $T\subseteq T'$ are subgraphs of $R$, then $\image(\phi|_T) \subseteq \image(\phi|_{T'})$.
Also, note that if $T,T'$ are two subgraphs of $R$ with $\vertex(T) = \vertex(T') \neq \varnothing$, then $T=T'$.

\begin{lemma}
\label{lemma intersection union}
	Let $\phi : R \to S$ be a morphism.
	Suppose that $T,T' \in \sbgph(R)$ are two subgraphs which overlap.
	Then the following hold:
\begin{align}
	\image(\phi|_{T\cap T'}) &= \image(\phi|_T) \cap \image(\phi|_{T'})\label{images intersection} \\
	\image(\phi|_{T\cup T'}) &= \image(\phi|_T) \cup \image(\phi|_{T'})\label{images union}.
\end{align}
\end{lemma}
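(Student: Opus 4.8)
The plan is to prove each inclusion of \eqref{images intersection} and \eqref{images union} separately, treating the vertices and the edges of the relevant images by hand and isolating the one genuinely geometric point. Throughout I use that $\image(\phi|_U) = \bigcup_{v \in \vertex(U)} \phi_1(v)$ whenever $U$ has a vertex, that $\image(\phi|_{|_e}) = |_{\phi_0(e)}$, and the monotonicity remark $\image(\phi|_{U}) \subseteq \image(\phi|_{U'})$ for $U \subseteq U'$.

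I would dispatch \eqref{images union} first, as it is essentially formal. The inclusion $\supseteq$ is immediate from monotonicity applied to $T, T' \subseteq T \cup T'$. For $\subseteq$, when $T \cup T'$ contains a vertex I use $\vertex(T\cup T') = \vertex(T) \cup \vertex(T')$ and distribute the defining union of $\image$ over this decomposition, and similarly for the edges; the degenerate case in which $T \cup T'$ has no vertex forces $T = T' = |_e$ for a single common edge, which is checked directly. For \eqref{images intersection}, the inclusion $\subseteq$ is again monotonicity, since $T \cap T' \subseteq T, T'$. The substance is $\supseteq$, which I split into vertices and edges. For a vertex $w \in \vertex(\image(\phi|_T)) \cap \vertex(\image(\phi|_{T'}))$, write $w \in \vertex(\phi_1(v_1))$ and $w \in \vertex(\phi_1(v_2))$ with $v_1 \in \vertex(T)$, $v_2 \in \vertex(T')$; the separation axiom Definition~\ref{maps of xi}\eqref{maps of xi separation} forces $v_1 = v_2 \in \vertex(T \cap T')$, so $w \in \vertex(\image(\phi|_{T\cap T'}))$.

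For an edge $s \in \edge(\image(\phi|_T)) \cap \edge(\image(\phi|_{T'}))$ I first treat the \emph{interior} case: if $s \in \interior(\phi_1(v))$ for some $v$, then $s \notin \image(\phi_0)$ by Lemma~\ref{interiors disjoint from image} (so no single-edge subgraph can contribute $s$), and $s \in \edge(\phi_1(w))$ only for $w = v$ by Lemma~\ref{neighborhoods of vertices}; membership in both images then pins down a single $v \in \vertex(T) \cap \vertex(T')$, giving $s \in \interior(\phi_1(v)) \subseteq \edge(\image(\phi|_{T \cap T'}))$. The complementary case is where $s$ is a \emph{leg} of every $\phi_1(v)$ containing it. Here I invoke Lemma~\ref{same value} to write $\phi_0^{-1}(s) = \edge(L_s)$ for a linear subgraph (path) $L_s \subseteq R$, and observe, using Definition~\ref{maps of xi}\eqref{maps of xi interchange}, that for any subgraph $U$ one has $s \in \edge(\image(\phi|_U))$ if and only if $\edge(U) \cap \edge(L_s) \neq \varnothing$ (the forward direction produces an edge of $\nbhd(v) \subseteq \edge(U)$ mapping to $s$; the reverse uses that in a connected subgraph every edge is adjacent to a vertex, together with the single-edge case being direct).

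This reduces $\supseteq$ to the purely combinatorial statement that the subpaths $A = T \cap L_s$ and $B = T' \cap L_s$ of $L_s$, if both nonempty, must satisfy $A \cap B \neq \varnothing$; this is exactly where the overlap hypothesis $T \cap T' \neq \varnothing$ enters, and it is the main obstacle. I would argue by tree convexity: choose $c \in T \cap T'$ and let $\pi(c)$ be the vertex or edge of $L_s$ nearest to $c$ (well-defined by uniqueness of minimal paths, Proposition~\ref{minimal path prop}). Since $T$ is a subtree containing both $c$ and some element $a$ of $A$, it contains the unique path from $a$ to $c$, whose initial portion runs along $L_s$ up to $\pi(c)$; hence $\pi(c) \in T \cap L_s = A$, and symmetrically $\pi(c) \in B$, so $\pi(c) \in A \cap B$. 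Finally I translate $A \cap B \neq \varnothing$ back to $s \in \edge(\image(\phi|_{T\cap T'}))$ via the above characterization, promoting a shared \emph{vertex} of $L_s$ to a shared edge using its bivalence (its two incident edges lie in $\edge(L_s) \subseteq \edge(T\cap T')$) and handling the degenerate single-edge subgraph directly. This completes $\supseteq$ and hence the lemma.
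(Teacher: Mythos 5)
Your proof is correct, and for the union identity and the easy inclusion $\subseteq$ of \eqref{images intersection} it coincides with the paper's argument (monotonicity plus distributing the defining union over $\vertex(T)\cup\vertex(T')$). Where you genuinely diverge is the hard inclusion $\supseteq$ of \eqref{images intersection}. The paper never touches edges: it computes $\vertex(\image(\phi|_T))\cap\vertex(\image(\phi|_{T'}))=\bigcup_{v\in\vertex(T)\cap\vertex(T')}\vertex(\phi_1(v))$ from the separation axiom, then invokes the rigidity of subgraphs of a tree --- two subgraphs with the same nonempty vertex set are equal --- and disposes of the vertex-free case by noting both sides are single edges with one contained in the other. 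You instead run an element-by-element chase on edges, which forces the dichotomy between $s\in\interior(\phi_1(v))$ (handled by Lemmas~\ref{neighborhoods of vertices} and \ref{interiors disjoint from image}) and $s\in\image(\phi_0)$ (handled via the fiber $L_s=\phi_0^{-1}(s)$ of Lemma~\ref{same value}), plus a gate/convexity argument in the tree $R$ to show that $T\cap T'\neq\varnothing$ forces $T$ and $T'$ to meet $L_s$ at a common point. Your route is longer but has two virtues: it makes explicit exactly where the overlap hypothesis enters (the projection $\pi(c)$ onto $L_s$), and your characterization ``$s\in\edge(\image(\phi|_U))$ iff $\edge(U)\cap\edge(L_s)\neq\varnothing$'' is essentially a localized form of the analysis in Proposition~\ref{legs interchange}. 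The paper's route buys brevity by exploiting that a subgraph with a vertex is determined by its vertex set, at the cost of a small implicit step (that two subgraphs sharing no vertices share at most one edge, which follows as in Lemma~\ref{legs intersection}). Both are sound; if you adopt yours, you should record the tree-convexity fact (every connected subgraph containing two elements contains the minimal path between them, and minimal paths to a subtree pass through a unique gate) as a separate lemma, since the paper's existing Proposition~\ref{minimal path prop} does not quite state it.
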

\begin{proof}

	We start with the case when $T$ (or $T'$) is a single edge $|_e$. Since $T$ and $T'$ are assumed to overlap, $T\subseteq T'$. 
	Then $T\cap T' = |_e$ and $T\cup T' = T'$.
	Both sides of \eqref{images intersection} are $|_{\phi_0(e)}$, while both sides of \eqref{images union} are $\image(\phi|_{T'})$.

	It remains to prove the result when $T$ and $T'$ each contain at least one vertex. 
	For \eqref{images union}, we have 
	\begin{align*}
		\image(\phi|_T) \cup \image(\phi|_{T'}) &= \left(\bigcup_{v\in\vertex(T)} \phi_1(v)\right) \cup \left(\bigcup_{w\in \vertex(T')} \phi_1(w)\right) 
	\\ &= \bigcup_{v\in \vertex(T) \cup \vertex(T')} \phi_1(v) = \image(\phi|_{T\cup T'}).
	\end{align*}
	Thus \eqref{images union} holds.

	Since $T\cap T'$ is contained in $T$ (and in $T'$), it is automatic that $\subseteq$ of \eqref{images intersection} holds.
	We have \begin{align*}
		\vertex( \image(\phi|_T) ) \cap \vertex( \image(\phi|_{T'}) ) &= \left(\bigcup_{v\in\vertex(T)} \vertex(\phi_1(v))\right) \cap \left(\bigcup_{w\in \vertex(T')} \vertex(\phi_1(w))\right) \\
		&=\bigcup_{v\in \vertex(T) \cap \vertex(T')} \vertex(\phi_1(v))
	\end{align*}
	by Definition~\ref{maps of xi}\eqref{maps of xi separation}. This last set is of course just $\vertex(\image(\phi|_{T\cap T'}))$.
	We have two cases to consider:
	\begin{itemize}
		\item If $\vertex(\image(\phi|_{T\cap T'})) \neq \varnothing$, then \eqref{images intersection} holds since both sides are subgraphs that have the same non-empty set of vertices.
		\item If $\vertex(\image(\phi|_{T\cap T'})) = \varnothing$, then both sides of \eqref{images intersection} are a single edge.
		But we already saw that $\subseteq$ of \eqref{images intersection} holds, so \eqref{images intersection} holds.
	\end{itemize}
\end{proof}

\begin{lemma}
	If $(\alpha_0, \alpha_1)$ is a complete morphism (Definition~\ref{definition complete morphism}) from $R$ to $S$, then $(\alpha_0, \alpha_1 \circ \medstar)$ is a morphism (Definition~\ref{maps of xi}) from $R$ to $S$. 
\end{lemma}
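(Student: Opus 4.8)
The plan is to set $\phi_0 = \alpha_0$ and $\phi_1 = \alpha_1 \circ \medstar$, so that $\phi_1(v) = \alpha_1(\cstar_v)$, and then to verify in turn the three conditions of Definition~\ref{maps of xi}. Before doing so, I would record three elementary consequences of Definition~\ref{definition complete morphism}. First, for any subgraph $T$, the set of edges occurring in $\boundarymap(T)$ is precisely $\legs(T)$; this is immediate from the two cases of $\boundarymap$, noting $\legs(|_e) = \{e\}$. Second, $\alpha_1$ is monotone: if $T \subseteq T'$ then $T \cap T' = T \neq \varnothing$, so $T,T'$ overlap and condition (2b) gives $\alpha_1(T') = \alpha_1(T \cup T') = \alpha_1(T) \cup \alpha_1(T')$, whence $\alpha_1(T) \subseteq \alpha_1(T')$. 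Third, $\alpha_1(|_e) = |_{\alpha_0(e)}$: by condition (1) one has $\boundarymap(\alpha_1(|_e)) = (\mathbf{M}\alpha_0)(e^2) = \alpha_0(e)^2$, and a subgraph whose boundary is a square must be vertex-free (a subgraph with a vertex has multiplicity-free boundary), hence a single edge.

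Condition (2) is then the easiest, handled uniformly by passing to supports. Applying condition (1) of Definition~\ref{definition complete morphism} to $\cstar_v$ and reading off the set of edges occurring on each side yields $\legs(\phi_1(v)) = \legs(\alpha_1(\cstar_v)) = \alpha_0(\legs(\cstar_v)) = \alpha_0(\nbhd(v)) = \phi_0(\nbhd(v))$, since $\legs(\cstar_v) = \nbhd(v)$ and $\mathbf{M}\alpha_0$ acts as $\alpha_0$ on supports.

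For condition (1) I would instead compare multiplicities. The element $\boundarymap(\alpha_1(\cstar_v)) = (\mathbf{M}\alpha_0)(\boundarymap(\cstar_v))$ has exactly $|\nbhd(v)|$ factors counted with multiplicity. If $v$ is not bivalent, then $\alpha_1(\cstar_v)$ cannot be a single edge, for the boundary of a single edge has exactly two factors; hence $\alpha_1(\cstar_v)$ contains a vertex and its boundary $\prod_{g \in \legs(\alpha_1(\cstar_v))} g$ is multiplicity-free. Equating the two expressions shows $\prod_{e \in \nbhd(v)} \alpha_0(e)$ is multiplicity-free, i.e.\ $\phi_0|_{\nbhd(v)} = \alpha_0|_{\nbhd(v)}$ is injective.

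The separation condition (3) is the step I expect to be the main obstacle, since the axioms of a complete morphism only constrain $\alpha_1$ on \emph{overlapping} subgraphs, whereas the stars $\cstar_v$ and $\cstar_w$ are disjoint as soon as $v,w$ are non-adjacent. The idea is to enlarge $\cstar_w$ into a branch that meets $\cstar_v$ in a single edge. Given $v \neq w$, let $e_1$ be the first edge on the unique minimal path from $v$ to $w$ (Proposition~\ref{minimal path prop}), and let $C = R_{e_1}$ be the subgraph of all vertices and edges on paths beginning at $e_1$ and not containing $v$, as in Example~\ref{example unique iso}. Then $w \in \vertex(C)$, so $\cstar_w \subseteq C$ and monotonicity gives $\alpha_1(\cstar_w) \subseteq \alpha_1(C)$. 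On the other hand $\cstar_v \cap C = |_{e_1}$ and the two overlap, so condition (2a) together with the third preliminary observation gives $\alpha_1(\cstar_v) \cap \alpha_1(C) = \alpha_1(|_{e_1}) = |_{\alpha_0(e_1)}$, which has no vertices. Therefore $\vertex(\phi_1(v)) \cap \vertex(\phi_1(w)) \subseteq \vertex(\alpha_1(\cstar_v)) \cap \vertex(\alpha_1(C)) = \varnothing$, completing the verification. The cases $R = \eta$ or $R$ with fewer than two vertices make conditions (1)--(3) vacuous, so they need no separate treatment.
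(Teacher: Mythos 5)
Your proof is correct, and for the separation condition it takes a genuinely different route from the paper's. For conditions \eqref{maps of xi bivalent} and \eqref{maps of xi interchange} of Definition~\ref{maps of xi} the paper simply asserts that they follow from $\boundarymap \circ \alpha_1 = (\mathbf{M}\alpha_0)\circ\boundarymap$; your support/multiplicity bookkeeping is exactly the verification being elided, so there is no real difference there. The divergence is in condition \eqref{maps of xi separation}: the paper inducts on $d(v,w)$, building the two ``tube'' subgraphs $T_v = \cstar_v \cup \bigcup_i \cstar_{v_i}$ and $T_w = \cstar_w \cup \bigcup_i \cstar_{v_i}$ along the minimal path and intersecting them via axiom (2a), with the induction hypothesis needed to peel off the intermediate vertices. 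You instead make a single application of (2a) to the overlapping pair $\cstar_v$ and the branch $C = R_{e_1}$ of Example~\ref{example unique iso}, using $\cstar_v \cap C = |_{e_1}$ and the monotonicity of $\alpha_1$ (itself a quick consequence of (2b)) to absorb $\cstar_w$ into $C$. This eliminates the induction entirely and is arguably cleaner; what the paper's inductive formulation buys is that it only ever manipulates unions of stars along a path, never the potentially large branch subgraph, and its structure mirrors the distance inductions used elsewhere in the section (e.g., Lemma~\ref{neighborhoods of vertices}). Both arguments ultimately rest on the same lattice axiom, and yours is complete as written.
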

\begin{proof}
For concision, we write $\check \alpha_1 := \alpha_1 \circ \medstar$ in this proof and the next.
Conditions \eqref{maps of xi bivalent} and \eqref{maps of xi interchange} of Definition~\ref{maps of xi} follow immediately from $\boundarymap \circ \alpha_1 = (\mathbf{M} \alpha_0) \circ \boundarymap$.
For \eqref{maps of xi separation}, suppose that $v\neq w$ and induct on $d(v,w)$. If $d(v,w) = 1$, then $\medstar_v \cap \medstar_w$ is an edge, hence $\alpha_1(\medstar_v \cap \medstar_w) = \alpha_1(\medstar_v) \cap \alpha_1(\medstar_w) = \check \alpha_1(v) \cap \check \alpha_1(w)$ is an edge, thus \eqref{maps of xi separation} holds. Assume the result is known for distances less than $n$, and suppose $ve_1v_1e_2\dots e_{n-1}v_{n-1}e_nw$ is a minimal path from $v$ to $w$.
Let $T_v = \medstar_v \cup \bigcup_{i=1}^{n-1} \medstar_{v_i}$ and $T_w = \medstar_w \cup \bigcup_{i=1}^{n-1} \medstar_{v_i}$; by the induction hypothesis, $\vertex(\check \alpha_1(w))$ and $\vertex(\check \alpha_1(v))$ are both disjoint from $\bigcup_{i=1}^{n-1} \vertex(\check \alpha_1(v_i))$.
On the other hand, $\vertex(\check \alpha_1(v)) \cap \vertex(\check \alpha_1(w)) \subseteq \vertex(\alpha_1(T_v)) \cap \vertex(\alpha_1(T_w)) = \vertex(\alpha_1(T_v \cap T_w)) = \bigcup_{i=1}^{n-1} \vertex(\check \alpha_1(v_i))$, proving the result. 
\end{proof}

\begin{proposition}\label{bijection complete and kernel}
	Morphisms and complete morphisms from $R$ to $S$ are in bijective correspondence.
	Precisely, the assignment that sends $(\alpha_0, \alpha_1)$ to $(\alpha_0, \alpha_1 \circ \medstar)$ constitutes a bijective function from the complete morphisms to the the morphisms.
\end{proposition}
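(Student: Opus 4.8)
The previous lemma already shows that the assignment $(\alpha_0,\alpha_1)\mapsto(\alpha_0,\alpha_1\circ\medstar)$ lands among morphisms, so all that remains is to see it is a bijection. The plan is to write down an explicit inverse. Given a morphism $\phi=(\phi_0,\phi_1)$, I would send it to the pair $(\phi_0,\alpha_1^\phi)$ where $\alpha_1^\phi(T):=\image(\phi|_T)$ for each $T\in\sbgph(R)$. This is well defined, since $\image(\phi|_T)$ is again a subgraph of $S$ by the proposition established just before Definition~\ref{image definition}. Observe immediately that $\alpha_1^\phi(\medstar_v)=\image(\phi|_{\medstar_v})=\phi_1(v)$, as $\medstar_v$ has the single vertex $v$; hence composing $\phi\mapsto(\phi_0,\alpha_1^\phi)$ with the forward map returns $(\phi_0,\phi_1)=\phi$, which handles one of the two round-trips.

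Next I would verify that $(\phi_0,\alpha_1^\phi)$ is genuinely a complete morphism. Condition $(2)$ of Definition~\ref{definition complete morphism}---that $\alpha_1^\phi$ preserves overlaps, intersections, and unions---is exactly the content of Lemma~\ref{lemma intersection union}, together with the remark that the image of a nonempty subgraph is nonempty (so overlaps are preserved). For condition $(1)$, the boundary identity $\boundarymap\circ\alpha_1^\phi=(\mathbf M\phi_0)\circ\boundarymap$, I would argue case-wise on $T$. When $T=|_e$ is a single edge, both sides equal $\phi_0(e)^2$. When $T$ contains a vertex, $\boundarymap(\alpha_1^\phi(T))$ is the product over $\legs(\image(\phi|_T))$, which by Proposition~\ref{legs interchange} applied to the restriction $\phi|_T\colon T\to S$ is the product over $\phi_0(\legs(T))$; this is to be matched against $(\mathbf M\phi_0)(\boundarymap(T))=\prod_{e\in\legs(T)}\phi_0(e)$. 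The base case $T=\medstar_v$ of this identity is precisely Definition~\ref{maps of xi}\eqref{maps of xi interchange}, read at the level of boundaries (i.e. with the $e^2$ convention for single edges).

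For the remaining round-trip I must show $\alpha_1=\alpha_1^\phi$ whenever $\phi=(\alpha_0,\alpha_1\circ\medstar)$ arises from a complete morphism $(\alpha_0,\alpha_1)$; equivalently, that $\alpha_1(T)=\image(\phi|_T)$ for every $T$. If $T$ contains vertices $v_1,\dots,v_k$, I would choose a covering path through them as in the proof just before Definition~\ref{image definition}, so that $T=\bigcup_j\medstar_{v_j}$ with the partial unions overlapping successively; repeated use of condition $(2)(b)$ for complete morphisms then gives $\alpha_1(T)=\bigcup_j\alpha_1(\medstar_{v_j})=\bigcup_j\phi_1(v_j)=\image(\phi|_T)$. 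If instead $T=|_e$, condition $(1)$ forces $\boundarymap(\alpha_1(|_e))=\alpha_0(e)^2$; since the legs of any vertex-containing subgraph are distinct edges, its boundary is square-free, so only the single edge $|_{\alpha_0(e)}$ has boundary $\alpha_0(e)^2$, and thus $\alpha_1(|_e)=|_{\alpha_0(e)}=\image(\phi|_{|_e})$. This establishes that the two assignments are mutually inverse.

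The step I expect to be the main obstacle is the boundary identity in condition $(1)$. Proposition~\ref{legs interchange} supplies only an equality of leg-\emph{sets}, whereas $\boundarymap$ records an element of the free commutative monoid $\mathbf M(\edge(S))$, so the multiplicities must be tracked precisely. The delicate case is when $\phi_0$ identifies two distinct legs of $T$: by Lemma~\ref{same value} this can occur only along a linear portion of $T$ whose edges share a common image, and there one must check that such a portion collapses onto a single edge so that the squaring convention $\boundarymap(|_f)=f^2$ restores the correct count. Once this multiplicity bookkeeping is settled, the rest of the argument is a formal consequence of Lemma~\ref{lemma intersection union} and the star-decomposition of subgraphs.
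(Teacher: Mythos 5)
Your proposal is correct and follows essentially the same route as the paper: the same inverse $T\mapsto\image(\phi|_T)$, the same appeal to Lemma~\ref{lemma intersection union} for the lattice condition and to Proposition~\ref{legs interchange} for the boundary condition, and the same two round-trip checks (using preservation of unions for the vertex case and the squared-boundary convention for single edges). Your closing remark about tracking multiplicities when $\phi_0$ identifies two legs is a point the paper's proof glosses over, and your resolution via Lemma~\ref{same value} (forcing $T$ to be linear with a single-edge image) is the right one.
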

\begin{proof}
We saw in the previous lemma that this function is well-defined. We now construct an inverse.
Suppose that $\phi = (\phi_0, \phi_1) \in \Xi(R,S)$.
If $T\in \sbgph(R)$, define $\hat \phi_1(T) = \image(\phi|_{T})$. 
Let us verify that $(\phi_0, \hat \phi_1)$ is a complete morphism.

If $T = |_e$, then $\boundarymap(T) = e^2$, so 
\[
	\boundarymap \hat \phi_1(|_e) = \boundarymap(|_{\phi_0(e)}) = (\phi_0(e))^2 = (\mathbf{M}\phi_0) e^2 = (\mathbf{M}\phi_0) \boundarymap (|_e).
\]
If $T$ contains a vertex, then $\boundarymap \hat \phi_1(T) = (\mathbf{M}\phi_0) \boundarymap (T)$ by Proposition~\ref{legs interchange} applied to $\phi|_T$.
Thus the first condition of Definition~\ref{definition complete morphism} holds for the pair $(\phi_0, \hat \phi_1).$ The second condition of this definition is guaranteed by Lemma~\ref{lemma intersection union}. 

We only need to check that $\phi \mapsto (\phi_0, \hat \phi_1)$ is inverse to $(\alpha_0, \alpha_1) \mapsto (\alpha_0, \check \alpha_1) = (\alpha_0, \alpha_1 \circ \medstar)$.
But 
\[
	\check{\hat{\phi}}_1(v) = \hat{\phi}_1(\medstar_v) = \image(\phi|_{\medstar_v}) = \phi_1(v)
\]
so this function is a right inverse.
It only remains to check that this function is a left inverse as well. 
If $T = |_e$ is an edge, we have
\[
	\hat{\check{\alpha}}_1(T) = \image((\Upsilon{\alpha})|_{T}) = \begin{cases} \, |_{\alpha_0(e)} & T = \eta \\
	\bigcup_{v\in \vertex(T)} \check{\alpha}_1(v) & \vertex(T) \neq \varnothing \end{cases}
\]
so if $T$ is an edge we are done because $\alpha_1(|_e) = |_{\alpha_0(e)}$.
If $T$ contains a vertex, then
\[
	\hat{\check{\alpha}}_1(T) = \bigcup_{v\in \vertex(T)} \alpha_1(\medstar_v) = \alpha_1\left(\bigcup_{v\in \vertex(T)} \medstar_v \right) = \alpha_1(T)
\]
since $\alpha_1$ preserves unions. Thus $\hat{\check{\alpha}}_1 = \alpha_1$.
\end{proof}

As the collection of trees together with complete morphisms obviously forms a category, there is then an induced operation $\circ$ so that graphs and morphisms form a category.
If one traces through the construction of the inverse in Proposition~\ref{bijection complete and kernel}, we see that this operation takes the following form.

\begin{definition}[Composition in $\Xi$]\label{xi composition}
Let $\phi : R \to S$ and $\psi: S \to T$ be morphisms of $\Xi$. Define two functions
\begin{align*}
	(\psi \circ \phi)_0 &\colon \edge(R) \to \edge(T) \\
	(\psi \circ \phi)_1 &\colon \vertex(R) \to \sbgph(T)
\end{align*}
by $(\psi \circ \phi)_0 = \psi_0 \circ \phi_0$ and $(\psi \circ \phi)_1(v) = \image(\psi|_{\phi_1(v)}).$
\end{definition}

\subsection{Cyclic dendroidal sets}\label{section cyclic dendroidal sets}

Define the category of \emph{cyclic dendroidal sets} to be the presheaf category $\Set^{\Xi^{op}}$.
Precomposition with the functor $\iota: \Omega \to \Xi$ induces a functor 
\[
	\iota^* : \Set^{\Xi^{op}} \to \Set^{\Omega^{op}}
\]
from cyclic dendroidal sets to dendroidal sets. This functor has both adjoints, though we will only need the left adjoint $\iota_! : \Set^{\Omega^{op}} \to \Set^{\Xi^{op}}$ in this paper.

Given a tree $S$, we will write $\Xi[S] = \hom_\Xi (-, S)$ for the object represented by $S$.

\begin{definition}\label{horns and cores} Let $S \in \Xi$ be a tree.
	\begin{itemize}
	\item Suppose that $\delta$ is a coface map (Definition \ref{cofaces and codegens}) with codomain $S$. Then the $\delta$-horn of $S$ is the subobject
	\[
		\Lambda^\delta\Xi[S] = \bigcup_{\substack{d : R \to S \\ \delta \not\cong d}} d_* \Xi[R] \subseteq \Xi[S]
	\]
	where the union is over all coface maps which are not isomorphic (over $S$) to $\delta$. This is an \emph{inner horn} if $\delta$ is an inner coface; otherwise it is an \emph{outer horn}.
	\item Suppose $S\neq \eta$. The \emph{Segal core} of $\Xi[S]$, denoted $\Sc[S]$, is defined to be the union
\[
	\bigcup_{v\in \vertex(S)} \Xi[\cstar_v] \subseteq \Xi[S],
\]
where $\cstar_v$ is regarded as a subobject of $S$. If $S = \eta$, it is convenient to also define $\Sc[\eta] = \Xi[\eta]$.
	\end{itemize}
Likewise, if $T\in \Omega$ is a rooted tree, we have horns $\Lambda^\delta\Omega[T] \subseteq \Omega[T]$ and Segal cores $\Sc[T] = \bigcup \Omega[\cstar_v] \subseteq \Omega[T]$.
\end{definition}

\section{Using rooting to orient maps in \texorpdfstring{$\Xi$}{Ξ}}\label{section rooting}

In this section we give a careful comparison of the morphism sets of $\Xi$ and $\Omega$.
Morally, the category $\Xi$ is built up from the dendroidal category $\Omega$ by adding isomorphisms which rotate trees.
Thus, every morphism of $\Xi$ should decompose into an oriented map (in $\Omega$) along with some rotation data.

In the present section we make this precise. 
To each tree $S$ and a choice of root $s_0 \in \legs(S)$, there is a rooted tree $\rootify(S,s_0) \in \Omega$ (see Definition \ref{treeificiation} and Figure \ref{rootings figure}).
Further, given a morphism $\phi : R \to S$ we can transform $\phi$ into an oriented map (Lemma \ref{amalgamate splitting})
\[
	\lifting(\phi) : \rootify(R,r_\phi) \to \rootify(S,s_0)
\]
for some particular choice of root $r_\phi \in \legs(R)$ (see Definition \ref{def find root}).
We show that $\lifting$ respects composition in a certain sense (Proposition \ref{functoriality of lifting}, Remark~\ref{remark rootify as functor}).
Finally, in Theorem \ref{structure from rooting} (see also Corollary \ref{structure from rooting nonlinear} and Corollary \ref{structure from rooting linear}) we realize our goal and make clear the idea that (non-constant) maps $R\to S$ are just certain maps in $\Omega$ along with rooting data for $S$.

\begin{definition}[Rooting of trees]\label{treeificiation}
Suppose we are given a pair $(S, s_0)$ with $S \in \Ob(\Xi)$ and $s_0 \in \legs(S)$. 
\begin{itemize}
\item Assume that $S\neq \eta$. We now define a rooted tree $T\in \Ob(\Omega)$ with $\edge(T) = \edge(S)$ and $\vertex(T) = \vertex(S)$.
For each $v\in \vertex(S)$, let $\ord^v(k_v) = \out(v) \in \nbhd(v)$ be the element which minimizes the function $d(-,s_0)|_{\nbhd(v)}$.
Setting $\inp(v) = \nbhd(v) \setminus \out(v)$, we have an induced ordering on $\inp(v)$ via
\[ \begin{tikzcd}
\{ 0, 1, \dots, n_v \} \rar{\ord^v}[swap]{\cong} & \nbhd(v) \dar[equal]\\
\{ 0, 1, \dots, n_v \} \uar{+ k_v \mod (n_v + 1)} & \{ \out(v) \} \amalg \inp(v)\\
\{ 1, \dots, n_v \} \uar[hook] \rar[dotted]{\cong} & \inp(v). \uar[hook]
\end{tikzcd} \]
Similarly, we have an induced ordering on $\inp(T) = \legs(S) \setminus \{s_0\}$ via
\[ \begin{tikzcd}
\{ 0, 1, \dots, n \} \rar{\ord}[swap]{\cong} & \legs(S) \dar[equal]\\
\{ 0, 1, \dots, n \} \uar{+ k \mod (n + 1)} & \{ s_0 \} \amalg \inp(T)\\
\{ 1, \dots, n \} \uar[hook] \rar[dotted]{\cong} & \inp(T), \uar[hook]
\end{tikzcd} \]
where $k = \ord^{-1}(s_0)$.
Although $\iota T$ may be different from $S$, since they have different total orderings (though the same \emph{cyclic} orderings \cite{MR669787}) $\ord$ and $\ord^v$, there is a \emph{unique} isomorphism $f : \iota T \to S$ of $\Xi$ with $f_0 = \id$ by Example \ref{example unique iso}. 

\item We write
\[
	\rootify(S,s_0) = (T, f: \iota T \overset\cong\to S)
\]
for this construction.
The second component is redundant (since we insist that $f_0 = \id$), so we will usually abuse notation and just write $\rootify(S,s_0) = T$.
Since the trivial tree $\eta$ is already rooted, we also set $\rootify(\eta, e) = \eta$.
\item 
If $r_0 \in \legs(R)$ and $s_0 \in \legs(S)$, define $\amalgamate_{r_0,s_0}$ to be the composite
\[
\begin{tikzcd}[column sep=large]
\Omega(\rootify(R,r_0), \rootify(S,s_0)) \rar[hook, "\iota"] & 
\Xi(\iota\rootify(R,r_0), \iota\rootify(S,s_0))
\rar["\cong"]
\arrow[r, phantom, shift right=4,"\scriptstyle{\Xi(f_{R}^{-1}, f_{S})}" swap]
& \Xi(R, S). 
\end{tikzcd} 
\]
\end{itemize}
\end{definition}

An example is given in Figure \ref{rootings figure}.

\begin{figure}
\labellist
\small\hair 2pt
 \pinlabel {$1$} at 124 357
 \pinlabel {$2$} at 205 362
 \pinlabel {$3$} at 124 280
 \pinlabel {$0$} at 205 275
 \pinlabel {{\tiny\color{red}$0$}} at 157 309
 \pinlabel {{\tiny\color{red}$1$}} at 137 336
 \pinlabel {{\tiny\color{red}$2$}} at 126 310
 \pinlabel {{\tiny\color{red}$0$}} at 180 326
 \pinlabel {{\tiny\color{red}$1$}} at 188 301
 \pinlabel {{\tiny\color{red}$2$}} at 211 309
 \pinlabel {{\tiny\color{red}$3$}} at 203 335
 \pinlabel {{\tiny\color{red}$0$}} at 243 326
 \pinlabel {$1$} at 23 220
 \pinlabel {$2$} at 104 225
 \pinlabel {$3$} at 23 143
 \pinlabel {$0$} at 104 138
 \pinlabel {$0$} at 242 220
 \pinlabel {$1$} at 323 225
 \pinlabel {$2$} at 242 143
 \pinlabel {$3$} at 323 138
 \pinlabel {$3$} at 23 98
 \pinlabel {$0$} at 104 103
 \pinlabel {$1$} at 23 21
 \pinlabel {$2$} at 104 16
 \pinlabel {$2$} at 242 98
 \pinlabel {$3$} at 323 103
 \pinlabel {$0$} at 242 21
 \pinlabel {$1$} at 323 16
 \pinlabel {{\tiny\color{red}$0$}} at  56  172
 \pinlabel {{\tiny\color{red}$1$}} at  36  199
 \pinlabel {{\tiny\color{red}$2$}} at  25  173
 \pinlabel {{\tiny\color{red}$3$}} at  79  189
 \pinlabel {{\tiny\color{red}$0$}} at  87  164
 \pinlabel {{\tiny\color{red}$1$}} at 110  172
 \pinlabel {{\tiny\color{red}$2$}} at 102  198
 \pinlabel {{\tiny\color{red}$0$}} at 142  189
 \pinlabel {{\tiny\color{red}$2$}} at 275  172
 \pinlabel {{\tiny\color{red}$0$}} at 255  199
 \pinlabel {{\tiny\color{red}$1$}} at 244  173
 \pinlabel {{\tiny\color{red}$0$}} at 298  189
 \pinlabel {{\tiny\color{red}$1$}} at 306  164
 \pinlabel {{\tiny\color{red}$2$}} at 329  172
 \pinlabel {{\tiny\color{red}$3$}} at 321  198
 \pinlabel {{\tiny\color{red}$0$}} at 361  189
 \pinlabel {{\tiny\color{red}$0$}} at  56  50
 \pinlabel {{\tiny\color{red}$1$}} at  36  77
 \pinlabel {{\tiny\color{red}$2$}} at  25  51
 \pinlabel {{\tiny\color{red}$1$}} at  79  67
 \pinlabel {{\tiny\color{red}$2$}} at  87  42
 \pinlabel {{\tiny\color{red}$3$}} at 110  50
 \pinlabel {{\tiny\color{red}$0$}} at 102  76
 \pinlabel {{\tiny\color{red}$0$}} at 142  67
 \pinlabel {{\tiny\color{red}$1$}} at 275  50
 \pinlabel {{\tiny\color{red}$2$}} at 255  77
 \pinlabel {{\tiny\color{red}$0$}} at 244  51
 \pinlabel {{\tiny\color{red}$0$}} at 298  67
 \pinlabel {{\tiny\color{red}$1$}} at 306  42
 \pinlabel {{\tiny\color{red}$2$}} at 329  50
 \pinlabel {{\tiny\color{red}$3$}} at 321  76
 \pinlabel {{\tiny\color{red}$0$}} at 361  67
\endlabellist
\centering
\includegraphics[width=0.5\textwidth]{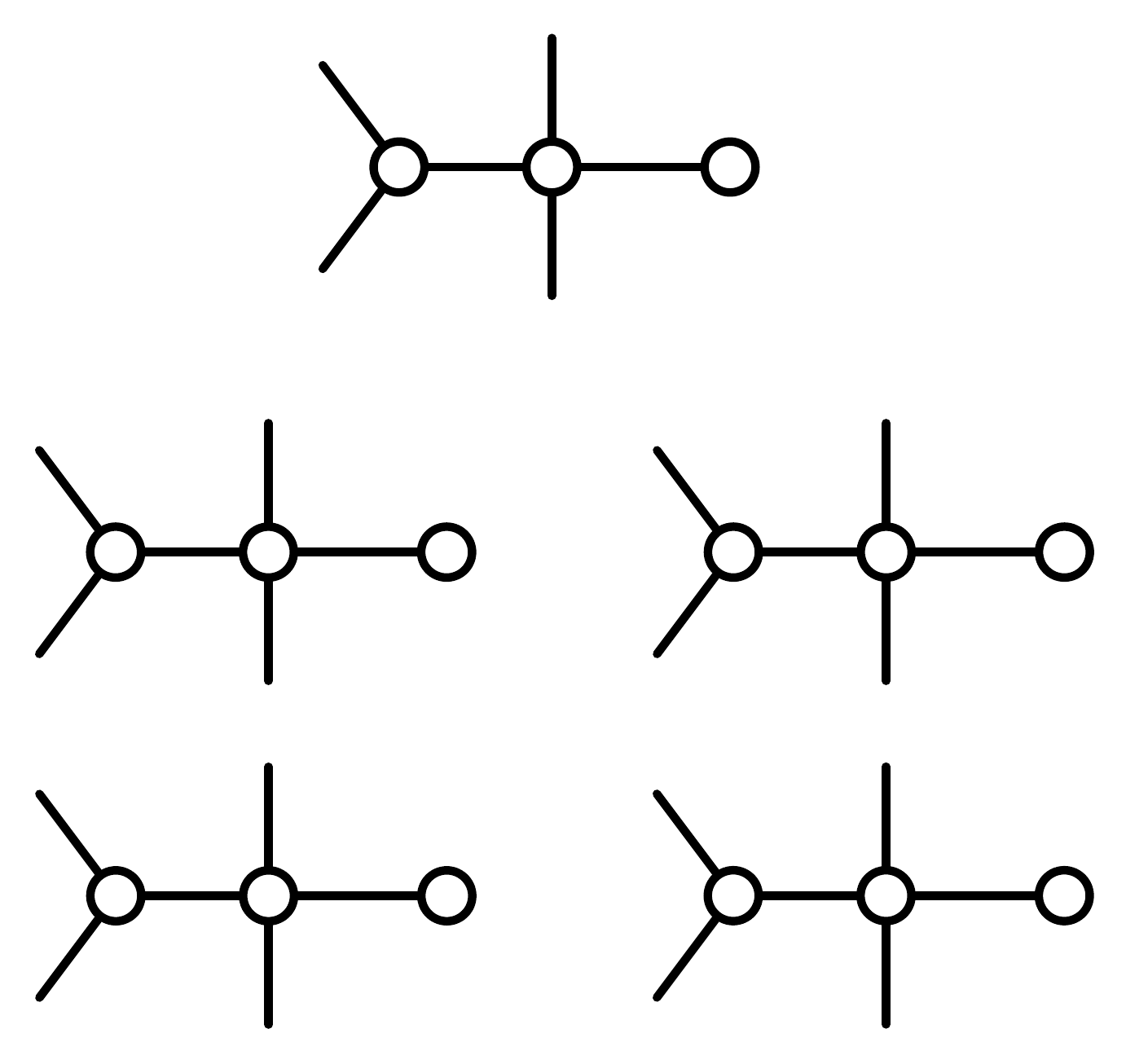}
	\caption{A tree $S$ together with its four associated rooted trees}\label{rootings figure}
\end{figure}

\begin{proposition}\label{functoriality of amalgamate}
Let $R, U, S \in \Ob(\Xi)$, $r_0 \in \legs(R)$, $u_0 \in \legs(U)$, and $s_0\in \legs(S)$. Then the diagram
\[ \begin{tikzcd}
\Omega(\rootify(U,u_0), \rootify(S,s_0)) \times \Omega(\rootify(R,r_0), \rootify(U,u_0)) \rar{\circ} \dar{\amalgamate_{u_0,s_0} \times \amalgamate_{r_0,u_0}}& \Omega(\rootify(R,r_0), \rootify(S,s_0))\dar{\amalgamate_{r_0,s_0}} \\
\Xi(U, S) \times \Xi(R, U) \rar{\circ} & \Xi(R, S)
\end{tikzcd} \]
commutes.
\end{proposition}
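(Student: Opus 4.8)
The plan is to unwind the definition of the function $\amalgamate_{r_0,s_0}$, recognizing it as conjugation by the canonical isomorphisms supplied by the rooting construction; once this is made explicit, the commutativity of the square becomes the functoriality of $\iota$ together with the cancellation of the intermediate isomorphism attached to $U$.

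First I would fix notation for the relevant isomorphisms. For a pair $(S,s_0)$, Definition~\ref{treeificiation} produces not only the rooted tree $\rootify(S,s_0)$ but also the unique isomorphism $f_S \colon \iota\rootify(S,s_0) \overset\cong\to S$ with $(f_S)_0 = \id$, and similarly for $R$ and $U$. Tracing through the definition of $\amalgamate_{r_0,s_0}$ as $\iota$ followed by $\Xi(f_R^{-1}, f_S)$, I would record the explicit formula
\[
	\amalgamate_{r_0,s_0}(\phi) = f_S \circ \iota(\phi) \circ f_R^{-1} \colon R \to S
\]
for $\phi \in \Omega(\rootify(R,r_0), \rootify(S,s_0))$, together with the analogous formulas for $\amalgamate_{u_0,s_0}$ and $\amalgamate_{r_0,u_0}$.

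Next I would chase an element $(\psi,\phi)$ of the upper-left corner around both sides of the square. Going right then down yields $\amalgamate_{r_0,s_0}(\psi \circ \phi) = f_S \circ \iota(\psi \circ \phi) \circ f_R^{-1}$, which equals $f_S \circ \iota(\psi) \circ \iota(\phi) \circ f_R^{-1}$ because $\iota$ is a subcategory inclusion and hence preserves composition. Going down then right yields
\[
	\amalgamate_{u_0,s_0}(\psi) \circ \amalgamate_{r_0,u_0}(\phi) = \bigl(f_S \circ \iota(\psi) \circ f_U^{-1}\bigr) \circ \bigl(f_U \circ \iota(\phi) \circ f_R^{-1}\bigr),
\]
and here the two copies of $f_U$ cancel, leaving $f_S \circ \iota(\psi) \circ \iota(\phi) \circ f_R^{-1}$ after using associativity of composition in $\Xi$. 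The two expressions agree, which is exactly the assertion.

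I do not expect any genuine obstacle: the statement is a formal diagram chase whose only content is that $\amalgamate$ is conjugation by the $f$'s and that $\iota$ is functorial. The one point demanding care is bookkeeping the variance of the hom-functor—confirming that $\Xi(f_R^{-1}, f_S)$ precomposes with $f_R^{-1}$ and postcomposes with $f_S$, so that it is genuinely the map $R \to S$ that is being transported—and verifying that the intermediate isomorphism $f_U$, the only possible source of a discrepancy, cancels precisely because the same rooted tree $\rootify(U,u_0)$ and the same isomorphism $f_U$ appear in both $\amalgamate_{u_0,s_0}$ and $\amalgamate_{r_0,u_0}$.
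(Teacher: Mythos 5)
Your proposal is correct and follows essentially the same route as the paper's proof: both unwind $\amalgamate_{r_0,s_0}(\phi)$ as $f_S \circ \iota(\phi) \circ f_R^{-1}$ and observe that the two copies of $f_U$ cancel while $\iota$ preserves composition. No gaps.
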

\begin{proof}
Let $g \in \Omega(\rootify(U,u_0), \rootify(S,s_0))$ and $h\in \Omega(\rootify(R,r_0), \rootify(U,u_0))$.
The diagram
\[ \begin{tikzcd}
\iota (\rootify(R,r_0)) 
  \rar{\iota (h)} 
  \dar{\cong}[swap]{f_R}
  \arrow[rr, bend left, "\iota (g\circ h)"]& 
\iota (\rootify(U,u_0)) 
  \dar{\cong}[swap]{f_U} 
  \rar{\iota (g)} & 
\iota (\rootify(S,s_0)) \dar{\cong}[swap]{f_S} \\
R \rar{\amalgamate_{r_0,u_0}(h)} & 
U \rar{\amalgamate_{u_0,s_0}(g)} & S
\end{tikzcd} \]
commutes,
hence
\[
	\amalgamate_{u_0,s_0}(g) \circ \amalgamate_{r_0,u_0}(h) = f_S \circ \iota(g\circ h) \circ f_R^{-1} = \amalgamate_{r_0,s_0}(g \circ h).
\]
\end{proof}

\begin{lemma}\label{subgraph distance addititvity}
	Suppose that $R$ is a subgraph of $S$, $s_0 \in \legs(S)$, and $r_0 \in \edge(R)$ minimizes $d(-,s_0)|_{\edge(R)}$. Then $r_0 \in \legs(R)$ and 
	\[
		d(r,s_0) = d(r,r_0) + d(r_0,s_0)
	\]
	for any $r \in \edge(R)$. In particular, the minimizing element $r_0$ is unique.
\end{lemma}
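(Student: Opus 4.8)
The plan is to prove the three assertions in order, leaning throughout on just two tools: the existence and uniqueness of minimal paths (Proposition~\ref{minimal path prop}, which also characterizes them as the paths with no repeated entries), and the fact that $d_R$ is the restriction of $d_S$. This latter fact says that $R$ behaves like a convex subset of $S$: the unique minimal path in $S$ between two edges or vertices of $R$ already lies entirely in $R$, since the minimal path computed inside $R$ realizes the same distance and so must coincide with the $S$-path by uniqueness. I would use this ``convexity'' repeatedly.

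First I would show $r_0 \in \legs(R)$, arguing by contradiction. Suppose $r_0 \in \interior(R)$. Then $r_0 \neq s_0$, since an interior edge of $R$ is incident to two vertices of $R$ whereas the leg $s_0$ of $S$ is incident to at most one vertex; hence $d(r_0, s_0) \geq 1$ and the minimal path from $r_0$ to $s_0$ begins $r_0\, v_1\, \dots$ for some vertex $v_1$ incident to $r_0$. Because $r_0$ is interior to $R$, both vertices of $S$ incident to $r_0$ lie in $R$, so $v_1 \in \vertex(R)$; as $R$ contains every edge incident to its vertices, the next entry of the path (the edge following $v_1$, or $s_0$ itself when $d(r_0,s_0)=1$) lies in $\edge(R)$ and is at distance $d(r_0,s_0)-1$ from $s_0$, contradicting the minimality of $r_0$.

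For the additivity formula I would first dispose of the trivial cases $R=\eta$ and $s_0 \in \edge(R)$ (in the latter $r_0 = s_0$, as it attains distance $0$), so assume $R \neq \eta$ and $s_0 \notin \edge(R)$, whence $d(r_0,s_0)\geq 1$. Fix $r \in \edge(R)$, let $P_1$ be the minimal path from $r$ to $r_0$ and $P_2$ the minimal path from $r_0$ to $s_0$; by convexity $P_1$ lies in $R$. The crux is to show $P_2$ meets $R$ only in $r_0$. Since $r_0 \in \legs(R)$ and $R\neq\eta$, the edge $r_0$ is incident to exactly one vertex $u$ of $R$. The first vertex $w$ of $P_2$ cannot be $u$: otherwise the edge of $P_2$ following $u$ would lie in $\nbhd(u) \subseteq \edge(R)$ and be strictly closer to $s_0$ than $r_0$, contradicting minimality. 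Hence $w \notin \vertex(R)$, so $P_2$ leaves $R$ at its first step; and by convexity it cannot return, for any later edge or vertex of $P_2$ lying in $R$ would force the sub-path of $P_2$ from $r_0$ to it (again minimal) to lie in $R$, yet that sub-path runs through $w \notin \vertex(R)$. Thus $P_1$ and $P_2$ share only the junction edge $r_0$, their concatenation (with the duplicate $r_0$ removed) has no repeated entries, and by Proposition~\ref{minimal path prop} it is the minimal path from $r$ to $s_0$; reading off its length yields $d(r,s_0) = d(r,r_0) + d(r_0,s_0)$.

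Finally, uniqueness is immediate: if $r_0'$ is another minimizer, applying the formula with $r = r_0'$ gives $d(r_0',s_0) = d(r_0',r_0) + d(r_0,s_0)$, and since $d(r_0',s_0) = d(r_0,s_0)$ we obtain $d(r_0',r_0)=0$, i.e.\ $r_0'=r_0$. I expect the main obstacle to be the middle step, namely verifying carefully that the two geodesics $P_1$ and $P_2$ meet only at $r_0$: this is exactly where the minimality of $r_0$ (which forces $P_2$ to exit $R$ immediately) must be played off against the convexity of $R$ (which prevents $P_2$ from re-entering), and the bookkeeping of the concatenation convention needs to be handled precisely.
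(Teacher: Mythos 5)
Your proof is correct and follows essentially the same route as the paper's: concatenate the minimal path from $r$ to $r_0$ (which lies in $R$ by the restriction of distances to subgraphs) with the minimal path from $r_0$ to $s_0$ (which meets $R$ only in $r_0$), and invoke the no-repeated-entries characterization of minimal paths. The only differences are cosmetic: you spell out the claim $r_0\in\legs(R)$, which the paper leaves implicit, and you establish that the second geodesic exits $R$ via its first vertex plus convexity, where the paper gets the same disjointness in one line directly from the minimality of $r_0$ (any edge of $R$ appearing later on that path would be strictly closer to $s_0$).
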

\begin{proof}
Let $P$ be the minimal path from $r_0$ to $s_0$.
Then $P$ contains no edges of $R$ other than $r_0$ by assumption.
Let $P'$ be the minimal path from $r$ to $r_0$.
Since $P'$ is actually a path in $R$, we have that $P$ and $P'$ only have the single edge $r_0$ in common.
Thus $P'P$ has no repeated edges, and thus is a minimal path from $r$ to $s_0$ by the characterization in Proposition \ref{minimal path prop}.
Thus $|P'P|-1 = (|P'|-1) + (|P|-1)$ which is the claimed equality.
\end{proof}

Given $R, S \in \Ob(\Xi)$, we write
$\Xi^0(R,S) \subseteq \Xi(R,S)$ for the set of maps which factor through the vertex-free graph $\eta$. 
If $R$ is linear then $\Xi^0(R,S) \cong \edge(S)$, otherwise $\Xi^0(R,S) = \varnothing$.
A particular case of Lemma~\ref{same value} says that if $\phi : R \to S$ is a map in $\Xi$ with $\phi_0|_{\legs(R)}$ not injective, then $\phi \in \Xi^0(R,S)$.
Combining this fact with $\phi_0(\legs(R)) = \legs(\image(\phi))$ (Proposition~\ref{legs interchange}) and Lemma~\ref{subgraph distance addititvity} applied to $\image(\phi) \hookrightarrow S$ gives that the following function is well-defined.

\begin{definition}[`Find root' function]\label{def find root}
Let $s_0 \in \legs(S)$.
Define a function $\findroot_{s_0}$
\[ \begin{tikzcd}[row sep = tiny]
		\Xi(R,S) \setminus \Xi^0(R,S) \rar{\findroot_{s_0}} &  \legs(R) \\
		\phi \rar[mapsto] & r_\phi,
\end{tikzcd}\]
where $r_\phi$ minimizes the function $d(\phi_0(-), s_0)|_{\legs(R)}$.
\end{definition}

Lemma~\ref{subgraph distance addititvity} also guarantees that $\findroot_{s_0}(\phi)$ minimizes the function $d(\phi_0(-),s_0)$, though of course there may be internal edges which also minimize this function since $\phi_0$ need not be injective.

\begin{remark}
	If $\phi \in \Xi^0(R,S)$, then $R$ must be a linear tree. The function $d(\phi_0(-),s_0)$ is \emph{constant}, hence is minimized by each of the two extremal edges when $R \cong L_n$ for $n > 0$.
\end{remark}

We now turn to several observations which give us effective tools for computing the function $\findroot_{s_0}$, in particular with respect to subgraph inclusions and certain compositions.

\begin{remark}\label{image find root}
	Let $\phi : R \to S$ be a map in $\Xi$ which does not factor through $\eta$. If $s_0 \in \legs(S)$,
	then
	$\phi_0(\findroot_{s_0}(\phi)) = \findroot_{s_0}(\image(\phi) \hookrightarrow S).$	
\end{remark}

In a similar vein, we have the following lemma.

\begin{lemma}\label{yet another findroot lemma}
	Let $\phi : R \to S \neq \eta$ be a morphism of $\Xi$ with $\image(\phi) = S$.
	Suppose $s_0 \in \legs(S)$ and write $r_0 = \findroot_{s_0}(\phi)$ (i.e., $\phi_0^{-1}(s_0) \cap \legs(R) = \{ r_0\})$.
	If $v\in \vertex(R)$ is such that $\phi_1(v)$ is not an edge, then 
	\[
		\phi_0\Big(\findroot_{s_0}\Big(\cstar_v \hookrightarrow R \overset\phi\to S \Big) \Big) = \phi_0 (\findroot_{r_0}(\cstar_v \hookrightarrow R)).
	\]
	In particular,
	\[
		\findroot_{s_0}\Big(\cstar_v \hookrightarrow R \overset\phi\to S \Big)  = \findroot_{r_0}(\cstar_v \hookrightarrow R).
	\]
\end{lemma}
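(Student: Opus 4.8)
The plan is to prove that both sides of the first displayed equation equal $t_0 := \findroot_{s_0}(\phi_1(v) \hookrightarrow S)$, the unique leg of the subgraph $\phi_1(v)$ minimizing $d(-,s_0)$; this leg exists and is unique by Lemma~\ref{subgraph distance addititvity} applied to $\phi_1(v) \hookrightarrow S$ with root $s_0$. For the left-hand side this is immediate: the composite $\cstar_v \hookrightarrow R \overset\phi\to S$ has edge function $\phi_0|_{\nbhd(v)}$ and image $\phi_1(v)$, which is not an edge, so Remark~\ref{image find root} gives
\[ \phi_0\Big(\findroot_{s_0}\big(\cstar_v \hookrightarrow R \overset\phi\to S\big)\Big) = \findroot_{s_0}(\phi_1(v) \hookrightarrow S) = t_0. \]
The real content is therefore to show that $\phi_0(e_R) = t_0$, where $e_R := \findroot_{r_0}(\cstar_v \hookrightarrow R)$ is the edge of $\nbhd(v)$ closest to $r_0$ in $R$.

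The key step is the additivity identity: for every $e \in \nbhd(v)$,
\[ d(\phi_0(e), s_0) = d(\phi_0(e), \phi_0(e_R)) + d(\phi_0(e_R), s_0), \]
with all distances taken in $S$. Granting this, since $d(\phi_0(e), \phi_0(e_R)) \geq 0$ we see that $\phi_0(e_R)$ minimizes $d(-,s_0)$ over $\legs(\phi_1(v)) = \phi_0(\nbhd(v))$ (Proposition~\ref{legs interchange}), whence $\phi_0(e_R) = t_0$ by the uniqueness in Lemma~\ref{subgraph distance addititvity}. To establish the identity I would argue as in the proof of Lemma~\ref{same value}. Recalling $\phi_0(r_0) = s_0$ (as $r_0 = \findroot_{s_0}(\phi)$ and $\image(\phi) = S$), write the minimal path in $R$ from $r_0$ to $v$ as $r_0\, w_1\, f_1 \cdots w_{m-1}\, e_R\, v$, so that its last edge before $v$ is $e_R$. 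For $e \neq e_R$ in $\nbhd(v)$ the minimal path from $e$ to $r_0$ is obtained by prepending $e\, v\, e_R$ (uniqueness, Proposition~\ref{minimal path prop}). Applying $\phi$ and concatenating the minimal paths inside $\phi_1(v)$ and inside each $\phi_1(w_j)$ gives a path in $S$ from $\phi_0(e)$ to $s_0$ with no repeated entries (Lemma~\ref{neighborhoods of vertices} and Definition~\ref{maps of xi}\eqref{maps of xi separation}), hence a minimal one; splitting off the initial segment lying in $\phi_1(v)$ and using that a subgraph inherits its distances yields the identity (the case $e = e_R$ being the same computation without that segment).

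Combining the two computations proves the first displayed equation. For the ``in particular'' statement it remains to upgrade equality of the $\phi_0$-images to equality of the edges themselves, for which I would observe that $\phi_0|_{\nbhd(v)}$ is injective: since $\phi_1(v)$ is not an edge, its boundary $\boundarymap(\phi_1(v)) = (\mathbf{M}\phi_0)\big(\prod_{e \in \nbhd(v)} e\big)$ is a product of distinct legs, so $\phi_0$ cannot identify two elements of $\nbhd(v)$ without forcing $\phi_1(v) = |_s$ to be a single edge. As both $\findroot_{s_0}(\cstar_v \hookrightarrow R \overset\phi\to S)$ and $e_R$ lie in $\nbhd(v)$ and share the image $t_0$ under $\phi_0$, they coincide. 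The main obstacle is the additivity identity; once one trusts the ``no repeats, hence minimal'' mechanism already exploited in Lemma~\ref{same value}, everything else is bookkeeping.
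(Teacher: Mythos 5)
Your proof is correct and rests on the same mechanism as the paper's: lift the minimal path in $R$ from $\nbhd(v)$ down to $r_0$, push it forward through the subgraphs $\phi_1(-)$ to get a concatenation with no repeated entries, and invoke uniqueness of minimal paths in a tree. The paper packages this as a head-to-head comparison of the two candidate roots (assuming them distinct and showing the two resulting minimal paths to $s_0$ coincide, forcing their $\phi_0$-images to agree), whereas you package the same computation as an additivity identity identifying both sides with $\findroot_{s_0}(\phi_1(v)\hookrightarrow S)$; the content is identical, and your injectivity argument for the ``in particular'' matches the paper's.
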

\begin{proof}
The second statement follows from the first since $\phi_0|_{\nbhd(v)}$ is injective whenever $\phi_1(v)$ is not an edge.

Suppose that the elements $e_0 = \findroot_{s_0}(\cstar_v \to R \to S)$ and $e_1 = \findroot_{r_0}(\cstar_v \to R)$ are distinct.
Let $e_1 v_2 e_2 \dots e_{n-1} v_n e_n$ be the minimal path from $e_1$ to $e_n = r_0 = \findroot_{s_0}(\phi)$.
Since $d(e_1,r_0) < d(e_0, r_0)$, the path $e_0 v_1 e_1 v_2 \dots v_n e_n$, where $v_1 = v$, is a minimal path from $e_0$ to $e_n$.
For $i = 1, \dots, n$, let $P_i$ be the minimal path from $\phi_0(e_{i-1})$ to $\phi_0(e_i)$; the path $P_i$ is a path in $\phi_1(v_i)$.
Since the $v_i$ are distinct, $P_1 P_2 \dots P_n$ contains no repeated entries, hence is the minimal path from $\phi_0(e_0)$ to $\phi_0(r_0) = s_0$.
Likewise, $P_2 \dots P_n$ is the minimal path from $\phi_0(e_1)$ to $s_0$. Since $d(\phi_0(e_0), s_0)) \leq d(\phi_0(e_1), s_0)$, the paths $P_1P_2 \dots P_n$ and $P_2\dots P_n$ are equal. This implies that the path $P_1$ from $\phi_0(e_0)$ to $\phi_0(e_1)$ does not contain a vertex, hence $\phi_0(e_0) = \phi_0(e_1)$.
\end{proof}

\begin{lemma}\label{nested subgraphs lemma}
Suppose that $A\neq \eta$ and that
\[
	A \hookrightarrow B \hookrightarrow C
\]
is a pair of subgraph inclusions. If $c_0 \in \legs(C)$ and $b_0 = \findroot_{c_0}(B \hookrightarrow C) \in \legs(B)$,
then $\findroot_{b_0}(A\hookrightarrow B) = \findroot_{c_0}(A \hookrightarrow C)$.
\end{lemma}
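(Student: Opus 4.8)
The plan is to reduce the whole statement to the distance additivity of Lemma~\ref{subgraph distance addititvity}, together with the elementary observation that $\findroot$ is insensitive to additive constants. First I would unwind the definitions. Since $A \neq \eta$ contains a vertex, the subgraph inclusions $A \hookrightarrow B$ and $A \hookrightarrow C$ both lie outside $\Xi^0$, so both $\findroot_{b_0}(A \hookrightarrow B)$ and $\findroot_{c_0}(A \hookrightarrow C)$ are defined; here I use that $b_0 \in \legs(B)$ and $B \neq \eta$. For a subgraph inclusion the edge map is the identity on $\edge(A)$, so by Definition~\ref{def find root} the element $\findroot_{c_0}(A \hookrightarrow C)$ is the unique leg of $A$ minimizing $d(-, c_0)|_{\legs(A)}$, while $\findroot_{b_0}(A\hookrightarrow B)$ is the unique leg of $A$ minimizing $d(-, b_0)|_{\legs(A)}$. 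Distances computed in $A$, $B$, and $C$ agree on the edges in question by the corollary following Proposition~\ref{minimal path prop}, so there is no ambiguity about where the minimizations take place.

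The key step is to apply Lemma~\ref{subgraph distance addititvity} to the inclusion $B \hookrightarrow C$ and the leg $c_0 \in \legs(C)$. Since $b_0 = \findroot_{c_0}(B\hookrightarrow C)$ is exactly the (unique) edge of $B$ minimizing $d(-, c_0)|_{\edge(B)}$, the lemma supplies the additivity identity
\[
	d(a, c_0) = d(a, b_0) + d(b_0, c_0) \qquad \text{for every } a \in \edge(B).
\]
Restricting $a$ to range over $\legs(A) \subseteq \edge(B)$, the summand $d(b_0, c_0)$ is a constant independent of $a$. Consequently a leg $a$ of $A$ minimizes $d(-, c_0)|_{\legs(A)}$ precisely when it minimizes $d(-, b_0)|_{\legs(A)}$, so the two minimization problems defining $\findroot_{c_0}(A \hookrightarrow C)$ and $\findroot_{b_0}(A \hookrightarrow B)$ have the same solution set.

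Finally I would invoke the uniqueness of the minimizing leg (i.e. the well-definedness of $\findroot$ recalled before Definition~\ref{def find root}) to conclude that the two minimizers coincide, which is the claimed equality $\findroot_{b_0}(A \hookrightarrow B) = \findroot_{c_0}(A \hookrightarrow C)$. I do not expect a genuine obstacle here: once the additivity identity is in hand the argument is immediate, and the only points requiring care are the bookkeeping that guarantees all three $\findroot$ expressions are defined and the verification that the distances in the two problems are literally the same function on $\legs(A)$ up to the constant $d(b_0,c_0)$, which is precisely what Lemma~\ref{subgraph distance addititvity} provides.
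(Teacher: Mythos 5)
Your proof is correct and follows essentially the same route as the paper's: both rest on the additivity identity $d(a,c_0)=d(a,b_0)+d(b_0,c_0)$ supplied by Lemma~\ref{subgraph distance addititvity} applied to $B\hookrightarrow C$. The only difference is cosmetic --- the paper applies that lemma a second time to $A\hookrightarrow B$ and does a little arithmetic to exhibit $a_0=\findroot_{b_0}(A\hookrightarrow B)$ as the minimizer of $d(-,c_0)$, whereas you observe directly that the two objective functions on $\legs(A)$ differ by the constant $d(b_0,c_0)$ and therefore have the same unique minimizer.
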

\begin{proof}
Write $a_0 = \findroot_{b_0}(A\hookrightarrow B)$ and apply Lemma \ref{subgraph distance addititvity} twice to get 
\begin{align}
	d(a, b_0) &= d(a,a_0) + d(a_0,b_0) \label{line onzess} \\
	d(a, c_0) &= d(a,b_0) + d(b_0,c_0)\label{line thrzzss}
\end{align}
for any $a\in \edge(A) \subseteq \edge(B)$. 
For the particular case when $a = a_0$, \eqref{line thrzzss} becomes $d(a_0,c_0) = d(a_0, b_0) + d(b_0, c_0)$. Combining with \eqref{line onzess} we have
\[
	d(a, b_0) = d(a, a_0) + d(a_0, c_0) - d(b_0,c_0),
\]
hence
\begin{equation*}
	d(a, c_0) 
	= d(a, b_0) + d(b_0, c_0) = d(a, a_0) + d(a_0, c_0).
\end{equation*}
Then $a_0$ is the element of $\edge(A)$ which minimizes this function, hence
\[a_0 = \findroot_{c_0}(A \hookrightarrow C). \]
\end{proof}

\subsection{Orientation of maps}

\begin{lemma}\label{amalgamate splitting}
Let $r_0 \in \legs(R)$ and $s_0 \in \legs(S)$. 
There is a (unique) function $\lifting = \lifting_{r_0,s_0}$ so that the following diagram commutes.
\[ \begin{tikzcd}
\findroot_{s_0}^{-1}(r_0) \dar[hook]  \rar{\lifting}
&
\Omega(\rootify(R,r_0), \rootify(S,s_0))
\dar{\amalgamate_{r_0,s_0}}
\\
\Xi(R,S) \setminus \Xi^0(R,S) \rar[hook] & \Xi(R,S)
\end{tikzcd} \]
\end{lemma}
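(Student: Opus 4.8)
The plan is to construct $\lifting$ explicitly and verify that its values are oriented; uniqueness then comes for free. First I would observe that $\amalgamate_{r_0,s_0}$ is injective, being the composite of the subcategory inclusion $\iota$ with the bijection $\Xi(f_R^{-1}, f_S)$. Hence at most one function can make the square commute, and it suffices to exhibit one. Given $\phi \in \findroot_{s_0}^{-1}(r_0) \subseteq \Xi(R,S)\setminus \Xi^0(R,S)$, I would set
\[
\lifting(\phi) := f_S^{-1}\circ \phi \circ f_R,
\]
a morphism of $\Xi$ from $\iota\rootify(R,r_0)$ to $\iota\rootify(S,s_0)$. By the definition of $\amalgamate_{r_0,s_0}$ one has $\amalgamate_{r_0,s_0}(\lifting(\phi)) = f_S\circ \lifting(\phi)\circ f_R^{-1} = \phi$, so the square commutes the moment we know $\lifting(\phi)$ lands in $\Omega(\rootify(R,r_0),\rootify(S,s_0))$. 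Since $f_R, f_S$ are the identity on edges, $\lifting(\phi)_0 = \phi_0$; and since source and target are rooted trees, membership in $\Omega$ is exactly the assertion that $\lifting(\phi)$ is \emph{oriented} (Definition \ref{dendroidal category}). Thus the whole statement reduces to checking, for each $v\in \vertex(R)$, that the edge $\out(v) := \findroot_{r_0}(\cstar_v \hookrightarrow R)$ (the element of $\nbhd(v)$ nearest $r_0$, which is the outgoing edge of $v$ in $\rootify(R,r_0)$) minimizes $d_S(\phi_0(-), s_0)$ over all of $\nbhd(v)$.

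The verification splits according to $\phi_1(v)$. If $\phi_1(v) = |_s$ is a single edge, then Definition \ref{maps of xi}\eqref{maps of xi interchange} forces $\phi_0(\nbhd(v)) = \{s\}$, so $d_S(\phi_0(-),s_0)$ is constant on $\nbhd(v)$ and the inequality holds trivially. If instead $\phi_1(v)$ contains a vertex, I would like to invoke Lemma \ref{yet another findroot lemma}, which identifies $\findroot_{s_0}(\cstar_v \to R \to S)$ with $\findroot_{r_0}(\cstar_v \to R) = \out(v)$; but that lemma assumes $\image(\phi) = S$, which need not hold.

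To remove this hypothesis I would corestrict: put $S' := \image(\phi)$ and $s_0' := \findroot_{s_0}(S'\hookrightarrow S)$, which lies in $\legs(S')$ by Lemma \ref{subgraph distance addititvity}, and regard $\phi$ as a map $R \to S'$ with $\image = S'$. Applying Lemma \ref{subgraph distance addititvity} to $S'\hookrightarrow S$, for every $s\in \edge(S')$ one has $d_S(s,s_0) = d_{S'}(s, s_0') + d_S(s_0', s_0)$ (using that distance in a subgraph is the restriction of the ambient distance), a shift by a constant independent of $s$. Since $\phi_0(\legs(R)) \subseteq \legs(S')$ by Proposition \ref{legs interchange}, this shift shows that minimizing $d_S(\phi_0(-),s_0)$ and minimizing $d_{S'}(\phi_0(-), s_0')$ agree on $\legs(R)$, whence $\findroot_{s_0'}(\phi\colon R\to S') = r_0$; the same shift shows that the desired inequality in $S$ is equivalent to the corresponding inequality in $S'$ with root $s_0'$. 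Now Lemma \ref{yet another findroot lemma}, applied to $\phi\colon R \to S'$, yields $\findroot_{s_0'}(\cstar_v \to R \to S') = \out(v)$, i.e. $\out(v)$ minimizes $d_{S'}(\phi_0(-), s_0')$ over $\nbhd(v)$, which is precisely what is needed. This proves $\lifting(\phi)$ is oriented.

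The main obstacle is exactly this last reduction: Lemma \ref{yet another findroot lemma} is available only for image-surjective maps, so the crux is to confirm that corestricting to $\image(\phi)$ both preserves the find-root datum $\findroot_{s_0'}(\phi) = r_0$ and translates the orientation inequalities faithfully—both of which are delivered by the additivity in Lemma \ref{subgraph distance addititvity}. Everything else is routine bookkeeping with the two edge-identity isomorphisms $f_R$ and $f_S$.
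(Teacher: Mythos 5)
Your proposal is correct and follows essentially the same route as the paper's proof: reduce commutativity to the orientedness of $(\phi_0,\phi_1)$ viewed as a map $\rootify(R,r_0)\to\rootify(S,s_0)$, dispose of the case $\phi_1(v)=|_s$ trivially, and in the remaining case corestrict to $\image(\phi)$ (with new root $\phi_0(r_0)$, via Remark~\ref{image find root}) so that Lemma~\ref{yet another findroot lemma} applies. The only cosmetic difference is that you re-derive the needed distance additivity directly from Lemma~\ref{subgraph distance addititvity}, where the paper invokes its packaged form, Lemma~\ref{nested subgraphs lemma}.
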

If $\phi \in \Xi(R,S) \setminus \Xi^0(R,S)$, we will use $\lifting_{s_0}(\phi)$ as shorthand for $\lifting_{\findroot_{s_0}(\phi), s_0}(\phi)$.
\begin{proof}
Let $\phi : R\to S$ be in $\findroot_{s_0}^{-1}(r_0)$.
To see that the functions 
\begin{align*}
	\phi_0 : \edge(R) &\to \edge(S) \\
	\phi_1 : \vertex(R) &\to \sbgph(S)
\end{align*}
determine a map $\rootify(R,r_0) \to \rootify(S,s_0)$ in $\Omega$, we just need to establish that, for each $v\in \vertex(R)$, we have $\phi_0(\out(v)) = \out(\phi_1(v))$. 
There is nothing to prove in the case $\phi_1(v)$ is a single edge.
For concision, write $\bar r_0$ for the element (see Remark~\ref{image find root})
\[ \bar r_0 := \phi_0(r_0) = \phi_0(\findroot_{s_0}(\phi)) = \findroot_{s_0}(\image(\phi) \hookrightarrow S) \in \legs(\image(\phi)) = \phi_0(\legs(R)). \]
In the case when $\phi_1(v)$ is not an edge, we have
\begin{align*}
	\out(\phi_1(v)) &= \findroot_{s_0}(\phi_1(v) \hookrightarrow S) \\
	&= \findroot_{\bar r_0}(\phi_1(v) \hookrightarrow \image(\phi)) & \text{Lemma \ref{nested subgraphs lemma}} \\
	&= \phi_0(\findroot_{\bar r_0}(\cstar_v \hookrightarrow R \to \image(\phi)) & \text{Remark \ref{image find root}} \\
	&= \phi_0(\findroot_{r_0} (\cstar_v \hookrightarrow R)) & \text{Lemma \ref{yet another findroot lemma}} \\
	&= \phi_0(\out(v)). & \text{Definition~\ref{treeificiation}}
\end{align*}
\end{proof}

\begin{lemma}\label{amalgamate different summands}
	If $r_0 \neq r_1 \in \legs(R)$, then
	\[
		\amalgamate_{r_0,s_0} \Big( \Omega(\rootify(R,r_0), \rootify(S,s_0)) \Big) \cap \amalgamate_{r_1,s_0} \Big( \Omega(\rootify(R,r_1), \rootify(S,s_0)) \Big) \subseteq \Xi^0(R,S).
	\]
\end{lemma}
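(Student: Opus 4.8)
The plan is to reduce the statement to a uniqueness property of the find-root function. I will prove the following claim: if a map $\phi \in \Xi(R,S)\setminus\Xi^0(R,S)$ lies in the image of $\amalgamate_{r_0,s_0}$, then necessarily $\findroot_{s_0}(\phi) = r_0$. Granting this, the lemma is immediate. Any $\phi$ in the displayed intersection that is \emph{not} in $\Xi^0(R,S)$ would satisfy both $\findroot_{s_0}(\phi)=r_0$ and $\findroot_{s_0}(\phi)=r_1$, contradicting $r_0\neq r_1$, since $\findroot_{s_0}$ is a function and hence $\findroot_{s_0}(\phi)$ is a single well-defined leg of $R$. Therefore every element of the intersection lies in $\Xi^0(R,S)$.

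To prove the claim, write $\phi = \amalgamate_{r_0,s_0}(g)$ for some oriented map $g : \rootify(R,r_0)\to\rootify(S,s_0)$ of $\Omega$. Since $f_R$ and $f_S$ are the identity on edges (Definition \ref{treeificiation}), we have $\phi_0 = g_0$, so it suffices to show that $r_0$ minimizes $d(g_0(-),s_0)|_{\legs(R)}$. Fix an arbitrary leg $\ell\neq r_0$ of $R$. In the rooted tree $T=\rootify(R,r_0)$ the leg $r_0$ is the root and $\ell$ is a leaf, so let $\ell = e_0, v_1, e_1, \dots, v_n, e_n = r_0$ be the unique minimal path from $\ell$ to $r_0$ (Proposition \ref{minimal path prop}). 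At each vertex $v_i$ the output edge is the one nearer the root, namely $\out(v_i)=e_i$, while $e_{i-1}\in\inp(v_i)$; this follows from the distance-minimizing characterization of $\out$ in Definition \ref{treeificiation}.

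Applying the defining inequality of an oriented map (Definition \ref{dendroidal category}) at each $v_i$ gives
\[
	d(g_0(e_i),s_0) = d(g_0(\out(v_i)),s_0) \leq d(g_0(e_{i-1}),s_0),
\]
and chaining these down the path yields $d(g_0(r_0),s_0)\leq d(g_0(\ell),s_0)$. As $\ell$ was arbitrary, $r_0$ minimizes $d(g_0(-),s_0)|_{\legs(R)}$. Because $\phi\notin\Xi^0(R,S)$, the restriction $\phi_0|_{\legs(R)}$ is injective (the contrapositive of the special case of Lemma \ref{same value}), so this minimizer is unique and equals $\findroot_{s_0}(\phi)$; thus $\findroot_{s_0}(\phi)=r_0$, proving the claim.

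The only genuine subtlety — and the step I would be most careful about — is the uniqueness of the minimizing leg. I must combine the injectivity of $\phi_0|_{\legs(R)}$ for $\phi\notin\Xi^0$ (Lemma \ref{same value}) with the uniqueness furnished by Lemma \ref{subgraph distance addititvity} applied to $\image(\phi)\hookrightarrow S$, so that ``$r_0$ minimizes'' genuinely pins down $\findroot_{s_0}(\phi)=r_0$ rather than merely exhibiting $r_0$ as one of several minimizers. Everything else is a routine propagation of the orientation inequality along a path.
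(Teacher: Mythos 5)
Your proof is correct and takes the same route as the paper: the paper's entire proof is the one-line assertion that any $\phi \in \amalgamate_{r_i,s_0}\bigl(\Omega(\rootify(R,r_i), \rootify(S,s_0))\bigr) \setminus \Xi^0(R,S)$ satisfies $\findroot_{s_0}(\phi) = r_i$, which is precisely the claim you establish. You have simply filled in the details (propagating the orientation inequality along the minimal path, and the uniqueness of the minimizing leg via Lemma \ref{same value} and Lemma \ref{subgraph distance addititvity}) that the paper leaves implicit.
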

\begin{proof}
If $\phi \in \amalgamate_{r_i,s_0} \Big( \Omega(\rootify(R,r_i), \rootify(S,s_0)) \Big) \setminus \Xi^0(R,S)$, then $\findroot_{s_0}(\phi) = r_i$.
\end{proof}

The following proposition describes precisely how the functions $\lifting_{r_0,s_0}$ behave with respect to composition in $\Xi$.
Special cases of the first part have already appeared in Lemma \ref{yet another findroot lemma} and Lemma \ref{nested subgraphs lemma}.

\begin{proposition}\label{functoriality of lifting}
Consider a composite
\[
	R \overset\psi\to U \overset\phi\to S
\]
in $\Xi$, and let $s_0 \in \legs(S)$.
Suppose that $\phi \circ \psi \notin \Xi^0(R,S)$.
If $u_0 = \findroot_{s_0}(\phi)$, then
\[
	\findroot_{u_0}(\psi) = \findroot_{s_0}(\phi \circ \psi)
\]
and 
\[
	\lifting_{s_0}(\phi) \circ \lifting_{u_0}(\psi) = \lifting_{s_0}(\phi \circ \psi).
\]
\end{proposition}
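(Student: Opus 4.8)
The plan is to prove both assertions at once by transporting the whole statement into $\Omega$, where $\amalgamate$ and $\lifting$ turn it into an equality of oriented maps, and then to exploit the injectivity of $\amalgamate$ together with the disjointness of its images (Lemma~\ref{amalgamate different summands}). First I would clear away the well-definedness bookkeeping. Since a map that factors through $\eta$ has the property that any composite involving it also factors through $\eta$, the hypothesis $\phi\circ\psi \notin \Xi^0(R,S)$ forces both $\phi \notin \Xi^0(U,S)$ and $\psi \notin \Xi^0(R,U)$. Hence $u_0 = \findroot_{s_0}(\phi) \in \legs(U)$ is defined, and writing $r_1 := \findroot_{u_0}(\psi) \in \legs(R)$, the liftings $\lifting_{s_0}(\phi) = \lifting_{u_0,s_0}(\phi)$ and $\lifting_{u_0}(\psi) = \lifting_{r_1,u_0}(\psi)$ are both defined; crucially their types match (the target of the second and the source of the first are both $\rootify(U,u_0)$), so they compose to $g := \lifting_{s_0}(\phi)\circ \lifting_{u_0}(\psi) \in \Omega(\rootify(R,r_1), \rootify(S,s_0))$.

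The key computation is that $\amalgamate_{r_1,s_0}(g) = \phi\circ\psi$. This follows at once from functoriality of $\amalgamate$ (Proposition~\ref{functoriality of amalgamate}) applied to $g$, combined with the defining commuting square of $\lifting$ in Lemma~\ref{amalgamate splitting}, which gives $\amalgamate_{u_0,s_0}(\lifting_{s_0}(\phi)) = \phi$ and $\amalgamate_{r_1,u_0}(\lifting_{u_0}(\psi)) = \psi$.

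With this in hand I would establish the first claim by contradiction. Set $r_2 := \findroot_{s_0}(\phi\circ\psi)$, so that $\lifting_{s_0}(\phi\circ\psi) \in \Omega(\rootify(R,r_2), \rootify(S,s_0))$ satisfies $\amalgamate_{r_2,s_0}(\lifting_{s_0}(\phi\circ\psi)) = \phi\circ\psi$ by Lemma~\ref{amalgamate splitting}. Thus $\phi\circ\psi$ lies in the image of both $\amalgamate_{r_1,s_0}$ and $\amalgamate_{r_2,s_0}$. If $r_1 \neq r_2$, Lemma~\ref{amalgamate different summands} would place $\phi\circ\psi$ in $\Xi^0(R,S)$, contradicting the hypothesis; hence $r_1 = r_2$, which is precisely $\findroot_{u_0}(\psi) = \findroot_{s_0}(\phi\circ\psi)$.

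For the second claim, now that $r_1 = r_2$ the two elements $g$ and $\lifting_{s_0}(\phi\circ\psi)$ both lie in $\Omega(\rootify(R,r_1), \rootify(S,s_0))$ and have the same image $\phi\circ\psi$ under $\amalgamate_{r_1,s_0}$. Since $\amalgamate_{r_1,s_0}$ is a composite of the faithful inclusion $\iota$ with the bijection $\Xi(f_R^{-1}, f_S)$, it is injective, so $g = \lifting_{s_0}(\phi\circ\psi)$, which is the asserted identity $\lifting_{s_0}(\phi) \circ \lifting_{u_0}(\psi) = \lifting_{s_0}(\phi\circ\psi)$. I expect the only genuine obstacle to be the initial bookkeeping—confirming that all find-root values and liftings are defined and that the source/target objects align so that $g$ really is a morphism of $\Omega$; once the types are matched, the three cited results do all the work and no distance estimates are required.
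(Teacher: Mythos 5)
Your proposal is correct and follows essentially the same route as the paper's proof: both hinge on combining Proposition~\ref{functoriality of amalgamate} with the splitting property of Lemma~\ref{amalgamate splitting} to show $\amalgamate_{r_1,s_0}\bigl(\lifting_{s_0}(\phi)\circ\lifting_{u_0}(\psi)\bigr)=\phi\circ\psi$, and then read off both claims. Your explicit appeals to Lemma~\ref{amalgamate different summands} (for uniqueness of the root) and to injectivity of $\amalgamate_{r_1,s_0}$ (for the equality of liftings) are just slightly more spelled-out versions of steps the paper handles implicitly, so there is nothing to add.
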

\begin{proof}
Set $u_0 = \findroot_{s_0}(\phi)$ and $r_0 = \findroot_{u_0}(\psi)$. Omitting the subsccripts, we know that the diagram 
\[ \begin{tikzcd}
\findroot_{s_0}^{-1}(u_0) \times \findroot_{u_0}^{-1} (r_0) \arrow[dr, bend left=15] \arrow[d, "\lifting \times \lifting" swap]\\
\Omega(\rootify(U,u_0), \rootify(S,s_0)) \times \Omega(\rootify(R,r_0), \rootify(U,u_0)) \dar{\circ} \arrow{r}{\amalgamate \times \amalgamate} & 
\Xi(U, S) \times \Xi(R, U) \dar{\circ} \\
\Omega(\rootify(R,r_0), \rootify(S,s_0))\arrow{r}{\amalgamate}  & \Xi(R, S)
\end{tikzcd} \]
commutes by Proposition \ref{functoriality of amalgamate} and Lemma \ref{amalgamate splitting}. 
Further, $(\phi, \psi)$ is an element in the apex.
Thus \[ \phi \circ \psi \in \amalgamate_{r_0,s_0}\Big( \Omega(\rootify(R,r_0), \rootify(S,s_0))\Big),\] so $\findroot_{s_0} (\phi \circ \psi) = r_0$.
Thus we have established the first statement.

The second statement is now immediate since the underlying maps of $\lifting_{s_0}(\phi)$, $\lifting_{u_0}(\psi)$, and $\lifting_{s_0}(\phi \circ \psi)$ are just $\phi_i$, $\psi_i$, and $(\phi \circ \psi)_i$ by the proof of Lemma \ref{amalgamate splitting}.
\end{proof}

\begin{remark}\label{remark rootify as functor}
	Let $S\in \Xi$, and let $\mathcal C \subseteq \Xi \downarrow S$ be the full subcategory with object set
	\[
		\coprod_{R\in\Xi} \Xi(R,S) \setminus \Xi^0(R,S).
	\]
	A restatement of the last part of the proof of Proposition~\ref{functoriality of lifting} is that for each $s_0\in S$, there is a functor $\mathcal C \to \Omega \downarrow \rootify(S,s_0)$ which on objects sends $\phi$ to $\lifting_{s_0}(\phi)$.
	It is possible to extend this functor to the larger full subcategory $\mathcal C \subseteq \mathcal C' \subseteq \Xi \downarrow S$ which includes the objects $\Xi(\eta, S)$, so that $\mathcal C' \to \Omega \downarrow \rootify(S,s_0) \overset{\iota}\to \Xi \downarrow S$ is isomorphic to the inclusion. It is not generally possible to extend the functor to all of $\Xi \downarrow S$.
\end{remark}

If $T$ and $T'$ are rooted trees, let $\Omega^0(T,T') \subseteq \Omega(T,T')$ denote the subset of oriented maps which factor through $\eta$. Notice that every morphism of $\Xi$ from $T$ to $T'$ that factors through $\eta$ is automatically oriented, so $\Omega^0(T,T') = \Xi^0(T,T')$; we make the distinction in notation only for emphasis.

For the remainder of the section, if $s_0 \in \legs(S)$, we will write 
\[
\amalgamate_{s_0}: \coprod\limits_{r \in \legs(R)} \Omega(\rootify(R,r), \rootify(S,s_0)) \to \Xi(R, S) \]
for the coproduct of the $\amalgamate_{r,s_0}$.

\begin{theorem}\label{structure from rooting}
Suppose that $s_0 \in \legs(S)$.
The function $\amalgamate_{s_0}$
restricts to a bijection 
\begin{align*}
	\amalgamate_{s_0}^{nc} \colon \coprod_{\legs(R)} \Big( \Omega(\rootify(R,r), \rootify(S,s_0)) \setminus \Omega^0(\rootify(R,r), \rootify(S,s_0)) \Big) & \to  \Xi(R,S) \setminus \Xi^0(R,S) \\
	&=  \coprod_{\legs(R)} \findroot_{s_0}^{-1}(r).
\end{align*}
\end{theorem}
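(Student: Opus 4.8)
The plan is to treat $\amalgamate_{s_0}^{nc}$ as the evident restriction of the coproduct map $\amalgamate_{s_0}$ and to verify the three things this requires: that the restriction is well-defined (i.e.\ genuinely lands in $\Xi(R,S)\setminus\Xi^0(R,S)$), that it is surjective, and that it is injective. The equality $\Xi(R,S)\setminus\Xi^0(R,S)=\coprod_{\legs(R)}\findroot_{s_0}^{-1}(r)$ appearing on the right is not something to prove but is simply the fibre decomposition of the function $\findroot_{s_0}$ from Definition~\ref{def find root}. The substance of the argument is an organized assembly of the preceding lemmas.

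First I would record two elementary facts about a single summand $\amalgamate_{r,s_0}$. Since it is by construction the composite of the subcategory inclusion $\iota$ (which is faithful, hence injective on morphism sets) with the bijection $\Xi(f_R^{-1},f_S)$, each $\amalgamate_{r,s_0}$ is injective. Next, because $f_R$ and $f_S$ are isomorphisms that are the identity on edges, conjugation by them preserves and reflects the property of factoring through $\eta$; combining this with the identity $\Omega^0=\Xi^0$ noted just before the theorem (and injectivity of $\iota$) shows that $\amalgamate_{r,s_0}(g)\in\Xi^0(R,S)$ if and only if $g\in\Omega^0(\rootify(R,r),\rootify(S,s_0))$. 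In particular $\amalgamate_{r,s_0}$ restricts to a map from $\Omega(\rootify(R,r),\rootify(S,s_0))\setminus\Omega^0$ into $\Xi(R,S)\setminus\Xi^0(R,S)$, and taking the coproduct over $r\in\legs(R)$ yields the well-definedness of $\amalgamate_{s_0}^{nc}$.

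For surjectivity, given $\phi\in\Xi(R,S)\setminus\Xi^0(R,S)$ I would set $r=\findroot_{s_0}(\phi)$, so that $\phi\in\findroot_{s_0}^{-1}(r)$. The splitting function of Lemma~\ref{amalgamate splitting} then produces $\lifting_{r,s_0}(\phi)\in\Omega(\rootify(R,r),\rootify(S,s_0))$ with $\amalgamate_{r,s_0}(\lifting_{r,s_0}(\phi))=\phi$. Since $\phi\notin\Xi^0(R,S)$ and $\amalgamate_{r,s_0}$ reflects membership in $\Xi^0$, the lift cannot lie in $\Omega^0$; hence it is a genuine element of the $r$-summand of the source of $\amalgamate_{s_0}^{nc}$ mapping onto $\phi$.

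Injectivity splits into two cases. If two elements lie in the same summand $r$, they agree because $\amalgamate_{r,s_0}$ is injective. If they lie in distinct summands $r\neq r_1$, then their common image $\phi$ lies in $\amalgamate_{r,s_0}(\Omega(\cdots))\cap\amalgamate_{r_1,s_0}(\Omega(\cdots))$, which by Lemma~\ref{amalgamate different summands} is contained in $\Xi^0(R,S)$; but $\phi$ is the image of a non-constant oriented map and so $\phi\notin\Xi^0(R,S)$, a contradiction. Thus distinct summands have disjoint images away from $\Xi^0$, and $\amalgamate_{s_0}^{nc}$ is a bijection. There is no genuinely hard step once Lemmas~\ref{amalgamate splitting} and~\ref{amalgamate different summands} are in hand; the one point demanding care is the interface between well-definedness and surjectivity, namely the equivalence ``$\amalgamate_{r,s_0}(g)$ is non-constant $\iff$ $g$ is non-constant'', since this is exactly what simultaneously justifies restricting the codomain and guarantees that the section $\lifting$ lands in the punctured source.
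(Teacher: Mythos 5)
Your proof is correct, and its skeleton coincides with the paper's: both arguments rest on Lemma~\ref{amalgamate splitting}, whose section $\lifting_{s_0}$ immediately gives surjectivity onto $\Xi(R,S)\setminus\Xi^0(R,S)$ once one knows that $\amalgamate_{r,s_0}$ preserves and reflects membership in $\Xi^0$ (a point you rightly flag as the one place requiring care, and which the paper uses implicitly when asserting that the lift of a non-constant map is non-constant). The only real divergence is the injectivity step. The paper reads injectivity off from the commutativity of the top triangle in its diagram: $\lifting_{s_0}\circ\amalgamate_{s_0}^{nc}$ equals the inclusion of the punctured coproduct, so $\amalgamate_{s_0}^{nc}$ admits a retraction and is injective in one stroke. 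You instead argue summand-by-summand, using injectivity of each $\amalgamate_{r,s_0}$ (as a composite of the faithful $\iota$ with a bijection) within a summand, and Lemma~\ref{amalgamate different summands} to separate distinct summands away from $\Xi^0(R,S)$. Both are valid and of comparable length; the paper's version has the mild advantage of not needing Lemma~\ref{amalgamate different summands} at all (that lemma is never cited in the paper's proof of this theorem), while yours makes the disjointness of the images of the different summands explicit rather than leaving it as a consequence of the uniqueness clause in Lemma~\ref{amalgamate splitting}.
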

\begin{proof}
There is a diagram 
\[ \begin{tikzcd}
 \coprod\limits_{\legs(R)} \Big( \Omega(\rootify(R,r), \rootify(S,s_0)) \setminus \Omega^0(\rootify(R,r), \rootify(S,s_0)) \Big)  \rar{\amalgamate_{s_0}^{nc}}\dar[hook]  &
 \Xi(R,S) \setminus \Xi^0(R,S) \arrow[dl, "\lifting_{s_0}" description] \dar[hook]
 \\
 \coprod\limits_{\legs(R)} \Omega(\rootify(R,r), \rootify(S,s_0)) \rar{\amalgamate_{s_0}} 
 & \Xi(R,S).
\end{tikzcd} \]
The bottom triangle commutes by Lemma \ref{amalgamate splitting}. Given $\psi : R \to S$, $\lifting_{s_0}(\psi) : \rootify(R,r_0) \to \rootify(S,s_0)$ is the unique map of $\Omega$ so that $\amalgamate_{s_0} (\lifting_{s_0}(\psi)) = \psi$. If $\psi = \amalgamate_{s_0}(\phi : \rootify(R,r_0) \to \rootify(S,s_0))$, then certainly $\phi$ satisfies the condition to be $\lifting_{s_0}(\amalgamate_{s_0}(\phi))$. Thus the top triangle commutes.

Since the left vertical map is injective, so is $\amalgamate_{s_0}^{nc}$. Further, if $\phi : R \to S$ is not constant, then 
\[
	\lifting_{s_0}(\phi) \in \Omega(\rootify(R, \findroot_{s_0}(\phi)), \rootify(S,s_0))
\]
is also not constant, so $\amalgamate_{s_0}^{nc}$ is surjective.
\end{proof}

\begin{corollary}\label{iso decompositions}
	We have, for each $s_0 \in \legs(S)$
	\[
		\Iso_{\Xi}(R,S) \cong \coprod_{\legs(R)} \Iso_{\Omega}(\rootify(R,r), \rootify(S,s_0))
	\]
	via $\amalgamate_{s_0}$.
	Specializing to the case $R = S$, we have
	\[
		\Aut_{\Xi}(S) \cong \coprod_{s\in\legs(S)} \Iso_{\Omega}(\rootify(S,s), \rootify(S,s_0)).
	\]
\end{corollary}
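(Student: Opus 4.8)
The plan is to deduce this corollary by restricting the bijection $\amalgamate_{s_0}^{nc}$ of Theorem~\ref{structure from rooting} to the isomorphisms on each side. First I would observe that, for $R \neq \eta$, every isomorphism is non-constant: an isomorphism $\phi : R \to S$ has $\phi_0$ a bijection of edge sets, so it cannot lie in $\Xi^0(R,S)$, whose members collapse all edges to one. Thus $\Iso_{\Xi}(R,S) \subseteq \Xi(R,S)\setminus\Xi^0(R,S)$, and likewise each $\Iso_{\Omega}(\rootify(R,r),\rootify(S,s_0))$ sits inside $\Omega(\rootify(R,r),\rootify(S,s_0))\setminus\Omega^0(\rootify(R,r),\rootify(S,s_0))$. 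The degenerate case $R=\eta$ forces $S=\eta$, and both sides collapse to a single identity, which I would dispatch by hand. So it suffices to show that $\amalgamate_{s_0}^{nc}$ carries the isomorphisms on the source onto the isomorphisms on the target.

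For the forward inclusion I would note that, by Definition~\ref{treeificiation}, $\amalgamate_{r,s_0}$ is the subcategory inclusion $\iota$ followed by the conjugation $\Xi(f_R^{-1},f_S)$ by the canonical isomorphisms $f_R,f_S$. Both a functor and conjugation by isomorphisms preserve invertibility, so $\amalgamate_{r,s_0}$ sends $\Iso_{\Omega}(\rootify(R,r),\rootify(S,s_0))$ into $\Iso_{\Xi}(R,S)$.

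The substance lies in the reverse inclusion, which is the main obstacle. Given $\phi \in \Iso_{\Xi}(R,S)$, set $r=\findroot_{s_0}(\phi)$ and $g=\lifting_{s_0}(\phi)\in\Omega(\rootify(R,r),\rootify(S,s_0))$, so that $\amalgamate_{r,s_0}(g)=f_S\,\iota(g)\,f_R^{-1}=\phi$. Then $\iota(g)$ is an isomorphism of $\Xi$; since $g$ is already oriented, the only point remaining is that the $\Xi$-inverse of $g$ is again oriented, so that $g$ is invertible inside the subcategory $\Omega$. The key lemma I would isolate is that an oriented $\Xi$-isomorphism $g:T\to T'$ of rooted trees sends the root $r_T$ to the root $r_{T'}$. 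To see this, let $w$ be the vertex adjacent to $r_T$, so $\out(w)=r_T$; orientation then forces $g_0(r_T)$ to minimize $d(-,r_{T'})$ over $\nbhd(g_0(w))=g_0(\nbhd(w))$, that is, $g_0(r_T)=\out(g_0(w))$. Since $g_0(r_T)$ is a leg and, in a rooted tree, the only leg equal to the output of its adjacent vertex is the root itself (any other leg is a dead end, hence an input), we conclude $g_0(r_T)=r_{T'}$. Because a $\Xi$-isomorphism preserves distances, $d(g_0(e),r_{T'})=d(e,r_T)$ for every edge $e$, so $g$ preserves the height function; applying the same reasoning to $g^{-1}$, which now sends $r_{T'}$ to $r_T$, shows $g^{-1}$ is height-preserving and hence oriented. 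Thus $g\in\Iso_{\Omega}$, completing the bijection.

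Finally, the decomposition of $\Aut_{\Xi}(S)$ is the special case $R=S$: reading $\Aut_{\Xi}(S)=\Iso_{\Xi}(S,S)$ and reindexing the coproduct over $\legs(S)$ gives the stated formula.
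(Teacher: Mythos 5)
Your proof is correct and follows the same overall route as the paper: reduce to the bijection $\amalgamate_{s_0}^{nc}$ of Theorem~\ref{structure from rooting} after noting that isomorphisms are never constant (the vertex-free case being trivial). The paper's own proof stops there, asserting the result "by taking subsets," and thus leaves implicit exactly the point you isolate: since $\Omega$ is a \emph{non-full} subcategory of $\Xi$, one must still check that an oriented map which is invertible in $\Xi$ has an oriented inverse, i.e.\ that $\lifting_{s_0}$ carries $\Iso_\Xi(R,S)$ into $\Iso_\Omega$, not merely into oriented maps underlying $\Xi$-isomorphisms. Your root-preservation lemma (an oriented $\Xi$-isomorphism of rooted trees sends root to root, hence preserves the distance-to-root function, hence has oriented inverse) closes this gap correctly; the identification $\out(w)=\ord^w(0)$ and the uniqueness of the minimizer are consistent with Definition~\ref{dendroidal category}, and the observation that in a rooted tree the only leg equal to the output of its adjacent vertex is the root is sound. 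An alternative that stays closer to the paper's existing toolkit: for an isomorphism $\phi$ one has $\findroot_{s_0}(\phi)=\phi_0^{-1}(s_0)$ and $\findroot_{r_0}(\phi^{-1})=s_0$, so applying Proposition~\ref{functoriality of lifting} to $\id_S=\phi\circ\phi^{-1}$ and $\id_R=\phi^{-1}\circ\phi$ exhibits $\lifting_{r_0}(\phi^{-1})$ as a two-sided inverse of $\lifting_{s_0}(\phi)$ inside $\Omega$, avoiding any new distance argument. Either way the corollary follows; your write-up supplies a verification the paper compresses into a single clause.
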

\begin{proof}
	The statement is trivial if $S$ (and hence $R$) does not have a vertex.
	Otherwise, isomorphisms are not constant, so this follows from Theorem \ref{structure from rooting} by taking subsets.
\end{proof}

\begin{example}
	Consider the tree $S$ with $\edge(S) = \legs(S) = \{ e_1, e_2, e_3 \}$ and $\vertex(S) = \{ v\}$.
We can apply Corollary \ref{iso decompositions} to reveal some of the structure of $\Aut_{\Xi}(S) = \Sigma_3$, but $\amalgamate_{e_i}$ as $i$ varies do not behave well together.
For example, under the composite
\[ \begin{tikzcd}
\Iso_\Omega\Big(\rootify(S,e_1), \rootify(S,e_2)\Big) \dar[hook] \rar[equal] & \Big\{ \phi, \psi \Big\} \\
\coprod\limits_{i=1}^3 \Iso_\Omega\Big(\rootify(S,e_i), \rootify(S,e_2)\Big) \rar{\amalgamate_{e_2}}[swap]{\cong}
& 
\Aut_{\Xi}(S) \rar{\amalgamate_{e_1}^{-1}}[swap]{\cong}
& 
\coprod\limits_{j=1}^3 \Iso_\Omega\Big(\rootify(S,e_j), \rootify(S,e_1)\Big)
\end{tikzcd} \]
(where, say, $\phi_0(e_1) = e_2, \phi_0(e_2) = e_1, \phi_0(e_3) = e_3, \psi_0(e_1) = e_2, \psi_0(e_2) = e_3, \psi_0(e_3) = e_1$) $\phi$ and $\psi$ map to different coproduct summands.
The morphism $\phi$ lands in the $j=2$ component and $\psi$ lands in the $j=3$ component.

Let us rephrase this.
The bottom line of this diagram may be identified with the following:
\[ \begin{tikzcd}
\displaystyle \coprod_{[\pi] \in (13)\backslash \Sigma_3 } [\pi] \rar{\cong} & \Sigma_3 \rar{\cong} & \displaystyle \coprod_{[\pi'] \in (23)\backslash \Sigma_3 } [\pi']
\end{tikzcd} \]
where the coproducts are indexed by right cosets.
So the preceding paragraph reflects that $\amalgamate_{e_2}$ and $\amalgamate_{e_1}$ perform right coset decompositions for different stabilizer subgroups.
\end{example}

\begin{corollary}\label{structure from rooting nonlinear}
	If $R$ is non-linear, then \[
		\amalgamate_{s_0} \colon \coprod_{\legs(R)} \Omega(\rootify(R,r), \rootify(S,s_0)) \to \Xi(R,S)
	\]
	is an isomorphism, with inverse given by $\lifting_{s_0}$.
\end{corollary}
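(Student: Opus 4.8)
The plan is to deduce this directly from Theorem~\ref{structure from rooting} by observing that the non-linearity of $R$ forces all of the `constant' (factoring-through-$\eta$) subsets appearing there to be empty, so that the restricted bijection $\amalgamate_{s_0}^{nc}$ already has the domain and codomain claimed in the corollary.

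First I would record the two emptiness statements. On the target side, since $R$ is non-linear we have $\Xi^0(R,S) = \varnothing$ by the dichotomy stated just before Definition~\ref{def find root} (only linear $R$ admit maps factoring through $\eta$), hence $\Xi(R,S) \setminus \Xi^0(R,S) = \Xi(R,S)$. On the source side, each $\rootify(R,r)$ has the same underlying graph as $R$: by Definition~\ref{treeificiation} it has the same edges, vertices, and neighborhoods, only reordered and oriented, so its vertices have the same valences and it is again non-linear. Therefore $\Omega^0(\rootify(R,r),\rootify(S,s_0)) = \Xi^0(\rootify(R,r),\rootify(S,s_0)) = \varnothing$ for every $r \in \legs(R)$, using the earlier remark that the two notions of constant map agree for rooted trees.

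With these identifications in hand, the coproduct $\coprod_{\legs(R)}\big(\Omega(\rootify(R,r),\rootify(S,s_0)) \setminus \Omega^0(\rootify(R,r),\rootify(S,s_0))\big)$ is literally $\coprod_{\legs(R)}\Omega(\rootify(R,r),\rootify(S,s_0))$, and $\amalgamate_{s_0}^{nc}$ is by construction the restriction of $\amalgamate_{s_0}$ to this subset. Since the domain and codomain now coincide with those of $\amalgamate_{s_0}$, we conclude $\amalgamate_{s_0} = \amalgamate_{s_0}^{nc}$, which is a bijection by Theorem~\ref{structure from rooting}. Finally, the claim that $\lifting_{s_0}$ is the inverse is read off from the commuting diagram in the proof of that theorem, where $\lifting_{s_0}$ is exhibited as the two-sided inverse of $\amalgamate_{s_0}^{nc}$.

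There is essentially no hard computation here; the only point requiring care, and the step I would treat as the main obstacle, is justifying that rootification preserves non-linearity and that `factoring through $\eta$' as a map of $\Omega$ coincides with `factoring through $\eta$' as a map of $\Xi$, so that the emptiness on the source side is legitimate. Both are immediate from the definitions, but they are exactly the hinges that let Theorem~\ref{structure from rooting} upgrade from a statement about non-constant maps to a statement about all maps.
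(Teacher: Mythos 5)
Your proposal is correct and follows the same route as the paper, whose entire proof is the single observation that $\Xi^0(R,S) = \varnothing$ when $R$ is non-linear, after which Theorem~\ref{structure from rooting} gives the bijection. Your extra care about the source side (that $\rootify(R,r)$ is again non-linear, so the sets $\Omega^0$ are also empty) is a detail the paper leaves implicit, but it is the same argument.
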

\begin{proof}
	If $R$ is non-linear, then $\Xi^0(R,S) = \varnothing$.
\end{proof}

\begin{corollary}\label{structure from rooting linear}
	If $R \neq \eta$ is a linear graph with at least one vertex, then $\legs(R) = \{ r_0 ,r_1 \}$
	and
	\[
		\amalgamate_{s_0} \colon \Omega(\rootify(R,r_0), \rootify(S,s_0)) \amalg \Omega(\rootify(R,r_1), \rootify(S,s_0)) \to \Xi(R,S)
	\]
	satisfies
	\[
		| \amalgamate_{s_0}^{-1}(\phi) | = \begin{cases}
			2 & \phi \in \Xi^0(R,S) \\
			1 & \phi \notin \Xi^0(R,S).
		\end{cases}
	\]
	If $R = \eta = L_0$, then $\amalgamate_{s_0}$ is a bijection.
\end{corollary}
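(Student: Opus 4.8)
The plan is to split the domain coproduct into its constant part (maps factoring through $\eta$) and its non-constant part, and to analyze the fiber over each $\phi$ separately. The single observation that makes everything run is that $\amalgamate_{r,s_0}$ does not disturb the underlying edge function: since $f_R$ and $f_S$ are the identity on edges (Definition~\ref{treeificiation}) and $\amalgamate_{r,s_0} = \Xi(f_R^{-1},f_S)\circ\iota$, one has $\amalgamate_{r,s_0}(g)_0 = g_0$ for every $g$. In particular $\amalgamate_{r,s_0}$ carries constant maps to constant maps and non-constant maps to non-constant maps (using $\Omega^0 = \Xi^0$ together with the factoring criterion of Lemma~\ref{same value}, namely that a map is constant exactly when $g_0$ fails to be injective on legs). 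Moreover each $\amalgamate_{r,s_0}$ is injective, being the composite of the faithful inclusion $\iota$ with the bijection $\Xi(f_R^{-1},f_S)$. Consequently any fiber of $\amalgamate_{s_0}$ meets each of the two summands in at most one point, so $|\amalgamate_{s_0}^{-1}(\phi)| \le 2$ in all cases.

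For $\phi \notin \Xi^0(R,S)$, the remark above shows that every preimage of $\phi$ is non-constant, so its fiber under $\amalgamate_{s_0}$ coincides with its fiber under the restricted map $\coprod_{\legs(R)}\bigl(\Omega(\rootify(R,r),\rootify(S,s_0))\setminus\Omega^0(\rootify(R,r),\rootify(S,s_0))\bigr)\to \Xi(R,S)\setminus\Xi^0(R,S)$. By Theorem~\ref{structure from rooting} this restriction is a bijection, so the fiber is a single point and $|\amalgamate_{s_0}^{-1}(\phi)| = 1$.

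For $\phi \in \Xi^0(R,S)$, I would use that such a constant map is determined by its value $s\in\edge(S)$, and that, since $R \neq \eta$ is linear, $\legs(R) = \{r_0,r_1\}$ has exactly two elements. In each summand $\Omega(\rootify(R,r_i),\rootify(S,s_0))$ there is a unique constant map of value $s$: the rooted tree $\rootify(R,r_i)$ is linear, hence admits the unique degeneracy to $\eta$, which we postcompose with the map $\eta\to\rootify(S,s_0)$ selecting $s$. Since $\amalgamate_{r_i,s_0}$ preserves the edge value, this constant map is sent to $\phi$, giving exactly one preimage in each summand; by the injectivity noted above these exhaust the fiber, so $|\amalgamate_{s_0}^{-1}(\phi)| = 2$. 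Finally, when $R = \eta = L_0$ the index set $\legs(\eta)$ is the single edge, so the coproduct has one summand $\Omega(\eta,\rootify(S,s_0)) \cong \edge(S)$; as $\Xi(\eta,S) = \Xi^0(\eta,S) \cong \edge(S)$ and $\amalgamate_{s_0}$ preserves edge values, it is a bijection.

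The main thing to get right is the bookkeeping for the constant maps: one must verify that, restricted to the constant maps, $\amalgamate_{r_i,s_0}$ is a bijection onto $\Xi^0(R,S)\cong\edge(S)$ for each $i$, which reduces to the existence of the value-$s$ constant map in each $\Omega$-summand. That existence hinges precisely on the linearity of $\rootify(R,r_i)$ guaranteeing the degeneracy to $\eta$ in $\Omega$. Everything else is a matter of assembling Theorem~\ref{structure from rooting} with the edge-preservation and injectivity of $\amalgamate_{r,s_0}$, and no new geometric input beyond the earlier lemmas is required.
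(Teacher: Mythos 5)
Your proposal is correct and follows essentially the same route as the paper: both split $\amalgamate_{s_0}$ into its restriction to constant maps (identified, summand by summand, with $\edge(S)$, so that the constant part is two-to-one) and its restriction to non-constant maps (injective by Theorem~\ref{structure from rooting}). Your explicit verification that $\amalgamate_{r,s_0}$ preserves the underlying edge function, and your construction of the unique value-$s$ constant map in each $\Omega$-summand via the degeneracy to $\eta$, just spell out details the paper's proof leaves implicit.
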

\begin{proof}
In general, $\amalgamate_{s_0}$ splits as a coproduct of $\amalgamate_{s_0}^{nc}$ with 
\[ 
\amalgamate_{s_0}^c \colon 
\coprod\limits_{\legs(R)} \Omega^0 (\rootify(R,r), \rootify(S,s_0)) \to
 \Xi^0(R,S).
 \]
If $R$ is linear and has at least one vertex, then
\[ \begin{tikzcd}
\Omega^0(\rootify(R,r_0), \rootify(S,s_0)) \amalg \Omega^0(\rootify(R,r_1), \rootify(S,s_0)) \dar{\cong} \rar{\amalgamate_{s_0}^c} &  \Xi^0(R,S) \dar{\cong} \\
\edge(S) \amalg \edge(S) \rar{\id \amalg \id} & \edge(S)
\end{tikzcd} \]
is two-to-one
and $\amalgamate_{s_0}^{nc}$ is injective by Theorem \ref{structure from rooting}.
Hence the first statement is proved.

For the second statement, if $R = \eta$ then all maps $R \to S$ are constant, $\legs(R) = \{r\}$ has one element, and
\[ \amalgamate_{s_0} = \amalgamate_{s_0}^c \colon \Omega^0(\rootify(R,r), \rootify(S,s_0)) \to  \Xi^0(R,S) \]
is isomorphic to the identity map on $\edge(S)$.

\end{proof}

\section{A generalized Reedy structure on \texorpdfstring{$\Xi$}{Ξ}}

The category $\Delta$ of nonempty finite ordered sets is the prototypical example of a \emph{Reedy category}. The surjective (resp.\ injective) maps form a wide subcategory (i.e., a subcategory which contains all of the objects of the ambient category) $\Delta^-$ (resp.\ $\Delta^+$) of morphisms which lower (resp.\ raise) degrees, such that any map has a unique factorization $f = f^+ f^-$.
Numerous inductive techniques used in the theory of (co)simplicial objects actually work in diagrams indexed by arbitrary Reedy categories. 

Generalized Reedy categories were introduced in \cite{bm} and capture the dendroidal category $\Omega$ as an example, highlighting its similarities to $\Delta$. 
We will return to the theory of model structures on diagram categories index by a generalized Reedy category $\Rr$ in Section \ref{section bmr model structure}, but for now we show that $\Xi$ admits such a structure.

\begin{definition}[{\cite[Definition 1.1]{bm}}]\label{definition greedy}
A \emph{generalized Reedy structure} on a small category $\Rr$ consists of
\begin{itemize}
\item
wide subcategories $\Rr^+$ and $\Rr^-$, and
\item
a degree function $d: \Ob(\Rr) \to \mathbb{N}$
\end{itemize}
satisfying the following four axioms.
\begin{enumerate}[(i)]
\item
Non-invertible morphisms in $\Rr^+$ (resp., $\Rr^-$) raise (resp., lower) the degree.  Isomorphisms in $\Rr$ preserve the degree. \label{Reedy def degrees}
\item
$\Rr^+ \cap \Rr^- = \Iso(\Rr)$. \label{Reedy def iso}
\item
Every morphism $f$ of $\Rr$ factors as $f = gh$ with $g  \in \Rr^+$ and $h \in \Rr^-$. This
factorization is unique up to isomorphism in the sense that if $g'h'$ is another such factorization, then there is an isomorphism $\theta $ so that $\theta h=h'$ and $g=g'\theta$.\label{Reedy def factorization}
\item
If $\theta f=f$ for $\theta \in \Iso(\Rr)$ and $f\in \Rr^-$, then $\theta$ is an identity. \label{Reedy def left iso}
\end{enumerate} If, moreover, the condition
\begin{enumerate}[(iv')]
\item If $f \theta=f$ for $\theta \in \Iso(\Rr)$ and $f\in \Rr^+$, then $\theta$ is an identity \label{Reedy def right iso}
\end{enumerate}
holds, then we call this a generalized \emph{dualizable}  Reedy structure.
\end{definition}

An ordinary Reedy category is a generalized Reedy category where there are no isomorphisms other than the identity maps.

\begin{definition}\label{reedy structure def}
Consider the following structures on $\Xi$.
	\begin{itemize}
		\item The degree function $d : \Ob(\Xi) \to \mathbb N$ with $d(S) = |\vertex(S)|$.
		\item The wide subcategory $\Xi^+$ consisting of all maps $\phi \colon R \to S$ so that $\phi_0 \colon \edge(R) \to \edge(S)$ is injective.
		\item The wide subcategory $\Xi^-$ consisting of all maps $\phi \colon R \to S$ so that $\phi_0$ is surjective and, for each vertex $v \in \vertex(S)$, there exists a vertex $w\in \vertex(R)$ with $v\in \vertex(\phi_1(w))$.
	\end{itemize}
\end{definition}

This definition is chosen to be compatible with the known generalized Reedy structure\footnote{See \cite[Example 2.8]{bm} and the minor correction in \cite[p.~216]{bh2}.} on the dendroidal category $\Omega$ from Definition \ref{dendroidal category}, in the sense that the equalities $\Omega^+ = \Omega \cap \Xi^+$ and $\Omega^- = \Omega \cap \Xi^-$ hold, and the degree functions agree.
Both inner and outer cofaces from Definition \ref{cofaces and codegens} are in $\Xi^+$, while codegeneracies are in $\Xi^-$.
In fact, one can show that $\Xi^+$ is generated by the cofaces and $\Xi^-$ is generated by the codegeneracies, though we do not need this here.
Notice that
\begin{equation} \Xi^0(R,S) \cap \Xi^+(R,S) \cong \begin{cases} \edge(S) & \text{ if } R = \eta \\ \varnothing & \text{otherwise}\end{cases} \label{equation about constants and plus}\end{equation} 
is nonempty if and only if $R = \eta$. The following lemma is immediate.

\begin{lemma}\label{lifting for plus minus} Given $s_0 \in \edge(S)$, the map
\[
	\lifting : \Xi(R, S) \setminus \Xi^0(R,S) \to \coprod_{r\in \legs(R)} \Omega(\rootify(R,r), \rootify(S,s_0)) 
\]
restricts to maps
\begin{align*}
	\Xi^+(R, S) \setminus \Xi^0(R,S) &\to \coprod_{r\in \legs(R)} \Omega^+(\rootify(R,r), \rootify(S,s_0)) \\
	\Xi^-(R, S) \setminus \Xi^0(R,S) &\to \coprod_{r\in \legs(R)} \Omega^-(\rootify(R,r), \rootify(S,s_0)).
\end{align*}
\end{lemma}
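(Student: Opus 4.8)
The plan is to observe that $\lifting_{s_0}(\phi)$ carries exactly the same underlying combinatorial data as $\phi$, so that membership in $\Xi^+$ or $\Xi^-$ transfers verbatim. Concretely, recall from the proof of Lemma~\ref{amalgamate splitting} that for $\phi \in \Xi(R,S) \setminus \Xi^0(R,S)$ the morphism
\[
	\lifting_{s_0}(\phi) : \rootify(R, \findroot_{s_0}(\phi)) \to \rootify(S,s_0)
\]
is the map of $\Omega$ whose underlying edge function is $\phi_0$ and whose underlying vertex function is $\phi_1$; the only thing verified there is that this pair respects the chosen roots, i.e.\ that it is oriented. Since $\edge(\rootify(R,r)) = \edge(R)$ and $\vertex(\rootify(R,r)) = \vertex(R)$, and similarly for $S$, the image $\iota\lifting_{s_0}(\phi)$ in $\Xi$ is literally the pair $(\phi_0, \phi_1)$.

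First I would invoke the compatibility $\Omega^+ = \Omega \cap \Xi^+$ and $\Omega^- = \Omega \cap \Xi^-$ recorded just after Definition~\ref{reedy structure def}. With this, the claim reduces to checking that $\iota\lifting_{s_0}(\phi)$ lies in $\Xi^+$ (resp.\ $\Xi^-$) whenever $\phi$ does. But the defining conditions for $\Xi^\pm$ in Definition~\ref{reedy structure def} are phrased purely in terms of the edge function (injectivity of $\phi_0$ for $\Xi^+$; surjectivity of $\phi_0$ for $\Xi^-$) together with the vertex-hitting condition $v \in \vertex(\phi_1(w))$ for $\Xi^-$. All of these are conditions on the data $(\phi_0, \phi_1)$, which is exactly what is left unchanged by passing to $\lifting_{s_0}(\phi)$.

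Hence, if $\phi \in \Xi^+(R,S) \setminus \Xi^0(R,S)$, then $\phi_0$ is injective, so $\iota\lifting_{s_0}(\phi) = (\phi_0, \phi_1)$ has injective edge map and lies in $\Xi^+$; being a map of $\Omega$, it therefore lies in $\Omega \cap \Xi^+ = \Omega^+$. The argument for $\Xi^-$ is identical, using that the vertex sets of $\rootify(S,s_0)$ and $S$ (and of $\rootify(R,r)$ and $R$) coincide, so that the vertex-hitting condition reads the same before and after applying $\lifting_{s_0}$. I expect essentially no obstacle: the sole point requiring attention is the bookkeeping identification of the underlying functions of $\lifting_{s_0}(\phi)$ with $(\phi_0,\phi_1)$, which is precisely what was established while proving Lemma~\ref{amalgamate splitting}; the remainder is a tautological transfer of the $\Xi^\pm$ membership conditions, which is why the statement can be asserted as immediate.
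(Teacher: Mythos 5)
Your argument is correct and is precisely the justification the paper has in mind: the paper offers no proof at all, stating only that the lemma "is immediate," and your observation that $\lifting_{s_0}(\phi)$ has the same underlying functions $(\phi_0,\phi_1)$ while the conditions defining $\Xi^{\pm}$ depend only on those functions (together with $\Omega^{\pm}=\Omega\cap\Xi^{\pm}$) is exactly why. Nothing further is needed.
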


\begin{proposition}\label{proposition xi is reedy}
With the structure from Definition \ref{reedy structure def}, $\Xi$ is a dualizable generalized Reedy category.
\end{proposition}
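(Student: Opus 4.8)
The plan is to verify the four axioms of Definition~\ref{definition greedy} together with the dual axiom (iv'), reducing the generic situation to the known generalized Reedy structure on $\Omega$ by means of the rooting machinery of the previous section. Two facts make this reduction possible: first, the compatibility $\Omega^+ = \Omega \cap \Xi^+$ and $\Omega^- = \Omega \cap \Xi^-$ noted after Definition~\ref{reedy structure def}, so that for a \emph{non-constant} map $\phi$ we have $\phi \in \Xi^{\pm}$ if and only if $\lifting_{s_0}(\phi) \in \Omega^{\pm}$ (Lemma~\ref{lifting for plus minus}); and second, that $\rootify$ leaves the vertex set unchanged (Definition~\ref{treeificiation}), so that the degree of a tree equals the degree of any of its rootings and all degree comparisons transfer verbatim. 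I will also use repeatedly that, by Corollary~\ref{iso decompositions} and Theorem~\ref{structure from rooting}, a non-constant $\phi$ is invertible precisely when its lift $\lifting_{s_0}(\phi)$ is invertible in $\Omega$. Throughout, the maps factoring through $\eta$ (the set $\Xi^0$) must be treated separately, since the bijection of Theorem~\ref{structure from rooting} applies only off $\Xi^0$.

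The two isomorphism axioms (iv) and (iv') are the quickest. If $f \in \Xi^-$ and $\theta f = f$ with $\theta$ invertible, then comparing edge-maps gives $\theta_0 f_0 = f_0$; since $f_0$ is surjective (by definition of $\Xi^-$) we conclude $\theta_0 = \id$, whence $\theta = \id$ by Example~\ref{example unique iso}. Dually, if $f \in \Xi^+$ and $f\theta = f$, injectivity of $f_0$ forces $\theta_0 = \id$ and again $\theta = \id$. For axiom (ii), the inclusion $\Iso(\Xi) \subseteq \Xi^+ \cap \Xi^-$ is immediate; conversely a map in $\Xi^+ \cap \Xi^-$ has $\phi_0$ bijective, and if it is non-constant its lift lies in $\Omega^+ \cap \Omega^- = \Iso(\Omega)$, so $\phi$ is invertible, while the only constant map in $\Xi^+ \cap \Xi^-$ is $\id_\eta$ (using \eqref{equation about constants and plus}). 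Axiom (i) is handled the same way: isomorphisms preserve degree since they induce a bijection on vertices; a non-invertible map in $\Xi^+$ (resp.\ $\Xi^-$) which is non-constant lifts to a non-invertible map of $\Omega^+$ (resp.\ $\Omega^-$) and hence raises (resp.\ lowers) degree, and the constant cases are degenerate, since a constant map in $\Xi^+$ has domain $\eta$ by \eqref{equation about constants and plus} and a constant map in $\Xi^-$ has codomain $\eta$, so the degree inequalities can be read off directly.

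The substance is the factorization axiom (iii). For a non-constant $\phi \colon R \to S$, choose $s_0 \in \legs(S)$, set $r = \findroot_{s_0}(\phi)$, and factor the lift $\bar\phi = \lifting_{s_0}(\phi) \colon \rootify(R,r) \to \rootify(S,s_0)$ in $\Omega$ as $\bar\phi = \bar g \circ \bar h$ with $\bar g \in \Omega^+$ and $\bar h \in \Omega^-$, through a rooted tree $W$ with underlying unrooted tree $U$ and root $w_0$. Both $\bar g$ and $\bar h$ are necessarily non-constant, for a constant factor would make $\bar\phi$ factor through $\eta$. Pushing forward with $\amalgamate$ and invoking its functoriality (Proposition~\ref{functoriality of amalgamate}), the maps $h = \amalgamate_{r,w_0}(\bar h)$ and $g = \amalgamate_{w_0,s_0}(\bar g)$ satisfy $g \circ h = \amalgamate_{r,s_0}(\bar\phi) = \phi$; since $\amalgamate$ only conjugates by edge-identity isomorphisms it preserves the conditions defining $\Xi^{\pm}$, so $g \in \Xi^+$ and $h \in \Xi^-$. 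For uniqueness, a second such factorization $\phi = g' h'$ is again non-constant in both factors; Proposition~\ref{functoriality of lifting} turns it into an $\Omega$-factorization of $\bar\phi$, to which I apply the (up-to-isomorphism) uniqueness of factorizations in $\Omega$ to produce a comparison isomorphism $\bar\theta$, and $\theta = \amalgamate(\bar\theta)$ is then the required isomorphism, the relations $\theta h = h'$ and $g'\theta = g$ following once more from Proposition~\ref{functoriality of amalgamate} and Lemma~\ref{amalgamate splitting}.

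The main obstacle is exactly the locus $\Xi^0$ of maps factoring through $\eta$, where the clean dictionary with $\Omega$ breaks down. For the factorization of a constant $\phi$ one must show the middle object is forced to be $\eta$: a map in $\Xi^+$ sends each vertex to a subgraph that still contains a vertex (otherwise $\phi_0$ would identify the two edges at a bivalent vertex), while a constant map sends each vertex to a bare edge, so in any factorization $\phi = gh$ the subgraphs $h_1(v)$ can contain no vertex; as $h \in \Xi^-$ must cover every vertex of the middle tree, that tree has empty vertex set and hence equals $\eta$. This pins down both existence and (trivial) uniqueness in the constant case, and care of this kind—rather than any deep new idea—is what the proof really requires.
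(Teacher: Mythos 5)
Your proposal is correct and follows essentially the same strategy as the paper: handle the maps in $\Xi^0$ separately, and for everything else transfer each axiom to the known generalized Reedy structure on $\Omega$ via $\lifting$ and $\amalgamate$ (Lemma~\ref{lifting for plus minus}, Theorem~\ref{structure from rooting}, Propositions~\ref{functoriality of amalgamate} and~\ref{functoriality of lifting}), with axioms (iv) and (iv$'$) done directly from surjectivity/injectivity of $\phi_0$ and Example~\ref{example unique iso}. The only divergences are cosmetic: the paper proves the degree axiom by a direct count on the disjoint vertex sets $\vertex(\phi_1(v))$ rather than by lifting to $\Omega$, and you spell out the constant case of the factorization axiom (forcing the middle object to be $\eta$), which the paper dismisses with ``we may assume $\phi\notin\Xi^0(R,S)$.''
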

\begin{proof}
For \eqref{Reedy def degrees}: note that isomorphisms preserve degree.
If $\phi \colon R \to S \in \Xi^+$, then $\{ \vertex(\phi_1(v)) \}_{\vertex(R)}$ is a collection of pairwise disjoint, non-empty subsets of $\vertex(S)$. Thus
\[
	d(R) = |\vertex(R)| = \sum_{\vertex(R)} 1 \leq \sum_{v\in \vertex(R)} |\vertex(\phi_1(v))| \leq |\vertex(S)| = d(S).
\]
If $\phi \colon R \to S \in \Xi^-$, 
then surjectivity of $\phi_0$ implies (by Lemma \ref{neighborhoods of vertices}) that $|\vertex(\phi_1(v))| \leq 1$ for all $v \in \vertex(R)$.
Let $\vertex(S) \to \vertex(R)$ be the map which sends $v \in \vertex(S)$ to the (unique, by Definition \ref{maps of xi}\eqref{maps of xi separation}) vertex $w$ with $v \in \vertex(\phi_1(w))$.
Since there is at most one $v$ in a given $\phi_1(w)$, the map $\vertex(S) \to \vertex(R)$ is injective, hence $d(S) \leq d(R)$.

For \eqref{Reedy def iso}, it is clear that $\Iso(\Xi)$ is contained in $\Xi^+ \cap \Xi^-$.
For the reverse inclusion, suppose that $\phi : R\to S$ is in both $\Xi^+$ and $\Xi^-$.
If $\phi$ is constant, then by \eqref{equation about constants and plus} we must have $R = \eta$; since $\phi$ is also in $\Xi^-$, $S$ is also an edge, and $\phi$ is an isomorphism.
If $\phi$ is not constant, choose a root $s_0$ for $S$.
Then by Lemma \ref{lifting for plus minus} we know that $\lifting(\phi)$ is in \[ \Omega^+(\rootify(R,r_0), \rootify(S,s_0)) \cap \Omega^-(\rootify(R,r_0), \rootify(S,s_0)) = \Iso_{\Omega}(\rootify(R,r_0), \rootify(S,s_0))\]  where $r_0 = \findroot_{s_0}(\phi)$.
By Corollary \ref{iso decompositions}, we thus have that $\phi$ is an isomorphism in $\Xi$. Since $\phi$ was arbitrary, we have $\Xi^+ \cap \Xi^- \subseteq \Iso(\Xi)$ as well.

For \eqref{Reedy def left iso}, note that if $\theta$ is an isomorphism, $\phi \colon R \to S \in \Xi^-$, and $\theta \phi = \phi$, then $\theta_0 \phi_0 = \phi_0$. 
Since $\theta_0$ is a bijection of sets and $\phi_0$ is a surjection of sets, it follows that $\theta_0$ is an identity. 
There is only one isomorphism $S \to S$ in $\Xi$ which is the identity on edges (Example \ref{example unique iso}), hence $\theta = \id_S$. The proof that $\Xi$ satisfies (iv') follows similarly.

We finally turn to \eqref{Reedy def factorization}.
We first \emph{construct} a factorization of a given morphism of $\Xi$.
	We may assume that $\phi \in \Xi(R,S) \setminus \Xi^0(R,S)$.
	Pick a root $s_0$ for $S$, and consider
	\[
		\lifting(\phi) \in \Omega(\rootify(R,r_0), \rootify(S,s_0))
	\]
	where $r_0 = \findroot_{s_0}(\phi)$.
	Then there is a decomposition $\lifting(\phi) = g \circ h$
	with 
$g \in \Omega^+(T, \rootify(S,s_0))$ and $ h\in \Omega^-(\rootify(R,r_0), T)$.
We have
\[ \begin{tikzcd}
\rootify(R,r_0) \arrow[rr, bend left, "\lifting(\phi)"] \rar{h} & T \rar{g} & \rootify(S,s_0);
\end{tikzcd} \]
apply the functor $\iota$ to this diagram to get
\[ \begin{tikzcd}
\iota(\rootify(R,r_0)) \dar{\cong} \arrow[rr, bend left, "\iota(\lifting(\phi))"] \rar{\iota h} & \iota T \rar{\iota g} & \iota(\rootify(S,s_0)) \dar{\cong} \\
R \arrow[rr, "\amalgamate(\lifting(\phi)) = \phi"] & & S.
\end{tikzcd} \]
Since $\iota h \in \Xi^-$, $\iota g \in \Xi^+$, and isomorphisms are in $\Xi^+ \cap \Xi^-$, we have provided the desired decomposition of $\phi$.

Suppose that $\phi^1 \circ \psi^1 = \phi^2 \circ \psi^2$ with $\phi^i \in \Xi^+(U_i,S)$ and $\psi^i \in \Xi^-(R, U_i)$.
Let $u_i = \findroot_{s_0}(\psi^i)$ and $r_0 = \findroot_{u_1}(\phi^1) \overset{\ref{functoriality of lifting}}= \findroot_{u_2}(\phi^2)$.
We have, by Proposition \ref{functoriality of lifting},
\[
	\lifting_{u_1}(\phi^1) \circ \lifting_{s_0}(\psi^1) = \lifting_{u_2}(\phi^2) \circ \lifting_{s_0}(\psi^2).
\]
Now $\lifting_{u_i}(\phi^i) \in \Omega^+$ and $\lifting_{s_0}(\psi^i) \in \Omega^-$, so there exists an isomorphism $a$ making the diagram
\[ \begin{tikzcd}
\rootify(R,r_0) 
	\arrow[r, "\lifting_{u_1}(\psi^1)"]  
	\arrow[d, "\id", "=" swap] & 
\rootify(U_1,u_1) 
	\arrow[r, "\lifting_{s_0}(\phi^1)"]  
	\arrow[d, "a", "\cong" swap] &
\rootify(S,S_0)  
	\arrow[d, "\id", "=" swap] \\
\rootify(R,r_0) 
	\arrow[r, "\lifting_{u_2}(\psi^2)"]  & 
\rootify(U_2,u_2) 
	\arrow[r, "\lifting_{s_0}(\phi^2)"]  &
\rootify(S,S_0)  
\end{tikzcd} \]
commute.
Applying $\iota$ gives the back square of the diagram:
\[ \begin{tikzcd}[row sep=small, column sep=small]
\iota(\rootify(R,r_0)) 
	\arrow[rrrr, "\iota(\lifting_{u_1}(\psi^1))"] 
	\arrow[dr, "\cong"] 
	\arrow[ddd, "=" swap] &&&& 
\iota(\rootify(U_1,u_1)) 
	\arrow[rrrr, "\iota(\lifting_{s_0}(\phi^1))"] 
	\arrow[dr, "\cong"] 
	\arrow[ddd, "\iota(a)", "\cong" swap, near end] &&&&
\iota(\rootify(S,S_0)) 
	\arrow[dr, "\cong"] 
	\arrow[ddd, "=" swap, near end] 
\\ & 
R 
	\arrow[rrrr, "\psi^1", crossing over, near start] &&&& 
U_1 
	\arrow[rrrr, "\phi^1", crossing over, near start] &&&&
S
\\ \\
\iota(\rootify(R,r_0)) 
	\arrow[rrrr, "\iota(\lifting_{u_2}(\psi^2))", near end] 
	\arrow[dr, "\cong"] &&&& 
\iota(\rootify(U_2,u_2)) 
	\arrow[rrrr, "\iota(\lifting_{s_0}(\phi^2))", near end] 
	\arrow[dr, "\cong"] &&&&
\iota(\rootify(S,S_0)) 
	\arrow[dr, "\cong"] 
\\ & 
R 
	\arrow[rrrr, "\psi^2"] 
	\arrow[from=uuu, crossing over, "=" swap, near start] &&&& 
U_2 
	\arrow[rrrr, "\phi^2"] 
	\arrow[from=uuu, crossing over, dashed] &&&&
S
	\arrow[from=uuu, crossing over, "="] 
\end{tikzcd} \]
and there exists a dashed map making the diagram commute. This map is necessarily an isomorphism. Thus, the front face establishes uniqueness of decompositions in $\Xi$.

\end{proof}

A stronger statement is true, namely that $\Xi$ is an EZ-category in the sense of \cite[Definition 6.7]{bm}.
We delay a proof of this fact (Theorem~\ref{xi is ez category}) until the end of the next section, as we should first learn a bit more about maps in $\Xi^-$.

\section{The active / inert weak factorization system on \texorpdfstring{$\Xi$}{Ξ}}

In this section we exhibit a weak factorization system on the category $\Xi$.
Given a class $I$ of morphisms in a category $\mathcal C$, write $\rlp{I}$ for the maps which have the right lifting property with respect to every element of $I$.
In other words, $f: X \to Y$ is in  $\rlp{I}$ if, and only if, every commutative square
\[ \begin{tikzcd}
A \dar{i} \rar & X \dar{f} \\
B \rar \arrow[dotted]{ur} & Y  
\end{tikzcd} \]
with $i \in I$ admits a lift $B \to X$.
Similarly, $\llp{I}$ is the class of maps having the left lifting property with respect to every element of $I$.

A weak factorization system (see \cite[Definition 11.2.1]{riehl}) consists of two classes of maps $\mathcal L$ and $\mathcal R$ so that every morphism factors into a map in $\mathcal L$ followed by one in $\mathcal R$, and so that
$\llp{\mathcal R} = \mathcal L$ and $\rlp{\mathcal L} = \mathcal R$.

\begin{remark}
	We have actually already encountered one weak factorization system in this paper, namely the one whose left class is $\Xi^-$ and whose right class is $\Xi^+$.
	In fact, the following is true: 
	if $\Rr$ is a generalized Reedy category, then $(\Rr^-, \Rr^+)$ is an \emph{orthogonal factorization system} (see, e.g., \cite[2.2]{MR0322004}), that is, a weak factorization system in which all of the liftings are unique.
	This is mentioned in Remark 8.28 of \cite{shulman_reedy}; its proof is an exercise using only axioms \eqref{Reedy def factorization}, \eqref{Reedy def left iso}, and closure of $\Rr^+$, $\Rr^-$ under composition.
\end{remark}

Any weak factorization system whose right class $\mathcal R$ is contained in the monomorphisms will be orthogonal.
This will be true, in particular, of the weak factorization system in Proposition~\ref{proposition active inert} (using Theorem~\ref{xi is ez category} and Remark~\ref{remark inert in xi plus}).

\begin{definition}
A morphism $\phi : R \to S$ in $\Xi$ is called \emph{active} if $\image(\phi) = S$.  It is called  \emph{inert} if $\phi_0 : \edge(R) \to \edge(S)$ is injective and if $w \in \vertex(\phi_1(v))$ then $\nbhd(w) \subseteq \image(\phi_0)$.
\end{definition}

Outer cofaces (see Definition \ref{cofaces and codegens}) are inert maps, while codegeneracies and inner cofaces are active maps.
Notice that every map in $\Xi^-$ (see Definition \ref{reedy structure def}) is an active map. 
Further, every inert map is contained in $\Xi^+$ by the following remark.

\begin{remark}\label{remark inert in xi plus}
Suppose that $\phi$ is inert. 
Then, since  $\phi_0$ is injective, we know $ 0 < |\vertex(\phi_1(v))|$ for any $v\in \vertex(R)$.
	Further, if $\vertex(\phi_1(v))$ contains two (adjacent) vertices $w_1$ and $w_2$, connected by an edge $e$, then $e\in \interior(\phi_1(v)) \cap \nbhd(w_1) \subseteq \interior(\phi_1(v)) \cap \image(\phi_0) = \varnothing$ by
	Lemma \ref{interiors disjoint from image}.
	Thus $|\vertex(\phi_1(v))| \leq 1$.
	In other words, $\phi$ is (isomorphic to) a subgraph inclusion.
\end{remark}

Let us spell out an alternative characterization of the active maps. The following lemma will be useful in the proof of Proposition~\ref{active characterization}.

\begin{lemma}\label{lemma for active characterization}
	Suppose that $R \in \sbgph(S)$ is not an edge. If $\legs(R) = \legs(S)$, then $R=S$.
\end{lemma}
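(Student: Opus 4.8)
The plan is to argue by contradiction: assuming $R \neq S$, I will exhibit an edge that is a leg of exactly one of $R$ and $S$, violating the hypothesis $\legs(R) = \legs(S)$. Since $R$ is a subgraph of $S$ we always have the inclusions $\vertex(R) \subseteq \vertex(S)$ and $\edge(R) \subseteq \edge(S)$, so $R \neq S$ forces at least one of these to be proper, and I would split into the two corresponding cases.

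First I would handle the easy case where $\vertex(R) = \vertex(S)$ but $\edge(R) \subsetneq \edge(S)$. Pick $e \in \edge(S) \setminus \edge(R)$. Because $R$ is not an edge it has a vertex, so $S$ has a vertex; as $S$ is connected, $e$ is then incident to some $v \in \vertex(S) = \vertex(R)$. But the subgraph axiom $\nbhd(v) \subseteq \edge(R)$ forces $e \in \edge(R)$, a contradiction. Hence this case cannot occur.

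The substantive case is $\vertex(R) \subsetneq \vertex(S)$. Here I would locate a ``boundary'' edge of $R$: choosing $v_0 \in \vertex(R)$ and $w \in \vertex(S) \setminus \vertex(R)$ and walking along the path between them in $S$, one finds consecutive vertices $v \in \vertex(R)$ and $w' \notin \vertex(R)$ joined by an edge $e$. Since $v \in \vertex(R)$, the subgraph property gives $e \in \edge(R)$. In $S$ the edge $e$ is incident to the two \emph{distinct} vertices $v$ and $w'$, so $e \in \interior(S)$ and hence $e \notin \legs(S)$. On the other hand, the graph axiom ensures $v$ and $w'$ are the only $S$-vertices incident to $e$, and $w' \notin \vertex(R)$, so inside $R$ the edge $e$ touches only $v$; thus $e \in \legs(R)$. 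This contradicts $\legs(R) = \legs(S)$, completing the proof.

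I do not expect a genuine obstacle here; the only points requiring care are the degenerate bookkeeping—namely that $S$ has at least one vertex (so, by connectedness, no edge of $S$ floats free of every vertex) and that each edge is incident to at most two vertices (the defining graph axiom). It is exactly the latter axiom that guarantees the boundary edge $e$ is interior in $S$ yet a leg in $R$.
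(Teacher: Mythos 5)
Your proof is correct, and it rests on exactly the same key observation as the paper's: an edge joining a vertex of $R$ to a vertex of $S$ outside $R$ would be interior in $S$ but a leg of $R$, which is incompatible with $\legs(R)=\legs(S)$. The paper simply runs this in the contrapositive direction (any vertex of $S$ adjacent to a vertex of $R$ must already lie in $R$, then spreads over $S$ by connectedness), whereas you package it as a two-case contradiction; both are fine.
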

\begin{proof}
	Suppose that $v\in \vertex(S)$ is connected to a different vertex $w\in \vertex(R)$ by an edge $e$.
	Then $e\notin \legs(S) = \legs(R)$, so $v\in \vertex(R)$. Since $S$ is connected and $\vertex(R) \neq \varnothing$, we have $\vertex(R) = \vertex(S)$. 
	The definition of subgraph implies $\edge(R) = \edge(S)$ as well.
\end{proof}

\begin{proposition}\label{active characterization}
	Suppose $\phi : R \to S$ is in $\Xi$.
	\begin{enumerate}
		\item Suppose $\phi \notin \Xi^0(R,S)$. Then $\phi$ is active if and only if $\phi_0(\legs(R)) = \legs(S)$.\label{active characterization nonconstant}
		\item Suppose $\phi \in \Xi^0(R,S)$. Then $\phi$ is active if and only if $S = \eta$.\label{active characterization constant}
	\end{enumerate}
\end{proposition}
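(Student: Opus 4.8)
The plan is to reduce both parts to a single observation: a morphism $\phi \colon R \to S$ lies in $\Xi^0(R,S)$ if and only if $\image(\phi)$ is a single edge. Granting this, part \eqref{active characterization nonconstant} follows from Proposition \ref{legs interchange} together with Lemma \ref{lemma for active characterization}, while part \eqref{active characterization constant} is almost a restatement.

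To prove the observation, first suppose $\phi \in \Xi^0(R,S)$, so that $\phi$ factors as $R \overset{\psi}{\to} \eta \overset{g}{\to} S$. The tree $\eta$ has a unique subgraph, namely itself, so if $\varepsilon$ denotes the edge of $\eta$ then $\psi_1(v) = |_\varepsilon$ for every vertex $v$ of $R$; Definition \ref{xi composition} then gives $(g \circ \psi)_1(v) = \image(g) = |_{g_0(\varepsilon)}$, whence $\image(\phi) = |_{g_0(\varepsilon)}$ is a single edge (the case $R = \eta$ being immediate from Definition \ref{image definition}). Conversely, assume $\image(\phi) = |_e$. If $R$ has no vertex there is nothing to prove, so suppose $R \neq \eta$; then each $\phi_1(v)$ is a nonempty subgraph of $|_e$ and therefore equals $|_e$. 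Comparing the number of factors on the two sides of the boundary identity $\boundarymap(\phi_1(v)) = (\mathbf{M}\phi_0)(\boundarymap(\cstar_v))$ — the operator $\mathbf{M}\phi_0$ preserves this count, and $\boundarymap(|_e) = e^2$ has two factors — shows that $|\nbhd(v)| = 2$ for every $v$, so that $R$ is linear. Moreover $\phi_0(\nbhd(v)) = \legs(\phi_1(v)) = \{e\}$ for each $v$, and since $R$ is connected, $\phi_0$ is constant; thus $(\phi_0, \phi_1)$ agrees with the morphism $R \to \eta \overset{e}{\to} S$, so $\phi \in \Xi^0(R,S)$.

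Now part \eqref{active characterization nonconstant}. Because $\phi \notin \Xi^0(R,S)$, the observation shows $\image(\phi)$ is not an edge. If $\phi$ is active, then $\image(\phi) = S$ and Proposition \ref{legs interchange} gives $\phi_0(\legs(R)) = \legs(\image(\phi)) = \legs(S)$. Conversely, if $\phi_0(\legs(R)) = \legs(S)$, then $\legs(\image(\phi)) = \legs(S)$ by Proposition \ref{legs interchange}; since $\image(\phi)$ is a subgraph of $S$ that is not an edge, Lemma \ref{lemma for active characterization} yields $\image(\phi) = S$, i.e.\ $\phi$ is active. For part \eqref{active characterization constant}, the observation gives $\image(\phi) = |_e$ for some edge $e$, so $\phi$ is active exactly when $|_e = S$; this occurs precisely when $S$ has a single edge and no vertices, that is, $S = \eta$.

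The main obstacle is the harder direction of the observation, namely that a morphism with single-edge image must be constant. This is exactly what prevents a non-active map from spuriously satisfying $\phi_0(\legs(R)) = \legs(S)$ (were such a map to exist, the backward implication of part \eqref{active characterization nonconstant} would fail), and the factor-counting argument forcing every source vertex to be bivalent is the crucial input there.
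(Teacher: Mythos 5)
Your proof is correct and follows essentially the same route as the paper's: both directions of part (1) are deduced from Proposition~\ref{legs interchange} together with Lemma~\ref{lemma for active characterization}, and part (2) from the fact that maps in $\Xi^0(R,S)$ have single-edge image. The only difference is that you explicitly prove the equivalence ``$\phi \in \Xi^0(R,S)$ iff $\image(\phi)$ is an edge,'' whose harder direction the paper leaves implicit even though it is exactly what licenses applying Lemma~\ref{lemma for active characterization} (which requires a non-edge subgraph) in the reverse implication of part (1); this is a worthwhile addition rather than a deviation.
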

\begin{proof}

The forward direction of \eqref{active characterization nonconstant} follows immediately from Proposition~\ref{legs interchange}.
For the reverse direction, by hypothesis and Proposition~\ref{legs interchange} we have $\legs(S) = \legs(\image(\phi))$, so by Lemma~\ref{lemma for active characterization} we have $\image(\phi) = S$.
For \eqref{active characterization constant}, simply note that if $\phi \in \Xi^0(R,S)$, then $\image(\phi)$ is an edge.
\end{proof}

\begin{lemma}\label{lemma one of the inclusions}
The set of active morphisms has the left lifting property with respect to the set of inert morphisms.
\end{lemma}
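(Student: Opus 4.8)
The plan is to unwind the lifting problem with the structural results already in hand and reduce it to the single identity $\image(g) = \image(f)$. Suppose given a commutative square with an active map $a : A \to B$ on the left and an inert map $i : X \to Y$ on the right, together with $f : A \to X$ and $g : B \to Y$ satisfying $i \circ f = g \circ a$. By Remark~\ref{remark inert in xi plus}, $i$ is (isomorphic to) a subgraph inclusion, so I would assume $X \in \sbgph(Y)$ with $i_0 : \edge(X) \hookrightarrow \edge(Y)$ the inclusion and $i_1 = \cstar$. The candidate lift $\ell : B \to X$ I would build is the ``corestriction'' of $g$ to $X$: set $\ell_0 = i_0^{-1} \circ g_0$ and $\ell_1 = g_1$, once these are seen to make sense.

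The crux is the claim that $\image(g) \subseteq X$. I would treat the main case $a \notin \Xi^0(A,B)$ first, which by Proposition~\ref{active characterization} forces both $A \neq \eta$ and $B \neq \eta$. For the composite $g \circ a$, Definition~\ref{xi composition} gives $(g\circ a)_1(v) = \image(g|_{a_1(v)})$, so that
\[
  \image(g \circ a) = \bigcup_{v \in \vertex(A)} \image(g|_{a_1(v)}) = \image\bigl(g|_{\bigcup_v a_1(v)}\bigr) = \image(g|_{\image(a)}),
\]
where the middle equality is Lemma~\ref{lemma intersection union}\eqref{images union} applied along a path through all vertices of $A$. Since $a$ is active, $\image(a) = B$, whence $\image(g \circ a) = \image(g|_B) = \image(g)$. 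On the other hand, because $i$ is a subgraph inclusion we have $\image(i \circ f) = \image(i|_{\image(f)}) = \image(f) \subseteq X$. Commutativity $g \circ a = i \circ f$ then forces $\image(g) = \image(f) \subseteq X$, as claimed.

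With $\image(g) \subseteq X$ in hand everything falls into place. For every $e \in \edge(B)$ one has $g_0(e) \in \edge(\image(g)) \subseteq \edge(X) = \image(i_0)$ (using $g_0(\nbhd(v)) = \legs(g_1(v))$ and Proposition~\ref{legs interchange}), so $\ell_0 = i_0^{-1} \circ g_0$ is well defined; and each $g_1(v) \subseteq \image(g) \subseteq X$ is genuinely a subgraph of $X$, so $\ell_1 = g_1$ takes values in $\sbgph(X)$. Checking that $\ell$ satisfies the three conditions of Definition~\ref{maps of xi} is routine, since each condition for $\ell$ is obtained from the corresponding condition for $g$ by applying the injection $i_0^{-1}$; that $i \circ \ell = g$ and $\ell \circ a = f$ then follows by unwinding the composition formula (or, more slickly, from the fact that a subgraph inclusion is a monomorphism, which also yields uniqueness of the lift). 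The degenerate case $a \in \Xi^0(A,B)$ I would dispatch separately at the outset: there Proposition~\ref{active characterization}\eqref{active characterization constant} forces $B = \eta$, and commutativity gives $g_0(e_B) = i_0(f_0(e)) \in \edge(X)$ for any $e \in \edge(A)$, so the required lift is immediate.

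The main obstacle—really the only nontrivial point—is the identity $\image(g) = \image(g\circ a)$, namely that precomposing with an \emph{active} map does not shrink the image. This is exactly where activeness $\image(a) = B$ enters, and it rests on images commuting with unions of overlapping subgraphs (Lemma~\ref{lemma intersection union}). Everything after that is bookkeeping against Definition~\ref{maps of xi}.
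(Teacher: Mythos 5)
Your proof is correct, and it produces the same lift as the paper's, but it is organized around a different pivot. The paper builds the lift piecemeal: on legs it uses the top map via the bijection $\phi_0|_{\legs(R)}$, on interior edges and on vertices it uses the bottom map via the unique preimages $G_w \in \sbgph(P)$ of $\beta_1(w)$, and it then checks the two commuting triangles by hand. You instead isolate the single global identity $\image(g\circ a)=\image(g)$ for $a$ active (via Lemma~\ref{lemma intersection union}), deduce $\image(g)=\image(i\circ f)=\image(f)\subseteq X$ from commutativity, and then the lift is simply the corestriction of $g$ along the subgraph inclusion, with both triangles and uniqueness coming formally from $i$ being a monomorphism. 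The paper's local observation that each $\beta_1(w)$ lies in $\sbgph(\image(\psi))$ is morally your containment $\image(g)\subseteq X$ carried out one vertex at a time, so the underlying mechanism is identical; what your packaging buys is that verifying the lift is a morphism and that the square splits becomes routine bookkeeping rather than a case-by-case check on legs, interior edges, and vertices. One small caution: the fact that inert maps are monomorphisms is only recorded later (Theorem~\ref{xi is ez category}), so at this point you should verify directly that a subgraph inclusion is a mono — immediate, since $i_0$ and the induced map $\sbgph(X)\to\sbgph(Y)$ are injective — which is in effect what the paper does in the last lines of its own proof.
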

\begin{proof}
Suppose we are given a commutative diagram in $\Xi$
\begin{equation} \label{diagram in question} \begin{tikzcd}
R \dar{\phi} \rar{\alpha} & P \dar{\psi} \\
S \arrow[ur, color=magenta, dashed, "\gamma" description] \rar{\beta} & Q
\end{tikzcd} \end{equation}
with $\phi$ active and $\psi$ inert (so, in particular,
$P$ is a subgraph of $Q$); we wish to show that a lift $\gamma : S \to P$ exists. Temporarily write $\alpha_0^L : \legs(R) \to \edge(P)$ and $\phi_0^L : \legs(R) \to \legs(S)$ for the restrictions of $\alpha_0$ and $\phi_0$ to legs.

If $S =\eta$ consists of the single edge $e$, then $R$ is linear.
We have a diagram
\[ \begin{tikzcd}
\legs(R) \dar{\phi_0} \rar{\alpha_0^L} & \edge(P)  \dar[hook]{\psi_0} \\
\{ e\} \rar{\beta_0} & \edge(Q)
\end{tikzcd} \]
with $\psi_0$ an injection.
Thus $\image(\alpha_0^L) = |_{p} $ is a single edge, and we define $\gamma : S \to P$ by $\gamma_0(e) = p\in \edge(P)$.

If $S$ contains a vertex, then $\phi_0^L : \legs(R) \to \legs(S) $ is a bijection.
Define $\gamma_0^L = \alpha_0^L \circ (\phi_0^L)^{-1} : \legs(S) \to \edge(P)$.
We wish to extend this to $\interior(S)$, which we will do in a moment.
Since $\phi$ is active, every $w\in \vertex(S)$ is in $\vertex(\phi_1(v))$ for some $v\in \vertex(R)$.
Notice that 
\[
\bigcup_{w\in \phi_1(v)} \beta_1(w) = 
(\beta \circ \phi)_1(v) = (\psi \circ \alpha)_1(v) = \bigcup_{t \in \alpha_1(v)} \psi_1(t) \in \sbgph( \image(\psi)) \]
and, 
since $P \to \image(\psi)$ is an isomorphism, $\sbgph( \image(\psi)) \cong \sbgph(P)$. 
Thus, for $w\in \vertex(\phi_1(v))$, there is a unique subgraph $G_w \in \sbgph(P)$ which maps to $\beta_1(w) \in \sbgph(\image(\psi)) \subseteq \sbgph(Q)$ under $\psi$.

Suppose that $s\in \interior(S)$ is an interior edge. The edge $s$ is adjacent to two distinct vertices $w$ and $w'$.
We have $\beta_1(w) \cap \beta_1(w') = \{ \beta_0(s) \}$, hence $G_w \cap G_{w'} = \{ p \}$ for some edge $p$. Set $\gamma_0(s) = p$; by definition we have $\phi_0 \gamma_0 (s) = \beta_0(s)$.
Further, since $\phi$ is active it sends legs to legs, so if $s = \phi_0(r)$, then $r$ is an internal edge between distinct vertices $v$ and $v'$, with $w\in \vertex(\phi_1(v))$ and $w'\in \vertex(\phi_1(v'))$.
Since $G_w \subseteq \alpha_1(v)$ and $G_{w'} \subseteq \alpha_1(v')$, we have $\{ \gamma_0(s) \} = G_w \cap G_{w'} \subseteq \alpha_1(v) \cap \alpha_1(v') = \{ \alpha_0(r) \}$. Thus
$\gamma_0 \phi_0(r) = \gamma_0(s) = \alpha_0(r)$.
In conclusion, $\alpha_0 = \gamma_0 \phi_0$ and $\psi_0 \gamma_0 = \beta_0$.

Next, define $\gamma_1(w) = G_w \in \sbgph(\alpha_1(v))$. 
The pair $\gamma = (\gamma_0, \gamma_1)$ is a morphism of $\Xi$, which quickly follows from inertness of $\psi$ and the fact that $\beta$ is a morphism of $\Xi$.
By definition, we have
$(\psi \circ \gamma)_1(w) = \bigcup_{v\in \gamma_1(w)} \psi_1(v) = \beta_1(w)$,
hence $\psi \circ \gamma = \beta$.
Then $\psi \circ \gamma \circ \phi = \beta \circ \phi = \psi \circ \alpha$, hence the injective map $\psi : \sbgph(P) \hookrightarrow \sbgph(Q)$ takes 
$(\gamma \circ \phi)_1(v)$ and $\alpha_1(v)$ to the same element. It follows that $(\gamma \circ \phi)_1(v) = \alpha_1(v)$, so $\gamma \circ \phi = \alpha$.
Thus we have shown that \eqref{diagram in question} always has a lift.
\end{proof}

\begin{proposition}\label{proposition active inert}
The active and inert morphisms form a weak factorization system.	
\end{proposition}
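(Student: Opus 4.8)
The plan is to supply the active/inert factorization, recall that half of the lifting condition is already established, and then identify each class with the corresponding lifting class. Write $\mathcal L$ for the active maps and $\mathcal R$ for the inert maps, so that Lemma~\ref{lemma one of the inclusions} says $\mathcal L\subseteq\llp{\mathcal R}$, equivalently $\mathcal R\subseteq\rlp{\mathcal L}$.

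First, the factorization. Given $\phi\colon R\to S$, corestrict $\phi$ to its image (Definition~\ref{image definition}): viewing $\phi_0$ as landing in $\edge(\image(\phi))$ and each $\phi_1(v)$ as a subgraph of $\image(\phi)$ produces a map $\bar\phi\colon R\to\image(\phi)$ whose image is all of $\image(\phi)$, hence active, and the subgraph inclusion $\iota\colon\image(\phi)\hookrightarrow S$ is inert. Visibly $\phi=\iota\circ\bar\phi$, giving an (active, then inert) factorization.

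Next, $\llp{\mathcal R}=\mathcal L$. The inclusion $\supseteq$ is Lemma~\ref{lemma one of the inclusions}. For $\subseteq$, let $\phi\in\llp{\mathcal R}$ and factor $\phi=\iota\circ\bar\phi$ as above. Lifting $\phi$ against the inert map $\iota$ in the square with top edge $\bar\phi$, bottom edge $\id_S$, left edge $\phi$, and right edge $\iota$ produces a section $\sigma$ of $\iota$. By Remark~\ref{remark inert in xi plus} inert maps are (isomorphic to) subgraph inclusions, so $\iota$ is a monomorphism; a monomorphism admitting a section is an isomorphism, whence $\iota$ is invertible and $\phi=\iota\circ\bar\phi$ is active. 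Dually, for $\rlp{\mathcal L}=\mathcal R$ the inclusion $\supseteq$ is again Lemma~\ref{lemma one of the inclusions}; for $\subseteq$, let $f\in\rlp{\mathcal L}$, factor $f=\iota\circ\rho$ with $\rho$ active and $\iota$ inert, and lift $f$ against the active map $\rho$ (with $\rho$ as the left edge and $f$ as the right edge of a square having top $\id$ and bottom $\iota$) to obtain a retraction $\mu$ with $\mu\rho=\id$. It then suffices to show that an active map admitting a retraction is an isomorphism, for then $\rho$ is invertible and $f=\iota\circ\rho$ is inert.

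This last statement is the crux, and I expect it to be the main obstacle. To prove it, let $\rho\colon P\to M$ be active with $\mu\rho=\id_P$. The case $P=\eta$ is immediate from Proposition~\ref{active characterization}\eqref{active characterization constant}, so assume $P,M\neq\eta$. Then $\mu$ is also active, since $\image(\mu)\supseteq\image(\mu\rho)=P$, and both maps are non-constant, so by Proposition~\ref{active characterization}\eqref{active characterization nonconstant} each carries legs bijectively onto legs; in particular $\rho_0$ and $\mu_0$ restrict to mutually inverse bijections on leg-sets. Moreover $\mu\rho=\id_P$ forces $\image(\mu|_{\rho_1(v)})=\cstar_v$ for every $v$, which already shows each $\rho_1(v)$ must contain a vertex (an edge would have edge image). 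Combining this with Lemma~\ref{interiors disjoint from image} (to rule out spurious interior edges of $\image(\rho)$) and the disjointness in Definition~\ref{maps of xi}\eqref{maps of xi separation}, one checks that each $\rho_1(v)$ is a single vertex, that $v\mapsto\vertex(\rho_1(v))$ is a bijection $\vertex(P)\to\vertex(M)$, and that $\rho_0$ is bijective; by Definition~\ref{maps of xi} this exhibits $\rho$ as an isomorphism. The bookkeeping in this final step is the delicate part. An alternative packaging, which may be cleaner, is to verify directly that $\mathcal L$ and $\mathcal R$ are closed under retracts and then invoke the standard recognition criterion for weak factorization systems (factorization together with the lifting property of Lemma~\ref{lemma one of the inclusions} and retract-closure), as in \cite[\S 11.2]{riehl}; there the retract-of-a-monomorphism argument handles $\mathcal R$ and the same active-map combinatorics handles $\mathcal L$.
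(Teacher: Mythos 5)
Your factorization and the direction $\llp{\mathcal R}=\mathcal L$ are essentially fine (the paper argues the latter by Reedy-factoring the lift $\sigma$ and using a degree count rather than ``mono plus split epi,'' but your route works once one checks that a subgraph inclusion is a monomorphism in $\Xi$, which is straightforward since $\phi_0$ and $\phi_1$ are then both injective). The genuine gap is in $\rlp{\mathcal L}\subseteq\mathcal R$: the statement you isolate as the crux --- that an active map admitting a retraction is an isomorphism --- is \emph{false}. Take $\rho\colon L_1\to L_2$ to be the inner coface with $\rho_1(v)=L_2$ and $\rho_0$ carrying the two legs of $L_1$ to the two legs of $L_2$ (active, since its image is all of $L_2$), and let $\mu\colon L_2\to L_1$ be the codegeneracy sending one vertex to $\cstar_v$ and the other to an edge. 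Then $\mu\rho=\id_{L_1}$ --- this is just the cosimplicial identity $s\circ d=\id$ transported along $\Delta\subseteq\Xi$ --- yet $\rho$ is not an isomorphism: $\rho_1(v)$ has two vertices and $\rho_0$ misses the interior edge. So the bookkeeping you defer cannot be completed; ``each $\rho_1(v)$ is a single vertex'' is simply not a consequence of $\mu\rho=\id$, and your intermediate observation that each $\rho_1(v)$ contains \emph{at least} one vertex is as far as that hypothesis takes you.

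What rescues the argument is precisely the data you discarded. Your lifting square also yields the lower triangle $f\mu=\iota$, so $\iota\rho\mu=\iota$, and since $\iota$ is a subgraph inclusion (hence monic) this gives $\rho\mu=\id$; together with $\mu\rho=\id$ that makes $\rho$ invertible. The paper instead lifts $f$ against the map $f^-\in\Xi^-$ coming from the generalized Reedy factorization (every map in $\Xi^-$ is active): a split monomorphism in $\Xi^-$ has $\phi_0$ both surjective and injective, hence lies in $\Xi^+\cap\Xi^-=\Iso(\Xi)$. The point is that ``split mono implies iso'' holds for the strictly smaller class $\Xi^-$, not for all active maps. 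This gives injectivity of $f_0$, and a second lift against $\bar f\colon R\to\image(f)$ then supplies the neighborhood condition $\nbhd(w)\subseteq\image(f_0)$ required of an inert map. Either repair is short, but as written your reduction is to a false lemma.
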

\begin{proof}
Given a map $\phi : R\to S$, we can factor $\phi$ as $R \to \image(\phi) \to S$. The map $R \to \image(\phi)$ is active since $\legs(\image(\phi)) = \phi_0(\legs(R))$, while $\image(\phi) \to S$ is a subgraph inclusion, hence inert.

For the remainder of the proof, write $\mathcal L$ for the set of active morphisms and $\mathcal R$ for the set of inert morphisms.
In Lemma \ref{lemma one of the inclusions} we showed that $\mathcal R \subseteq \rlp{\mathcal L}$ (or, equivalently, $\mathcal L \subseteq \llp{\mathcal R}$).

For the reverse inclusions, suppose that $\phi : R \to S$ is in $\rlp{\mathcal L}$. Consider the decomposition 
\[
	R \xrightarrow{\phi^-} T \xrightarrow{\phi^+} S
\]
coming from the generalized Reedy structure on $\Xi$.
We know that $\phi^-$ is an active morphism, hence we can lift in the diagram
\[ \begin{tikzcd}
R \dar{\phi^-} \rar{\id} & R \dar{\phi} \\
T\arrow[ur, color=magenta, dashed, "\psi" description] \rar{\phi^+} & S
\end{tikzcd} \]
which implies $\psi \phi^- = \id$, hence $\phi^-$ is an isomorphism. Thus $\phi_0$ is injective.
Further, the diagram
\[ \begin{tikzcd}
R \dar{\bar \phi} \rar{\id} & R \dar{\phi} \\
\image(\phi) \rar{i} & S
\end{tikzcd} \]
admits a lift; if 
$w$ is a vertex in $\phi_1(v) \subseteq \image(\phi)$, then
\[
	\nbhd(w) = i_0(\nbhd(w)) = \phi_0 \psi_0(\nbhd(w)) \subseteq \image(\phi_0).
\]
Hence $\phi$ is inert, and we see that $\rlp{\mathcal L} \subseteq \mathcal R$.

Now suppose that $\phi : R \to S$ is in $\llp{\mathcal R}$.
Then
\[ \begin{tikzcd}
R \dar{\phi} \rar{\bar \phi} & \image(\phi) \dar{i} \\
S \rar{\id} \arrow[ur, color=magenta, dashed, "\psi" description] & S
\end{tikzcd} \]
admits a lift $\psi : S \to \image(\phi)$. 
Factor $\psi = \psi^+\psi^-$ with $\psi^+ \in \Xi^+$, $\psi^- \in \Xi^-$. Since $\id_S = (i\psi^+)\psi^-$ with $i\psi^+ \in \Xi^+$, we have $i\psi^+, \psi^- \in \Iso(\Xi)$ by Definition~\ref{definition greedy}\eqref{Reedy def factorization}. It follows that $i\in \Xi^+$ preserves degree, hence is an isomorphism.
Thus $\phi$ is active, and we find that $\llp{\mathcal R} \subseteq \mathcal L$.

\end{proof}

The active and inert maps which are oriented form a weak factorization system on the category $\Omega$.
This was proved in \cite[Proposition 1.3.13]{kocktrees}, where inert maps are called `free' and active maps are called `boundary preserving'.
\begin{proposition}
	The inclusion $\iota : \Omega \to \Xi$ respects the weak factorization structures. \qed
\end{proposition}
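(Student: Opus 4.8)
The plan is to show that $\iota$ carries the active (left) class of the $\Omega$-system into the active class of $\Xi$ and the inert (right) class into the inert class, and then to check that the active/inert factorization of an oriented map, computed in $\Xi$, already lives in $\Omega$; since both are weak factorization systems, this is precisely what it means for $\iota$ to respect them. The fortunate circumstance is that the active and inert conditions are phrased entirely in terms of the underlying data $(\phi_0,\phi_1)$, which $\iota$ leaves untouched, so the whole argument reduces to matching Kock's ``boundary preserving'' and ``free'' maps with the restrictions of our active and inert maps, together with one orientation-bookkeeping step.

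First I would match the two pairs of classes. For a non-constant oriented map $\phi : R \to S$, Proposition~\ref{active characterization} says that $\phi$ is active in $\Xi$ if and only if $\phi_0(\legs(R)) = \legs(S)$; for rooted trees $\legs$ decomposes as root together with leaves, and an oriented map sends $\out(R)$ to the edge of $\image(\phi)$ nearest $\out(S)$, so $\phi_0(\legs(R)) = \legs(S)$ is exactly the boundary-preserving condition. (The constant case is disposed of by Proposition~\ref{active characterization}\eqref{active characterization constant}.) Dually, by Remark~\ref{remark inert in xi plus} an inert map of $\Xi$ is, up to isomorphism, a subgraph inclusion injective on edges, and an oriented subgraph inclusion of rooted trees is precisely a free map in Kock's sense. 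Thus the active maps of $\Omega$ are the oriented maps which are active in $\Xi$, and likewise for inert maps.

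Next I would verify that the $\Xi$-factorization $R \xrightarrow{\bar\phi} \image(\phi) \xrightarrow{i} S$ of an oriented $\phi$ is a factorization in $\Omega$. Write $s_0 = \out(S)$ and root $\image(\phi)$ at $\bar r_0 := \phi_0(\findroot_{s_0}(\phi))$, which by Remark~\ref{image find root} is $\findroot_{s_0}(\image(\phi)\hookrightarrow S)$, the leg of $\image(\phi)$ closest to $s_0$. Applying Lemma~\ref{subgraph distance addititvity} to $\image(\phi)\hookrightarrow S$ gives $d(e,s_0) = d(e,\bar r_0) + d(\bar r_0,s_0)$ for every $e\in\edge(\image(\phi))$, so ordering the neighborhood of a vertex by distance to $\bar r_0$ agrees with ordering it by distance to $s_0$. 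Consequently the subgraph inclusion $i$ is oriented, and feeding the identity $\phi_0 = i_0\bar\phi_0$ through the same distance relation shows that $\bar\phi$ is oriented as well. Hence the factorization lies in $\Omega$, and by the coincidence of the two pairs of classes it is, up to the canonical isomorphism, the $\Omega$-factorization.

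The argument is essentially bookkeeping; the only genuine point is the orientation step, and the main obstacle---ensuring that $\image(\phi)$ acquires a rooting for which both maps of the factorization remain oriented---is handled by the additivity of distances in Lemma~\ref{subgraph distance addititvity}. Everything else is the observation that $\iota$ does not alter $(\phi_0,\phi_1)$ and that the defining conditions for active and inert refer to nothing else.
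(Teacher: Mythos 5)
Your proof is correct. The paper itself offers no argument here---the proposition is stated with an immediate \qed, the point being that the active and inert conditions are defined identically in $\Omega$ and $\Xi$ (with Kock's ``boundary preserving'' and ``free'' being exactly the oriented active and oriented inert maps), so there is essentially nothing to check beyond the fact that the image factorization of an oriented map can be taken inside $\Omega$. What you have done is make that last point explicit, and you have done it correctly: rooting $\image(\phi)$ at $\bar r_0 = \findroot_{s_0}(\image(\phi)\hookrightarrow S)$ and invoking the additivity $d(e,s_0)=d(e,\bar r_0)+d(\bar r_0,s_0)$ from Lemma~\ref{subgraph distance addititvity} is precisely what shows that minimizing distance to $\bar r_0$ and to $s_0$ pick out the same edge in each $\nbhd(v)$, hence that both $\bar\phi$ and the subgraph inclusion are oriented. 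Together with the (orthogonal, hence unique-up-to-isomorphism) nature of the factorizations, this gives the compatibility. Your write-up supplies the bookkeeping the paper leaves implicit; nothing in it is superfluous except perhaps the remark about where $\out(R)$ lands, which is not needed for matching ``active'' with ``boundary preserving.''
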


As promised at the conclusion of the previous section, we have the following.

\begin{theorem}\label{xi is ez category}
	The category $\Xi$, together with the degree function $d(S) = |\vertex(S)|$ from Definition~\ref{reedy structure def}, is an EZ-category in the sense of \cite[Definition 6.7]{bm}.
\end{theorem}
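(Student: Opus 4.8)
The plan is to check the defining conditions of an EZ-category directly against the structure already in place. Recall that an EZ-category is a dualizable generalized Reedy category in which every morphism of $\Rr^+$ is a monomorphism and every morphism of $\Rr^-$ is a split epimorphism; since Proposition~\ref{proposition xi is reedy} already establishes that $\Xi$ is dualizable generalized Reedy, only these two further properties remain. I would remark at the outset that the reverse inclusions (every monomorphism lies in $\Xi^+$, every split epimorphism lies in $\Xi^-$) follow formally once the two conditions are known: factoring an arbitrary morphism as $\phi^+\phi^-$ via Definition~\ref{definition greedy}\eqref{Reedy def factorization} and cancelling shows the relevant factor to be simultaneously a split epimorphism and a monomorphism, hence an isomorphism.

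For the monomorphism condition I would argue directly. Suppose $\phi\in\Xi^+(R,S)$, so $\phi_0$ is injective, and suppose $\phi\psi=\phi\psi'$ for $\psi,\psi'\colon A\to R$. Injectivity of $\phi_0$ gives $\psi_0=\psi'_0$ on edges, while Definition~\ref{xi composition} yields $\image(\phi|_{\psi_1(a)})=\image(\phi|_{\psi'_1(a)})$ for each $a\in\vertex(A)$. The point is that $\phi$ is ``injective on subgraphs'': by the argument of Remark~\ref{remark inert in xi plus}, injectivity of $\phi_0$ forces each $\phi_1(v)$ to contain a vertex, and by Definition~\ref{maps of xi}\eqref{maps of xi separation} the sets $\vertex(\phi_1(v))$ are disjoint and nonempty as $v$ ranges over $\vertex(R)$. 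Hence $\vertex(\image(\phi|_T))=\coprod_{v\in\vertex(T)}\vertex(\phi_1(v))$ determines $\vertex(T)$, and so $T$, for any subgraph $T$ containing a vertex; the remaining case, where $\psi_1(a)=|_e$ is a bare edge, is settled by injectivity of $\phi_0$ applied to $\image(\phi|_{|_e})=|_{\phi_0(e)}$ (note $\image(\phi|_{\psi'_1(a)})$ cannot then contain a vertex). Thus $\psi_1=\psi'_1$ and $\phi$ is monic.

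For the split epimorphism condition I would transfer the corresponding fact from $\Omega$, which is itself an EZ-category \cite{bm}. A constant map in $\Xi^-$ has codomain $\eta$ and is split by an edge inclusion, so assume $\phi\in\Xi^-(R,S)$ is non-constant and choose a root $s_0\in\legs(S)$. By Lemma~\ref{lifting for plus minus} the lift $\lifting_{s_0}(\phi)$ lies in $\Omega^-(\rootify(R,r_0),\rootify(S,s_0))$ with $r_0=\findroot_{s_0}(\phi)$, and since $\Omega$ is an EZ-category this map admits a section $t$ in $\Omega$, i.e.\ $\lifting_{s_0}(\phi)\circ t=\id$. Applying Proposition~\ref{functoriality of amalgamate} to this composite shows that $\amalgamate_{s_0,r_0}(t)\colon S\to R$ is a section of $\phi=\amalgamate_{r_0,s_0}(\lifting_{s_0}(\phi))$, so $\phi$ is a split epimorphism (one could alternatively split codegeneracies by inner cofaces and compose).

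I expect the split epimorphism step to be the main obstacle, not because the transfer is conceptually hard but because of the bookkeeping around the degenerate cases: one must peel off the constant maps in $\Xi^0(R,S)$ and track the roots $r_0,s_0$ carefully through $\amalgamate$ and $\lifting$, precisely where the rooting correspondence fails to be a clean bijection (compare Corollary~\ref{structure from rooting linear}). Matching the subscripts in Proposition~\ref{functoriality of amalgamate} so that the transported section composes to $\id_S$ rather than to some nontrivial automorphism is the delicate point; everything else is formal once $\Omega$ is granted to be an EZ-category.
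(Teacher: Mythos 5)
The central gap is that you have misstated the definition you are verifying: \cite[Definition 6.7]{bm} requires, in addition to the identification of $\Rr^+$ with the monomorphisms and of $\Rr^-$ with the split epimorphisms, that \emph{any pair of split epimorphisms with common domain admits an absolute pushout} (a commutative square that remains a pushout after applying any functor). This third axiom is not a formal consequence of the other two --- it is exactly what drives the Eilenberg--Zilber decomposition for presheaves on $\Rr$ --- and your proposal never addresses it; the claim that ``only these two further properties remain'' is where the argument fails. The paper devotes a substantial portion of its proof to this point: given $\phi, \psi \in \Xi^-$ with common domain $R$, one picks $r_0 \in \legs(R)$, sets $s_0 = \phi_0(r_0)$ and $u_0 = \psi_0(r_0)$ (legitimate because maps in $\Xi^-$ are active, hence carry legs to legs), lifts the cospan to $\Omega^-$, takes an absolute pushout there using that $\Omega$ is an EZ-category, and transports the resulting square back along the isomorphisms $\iota\rootify(R,r_0) \cong R$, etc. You would need to supply this argument, or an equivalent one, before the theorem is established.

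The two conditions you do treat are handled correctly, and your route differs mildly from the paper's. For the monomorphism condition the paper transfers the statement from $\Omega$ by observing that every arrow of $\Xi$ is isomorphic in the arrow category $\Xi^{[1]}$ to $\iota$ of an arrow of $\Omega$; your direct argument --- injectivity of $\phi_0$ forces each $\phi_1(v)$ to contain a vertex, the sets $\vertex(\phi_1(v))$ are pairwise disjoint, hence $T \mapsto \image(\phi|_T)$ is injective on subgraphs, with the bare-edge case $T = |_e$ settled by injectivity of $\phi_0$ --- is sound and more self-contained. Your split-epimorphism argument via Lemma~\ref{lifting for plus minus}, Proposition~\ref{functoriality of amalgamate}, and Lemma~\ref{amalgamate splitting} is essentially the paper's, and deriving the reverse inclusions formally from the Reedy factorization of Definition~\ref{definition greedy}\eqref{Reedy def factorization} is a legitimate shortcut (the paper instead verifies both directions of the transfer). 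None of this, however, substitutes for the missing absolute pushout axiom.
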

\begin{proof}
	As we have already established that $\Xi$ is a dualizable generalized Reedy category (Proposition~\ref{proposition xi is reedy}), it is enough to show that 
\begin{enumerate}[(a)]
	\item  $\Xi^+$ is the subcategory of monomorphisms, \label{xi ez one}
	\item  $\Xi^-$ is the subcategory of split epimorphisms, and \label{xi ez two}
	\item  any pair of split epimorphisms with common domain has an absolute pushout \cite{MR0280565} (that is, can be extended to a commutative square which becomes a pushout square after applying any functor). \label{xi ez three}
\end{enumerate}
	For efficiency, we rely on the fact that $\Omega$ is an EZ-category \cite[Examples 6.8]{bm}.
	Every map $\phi : R \to S$ in $\Xi$ is isomorphic (in the arrow category $\Xi^{[1]}$) to at least one map $\iota(\tilde \phi : \tilde R \to \tilde S)$ by Corollary~\ref{structure from rooting nonlinear}, Corollary~\ref{structure from rooting linear}, and the fact that $\legs(S) \neq \varnothing$. 
	Suppose that $\phi$ is a map and we have chosen an isomorphism $\gamma: \iota \tilde \phi \cong \phi$ in $\Xi^{[1]}$. 
	The following hold, which then imply that \eqref{xi ez one} and \eqref{xi ez two} hold by the corresponding properties of $\Omega$:
	\begin{enumerate}[(i)]
		\item $\phi$ is a monomorphism (resp.\ split epimorphism) if and only if $\tilde \phi$ is a monomorphism (resp.\ split epimorphism)
		\item $\phi$ is in $\Xi^+$ (resp.\ in $\Xi^-$) if and only if $\tilde \phi$ is in $\Omega^+$ (resp.\ in $\Omega^-$).
	\end{enumerate}
	The only point that is perhaps not immediate is that if $\phi$ is a split epimorphism, so is $\tilde \phi$.
	Suppose $\alpha$ is a section of $\phi$.
	If $\phi$ is constant then $S = \tilde S = \eta$ and the composite $\eta \xrightarrow{\alpha} R \xrightarrow{\gamma^{-1}} \iota \tilde R$ is automatically oriented and is a section for $\tilde \phi$.
	If $\phi$ is not constant, then $\tilde \phi$ is isomorphic in the arrow category $\Omega^{[1]}$ to $\lifting_{s_0,r_0}(\phi) : \rootify(R,r_0) \to \rootify(S, s_0)$ where $s_0$ and $r_0$ are the images under $\gamma$ of the roots of $\tilde S$ and $\tilde R$, respectively.
	We then have
	\[
		\id_{\rootify(S,s_0)} = \lifting_{s_0} (\id_{S}) = \lifting_{s_0} (\phi \circ \alpha) = \lifting_{s_0} (\phi) \circ \lifting_{r_0}(\alpha)
	\]
	by Proposition~\ref{functoriality of lifting}. Since $\lifting_{s_0} (\phi)$ admits a section, so does $\tilde \phi$.

	Let us turn to point \eqref{xi ez three}.
	Suppose that $\phi : R \to S$ and $\psi : R \to U$ are two maps in $\Xi^-$ (i.e., two split epimorphisms).
	Pick $r_0 \in \legs(R)$, and let $s_0 = \phi_0(r_0) \in \legs(S)$ and $u_0 = \psi_0(r_0) \in \legs(U)$ (using that $\phi$ and $\psi$ are active).
	There are maps $\tilde \phi : \rootify(R,r_0) \to \rootify(S,s_0)$ and $\tilde \psi : \rootify(R,r_0) \to \rootify(U,u_0)$ and an isomorphism of diagrams of shape $\bullet \leftarrow \bullet \rightarrow \bullet$
	\[
	\begin{tikzcd}
		\iota \rootify(U,u_0)\dar["\gamma_U","\cong" swap] & \iota \rootify(R,r_0)\dar["\gamma_R","\cong" swap] \lar[swap]{\iota \tilde \psi} \rar{\iota \tilde \phi} & \iota \rootify(S,s_0)\dar["\gamma_S","\cong" swap] \\
		U & R \lar[swap]{\psi} \rar{\phi} & S
	\end{tikzcd}
	\]
	in $\Xi$.
	Let
	\begin{equation}\label{abs pushout in omega}
		\begin{tikzcd}
			\rootify(R,r_0) \dar[swap]{\tilde \psi} \rar{\tilde \phi} & \rootify(S,s_0) \dar{h} \\
			\rootify(U,u_0) \rar{g} & T\\
		\end{tikzcd}
	\end{equation}
	be an absolute pushout of $\tilde \phi, \tilde \psi \in \Omega^-$.
	The diagram \eqref{abs pushout in omega} remains an absolute pushout after applying $\iota$, and the resulting square is isomorphic to the square
	\begin{equation}\label{abs pushout in xi}
		\begin{tikzcd}
			R \dar[swap]{\psi} \rar{\phi} & S \dar{(\iota h) \gamma_S^{-1}} \\
			U \rar{(\iota g) \gamma_U^{-1}} & \iota T.\\
		\end{tikzcd}
	\end{equation}
	Thus \eqref{abs pushout in xi} is an absolute pushout as well, and \eqref{xi ez three} is established.
\end{proof}

\section{A functor from \texorpdfstring{$\Xi$}{Ξ} to the category of cyclic operads}\label{towards cyclic}

In this section, we will show that given a tree $R$, there is a cyclic (colored) operad $C(R) \in \Cyc$ with $\colset(C(R)) = \edge(R)$.
Further, the assignment $R\mapsto C(R)$ is the object part of a functor $C: \Xi \to \Cyc$.
This functor is faithful but not full.

\subsection{A monadic description}
Fix a color set $\fC$. 
A $\fC$-colored tree is a (pinned) tree $S$ together with a function $\xi \colon \edge(S) \to \fC$.
If $\uc = c_0, c_1, \dots, c_n$ is a profile in $\fC$, we will write $\unrootedtrees(\uc) = \unrootedtrees^{\fC}(\uc)$ for the groupoid of all $\fC$-colored trees $S$ so that 
\[ \begin{tikzcd}
	\{0, 1, \dots, n\} \rar{\ord} & \legs(S) \rar[hook] & \edge(S) \rar{\xi} & \fC 
\end{tikzcd} \]
takes $i$ to $c_i$ for $0\leq i \leq n$.
The isomorphisms $(S,\xi) \to (S', \xi')$ in $\unrootedtrees(\uc)$ are those isomorphisms $\phi : S \to S'$ in $\Xi$ so that $\xi = \xi'\circ \phi_0$ and $\phi_0 \circ \ord_S = \ord_{S'}$.

Consider the groupoid $\Sigma_\fC^+$ whose objects are finite, non-empty, ordered lists $\uc = c_0, c_1, \dots, c_n$ ($n\geq 0$) of elements of $\fC$, and morphisms are 
$\uc\sigma \overset\sigma\to \uc$, where $\sigma  \in \Sigma_m^+ = \Aut\{0,1,\dots, m\}$, and $\uc \sigma = (c_{\sigma(0)}, c_{\sigma(1)}, \dots, c_{\sigma(m)})$.
Such a morphism of $\Sigma_\fC^+$ determines a morphism $\unrootedtrees(\uc) \to \unrootedtrees(\uc\sigma)$ sending $S$ to $S\sigma$, which has all of the same structure as $S$ except that the leg ordering $\ord_{S\sigma}$ is the composite
\[ \begin{tikzcd}
\{ 0, 1, \dots, m \} \rar{\sigma}
& \{ 0, 1, \dots, m \} \rar{\ord_S} & \legs(S).
\end{tikzcd} \]
This is, in fact, a contravariant functor $\unrootedtrees(-) = \unrootedtrees^{\fC}(-) \colon \Sigma_\fC^+ \to \mathbf{Gpd}$.

Every object $X \in \mathcal \Set^{\Sigma_\fC^+}$ determines a functor $\unrootedtrees^{\fC}(\uc) \to \Set$, given on an object $S\in \unrootedtrees^{\fC}(\uc)$ by
\[
X[S] = \prod_{v\in \vertex(S)} X({\xi(\nbhd(v))}). 	
\] 

\begin{remark}
	There is a monad $T^+ : \Set^{\Sigma_\fC^+} \to \Set^{\Sigma_\fC^+}$ given on objects by 
	\[
	T^+(X)(\uc) = \colim_{S \in \unrootedtrees(\uc)} X[S].\]
	The category of algebras for $T^+$ is $\Cyc_\fC$, the subcategory of $\Cyc$ consisting of those cyclic operads with color set $\fC$ and morphisms which are the identity on colors. An analogous statement appears in \cite[\S 5.1]{mss} in the monochrome case (see also \cite{JOYAL2011105} for the colored case). 
\end{remark}

Notice that if $f : \fC \to \fD$ is a map of sets, then restriction along $\Sigma_\fC^+ \to \Sigma_\fD^+$ induces a functor $f^* : \mathbf{Cyc}_{\fD} \to \mathbf{Cyc}_{\fC}$.
A map $\alpha: P \to Q$ in $\Cyc$ is the same thing as a pair $(\alpha_0,\alpha_1)$, where $\alpha_0 : \colset(P) \to \colset(Q)$ is a map of sets and $\alpha_1 : P \to \alpha_0^*Q$ is a map in $\mathbf{Cyc}_{\fC}$.

\subsection{The functor \texorpdfstring{$C: \Xi \to \Cyc$}{C:Ξ → Cyc}}

Let $\prof(\fC)$ be the set of non-empty ordered lists of elements in $\fC$; it is the set of objects of the groupoid $\Sigma_\fC^+$. 
There is a forgetful functor $\Cyc_\fC \to \Set^{\Sigma_\fC^+} \to \Set^{\prof(\fC)}$.
Write
\[
	F_\fC : \Set^{\prof(\fC)} \rightleftarrows \Cyc_\fC : U_\fC
\]
for the corresponding adjunction.

\begin{definition}Let $S$ be a tree and $\fC = \edge(S)$.
Then $S$ determines an object $Z = Z^S$ of $\Set^{\prof(\fC)}$ with
\begin{equation}\label{Z def}
	Z_{\uc} = \begin{cases}
		\{ v \} & \nbhd(v) = \uc \text{ as ordered lists} \\
		\varnothing & \text{otherwise.}
	\end{cases}
\end{equation}
The cyclic operad $C(S)$ is defined as $F_\fC(Z)$.
\end{definition}
Applying the left adjoint of $\Set^{\Sigma_\fC^+} \to \Set^{\prof(\fC)}$ (for $\fC = \edge(S)$) to the object $Z=Z^S$ gives a new object $Z'$. This object has the property that $|Z'_{\uc}| = 1$ if and only if $\uc$ contains no repetitions and $\uc = \nbhd(v)$ as \emph{unordered} lists for some $v$. Otherwise, $|Z'_{\uc}| = 0$.

Notice that if $T\in \Omega$ is a rooted tree and $F: \Operad \to \Cyc$ is left adjoint to the forgetful functor, then 
\begin{equation*}
	C(\iota(T)) = F(\Omega(T)).
\end{equation*}

\begin{remark}\label{rmk pinned sub to elements}
If $R \in \sbgph(S)$ and $R$ is a pinned graph, with $\{0, 1\, \dots, n\} \xrightarrow{\ord_R} \legs(R) \hookrightarrow \edge(S)$ sending $i$ to $c_i$, then $R$ determines an element of $C(S)(c_1, \dots, c_n; c_0)$. 
There may, of course, be many elements of this set that do not come from subgraphs of $S$.
Varying the vertex orders $\ord^v$ on $R$ does not change the element produced in this way.
The cyclic operad $C(S)$ is nearly always infinite (see Example \ref{nonfull example} below). Specifically, $C(S)$ is infinite whenever $S \notin \{\eta, \cstar_1\}$, as then $S$ has a vertex $v$ with $|v| > 1$.
\end{remark}

\begin{example}
Let $S$ be the graph from Figure \ref{figure basic tree example}. Then $C(S)$ is the cyclic operad $O$ generated by three operations $u\in O(b,a;c)$, $v \in O(d,e,f;c)$, and $w\in O(\, ; d)$. We see there is an element $v \circ_1 w \in O(e,f;c)$.
Notice that there are no elements of the form $q\circ_i r$ where $r\in \{u,v\}$, $q\in \{u,v,w\}$, as $c$ is not an input for such a $q$.
But we can first apply the rotation to get, say $v\cdot \tau \in O(e,f,c;d)$, and then compose to get $(v\cdot \tau) \circ_3 u \in O(e,f,b,a;d)$. Finally, there is an element $((v\cdot \tau^2) \circ_2 u) \circ_4 w \in O(f,b,a;e)$; these are all of the elements given by subgraphs of $S$. Notice there are many other elements, for example $u \circ_1 (u\cdot \tau) \in O(a,c,a;c)$.
\end{example}

We wish to extend $S\mapsto C(S)$ to a functor $\Xi \to \Cyc$. 
Defining maps out of $C(S)$ is easy, as this object is free in $\Cyc_{\edge(S)}$. We utilize Remark~\ref{rmk pinned sub to elements} in order to regard subgraphs of $S$ as elements in $C(S)$.

\begin{definition}[$C$ as a functor]\label{functor xi to cyc} 
Suppose $\phi \in \Xi(R,S)$ and write $\fC = \colset(C(R)) = \edge(R)$.
Set $C(\phi)_0 = \phi_0 : \fC = \edge(R) \to \edge(S)$.
Since $C(R)$ is free in $\Cyc_\fC$, it is enough to define $C(\phi)_1$ on generators. 
Let $\phi_1^\natural : Z^R \to U_\fC \phi_0^* C(S)$ in $\Set^{\prof(\fC)}$ by $v \mapsto \phi_1(v) \in \sbgph(S)$, endowed with the ordering $\{0,1, \dots, n_v \} \xrightarrow{\ord^v} \nbhd(v) \overset{\phi_0}\to \legs(\phi_1(v))$. Define $C(\phi)_1 : C(R) \to \phi_0^*C(S)$ to be the adjoint of $\phi_1^\natural$.
\end{definition}

\begin{theorem}\label{C is faithful} 
	The functor $C: \Xi \to \Cyc$
	is faithful.
\end{theorem}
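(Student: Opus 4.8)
The plan is to unwind the definition of $C$ on morphisms (Definition~\ref{functor xi to cyc}) and show that both components of a map are recoverable from its image. Suppose $\phi, \psi \in \Xi(R,S)$ satisfy $C(\phi) = C(\psi)$. The color components are literally the edge maps, $C(\phi)_0 = \phi_0$ and $C(\psi)_0 = \psi_0$, so equality yields $\phi_0 = \psi_0$ at once. It then remains to recover $\phi_1$ from $C(\phi)_1$. Since $C(R) = F_{\edge(R)}(Z^R)$ is free, the map $C(\phi)_1 \colon C(R) \to \phi_0^* C(S)$ is determined by its restriction $\phi_1^\natural$ to the generating object $Z^R$, whose non-empty fibers are indexed precisely by the vertices of $R$; by construction $\phi_1^\natural(v)$ is the element of $C(S)$ represented by the pinned subgraph $\phi_1(v)$, ordered via $\phi_0 \circ \ord^v$, sitting in the profile determined by $\phi_0 \circ \ord^v$. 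As $\phi_0 = \psi_0$, the elements $\phi_1^\natural(v)$ and $\psi_1^\natural(v)$ lie in the \emph{same} set, and $C(\phi)_1 = C(\psi)_1$ forces them to coincide for every $v$. Thus the theorem reduces to the claim that the assignment sending a pinned subgraph $R' \in \sbgph(S)$ to the element of $C(S)$ it determines (Remark~\ref{rmk pinned sub to elements}) is injective.

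To prove this claim I would use the monadic description of $C(S) = F_{\edge(S)}(Z^S)$. Writing $\fC = \edge(S)$, and $Z'$ for the $\Sigma_\fC^+$-set obtained by symmetrizing $Z^S$, the functor $F_\fC$ factors as symmetrization followed by the free $T^+$-algebra, so that
\[
C(S)(\uc) \;=\; \colim_{W \in \unrootedtrees(\uc)} \ \prod_{w \in \vertex(W)} Z'(\xi\, \nbhd_W(w)).
\]
Because this colimit is taken over the \emph{groupoid} $\unrootedtrees(\uc)$, its elements are exactly isomorphism classes of pairs $(W, \zeta)$ consisting of an $\fC$-colored tree $W$ with legs ordered by $\uc$ together with a decoration $\zeta$ assigning to each vertex $w$ an element of $Z'(\xi\,\nbhd_W(w))$. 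Since $|Z'(\ud)| = 1$ precisely when $\ud$ is repetition-free and equals some $\nbhd(v)$ as an unordered list (and is $0$ otherwise, as recorded just after the definition of $C(S)$), a decoration is the same datum as a choice, for each $w$, of a vertex $v(w) \in \vertex(S)$ whose $S$-neighborhood matches the colors around $w$. A pinned subgraph $R'$ of $S$ supplies such data \emph{tautologically}: take $W = R'$ with $\xi$ the inclusion $\edge(R') \hookrightarrow \edge(S)$ and $v(w) = w$, the matching condition being exactly $\nbhd_{R'}(w) = \nbhd_S(w)$, which holds because subgraphs contain all incident edges.

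Now suppose two pinned subgraphs $R', R''$ represent the same element, so that there is an isomorphism $\theta \colon R' \to R''$ in $\unrootedtrees(\uc)$ carrying the tautological decoration of $R'$ to that of $R''$. Preservation of colors means $\theta_0$ satisfies $\theta_0(a) = a$ in $\edge(S)$, hence $\theta_0$ is the identity inclusion and $\edge(R') = \edge(R'')$; preservation of decorations means $\theta_1(w) = w$ in $\vertex(S)$, hence $\vertex(R') = \vertex(R'')$. Having the same vertex and edge sets inside $S$, the two subgraphs are equal, proving the claim and therefore the theorem. The degenerate case $R' = |_e$ is subsumed: such an $R'$ is decoration-free, and the element it represents has profile $(e,e)$, from which the color $e$, and hence $|_e$, is recovered.

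I expect the main obstacle to be the careful justification of this decorated-tree description of the free cyclic operad — in particular, the identification of the colimit over the groupoid $\unrootedtrees(\uc)$ with isomorphism classes of decorated trees, i.e.\ verifying that the free $T^+$-algebra genuinely computes these orbits and that the symmetrization step $\Set^{\prof(\fC)} \to \Set^{\Sigma_\fC^+}$ introduces no phantom identifications. Once that normal-form statement is secured, the rigidity argument (colors \emph{are} the edges of $S$ and decorations \emph{are} the vertices of $S$, so any comparison isomorphism is forced to be an identity) is essentially bookkeeping, and the final assembly $\phi_0 = \psi_0$ together with $\phi_1(v) = \psi_1(v)$ for all $v$ gives $\phi = \psi$.
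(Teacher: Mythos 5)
Your proof is correct and follows the same route as the paper: reduce to $\phi_0=\psi_0$ plus the claim that a pinned subgraph of $S$ is recoverable from the element of $C(S)$ it determines via Remark~\ref{rmk pinned sub to elements}. The only difference is that the paper asserts this recoverability without argument, whereas you supply the missing justification via the decorated-tree description of the free cyclic operad and the rigidity of comparison isomorphisms (colors being edges of $S$ forces $\theta_0=\id$, and since distinct vertices of a tree with legs have distinct unordered neighborhoods, $\theta_1=\id$ follows as well), which is a worthwhile elaboration rather than a different approach.
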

\begin{proof}
Let $\phi, \psi \in \Xi(R,S)$, and suppose that $C(\phi) = C(\psi)$.
Write $f = \phi_0 = \psi_0$ for the common function on color sets.
By assumption, for each $v \in \vertex(R)$, we have that $C(\phi)_1(v)$ is equal to $C(\psi)_1(v)$.
But $C(\phi)_1(v)$ comes from $\phi_1(v)$ (with appropriate choice of pinned structure) as in Remark~\ref{rmk pinned sub to elements}, and likewise for $C(\psi)_1(v)$.
It follows then that the subgraphs $\phi_1(v)$ and $\psi_1(v)$ are the same, hence $\phi = \psi$.
\end{proof}

\begin{example}[$C : \Xi \to \Cyc$ is not full]\label{nonfull example}
Consider the graph $L_1 = $ ---\!\textbullet\!--- with edges $0$ and $1$ and vertex $v$.
There are exactly four elements in $\Xi(L_1,L_1)$, corresponding to the four maps of edge sets $\{0,1\} \to \{0,1\}$.
But there are infinitely many maps in $\hom_{\Cyc}(C(L_1), C(L_1))$. 
One example which is not in $C(\Xi(L_1,L_1))$ is the map $f: C(L_1) \to C(L_1)$ which on color sets is $f(i) = 0$ and on morphisms is specified by $f(v) = v \circ_1 v$:
\[
	\text{$\overset{0}{\text{---}}$\!\textbullet\!$\overset{1}{\text{---}}$\!\textbullet\!$\overset{0}{\text{---}}$}.
\]
\end{example}

In Section \ref{section cyclic dendroidal sets}, we saw several examples of cyclic dendroidal sets. Another class of examples are the nerves of cyclic operads.

\begin{definition}[Nerve] There is a functor $N_c : \Cyc \to \Set^{\Xi^{op}}$ defined, on an object $O \in \Cyc$, by
	\[ N_c(O) = \Cyc(C(-), O) \in \Set^{\Xi^{op}}.\]
We will refer to $N_c$ as the \emph{cyclic dendroidal nerve}.
\end{definition}

Recall there is an analogous \emph{dendroidal} nerve $N_d: \Operad \to \Set^{\Omega^{op}}$ defined by $N_d(O)_T = \Operad(\Omega(T), O)$, where $\Omega(T)$ is the $\edge(T)$-colored operad freely generated by $T$.
This functor is a fully-faithful embedding, and the essential image may be characterized using \emph{inner horns} and \emph{Segal cores} from Definition \ref{horns and cores}.
\begin{theorem}[Moerdijk--Weiss; Cisinski--Moerdijk]\label{dendroidal nerve theorem}
	Let $X\in \Set^{\Omega^{op}}$ be a dendroidal set. The following are equivalent.
	\begin{enumerate}[(i')]
		\item $X\cong N_d(O)$ for some operad $O$.
		\item $X_T = \hom(\Omega[T], X) \to \hom(\Lambda^\delta\Omega[T], X)$ is a bijection for every inner coface map $\delta$.
		\item $X_T = \hom(\Omega[T], X) \to \hom(\Sc[T], X)$ is a bijection for every rooted tree $T$. 
	\end{enumerate}
\end{theorem}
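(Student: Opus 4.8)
The plan is to establish the two equivalences (i$'$)$\Leftrightarrow$(iii$'$) and (ii$'$)$\Leftrightarrow$(iii$'$) separately, since the first is essentially a statement about how free operads decompose over trees (the nerve/Segal part), while the second is a purely combinatorial comparison between inner horns and Segal cores. Throughout I would use that, by the Yoneda lemma, $\hom(\Omega[T], X) = X_T$, and that $\hom(\Sc[T], X)$ is by construction the matching object $\lim_{v\in\vertex(T)} \hom(\Omega[\cstar_v], X)$ computed over the diagram of corollas and their shared edges.

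For (i$'$)$\Rightarrow$(iii$'$), write $X = N_d(O)$, so $X_T = \Operad(\Omega(T), O)$. The structural input is that the free operad $\Omega(T)$ is the colimit in $\Operad$ of the free operads $\Omega(\cstar_v)$ on the corollas of $T$, glued along the free operads $\Omega(\eta)$ on the interior edges; this merely records that a tree is assembled from its vertex-neighborhoods by identifying shared edges. Applying the representable $\Operad(-,O)$ turns this colimit presentation into the corresponding limit, which is exactly $\hom(\Sc[T], X)$, so the comparison map is a bijection. For the converse (iii$'$)$\Rightarrow$(i$'$), I would reconstruct an operad $O$ with color set $X_\eta$ and with $O(c_1, \dots, c_n; c_0)$ the fiber of $X_{\cstar_{n+1}} \to (X_\eta)^{n+1}$ over $(c_0, c_1, \dots, c_n)$. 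The Segal isomorphism for a tree with two vertices, together with the inner coface contracting its inner edge, supplies the partial compositions $\circ_i$; associativity, unitality, and $\Sigma$-equivariance then follow by applying the Segal condition to trees with three vertices and to the relevant codegeneracies and isomorphisms. One finishes by checking that the resulting unit $X \to N_d(O)$ is levelwise a bijection, again by Segal-decomposing each $X_T$.

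For (ii$'$)$\Leftrightarrow$(iii$'$), the point is that conditions (ii$'$) and (iii$'$) are both \emph{unique} right-lifting statements, against the inner horn inclusions $\Lambda^\delta\Omega[T] \hookrightarrow \Omega[T]$ and against the Segal-core inclusions $\Sc[T] \hookrightarrow \Omega[T]$ respectively. I would show that these two families of maps generate the same class of maps admitting unique fillers against a fixed $X$. In one direction, each Segal-core inclusion is built as a finite composite of pushouts of inner horn inclusions, obtained by contracting the inner edges of $T$ one at a time and tracking the intermediate subtrees; since unique right lifting against $X$ is stable under pushout and composition, (ii$'$) implies (iii$'$). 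The reverse containment, expressing inner horns through Segal cores and lower-dimensional data, requires a cancellation argument for unique lifting rather than mere closure under pushout and composition.

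The hardest part is this combinatorial step (ii$'$)$\Leftrightarrow$(iii$'$): concretely, the inductive decomposition of a Segal-core inclusion into inner horn inclusions, together with the bookkeeping of the outer faces and intermediate trees that appear, and the care needed because the equivalence of \emph{unique} (rather than merely weak) lifting conditions is sensitive to exactly which maps are used. This is precisely where the analogous dendroidal arguments of Moerdijk--Weiss and Cisinski--Moerdijk demand the most attention, and it is the step least amenable to a purely formal treatment.
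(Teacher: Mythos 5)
This statement is recalled from the literature: the paper's ``proof'' is a two-line citation (Moerdijk--Weiss for (i$'$)$\Leftrightarrow$(ii$'$), Cisinski--Moerdijk for the equivalence with (iii$'$)), so there is no internal argument to compare against. Judged on its own terms, your sketch reproduces the standard strategy for two of the three implications but does not close the equivalence.

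What works: your (i$'$)$\Leftrightarrow$(iii$'$) is the right argument. The free operad $\Omega(T)$ is the colimit in $\Operad$ of the diagram of corolla operads $\Omega(\cstar_v)$ glued along the edge operads $\Omega(\eta)$ (the diagram the paper calls $\mathcal C^T$ in Lemma \ref{segal inclusion different category lemma}), and $\hom(\Sc[T],X)$ is the corresponding limit, so nerves satisfy the strict Segal condition; conversely the Segal data on corollas and two-vertex trees reconstructs an operad, with associativity checked on three-vertex trees. Your (ii$'$)$\Rightarrow$(iii$'$) is also sound: the Segal core inclusion is inner anodyne, and the class of maps having \emph{unique} lifting against a fixed $X$ (i.e.\ maps left-orthogonal to $X\to *$) is closed under pushout, transfinite composition, and retract, so unique inner horn filling implies the strict Segal condition.

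The gap: you never establish any implication \emph{into} (ii$'$). You explicitly concede that (iii$'$)$\Rightarrow$(ii$'$) ``requires a cancellation argument'' you do not supply, and your architecture --- (i$'$)$\Leftrightarrow$(iii$'$) together with (ii$'$)$\Rightarrow$(iii$'$) --- provides no other route, so the three conditions are not shown equivalent. Moreover, the cancellation you gesture at (expressing inner horns through Segal cores inside an orthogonality class) is genuinely delicate and is not how the literature proceeds. The standard fix is to close the cycle through (i$'$): prove directly that $N_d(O)$ has unique inner horn fillers. Concretely, a map $\Lambda^\delta\Omega[T]\to N_d(O)$ restricts to every corolla of $T$ (all corollas survive in an inner horn), hence determines operations $x_v\in O$ for each vertex, compatible along shared edges; associativity of operadic composition then yields a unique operad map $\Omega(T)\to O$ extending the horn. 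This is exactly the Moerdijk--Weiss direction (i$'$)$\Rightarrow$(ii$'$), and with it your remaining implications assemble into the full equivalence. Without it, the proof is incomplete.
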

\begin{proof}
	The equivalence of the first two was established in Proposition 5.3 and Theorem 6.1 of \cite{mw2}, while the third was shown to be equivalent to the first two in \cite[Corollary 2.6]{cm-ds}.
\end{proof}

The next section is dedicated to establishing an analogous theorem for the cyclic dendroidal nerve.

\begin{remark}
Example \ref{nonfull example}, shows, in particular, that the composite
\[ \begin{tikzcd}[row sep=tiny]
\Xi \rar & \Cyc \rar & \Set^{\Xi^{op}}	 \\	
S \arrow[mapsto]{rr} & & N_cC(S)
\end{tikzcd}
\]	
is not the Yoneda embedding $S\mapsto \Xi[S]$, since the Yoneda embedding is fully-faithful.
\end{remark}

\section{Cyclic operads and the nerve theorem}

Our goal in this section is to prove Theorem \ref{cyclic dendroidal nerve theorem}.
The method of proof is to reduce to the dendroidal case and apply Theorem \ref{dendroidal nerve theorem}.
To do this, we first need to understand the relationship between dendroidal Segal cores (resp.\ dendroidal inner horns) and their cyclic dendroidal analogues from Definition \ref{horns and cores}. 
The other main ingredient is to show that if $X$ is a cyclic dendroidal set so that $\iota^* X$ is the nerve of an operad, then $X$ was already the nerve of a cyclic operad. This is Theorem \ref{theorem creating cyclic operads}.

\begin{lemma}\label{segal inclusion different category lemma}
	If $T\in \Omega$ is a rooted tree, then
	\[
		\iota_!(\Sc[T] \to \Omega[T]) \cong (\Sc[\iota T] \to \Xi[\iota T]).
	\]
\end{lemma}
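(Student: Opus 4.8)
The plan is to realize both arrows as colimits of the same diagram of representables and invoke cocontinuity of $\iota_!$. The two formal facts I will use are that $\iota_!$, being a left adjoint, preserves all colimits, and that it carries representables to representables: since $\iota_!$ is the left Kan extension of $\iota$ along the Yoneda embedding, one has $\iota_! \Omega[U] \cong \Xi[\iota U]$ naturally in $U \in \Omega$. In particular $\iota_! \Omega[T] \cong \Xi[\iota T]$, $\iota_! \Omega[\cstar_v] \cong \Xi[\cstar_v]$, and $\iota_! \Omega[\eta] \cong \Xi[\eta]$, and by naturality these identifications are compatible with the maps induced by the subgraph inclusions $\cstar_v \hookrightarrow T$, because $\iota$ sends the rooted subgraph inclusion $\cstar_v \hookrightarrow T$ to the unrooted subgraph inclusion $\cstar_v \hookrightarrow \iota T$.

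Next I would present the Segal core as a colimit. Let $\mathcal I_T$ be the incidence category of $T$: its objects are the vertices $v \in \vertex(T)$ and the inner edges $e \in \interior(T)$, with a single morphism $e \to v$ whenever $e \in \nbhd(v)$. Define $D_T \colon \mathcal I_T \to \Set^{\Omega^{op}}$ by $v \mapsto \Omega[\cstar_v]$ and $e \mapsto \Omega[\eta]$, sending each $e \to v$ to the inclusion $\Omega[\eta] \hookrightarrow \Omega[\cstar_v]$ corresponding to the edge $e$. I claim $\Sc[T] = \colim_{\mathcal I_T} D_T$, with colimiting cocone the inclusions $\Omega[\cstar_v] \hookrightarrow \Omega[T]$. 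This is the combinatorial heart of the argument: one checks that, as subobjects of $\Omega[T]$, the intersection $\Omega[\cstar_v] \cap \Omega[\cstar_w]$ equals $\Omega[\eta]$ (on the shared edge) when $v,w$ are joined by an inner edge and is empty otherwise, since distinct non-adjacent stars share no edge; and all triple intersections vanish, because three vertices of a tree are never pairwise adjacent. Hence the union $\bigcup_v \Omega[\cstar_v]$ is exactly the colimit of its nerve-of-the-cover diagram $D_T$.

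The same computation verbatim, using only the underlying graph of $\iota T$, which coincides with that of $T$ and so gives $\mathcal I_{\iota T} = \mathcal I_T$, shows $\Sc[\iota T] = \colim_{\mathcal I_T} D_{\iota T}$, where $D_{\iota T}(v) = \Xi[\cstar_v]$ and $D_{\iota T}(e) = \Xi[\eta]$. By the representable identifications above, $\iota_! D_T \cong D_{\iota T}$ as diagrams $\mathcal I_T \to \Set^{\Xi^{op}}$. Applying $\iota_!$ and using cocontinuity then yields
\[
\iota_! \Sc[T] = \iota_! \colim_{\mathcal I_T} D_T \cong \colim_{\mathcal I_T} \iota_! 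D_T \cong \colim_{\mathcal I_T} D_{\iota T} = \Sc[\iota T].
\]

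Finally I would assemble the square. The inclusion $\Sc[T] \hookrightarrow \Omega[T]$ is the map of colimits induced by the cocone $\{\Omega[\cstar_v] \hookrightarrow \Omega[T]\}$; applying $\iota_!$ produces the map of colimits induced by $\{\Xi[\cstar_v] \hookrightarrow \Xi[\iota T]\}$, which is precisely the defining inclusion $\Sc[\iota T] \hookrightarrow \Xi[\iota T]$ under the identifications $\iota_! \Omega[T] \cong \Xi[\iota T]$ and $\iota_! \Sc[T] \cong \Sc[\iota T]$. Thus the square commutes, exhibiting the claimed isomorphism of arrows. The degenerate cases $T = \eta$ (empty vertex set, where $\Sc[\eta] = \Omega[\eta]$ by convention) and $T = \cstar_n$ (a single vertex, colimit over one object) are immediate from the same description. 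I expect the only genuinely nonformal step to be the colimit presentation of the Segal core, i.e.\ the intersection computation in the second paragraph; everything else follows from the adjunction $\iota_! \dashv \iota^*$.
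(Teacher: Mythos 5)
Your argument is essentially the paper's own proof: both write the Segal core as a colimit of a diagram of representables indexed by vertices and edges, and then use that $\iota_!$ preserves colimits and sends $\Omega[U]$ to $\Xi[\iota U]$ naturally. The only (immaterial) difference is that you index over inner edges while the paper uses all edges (legs contribute nothing to the colimit, though the paper's convention handles $T=\eta$ uniformly where you need a separate remark), and you supply the intersection computation justifying the colimit presentation, which the paper asserts without proof.
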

\begin{proof}
Given $S\in \Xi$, define a category $\mathcal C^S$ with 
$\Ob(\mathcal C^S) = \edge(S) \amalg \vertex(S)$, and non-identity maps $\{ e \to v \mid e\in \nbhd(v) \}$.
There is a functor $F : \mathcal C^S \to \Set^{\Xi^{op}}$ which on objects is given by $e \mapsto \Xi[ \eta ]$ and $v \mapsto \Xi[ \cstar_{|v|} ]$.
Define $F(\ord^v(k) \to v)$ to be the inclusion $k_* \colon  \Xi[ \eta ] \to \Xi[ \cstar_{|v|} ]$ which hits $k\in \Xi[\cstar_{|v|}]_\eta = \{0, 1, \dots, |v| - 1 \}$.
Then $\colim_{\mathcal C^S} F \cong \Sc[S]$.
A similar consideration applies for $\Sc[T] \subseteq \Omega[T]$; namely if $T$ is a rooted tree in $\Omega$, then there is a functor (using the same domain category from above) $F' : \mathcal C^T \to \Set^{\Omega^{op}}$ with $\colim_{\mathcal C^T} F' \cong \Sc[T] \subseteq \Omega[T]$.

Notice that $\iota_! \Omega[T] \cong \Xi[\iota T]$
since for any $X \in \SSet^{\Xi^{op}}$, they are the same on $\hom(-, X)$:
\[
	\hom(\iota_! \Omega[T], X) = \hom(\Omega[T], \iota^* X) = (\iota^* X)_T = X_{\iota T} = \hom(\Xi[T], X).
\]
In particular, $\iota_!(\Omega[\cstar_v]) = \Xi[\cstar_v]$ and $\iota_!(\Omega[\eta]) = \Xi[\eta]$.
Since $\iota_!$ is a left adjoint, it commutes with colimits, and we have
\begin{equation*}
	\iota_!(\Sc[T] \to \Omega[T]) \cong (\Sc[\iota T] \to \Xi[\iota T]).
\end{equation*}
\end{proof}

\begin{lemma}\label{coface interchange}
	Let $\delta : R \to T$ be any coface map in $\Omega$.
	Then \[ \iota_!(\Lambda^\delta\Omega[T] \hookrightarrow \Omega[T]) \cong (\Lambda^\delta\Xi[T] \hookrightarrow \Xi[T]).\]
\end{lemma}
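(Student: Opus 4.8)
The plan is to mimic the proof of Lemma~\ref{segal inclusion different category lemma}, where $T$ is now regarded in $\Xi$ via $\iota$. Both horns are unions of sub-representables: by definition $\Lambda^\delta\Omega[T] = \bigcup_{d\not\cong\delta} d_*\Omega[R_d]$ and $\Lambda^{\iota\delta}\Xi[\iota T] = \bigcup_{c\not\cong \iota\delta} c_*\Xi[\iota R_c]$, where $d$ (resp.\ $c$) ranges over coface maps into $T$ in $\Omega$ (resp.\ into $\iota T$ in $\Xi$), and each inclusion is a monomorphism of representables because cofaces lie in $\Omega^+$, resp.\ $\Xi^+$, the subcategory of monomorphisms (Theorem~\ref{xi is ez category}). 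Since $\iota_!$ is cocontinuous and satisfies $\iota_!\Omega[R] \cong \Xi[\iota R]$ with $\iota_!(d_*) = (\iota d)_*$ (from the proof of Lemma~\ref{segal inclusion different category lemma}), applying $\iota_!$ to the $\Omega$-horn should produce the $\Xi$-horn, provided two things hold: the indexings of the two unions match, and $\iota_!$ preserves the union.

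First I would establish the \emph{coface correspondence}: the assignment $d \mapsto \iota d$ gives a bijection, up to isomorphism over the codomain, between cofaces of $T$ and cofaces of $\iota T$, sending inner to inner, outer to outer, and $\delta$ to $\iota\delta$. Inner cofaces on both sides are indexed by the inner edges of $T$, which do not depend on the orientation, so these match immediately. For outer cofaces I would use Definition~\ref{cofaces and codegens}: an outer coface of $\iota T$ deletes a vertex $v$ having exactly one non-leg neighbor. In the rooted tree $T$ such a $v$ is forced to be either a top vertex (its output being the single inner edge) or the root vertex (with a single inner input); in both cases the resulting subgraph inclusion is $\iota$ of an outer coface of $T$, the root being relocated in the latter case. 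Orientation is never an obstruction, since every subgraph inclusion into $\iota T$ is oriented once the domain is rooted at the appropriate leg (Section~\ref{section rooting}).

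The main obstacle is that $\iota_!$, although cocontinuous, is not fully faithful (its unit $\Omega(R,T) \to \Xi(\iota R, \iota T)$ is not surjective) and so does not obviously preserve the monomorphism $\Lambda^\delta\Omega[T] \hookrightarrow \Omega[T]$. To handle this cleanly I would first prove the general formula
\[
	(\iota_! Y)(S) \cong \coprod_{r\in\legs(S)} Y(\rootify(S,r)) \qquad (S \text{ non-linear}),
\]
natural in $Y \in \Set^{\Omega^{op}}$ and in $S$. This follows from the coend computation $(\iota_! Y)(S) \cong \int^{R} \Xi(S,\iota R)\times Y(R)$ of the pointwise left Kan extension, the bijection $\Xi(S, \iota R) \cong \coprod_{r\in\legs(S)} \Omega(\rootify(S,r), R)$ of Corollary~\ref{structure from rooting nonlinear} (taking as distinguished leg of $\iota R$ the root of $R$, so $\rootify(\iota R,-) = R$), and the co-Yoneda lemma; the required naturality in $R$ is precisely the functoriality of $\amalgamate$ and $\lifting$ (Proposition~\ref{functoriality of amalgamate}, Proposition~\ref{functoriality of lifting}). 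Granting this, evaluating $\iota_!$ on $\Lambda^\delta\Omega[T]\hookrightarrow\Omega[T]$ at a non-linear $S$ yields a coproduct, indexed by $\legs(S)$, of the fiberwise inclusions $(\Lambda^\delta\Omega[T])(\rootify(S,r)) \hookrightarrow \Omega(\rootify(S,r),T)$, which is manifestly injective.

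Finally I would identify the image fiberwise with $\Lambda^{\iota\delta}\Xi[\iota T]$: under the decomposition $\Xi(S,\iota T) \cong \coprod_r \Omega(\rootify(S,r),T)$, a map corresponding to a pair $(r,\psi)$ factors through a non-$\iota\delta$ coface of $\iota T$ exactly when $\psi$ factors through the corresponding non-$\delta$ coface of $T$, by the coface correspondence together with the compatibility of $\lifting$ with composition (Proposition~\ref{functoriality of lifting}); hence membership in the two horns agrees fibrewise. The remaining case is that of linear $S$, where maps may factor through $\eta$ (the set $\Xi^0$) and the displayed formula over-counts by the two-to-one behaviour of Corollary~\ref{structure from rooting linear}; here I would treat the extra constant maps directly, checking that they lie in both horns consistently. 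Assembling the fibrewise bijections over all $S$ then gives the claimed isomorphism of arrows.
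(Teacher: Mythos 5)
Your proposal is correct in substance, but it takes a genuinely different route from the paper's proof. The paper sidesteps what you call the ``main obstacle'' (that $\iota_!$ need not visibly preserve the subobject $\Lambda^\delta\Omega[T]\hookrightarrow\Omega[T]$) by never computing $\iota_!$ pointwise: it writes $\Lambda^\delta\Xi[\iota T]\hookrightarrow\Xi[\iota T]$ as a colimit of representables over a skeleton of $\Xi^{+}\downarrow \iota T$ with the objects isomorphic to $\mathrm{id}$ and $\delta$ deleted, using that every element of the horn factors through a minimal face (Definition~\ref{definition greedy}\eqref{Reedy def factorization}) and that maps in $\Xi^{+}$ induce monomorphisms of representables (Theorem~\ref{xi is ez category}); it then uses $\lifting$ and Proposition~\ref{functoriality of lifting} to replace every object \emph{and} every morphism of that index category by an oriented one, so that the very same index category computes $\Lambda^\delta\Omega[T]\hookrightarrow\Omega[T]$ in $\Set^{\Omega^{op}}$, and cocontinuity of $\iota_!$ finishes the argument uniformly in the evaluation variable, with no coface-by-coface matching and no case split. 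Your route instead makes $\iota_!$ explicit via the coend formula $(\iota_!Y)(S)\cong\coprod_{r\in\legs(S)}Y(\rootify(S,r))$ for non-linear $S$ (Corollary~\ref{structure from rooting nonlinear} plus co-Yoneda) and checks horn membership fibrewise using the coface correspondence and Proposition~\ref{functoriality of lifting}. This works, and the explicit description of $\iota_!$ is of independent interest, but it costs you two things the paper gets for free: the bijection between cofaces of $T$ in $\Omega$ and cofaces of $\iota T$ in $\Xi$ up to isomorphism over the codomain (a special case of the paper's identification of the full overcategories), and the linear-$S$ case, where the displayed formula over-counts constant maps by the two-to-one collapse of Corollary~\ref{structure from rooting linear}. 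That last step does go through -- constant maps factor through $\eta$, and their membership in either horn is detected by which edge of $T$ they hit, a datum $\iota_!$ manifestly preserves -- but it is the one place where your sketch would need to be written out in full before the argument is complete.
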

\begin{proof}
Our strategy is the same as in the previous lemma -- we write both sides as a colimit of representables and then use that $\iota_!$ commutes with colimits and takes representables to representables.

Suppose that $S\in \Xi$. 
Let $V^S$ be a skeleton of the comma category $\Xi^+ \downarrow S$; there is an evident functor $A^S : V^S \to \Set^{\Xi^{op}}$ which sends $\phi : R \to S$ to the object $\Xi[R]$ and
\[ \begin{tikzcd}[column sep=tiny]
R' \arrow[dr,"\phi'"] \arrow[rr,"\alpha"] & & R \arrow[dl,"\phi"] \\
& S
\end{tikzcd} \] 
to $\alpha_* : \Xi[R'] \to \Xi[R]$.
Since $V^S$ has a terminal object (the one isomorphic to $\id_S$), $\colim_{V^S} A^S = \Xi[S]$.
If $\delta : R \to S$ is a coface map, write $V^S_\delta$ for the full subcategory of $V^S$ that excludes the two objects isomorphic to $\id_S$ and $\delta$.
Then there is an isomorphism
\[ \begin{tikzcd}
\colim\limits_{V^S_\delta} A^S|_{V^S_\delta} \dar \rar{\cong} & \Lambda^\delta\Xi[S] \dar[hook] \\
\colim\limits_{V^S} A^S \rar{\cong} & \Xi[S]. 
\end{tikzcd} \]
This isomorphism arises from the fact that $\Xi[R] \to \Xi[S]$ is a monomorphism whenever $R\to S$ is in $\Xi^+$ (Theorem~\ref{xi is ez category}) and the fact that all elements of $\Xi[S]$ (in particular, all elements of $\Lambda^\delta\Xi[S]$) factor through a minimal face by Definition~\ref{definition greedy}\eqref{Reedy def factorization}.

Suppose that $T$ is a rooted tree (with root $t_0$), and let $\phi : R \to T$ be any map in $\Xi^+$. 
If $R = \eta$, then $\phi$ is automatically oriented.
Assume that $R$ has at least one vertex. 
Then $\lifting_{t_0}(\phi) : \rootify(R, \findroot_{t_0}(\phi)) \to \rootify(T, t_0) = T$ is isomorphic to $\phi$ over $T$ and is also in $\Xi^+$. Thus we may as well have assumed that $\phi$ was oriented in the first place.
In particular, we may assume that all objects in $V^T$ are oriented maps.
With this assumption, all morphisms in $V^T$ are also oriented maps, for if $\phi \alpha = \phi'$ in $\Xi^+$ with $\phi$ and $\phi'$ oriented maps, then $\alpha$ is also oriented by Proposition \ref{functoriality of lifting}.
Thus we have a functor $B^T : V^T \to \Set^{\Omega^{op}}$ sending $R \to T$ to $\Omega[R]$, and with $\iota_! \circ B^T \cong A^T$. Further, as above, $\colim_{V^T_\delta} B^T|_{V^T_\delta}$ is isomorphic to $\Lambda^\delta\Omega[T]$ over $\Omega[T]$.
Concluding the proof, we have \[ \iota_! \Lambda^\delta\Omega[T] \cong \iota_!\left(  \colim_{V^T_\delta} B^T|_{V^T_\delta} \right) \cong \colim_{V^T_\delta} \left( \iota_! \circ B^T|_{V^T_\delta} \right) \cong  \colim_{V^T_\delta} A^T|_{V^T_\delta} \cong \Lambda^\delta\Xi[T].\]
\end{proof}

\begin{theorem}\label{theorem creating cyclic operads}
	Let $X \in \Set^{\Xi^{op}}$ be a cyclic dendroidal set and $O$ be a colored operad.
	If $\alpha : \iota^* X \overset{\cong}{\to} N_d(O)$ is an isomorphism of dendroidal sets, then there is a unique cyclic structure on $O$ so that $\alpha$ lifts to an isomorphism between $X$ and the cyclic dendroidal nerve of $O$. 
	In other words, there is a unique object $\tilde O \in \Cyc$ and an isomorphism $\tilde \alpha : X \to N_c(\tilde O)$ such that $U(\tilde O) = O$ and $\tilde \alpha_{\iota T} = \alpha_T : X_{\iota T} = (\iota_* X)_T \to N_d(O)_T = N_d(U\tilde O)_T = N_c(\tilde O)_{\iota T}$ for every rooted tree $T\in \Omega$.

	Further, if $\beta : \iota^* Y \to N_d(P)$ is another such isomorphism and $f :X \to Y$ is a morphism of $\Set^{\Xi^{op}}$, then there is a (unique) morphism $f' : \tilde O \to \tilde P$ in $\Cyc$ making the diagram
	\[ \begin{tikzcd}
	X \rar{\tilde \alpha} \dar{f} & N_c(\tilde O) \dar{N_c(f')} \\
	Y \rar{\tilde \beta} & N_c(\tilde P)
	\end{tikzcd} \]
	commute.
\end{theorem}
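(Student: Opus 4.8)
The plan is to upgrade the hypothesis into a Segal condition for $X$, read the cyclic structure off the automorphisms of the corollas, and then reglue everything using that Segal condition. The first step is to observe that $X$ itself satisfies the Segal condition: for every tree $S\in\Xi$ the map $X_S\to\hom(\Sc[S],X)$ induced by $\Sc[S]\hookrightarrow\Xi[S]$ is a bijection. Every $S$ is isomorphic in $\Xi$ to $\iota\rootify(S,s_0)$ for an arbitrary leg $s_0$, and such an isomorphism carries $\Sc[S]$ to $\Sc[\iota\rootify(S,s_0)]$, so it suffices to treat $S=\iota T$ for a rooted tree $T$. There $\hom(\Sc[\iota T],X)\cong\hom(\iota_!\Sc[T],X)\cong\hom(\Sc[T],\iota^*X)$ and $X_{\iota T}\cong\hom(\Xi[\iota T],X)\cong\hom(\Omega[T],\iota^*X)$ by Lemma~\ref{segal inclusion different category lemma} and adjunction; since $\iota^*X\cong N_d(O)$ satisfies the Segal condition of Theorem~\ref{dendroidal nerve theorem}, the comparison is a bijection.

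Next I would define the cyclic structure. Writing $W=\rootify(\cstar_{n+1},0)$ for the rooted $n$-corolla, we have $\iota W\cong\cstar_{n+1}$ and $N_d(O)_W=O_n$, so $\alpha$ induces a bijection $X_{\cstar_{n+1}}\cong O_n$. The group $\Aut_\Xi(\cstar_{n+1})=\Sigma_n^+$ acts on $X_{\cstar_{n+1}}$ through the presheaf structure, and transporting this action across $\alpha$ defines the operators $-\cdot\sigma$ on $O_n$. Because $\alpha$ is a map of $\Omega$-presheaves and $\Aut_\Omega(W)=\Sigma_n$ is exactly the isotropy subgroup of the root $0$ inside $\Sigma_n^+=\Aut_\Xi(\cstar_{n+1})$, the restricted $\Sigma_n$-action agrees with the operadic one, so the first condition of Definition~\ref{defn cyclic operad} holds.

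The main obstacle is verifying the compatibility axiom \eqref{equation cyclic operad condition}. For this I would realize both sides inside $X$ on a two-vertex tree $D$ obtained by grafting the corolla for $g$ onto the corolla for $f$ along the edge at input $i$. By the Segal condition a composable pair $(g,f)$ is a single element of $X_D$, and the operadic composite $g\circ_i f$ is its image under the inner coface $\delta^e\colon\cstar_{k+\ell}\to D$ contracting the internal edge; this is purely a statement about $\iota^*X=N_d(O)$. Both the left side $(g\circ_i f)\cdot\tau_{k+\ell}$ and the right side are then images of this same element of $X_D$ under morphisms $\cstar_{k+\ell}\to D$ assembled from $\delta^e$ and the corolla rotations $\tau_{k+1},\tau_{\ell+1}$, so the axiom reduces to an identity of morphisms in $\Xi$. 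That identity holds because rotations and inner cofaces satisfy exactly these relations in $\Xi$, and applying the contravariant functor $X$ turns it into \eqref{equation cyclic operad condition}. This makes $\tilde O$ a cyclic operad with $U\tilde O=O$; the hard part is genuinely this combinatorial bookkeeping, identifying the two instances of $D$ and tracking the rotations. Uniqueness of $\tilde O$ is then forced, since $\tilde\alpha_{\iota W}=\alpha_W$ together with $U\tilde O=O$ leaves no freedom in the $\Sigma_n^+$-action.

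Finally I would produce $\tilde\alpha\colon X\to N_c(\tilde O)$ and establish functoriality. Both $X$ and $N_c(\tilde O)$ satisfy the Segal condition — the latter because $C(S)$ is the free cyclic operad on $S$, whence $N_c(\tilde O)_S\cong\hom(\Sc[S],N_c(\tilde O))$ — and on corollas and on $\eta$ the isomorphism $\alpha$ gives compatible bijections $X_{\cstar_{n+1}}\cong O_n\cong N_c(\tilde O)_{\cstar_{n+1}}$ and $X_\eta\cong\colset(O)\cong N_c(\tilde O)_\eta$. Gluing these over $\Sc[S]$ defines $\tilde\alpha_S$; naturality in $S$ follows from compatibility of the gluing with the presheaf structure, $\tilde\alpha$ is an isomorphism because it is one on the corollas controlling the Segal limits, and by construction $\iota^*\tilde\alpha=\alpha$, which also pins down uniqueness. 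For the last statement, the colour map of $f'$ is $f_\eta$ read through $\colset(O)$ and $\colset(P)$, and its action on operations is $f_{\cstar_{n+1}}\colon O_n\to P_n$ read through $\alpha$ and $\beta$; naturality of $f$ with respect to the corolla automorphisms shows $f'$ respects the $\Sigma_n^+$-actions, and naturality with respect to the inner cofaces $\delta^e$ (as in the axiom verification) shows $f'$ respects composition, so $f'$ is the unique morphism of $\Cyc$ with the desired effect. The square $\tilde\beta\circ f=N_c(f')\circ\tilde\alpha$ commutes on corollas and on $\eta$ by these definitions, hence commutes everywhere by the Segal condition.
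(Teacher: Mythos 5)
Your proposal is correct and follows essentially the same route as the paper: the cyclic structure is defined by transporting the $\Aut_\Xi(\cstar_{n+1})=\Sigma_n^+$-action on $X_{\cstar_{n+1}}$ across $\alpha$, and the axiom \eqref{equation cyclic operad condition} is verified by realizing both sides on the two-vertex grafting trees (the paper's $\pgraft_{m,n}^i$) and reducing to identities among rotations and cofaces in $\Xi$ (the paper's Lemma~\ref{psi and phi lemma} and the cube diagrams). Your explicit Segal-core gluing of $\tilde\alpha$ and the preliminary verification that $X$ satisfies the Segal condition are slightly more detailed than the paper's treatment, which leaves the lift of $\alpha$ largely implicit, but this is a presentational difference rather than a different argument.
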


In order to prove this theorem, we need to delve a bit into how the operad structure of $O$ is manifested in the dendroidal set $N_d(O)$.

\newcommand{\retrieveprof}{\mathscr P}

Note that $\cstar_{n+1}$ from Example \ref{unrooted tree examples} is a rooted tree (which in the dendroidal setting is usually called $C_n \in \Omega$).
Let $i : \eta \to \cstar_{n+1}$ be the (oriented) map with image the single edge $i$; if $X \in \Set^{\Xi^{op}}$ write $\retrieveprof = \prod_{i=0}^n i^* : X_{\cstar_{n+1}} \to \prod_{i=0}^n X_\eta$.
If $O$ is a colored operad and $W = N_d(O)$, then $W_\eta = \colset(O)$ and $W_{\cstar_{n+1}} = \coprod_{c, c_1, \dots, c_n} O(c_1, \dots, c_n; c)$ with $\retrieveprof$ retrieving the profile $(c, c_1, \dots, c_n)$.
Now that we have the elements of $O$, let us examine the composition. For that, we will utilize the following trees.

\newcommand{\pgraft}{{\mathbf Z}}

\begin{figure}
\labellist
\small\hair 2pt
 \pinlabel {$a$} [B] at 40.5 57
 \pinlabel {$b$} [B] at 94.5 57
 \pinlabel {$a$} [B] at 212 57
 \pinlabel {$b$} [B] at 266 57
 \pinlabel {$b$} [B] at 381 57
 \pinlabel {$a$} [B] at 435 57
 \pinlabel {$e$} at 67.5 67
 \pinlabel {$e$} at 239 67
 \pinlabel {$e$} at 408 67
 \pinlabel {$4$} at 25.5 97
 \pinlabel {$3$} at 102.5 101
 \pinlabel {$2$} at 137.5 52
 \pinlabel {$1$} at 102.5 19
 \pinlabel {$0$} at 24.5 24
 \pinlabel {$0$} at 196 97
 \pinlabel {$4$} at 274 101
 \pinlabel {$3$} at 309 52
 \pinlabel {$2$} at 274 19
 \pinlabel {$1$} at 197 24
 \pinlabel {$1$} at 367 97
 \pinlabel {$0$} at 443 101
 \pinlabel {$4$} at 478 52
 \pinlabel {$3$} at 443 19
 \pinlabel {$2$} at 366 24
\endlabellist
\centering
\includegraphics[width=\textwidth]{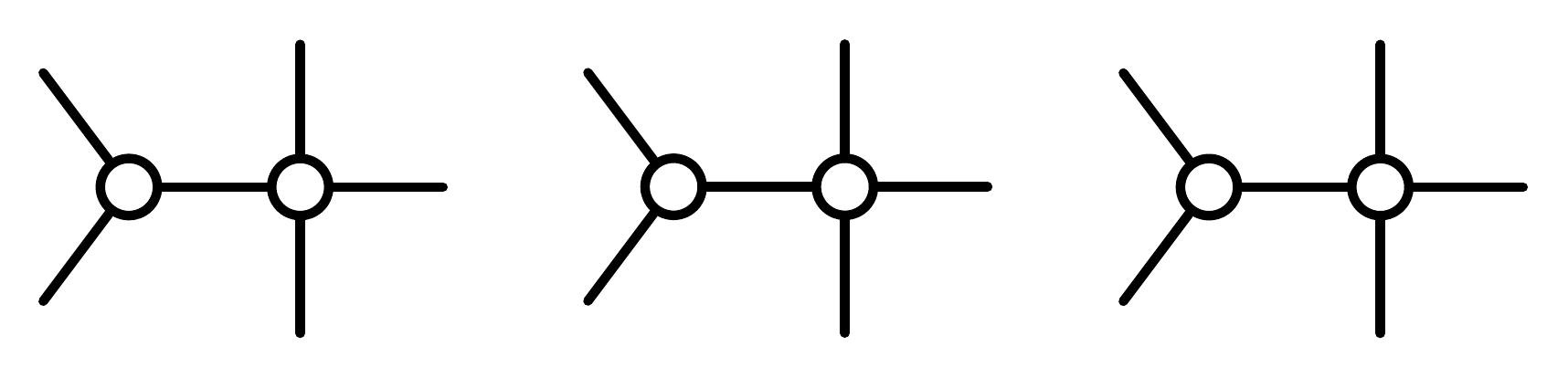}
	\caption{The rooted trees $\pgraft_{2,3}^1$, $\pgraft_{2,3}^2$, and $\pgraft_{3,2}^1$}\label{figure pgrafts}
\end{figure}

\begin{definition}
	Suppose that $m\geq 1$, $n\geq 0$, and $1\leq i \leq m$.
	Define a (rooted) tree $\pgraft_{m,n}^i$ with two vertices $a,b$, $\legs(\pgraft_{m,n}^i) = 0, 1, \dots, m+n-1$ (in that order), and one internal edge $e$.
	We further declare that the vertex neighborhoods are the following ordered sets
	\begin{align*}
		\nbhd(a) &= 0, 1, \dots, i-1, e, n+i, \dots, m+n-1 \\
		\nbhd(b) &= e, i, i+1, \dots, n+i-1.
	\end{align*}
	See Figure~\ref{figure pgrafts} for examples.
	This graph admits one inner coface map $\delta^e : \cstar_{n+m} \to \pgraft_{m,n}^i$ with $\delta^e(k) = k$ for all edges $k$, and two outer cofaces
	\begin{align*}
		\delta^a &: \cstar_{n+1} \to \pgraft_{m,n}^i & \delta^a(k) &= \begin{cases}
			e & \text{if $k=0$} \\
			i + k - 1 &\text{otherwise}
		\end{cases} & \delta^a(v) &= \cstar_b \\
		\delta^b &: \cstar_{m+1} \to \pgraft_{m,n}^i & \delta^b(k) &= \begin{cases}
			k & \text{if $0\leq k < i$} \\
			e & \text{if $k=i$} \\
			n - 1 + k &\text{if $i < k \leq m$.}
		\end{cases} & \delta^b(v) &= \cstar_a. 
	\end{align*}
All three of these cofaces are oriented.
\end{definition}

\begin{remark}\label{operadic mult}
	Let $O$ be a $\fC$-colored operad and $W = N_d(O)$ its dendroidal nerve. Then the operadic multiplication $\circ_i$ is recovered from
	\[
\begin{tikzcd}
	O(c_1, \dots, c_m; c_0) \times O(d_1, \dots, d_n; c_i)  \rar[hook] \arrow[dd, dashed, "\circ_i"] & 
	W_{\cstar_{m+1}} \times_{W_\eta} W_{\cstar_{n+1}} \dar["\cong","(d_b \times d_a)^{-1}" swap] 
\\& W_{\pgraft_{m,n}^i} \dar["d_e" swap]  
\\
	O(c_1, \dots, c_{i-1}, d_1, \dots, d_n, c_{i+1}, \dots, c_m; c_0) \rar[hook] & 
	W_{\cstar_{m+n}}
\end{tikzcd}\]
where the pullback in the top right is obtained from the maps $\cstar_{m+1} \overset{i}\hookleftarrow \eta \overset{0}\hookrightarrow \cstar_{n+1}$. 
\end{remark}

Given $\sigma \in \Sigma_{n}^+ = \Aut\{0,1,\dots, n\}$, there is a morphism $\psi_\sigma : \cstar_{n+1} \to \cstar_{n+1}$ determined by $(\psi_\sigma)_0 = \sigma$ on edges. The diagram 
\[ \begin{tikzcd}
X_{\cstar_{n+1}} \rar{\psi_\sigma^*}  \dar{\retrieveprof} & X_{\cstar_{n+1}}  \dar{\retrieveprof}   \\
\prod\limits_{i=0}^n X_{\eta} \rar{(-)\cdot \sigma} & \prod\limits_{i=0}^n X_{\eta}
\end{tikzcd} \]
commutes.
In particular, $\psi_\sigma^*$ restricts to a function
\[
	\retrieveprof^{-1}(x_0, \dots, x_n) \to \retrieveprof^{-1}(x_{\sigma(0)}, \dots, x_{\sigma(n)}).
\]

For the purposes of the next lemma, let us fix some notation.
For $q\geq 1$, let $\tau_q \in \Aut \{0, 1, \dots, q-1 \}$ be given by $\tau_q(k) = k +1 \mod q$.
Let $\psi^{q} = \psi_{\tau_q}: \cstar_{q} \to \cstar_{q}$ be the associated map.
If $1 \leq i \leq m-1$, define a map $\phi^{m,n} : \pgraft_{m,n}^i \to \pgraft_{m,n}^{i+1}$ by
\begin{align*}
	e &\mapsto e & a &\mapsto \cstar_a \\
	k &\mapsto \tau_{m+n}(k) & b &\mapsto \cstar_b.
\end{align*}
Likewise, if $n \geq 1$, define a map $\phi^{m,n} : \pgraft_{m,n}^m \to \pgraft_{n,m}^{1}$ by 
\begin{align*}
	e &\mapsto e & a &\mapsto \cstar_b \\
	k &\mapsto \tau_{m+n}(k) & b &\mapsto \cstar_a.
\end{align*}

The following lemma is a diagrammatic repackaging of Definition~\ref{defn cyclic operad}\eqref{equation cyclic operad condition}.

\begin{lemma}\label{psi and phi lemma}
	If $1 \leq i \leq m-1$, then the diagram 
\[ \begin{tikzcd}[column sep=small, row sep=small]
\cstar_{m+1} \arrow[dr, "\delta^b"] \arrow[dd, "\psi^{m+1}"] & & 
\cstar_{n+1} \arrow[dl, "\delta^a", swap] \arrow[dd, "\id" near start] 
\\ & 
\pgraft_{m,n}^i \arrow[dd, "\phi^{m,n}" description] & & 
\cstar_{m+n} \arrow[ll, crossing over, "\delta^e" near start] \arrow[dd, "\psi^{m+n}"] 
\\
\cstar_{m+1} \arrow[dr, "\delta^b"] & & 
\cstar_{n+1} \arrow[dl, "\delta^a", swap] 
\\ & 
\pgraft_{m,n}^{i+1} & & 
\cstar_{m+n} \arrow[ll, "\delta^e" near start] \\
\end{tikzcd} \]
	commutes.
If $i=m$ and $n\geq 1$, then the diagram
\[ \begin{tikzcd}[column sep=small, row sep=small]
\cstar_{m+1} \arrow[dr, "\delta^b"] \arrow[dd, "\psi^{m+1}"] & & 
\cstar_{n+1} \arrow[dl, "\delta^a", swap] \arrow[dd, "\psi^{n+1}" near start] 
\\ & 
\pgraft_{m,n}^m \arrow[dd, "\phi^{m,n}" description] & & 
\cstar_{m+n} \arrow[ll, crossing over, "\delta^e" near start] \arrow[dd, "\psi^{m+n}"] 
\\
\cstar_{m+1} \arrow[dr, "\delta^a"] & & 
\cstar_{n+1} \arrow[dl, "\delta^b", swap] 
\\ & 
\pgraft_{n,m}^{1} & & 
\cstar_{m+n} \arrow[ll, "\delta^e" near start] \\
\end{tikzcd} \]
	commutes.
\end{lemma}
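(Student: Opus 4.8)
The plan is to observe that each of the two pictured diagrams is a ``prism'' over the triangle of cofaces into a grafting tree: the top and bottom faces are just the defining cocones
\[
\cstar_{m+1} \xrightarrow{\delta^b} \pgraft_{m,n}^i \xleftarrow{\delta^a} \cstar_{n+1}, \qquad \cstar_{m+n} \xrightarrow{\delta^e} \pgraft_{m,n}^i
\]
(and their analogues on the bottom), so they carry no commutativity condition of their own. Hence commutativity of the whole diagram is equivalent to commutativity of the three quadrilateral side faces joining corresponding corollas. For the first diagram these amount to the three identities
\begin{align*}
\phi^{m,n}\circ\delta^b &= \delta^b\circ\psi^{m+1}, \\
\phi^{m,n}\circ\delta^a &= \delta^a, \\
\phi^{m,n}\circ\delta^e &= \delta^e\circ\psi^{m+n},
\end{align*}
each an equality of maps from a corolla $\cstar_q$ to $\pgraft_{m,n}^{i+1}$; for the second diagram (where the bottom $\delta^a,\delta^b$ are interchanged) the three identities are $\phi^{m,n}\circ\delta^b = \delta^a\circ\psi^{m+1}$, $\phi^{m,n}\circ\delta^a = \delta^b\circ\psi^{n+1}$, and $\phi^{m,n}\circ\delta^e = \delta^e\circ\psi^{m+n}$.

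Since every morphism out of a corolla $\cstar_q$ is determined by its edge function $\phi_0$ together with the single subgraph $\phi_1(v)\in\sbgph(\pgraft)$ assigned to the unique vertex $v$ (Definition~\ref{maps of xi}), I would verify each identity by computing both sides on edges and on $v$, using $(\psi\circ\phi)_0 = \psi_0\circ\phi_0$ and $(\psi\circ\phi)_1(v) = \image(\psi|_{\phi_1(v)})$ from Definition~\ref{xi composition}. The vertex computations are immediate: every $\psi^q$ is an isomorphism, so $\psi^q_1(v) = \cstar_v$ and $\image(\delta|_{\cstar_v}) = \delta_1(v)$, while $\phi^{m,n}$ is an isomorphism that carries $\cstar_a,\cstar_b$ to $\cstar_a,\cstar_b$ (respectively to $\cstar_b,\cstar_a$ in the second diagram) and all of $\pgraft_{m,n}^i$ onto all of its target. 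Thus the only genuine content lies in checking that the edge functions agree.

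For the edge computations the plan is to substitute the piecewise formulas for $\delta^a,\delta^b,\delta^e$ and the cyclic permutations $\tau_q$ directly. For example, to check $\phi^{m,n}\circ\delta^b = \delta^b\circ\psi^{m+1}$ one splits on the ranges $0\le k<i$, $k=i$, $i<k<m$, $k=m$ and finds that both composites send $k$ to $k+1$, to $e$, to $n+k$, and to $0$, respectively; the remaining squares unwind in the same way, and in the second diagram the interchange of $\delta^a$ and $\delta^b$ corresponds exactly to the swap $a\leftrightarrow b$ built into $\phi^{m,n}$ there.

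The main obstacle is purely bookkeeping: one must correctly track the wraparound $\tau_q(q-1)=0$ and the junctions between the cases of the coface formulas, verifying that the shifted index $i+k$ never overflows $m+n$ except at the intended boundary edge. The crux is that the standing hypotheses ($1\le i\le m-1$ for the first diagram, $i=m$ and $n\ge 1$ for the second) are precisely what guarantee these range computations are valid; once that is in place, everything reduces to routine substitution.
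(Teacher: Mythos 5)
Your proposal is correct and follows the same route as the paper, whose entire proof is the observation that one need only check the composites agree on edge sets and that this is a straightforward case analysis. Your extra step of justifying why the vertex/subgraph data takes care of itself (maps out of a corolla being determined by the edge function together with one subgraph, which is forced once the edges are known) is a reasonable elaboration of what the paper leaves implicit, and your sample index computation for $\phi^{m,n}\circ\delta^b=\delta^b\circ\psi^{m+1}$ matches the stated formulas.
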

\begin{proof}
	One merely needs to check that the composite functions are identical on edge sets, which is straightforward.
\end{proof}

\begin{proof}[Proof of Theorem \ref{theorem creating cyclic operads}]
	We begin by defining the cyclic structure on $O$.
	Let $c_0, c_1, \dots, c_n \in \colset(O)$, and write $x_i := \alpha^{-1} (c_i) \in X_\eta$.
Then $\alpha$ determines the isomorphism
	\[ \begin{tikzcd}
 \retrieveprof^{-1}(x_0, \dots, x_n) \rar[hook] \dar[dashed,"\alpha", "\cong" swap]  & X_{\cstar_{n+1}} \dar["\alpha", "\cong" swap]  \rar{\retrieveprof} & \prod\limits_{i=0}^n X_{\eta} \dar["\alpha", "\cong" swap]  \\
 O(c_1, \dots, c_n; c_0) \rar[hook] & N_d(O)_{C_n} \rar{\retrieveprof}  & \prod\limits_{i=0}^n N_d(O)_{\eta} 
	\end{tikzcd} \]
on the left.
We define, for $\sigma \in \Sigma_n^+$,
\[
	(-)\cdot \sigma : O(c_1, \dots, c_n; c_0) \to O(c_{\sigma(1)}, \dots, c_{\sigma(n)}; c_{\sigma(0)})
\]
as the restriction of the endomorphism $\alpha^{-1} \circ \psi_\sigma^* \circ \alpha$ of $N_d(O)_{\cstar_{n+1}}$.
When $\sigma$ fixes $0$, this is precisely the usual symmetric group action.

Now suppose that we are working with $\tau_{n+1}$ and consider
\[ (-)\cdot \tau_{n+1} : O(c_1, \dots, c_n; c_0) \to O(c_2, \dots, c_n, c_0; c_1). \]
which is the restriction of $\alpha^{-1} \circ \psi_{n+1} \circ \alpha$.
By Theorem~\ref{dendroidal nerve theorem}, we simply need to show that $\{ (-) \cdot \tau_{n+1} \}$ are compatible with operadic multiplication; using the notation from Remark \ref{operadic mult}, we have that $\circ_i = d_e \circ (d_b \times d_a)^{-1}$ on $W = N_d(O)$.

In the diagrams in Figure~\ref{figure big cube one} and Figure~\ref{figure big cube two}, we have $W_\eta = \colset(O)$ and \[ W_{\cstar_{m+1}} = O_m = \coprod_{W_\eta^{\times m + 1}} O(c_1, \dots, c_m; c_0) \] since $W = N_d(O)$.
By Lemma \ref{psi and phi lemma}, the diagram in Figure~\ref{figure big cube one}
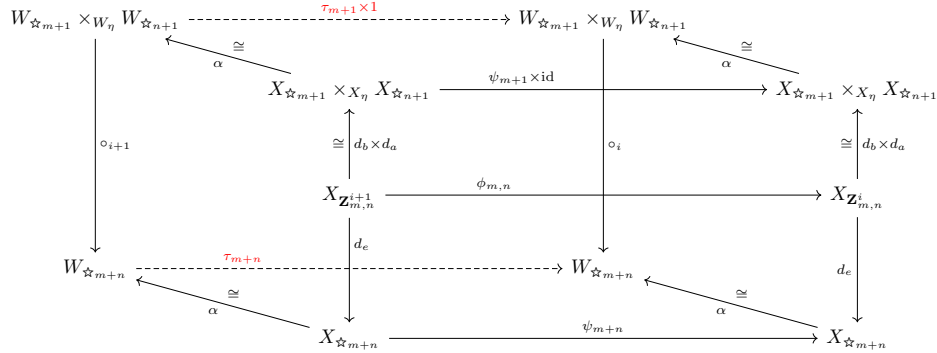
\begin{figure}[htb]
\[
\resizebox{\textwidth}{!}{%
\begin{tikzcd}[column sep=large,ampersand replacement=\&] 
W_{\cstar_{m+1}} \times_{W_\eta} W_{\cstar_{n+1}} \arrow[rr, dashed, "\tau_{m+1} \times 1" color=red] \arrow[dddd, "\circ_{i+1}"] \& \& W_{\cstar_{m+1}} \times_{W_\eta} W_{\cstar_{n+1}} \arrow[dddd, "\circ_i"]
\\
\& X_{\cstar_{m+1}} \times_{X_\eta} X_{\cstar_{n+1}}  \arrow[rr,"\psi_{m+1} \times \id" near start]
	\arrow[ul, "\alpha", "\cong" swap]
\& \& 
X_{\cstar_{m+1}} \times_{X_\eta} X_{\cstar_{n+1}} 
\arrow[ul, "\alpha", "\cong" swap]
\\ \\
\& X_{\pgraft_{m,n}^{i+1}} \arrow[dd,"d_e" near start]  \arrow[uu,"\cong","d_b \times d_a" swap]  \arrow[rr,"\phi_{m,n}" near start]  \& \&  
X_{\pgraft_{m,n}^i} \arrow[dd,"d_e" swap]  \arrow[uu,"\cong","d_b \times d_a" swap]  
\\ 
W_{\cstar_{m+n}} \arrow[rr, dashed, "\tau_{m+n}" {color=red, near start}]\& \&
	W_{\cstar_{m+n}}
\\
\& X_{\cstar_{m+n}} \arrow[rr,"\psi_{m+n}"] \arrow[ul, "\alpha", "\cong" swap]\& \&
	X_{\cstar_{m+n}} \arrow[ul, "\alpha", "\cong" swap]
\end{tikzcd}
} \]
\caption{Compatibility with operadic composition}\label{figure big cube one}
\end{figure}
commutes for $1\leq i \leq m-1$
and the diagram in Figure~\ref{figure big cube two}
\begin{figure}[htb]
\[
\resizebox{\textwidth}{!}{%
\begin{tikzcd}[column sep=large,ampersand replacement=\&] 
W_{\cstar_{n+1}} \times_{W_\eta} W_{\cstar_{m+1}} \arrow[rr, dashed, "(\tau_{m+1} \times \tau_{n+1})\circ \sigma" color=red] \arrow[dddd, "\circ_1"] \& \& W_{\cstar_{m+1}} \times_{W_\eta} W_{\cstar_{n+1}} \arrow[dddd, "\circ_m"]
\\
\& X_{\cstar_{n+1}} \times_{X_\eta} X_{\cstar_{m+1}}  \arrow[rr,"(\psi_{m+1} \times \psi_{n+1})\circ \sigma" near start]
	\arrow[ul, "\alpha", "\cong" swap]
\& \& 
X_{\cstar_{m+1}} \times_{X_\eta} X_{\cstar_{n+1}} 
\arrow[ul, "\alpha", "\cong" swap]
\\ \\
\& X_{\pgraft_{n,m}^{1}} \arrow[dd,"d_e" near start]  \arrow[uu,"\cong","d_b \times d_a" swap]  \arrow[rr,"\phi_{m,n}" near start]  \& \&  
X_{\pgraft_{m,n}^m} \arrow[dd,"d_e" swap]  \arrow[uu,"\cong","d_b \times d_a" swap]  
\\ 
W_{\cstar_{m+n}} \arrow[rr, dashed, "\tau_{m+n}" {color=red, near start}]\& \&
	W_{\cstar_{m+n}}
\\
\& X_{\cstar_{m+n}} \arrow[rr,"\psi_{m+n}"] \arrow[ul, "\alpha", "\cong" swap]\& \&
	X_{\cstar_{m+n}} \arrow[ul, "\alpha", "\cong" swap]
\end{tikzcd}
} \]
\caption{Compatibility with operadic composition}\label{figure big cube two}
\end{figure}
commutes for $i=m$ (where $\sigma$ interchanges the two factors).
This shows that the proposed $\Sigma_n^+$ actions really turn $O$ into a cyclic operad.
Uniqueness of the structure is clear from the requirement that $\alpha$ lifts to an isomorphism of cyclic dendroidal sets.
Finally, the existence of a unique $f'$ in the second part is just a characterization of cyclic operad maps as operad maps which respect the extra symmetry, the fact that $N_d$ is fully-faithful, and that the $f'$ guaranteed by fullness of $N_d$ preserves the extra symmetry by inspection.
\end{proof}

We are now ready to prove the main theorem about the cyclic dendroidal nerve.
Note that our nerve theorem does not fit into the general monadic framework for nerve theorems from \cite{weber,bmw}, as, for the reasons outlined in the introduction, we have chosen a non-full subcategory $\Xi$ of $\Cyc$ as our indexing category (contrast with the paragraph after \cite[Definition 2.3]{bmw}, where the indexing category $\Theta_T$ is always a full subcategory).

\begin{theorem}\label{cyclic dendroidal nerve theorem}
The functor $N_c : \Cyc \to \Set^{\Xi^{op}}$ is fully-faithful.
Further, if $X\in \Set^{\Xi^{op}}$ is a cyclic dendroidal set, then the following are equivalent.
	\begin{enumerate}[(i)]
		\item $X\cong N_c(O)$ for some cyclic operad $O$.
		\item $X_S = \hom(\Xi[S], X) \to \hom(\Lambda^\delta\Xi[S], X)$ is a bijection for every inner coface map $\delta$.
		\item $X_S = \hom(\Xi[S], X) \to \hom(\Sc[S], X)$ is a bijection for every rooted tree $S$. 
	\end{enumerate}
\end{theorem}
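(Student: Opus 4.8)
The plan is to reduce every assertion to the already-established dendroidal statements by pulling back along $\iota : \Omega \to \Xi$, exploiting the identity $\iota^* N_c(O) = N_d(UO)$. Indeed, for a rooted tree $T$ one has $N_c(O)_{\iota T} = \Cyc(C(\iota T), O) = \Cyc(F\Omega(T), O) = \Operad(\Omega(T), UO) = N_d(UO)_T$, using the adjunction $F \dashv U$ and the identity $C(\iota T) = F(\Omega(T))$ recorded earlier, and these isomorphisms are natural in $T$.

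\emph{Full faithfulness.} I would apply the second part of Theorem~\ref{theorem creating cyclic operads} to $X = N_c(O)$ and $Y = N_c(P)$, taking $\alpha$ and $\beta$ to be the identity isomorphisms $\iota^* N_c(O) = N_d(UO)$ and $\iota^* N_c(P) = N_d(UP)$. Uniqueness in the first part of that theorem forces $\tilde O = O$, $\tilde P = P$, with $\tilde \alpha = \id$ and $\tilde \beta = \id$. The theorem then asserts that every $f : N_c(O) \to N_c(P)$ in $\Set^{\Xi^{op}}$ equals $N_c(f')$ for a unique $f' : O \to P$ in $\Cyc$; this is precisely bijectivity of $\Cyc(O,P) \to \hom(N_c(O), N_c(P))$.

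\emph{The three conditions.} The core computation is that the comparison maps in (ii) and (iii) agree, under $\iota$, with their dendroidal counterparts. Using the adjunction $\iota_! \dashv \iota^*$ together with Lemma~\ref{segal inclusion different category lemma}, for each rooted tree $T$ I identify $\hom(\Xi[\iota T], X) \to \hom(\Sc[\iota T], X)$ with $(\iota^* X)_T = \hom(\Omega[T], \iota^* X) \to \hom(\Sc[T], \iota^* X)$; since (iii) ranges exactly over rooted trees, this directly shows (iii) for $X$ over $\Xi$ is equivalent to the dendroidal Segal condition for $\iota^* X$ over $\Omega$. Likewise, Lemma~\ref{coface interchange} identifies $X_{\iota T} \to \hom(\Lambda^\delta\Xi[\iota T], X)$ with $(\iota^* X)_T \to \hom(\Lambda^\delta\Omega[T], \iota^* X)$ for each oriented inner coface $\delta$ of $T$. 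An arbitrary inner coface into $S \in \Xi$ is isomorphic over $S$ to an oriented inner coface into $\iota\rootify(S, s_0)$ (via the rooting of Definition~\ref{treeificiation} and the lifting $\lifting$), and both comparison maps are invariant under isomorphism of the codomain; hence (ii) for $X$ over $\Xi$ is equivalent to the dendroidal inner-horn condition for $\iota^* X$ over $\Omega$.

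\emph{Assembling the equivalence.} With these reductions, Theorem~\ref{dendroidal nerve theorem} gives that the two $\Omega$-level conditions on $\iota^* X$ are each equivalent to $\iota^* X \cong N_d(O)$ for some operad $O$, so (ii) $\Leftrightarrow$ (iii) follows at once. For (i) $\Rightarrow$ (ii): if $X \cong N_c(O)$ then $\iota^* X \cong N_d(UO)$ satisfies (i'), hence (ii'), so $X$ satisfies (ii). For (ii) $\Rightarrow$ (i): condition (ii) yields (ii') for $\iota^* X$, so $\iota^* X \cong N_d(O)$, and Theorem~\ref{theorem creating cyclic operads} upgrades this to an isomorphism $X \cong N_c(\tilde O)$ with $\tilde O \in \Cyc$. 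The genuine difficulty—lifting an operad-nerve structure on $\iota^* X$ to a cyclic-operad-nerve structure on $X$—is exactly the content of Theorem~\ref{theorem creating cyclic operads}; within the present proof the remaining work is confined to the bookkeeping that matches the $\Xi$-level Segal-core and inner-horn data with their $\Omega$-level analogues under $\iota_! \dashv \iota^*$.
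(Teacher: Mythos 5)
Your proposal is correct and follows essentially the same route as the paper: full faithfulness from the uniqueness clause of Theorem~\ref{theorem creating cyclic operads}, and the equivalence of (i)--(iii) by transporting each condition to $\iota^*X$ via Lemma~\ref{coface interchange} and Lemma~\ref{segal inclusion different category lemma} (after reducing to rooted trees and oriented cofaces) and then invoking Theorem~\ref{dendroidal nerve theorem} together with Theorem~\ref{theorem creating cyclic operads}. No gaps.
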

\begin{proof}
That $N_c$ is fully-faithful follows from the existence and uniqueness of $f'$ in the second part of Theorem \ref{theorem creating cyclic operads}.

The remainder of the proof relies on Theorem \ref{dendroidal nerve theorem}, and goes by comparing properties of $X$ to properties of $\iota^*X$.
Notice, for instance, that Theorem \ref{theorem creating cyclic operads} implies that $\iota^*X$ satisfies (i') if and only if $X$ satisfies (i).

Since every tree $S\in \Xi$ is isomorphic to a rooted tree $T\in \Omega$, it suffices to examine the remaining condition only for rooted trees $T$. Likewise, in (ii), we may assume that the inner coface map $\delta: R \to T$ is an oriented map.
So, to see that (ii) holds for $X$ if and only if (ii') holds for $\iota^*X$, we merely need to note that the commutative diagram
\[ \begin{tikzcd}
\hom(\Omega[T], \iota^* X) \rar \dar{\cong} & \hom(\Lambda^\delta\Omega[T], \iota^* X)  \dar{\cong} \\
\hom(\iota_!\Omega[T], X) \rar \dar{\cong}& \hom(\iota_!\Lambda^\delta\Omega[T], X) \dar{\cong}\\
\hom(\Xi[T], X) \rar & \hom(\Lambda^\delta\Xi[T], X)
\end{tikzcd} \]
has the indicated isomorphisms by Lemma \ref{coface interchange}. A similar argument shows that (iii) holds for $X$ if and only if (iii') holds for $\iota^*X$, this time applying Lemma \ref{segal inclusion different category lemma}.
\end{proof}

\section{Berger--Moerdijk Reedy model structure}\label{section bmr model structure}

In this section, we first recall the Berger--Moerdijk Reedy model structure on categories of diagrams indexed by a generalized Reedy category $\Rr$ (Definition \ref{definition greedy}) and investigate properties of this model structure.
The basic reference is the paper \cite{bm}, while many of these results in the case of classical Reedy categories may be found in \cite[Chapter 15]{hirschhorn}.
We then turn to the special case of simplicial $\Rr$-presheaves, and consider the full subcategory $\SSet^{\Rr^{op}}_\ast \subseteq \SSet^{\Rr^{op}}$ of diagrams which are a point in degree zero.
For certain $\Rr$, this category admits a model structure (Theorem~\ref{reedy type model structure for discrete}) which is simplicial (Theorem~\ref{existence simplicial structure discrete}), left proper, and cellular (Proposition~\ref{reedy type localizability}).
The results of this section are likely well-known among experts, at least in certain special cases.

A model category $\mathcal M$ is called \emph{$\Rr$-projective} if, for every $r\in \Rr$, the category of (right) $\Aut(r)$-equivariant maps $\mathcal M^{\Aut(r)}$ admits the model structure where weak equivalences and fibrations are detected in $\mathcal M$.
This occurs, for instance, if $\mathcal M$ is cofibrantly generated (see \cite[11.6.1]{hirschhorn}) or if $\Aut(r) = \{e\}$ for all $r\in \Rr$.

If $r\in \Rr$, let $\Rr^+(r)$ be the category whose objects are \emph{non-invertible} maps in $\Rr^+$ with codomain $r$, and whose morphisms $\alpha \to \beta$ are commutative triangles
\[ \begin{tikzcd}[column sep=tiny]
s \arrow{rr}{\sigma} \arrow{dr}[swap]{\alpha} & & s' \arrow{dl}{\beta}\\
& r
\end{tikzcd} \]
in $\Rr^+$.
This is a full subcategory of $\Rr^+ \downarrow r$.
Similarly, for each $r\in \Rr$, there is a full subcategory $\Rr^-(r) \subseteq r \downarrow \Rr^-$ whose objects are non-invertible morphisms with domain $r$.

If $Z\in \mathcal M^{\Rr}$, define, for each $r\in \Rr$, \emph{latching} and \emph{matching} objects
\begin{align*}
		L_r Z &= \colim_{
	\substack{
	\mathbb R^+(r) \\
	\alpha \colon s \to r
	}}
	Z_s & 
	M_r Z &= \lim_{
	\substack{
	\mathbb R^-(r) \\
	\alpha \colon r \to s
	}}
	Z_s
\end{align*}
which come equipped with maps
\[
	L_r Z \to Z_r \to M_r Z
\]
in $\mathcal M^{\Aut(r)}$.

\begin{theorem}[Theorem 1.6, \cite{bm}]\label{bm_reedy_ms}
If $\Rr$ is a generalized Reedy category and $\mathcal M$ is an $\Rr$-projective model category, then the diagram category $\mathcal M^{\Rr}$ admits a model structure where $f : X \to Y$ is a
\begin{itemize}
	\item weak equivalence if, for each $r\in \Rr$, $f_r : X_r \to Y_r$ is a weak equivalence in $\mathcal M$;
	\item  cofibration if, for each $r\in \Rr$, \[ X_r \amalg_{L_rX} L_rY \to Y_r\] is a cofibration in $\mathcal M^{\Aut(r)}$; and
	\item fibration if, for each $r\in \Rr$, \[X_r \to  M_r X \times_{M_r Y} Y_r\] is a fibration in $\mathcal M$.
\end{itemize}
\end{theorem}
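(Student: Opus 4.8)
The plan is to follow the strategy of Berger and Moerdijk; indeed this is precisely \cite[Theorem 1.6]{bm}, so strictly one may cite it, but let me indicate how I would reconstruct the argument. Since weak equivalences are defined levelwise, the two-out-of-three axiom and closure under retracts are inherited immediately from $\mathcal M$. Closure of the cofibrations and fibrations under retracts is formal, using that the relative latching map $X_r \amalg_{L_r X} L_r Y \to Y_r$ and the relative matching map $X_r \to M_r X \times_{M_r Y} Y_r$ are functorial in $f$ and carry a retract of $f$ to a retract of the corresponding map in $\mathcal M^{\Aut(r)}$. The genuine content is in the lifting and factorization axioms, which I would establish by an induction over the degree filtration, writing $\Rr_{\le n}$ for the full subcategory of $\Rr$ on the objects of degree at most $n$ and exploiting that a diagram on $\Rr_{\le n}$ extends to one on $\Rr_{\le n+1}$ by choosing, for each object $r$ of degree $n+1$, an $\Aut(r)$-object $Z_r$ together with an equivariant factorization $L_r Z \to Z_r \to M_r Z$.

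The key technical lemma is the relative characterization of the four classes: $f$ is a fibration (resp.\ a trivial fibration) if and only if for every $r \in \Rr$ the relative matching map $X_r \to M_r X \times_{M_r Y} Y_r$ is a fibration (resp.\ a trivial fibration) in $\mathcal M^{\Aut(r)}$, and dually $f$ is a cofibration (resp.\ a trivial cofibration) if and only if every relative latching map is a cofibration (resp.\ a trivial cofibration) in $\mathcal M^{\Aut(r)}$. The fibration and cofibration halves hold essentially by definition (recalling that fibrations in the $\Rr$-projective model structure on $\mathcal M^{\Aut(r)}$ are detected in $\mathcal M$). The two \emph{trivial} halves are the part that requires work: one must show that a cofibration which is moreover a levelwise weak equivalence has relative latching maps that are also weak equivalences. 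I would prove this by induction on $n = d(r)$, expressing $L_r$ as a colimit over $\Rr^+(r)$ and using that in this colimit the latching data at lower degrees have already been shown to be cofibrations, so the colimit preserves the levelwise weak equivalence; the dual statement for $M_r$ over $\Rr^-(r)$ is symmetric.

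With the relative characterization in hand, the lifting axioms follow by the standard Reedy induction. Given a cofibration $i : A \to B$ and a trivial fibration $p : X \to Y$, I would construct a lift $B \to X$ one degree at a time: supposing the lift defined on $\Rr_{\le n}$, for each object $r$ of degree $n+1$ the remaining problem is an $\Aut(r)$-equivariant lifting problem of the relative latching map of $i$ against the relative matching map of $p$, which admits a solution because the former is a cofibration and the latter a trivial fibration in $\mathcal M^{\Aut(r)}$. This is exactly the point at which $\Rr$-projectivity is indispensable: it is what guarantees that $\mathcal M^{\Aut(r)}$ carries a model structure in which such equivariant lifts exist. The other lifting axiom is dual. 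The factorization axioms are produced by the same inductive scheme: having factored $f$ over $\Rr_{\le n}$, on each degree-$(n+1)$ object one factors the induced map from the latching pushout to the matching pullback inside $\mathcal M^{\Aut(r)}$ and glues the pieces together.

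The step I expect to be the main obstacle is the careful bookkeeping of the $\Aut(r)$-actions throughout, rather than any single hard estimate. Concretely, one must verify that $L_r$ and $M_r$ genuinely take values in $\mathcal M^{\Aut(r)}$ and are functorial in the lower-degree data, that the degreewise extension of a diagram really is the datum of an equivariant factorization $L_r Z \to Z_r \to M_r Z$, and that the inductive constructions descend across isomorphism classes of objects of a fixed degree. It is here that the generalized Reedy axioms \eqref{Reedy def left iso} and \eqref{Reedy def right iso} of Definition~\ref{definition greedy} enter, ruling out spurious automorphisms fixing maps in $\Rr^-$ or $\Rr^+$ and thereby ensuring that the factorizations and lifts assemble into well-defined $\Rr$-equivariant data. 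This equivariant bookkeeping is essentially the only feature distinguishing the argument from the classical Reedy case treated in \cite[Chapter 15]{hirschhorn}.
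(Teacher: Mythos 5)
This theorem is not proved in the paper at all: it is quoted verbatim from \cite[Theorem 1.6]{bm}, so there is no in-paper argument to compare against. Your sketch is a correct reconstruction of the Berger--Moerdijk proof — degree-wise induction, the relative characterization of (trivial) cofibrations and fibrations via equivariant latching/matching maps, and $\Rr$-projectivity supplying the lifting and factorization axioms in each $\mathcal M^{\Aut(r)}$. One small imprecision: axiom \eqref{Reedy def right iso} is \emph{not} among the hypotheses (the theorem assumes only a generalized Reedy category, hence only axiom \eqref{Reedy def left iso}); the asymmetry in the statement — cofibrations tested in $\mathcal M^{\Aut(r)}$ but fibrations tested merely in $\mathcal M$ — is exactly the trace of having only one of the two dual axioms available, and the argument must be arranged so that \eqref{Reedy def right iso} is never actually used.
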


We will call this model structure the \emph{Berger--Moerdijk Reedy} model structure on $\mathcal M^\Rr$.
It would be convenient to know when this model structure is left proper and cellular, so that we can guarantee the existence of left Bousfield localizations \cite[4.1.1]{hirschhorn}.

\begin{theorem}\label{left properness}
	If $\Rr$ is a generalized Reedy category and $\mathcal M$ is left proper and $\Rr$-projective, then the Berger--Moerdijk Reedy model structure on $\mathcal M^{\Rr}$ is left proper.
\end{theorem}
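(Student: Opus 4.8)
The plan is to reduce left properness of $\mathcal M^\Rr$ to the assumed left properness of $\mathcal M$ by working one object of $\Rr$ at a time. Colimits in the diagram category $\mathcal M^\Rr$ are computed objectwise, and by Theorem~\ref{bm_reedy_ms} the weak equivalences of the Berger--Moerdijk Reedy structure are precisely the objectwise weak equivalences. Thus, given a pushout square
\[ \begin{tikzcd}
A \arrow[d, hook, "i"] \arrow[r, "g"] & C \arrow[d, "j"] \\
B \arrow[r] & D
\end{tikzcd} \]
with $i$ a Reedy cofibration and $g$ a weak equivalence, it suffices to show that $B_r \to D_r = B_r \amalg_{A_r} C_r$ is a weak equivalence in $\mathcal M$ for every $r \in \Rr$. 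Since $g_r : A_r \to C_r$ is a weak equivalence by hypothesis, the left properness of $\mathcal M$ will give the conclusion as soon as we know that $i_r : A_r \to B_r$ is a cofibration in $\mathcal M$.

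The crux is therefore to prove that every Berger--Moerdijk Reedy cofibration is an objectwise cofibration. I would establish this by induction on the degree $d(r)$, simultaneously proving that the relative latching maps $A_r \amalg_{L_r A} L_r B \to B_r$ and the latching maps $L_r A \to L_r B$ are cofibrations in $\mathcal M$, exactly as in the classical Reedy case \cite[Ch.~15]{hirschhorn}. Two observations make the inductive step go through. First, the forgetful functor $U : \mathcal M^{\Aut(r)} \to \mathcal M$ is left Quillen: its right adjoint is coinduction $\prod_{\Aut(r)}(-)$, which preserves fibrations and trivial fibrations because these classes are defined by a right lifting property and hence are closed under arbitrary products. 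Consequently $U$ preserves cofibrations, so the relative latching map---being a cofibration in $\mathcal M^{\Aut(r)}$ by the definition of Reedy cofibration in Theorem~\ref{bm_reedy_ms}---is a cofibration in $\mathcal M$. Second, the map $A_r \to A_r \amalg_{L_r A} L_r B$ is the cobase change of the latching map $L_r A \to L_r B$, which is a cofibration by the inductive hypothesis, since $L_r$ is assembled from the values at objects of strictly smaller degree (the objects of $\Rr^+(r)$ are non-invertible, hence degree-raising). Factoring $i_r$ as
\[
A_r \longrightarrow A_r \amalg_{L_r A} L_r B \longrightarrow B_r,
\]
the first map is a cofibration as a cobase change of a cofibration and the second is the relative latching map, so $i_r$ is a cofibration in $\mathcal M$.

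With the objectwise-cofibration fact in hand, the argument concludes immediately. Evaluating the square at $r$ produces a pushout in $\mathcal M$ of the weak equivalence $g_r$ along the cofibration $i_r$, so $B_r \to D_r$ is a weak equivalence because $\mathcal M$ is left proper. As $r$ was arbitrary, $B \to D$ is an objectwise weak equivalence, that is, a weak equivalence in $\mathcal M^\Rr$, which is exactly left properness.

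The only genuine obstacle I expect is the inductive verification that Reedy cofibrations are objectwise cofibrations, and in particular that the latching maps $L_r A \to L_r B$ are cofibrations. The delicate point peculiar to the generalized Reedy setting is that the latching objects carry an $\Aut(r)$-action and the colimit over $\Rr^+(r)$ must be analyzed compatibly with these actions; here $\Rr$-projectivity of $\mathcal M$ is precisely what guarantees the relevant equivariant model structures exist at each stage of the induction, and the bookkeeping mirrors that of \cite{bm}.
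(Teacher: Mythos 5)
Your proposal is correct and follows essentially the same route as the paper: both arguments reduce left properness to the fact that Reedy cofibrations are objectwise cofibrations (the paper's Lemma~\ref{reedy levelwise lemma}), established by factoring $i_r$ through the relative latching map, using that the forgetful functor $\mathcal M^{\Aut(r)} \to \mathcal M$ preserves cofibrations via its coinduction right adjoint, and then concluding objectwise from left properness of $\mathcal M$. The only cosmetic difference is that the paper verifies cofibrancy of $L_r A \to L_r B$ by viewing $\Rr^+(r)$ as a generalized Reedy category and applying \cite[Corollary 1.7]{bm} to the colimit, rather than by an explicit induction on degree, but this is the same bookkeeping you defer to \cite{bm} for.
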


A key component of the proof is that Reedy cofibrations are levelwise cofibrations (Lemma~\ref{reedy levelwise lemma}); in establishing this lemma, we will utilize the fact that $\mathcal M^G \to \mathcal M$ preserves cofibrations when $G$ is a finite group.

\begin{remark}\label{remark preservation of cofibrations}
	Suppose that $G$ is any discrete group and $\mathcal M$ is a category so that products indexed by $G$ exist. The forgetful functor $f: \mathcal M^G \to \mathcal M$ has a right adjoint $r$ so that the composite $\mathcal M \xrightarrow{r} \mathcal M^G \xrightarrow{f} \mathcal M$ takes an object $X$ to $\prod_{g\in G} X$, and similarly for morphisms.
	If $\mathcal M$ is a model category, then the composite $fr$ takes acyclic fibrations to acyclic fibrations.
	Suppose further that $\mathcal M^G$ admits the projective model structure, that is, the model structure so that $f$ creates fibrations and weak equivalences.
	In this case, $r$ preserves acyclic fibrations, hence the left adjoint $f: \mathcal M^G \to \mathcal M$ preserves cofibrations.
\end{remark}

\begin{lemma}\label{reedy levelwise lemma}
If $\Rr$ is a generalized Reedy category, $\mathcal M$ is $\Rr$-projective, and $f: A \to B$ is a Reedy cofibration, then for each $r\in \Rr$ the map 
$f_r : A_r \to B_r$ is a cofibration in $\mathcal M$.
\end{lemma}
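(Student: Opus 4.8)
The plan is to induct on the degree $n = d(r)$, proving at stage $n$ that $f_s \colon A_s \to B_s$ is a cofibration in $\mathcal M$ for every $s$ with $d(s) = n$; the inductive hypothesis supplies the same conclusion for all objects of degree $< n$. First I would record the factorization that reduces the levelwise statement to a statement about latching objects. Writing $P_r = A_r \amalg_{L_r A} L_r B$ for the relative latching object, $f_r$ factors as
\[
A_r \longrightarrow P_r \longrightarrow B_r,
\]
where the pushout square
\[ \begin{tikzcd}
L_r A \rar \dar{L_r(f)} & A_r \dar \\
L_r B \rar & P_r
\end{tikzcd} \]
exhibits $A_r \to P_r$ as the cobase change of $L_r(f) \colon L_r A \to L_r B$, while $P_r \to B_r$ is exactly the relative latching map. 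By the definition of Reedy cofibration (Theorem~\ref{bm_reedy_ms}), $P_r \to B_r$ is a cofibration in $\mathcal M^{\Aut(r)}$, hence a cofibration in $\mathcal M$ by Remark~\ref{remark preservation of cofibrations}. Thus it suffices to prove that $L_r(f)$ is a cofibration in $\mathcal M$: its cobase change $A_r \to P_r$ will then be a cofibration, and $f_r$ a composite of two cofibrations.

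The remaining, genuinely substantive step is to show $L_r(f)$ is a cofibration. Since $L_r$ is a colimit over $\Rr^+(r)$, whose objects are non-invertible $\Rr^+$-maps $s \to r$ and therefore satisfy $d(s) < n$, the map $L_r(f)$ depends only on the values of $f$ in degrees below $n$, all of which are cofibrations by the inductive hypothesis. I would build $L_r A$ and $L_r B$ by filtering $\Rr^+(r)$ according to the degree $k$ of the source, giving finitely many stages $k = 0, 1, \dots, n-1$. Passing from the stage assembled out of sources of degree $< k$ to the stage including degree $k$ is an $\Aut(r)$-equivariant pushout whose attaching map is assembled, over the isomorphism classes of degree-$k$ sources, from the relative latching maps of $f$ at those sources. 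Each such attaching map is a cofibration in the relevant equivariant category by the Reedy cofibration hypothesis together with $\Rr$-projectivity, so forgetting the group action (Remark~\ref{remark preservation of cofibrations}), taking the resulting pushouts, and composing yields that $L_r(f)$ is a cofibration in $\mathcal M$. In the base case $n = 0$ the category $\Rr^+(r)$ is empty, both latching objects are initial, $L_r(f)$ is trivially a cofibration, and the argument degenerates to $f_r = (P_r \to B_r)$.

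I expect the filtration step to be the main obstacle, and the difficulty is caused entirely by the automorphisms. Distinct sources $s$ of a common degree may be identified by isomorphisms, and $\Aut(r)$ acts on the indexing category $\Rr^+(r)$, so the cells attached at each stage cannot be adjoined one object at a time but must be organized into $\Aut$-orbits and attached $\Aut(r)$-equivariantly. Making this attachment precise---so that each stage really is the pushout of a \emph{single} cofibration in $\mathcal M^{\Aut(r)}$---is the technical heart of the matter, and it is exactly the point at which the hypothesis that $\mathcal M$ is $\Rr$-projective is indispensable, since it guarantees that the equivariant model structures $\mathcal M^{\Aut(r)}$ exist and that Remark~\ref{remark preservation of cofibrations} applies at every stage.
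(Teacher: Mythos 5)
Your proposal is correct and follows essentially the same route as the paper: factor $f_r$ as $A_r \to A_r \amalg_{L_rA} L_rB \to B_r$, handle the second map by the definition of Reedy cofibration together with the forgetful functor $\mathcal M^{\Aut(r)} \to \mathcal M$ (Remark~\ref{remark preservation of cofibrations}), and reduce the first map to showing that $L_r(f)$ is a cofibration by filtering $\Rr^+(r)$ by the degree of the source. The ``technical heart'' you flag --- organizing the equivariant cell attachments in that filtration --- is precisely what the paper closes by citation rather than by hand: it observes that $\mathbb S = \Rr^+(r)$ is itself a generalized Reedy category with $\mathbb S = \mathbb S^+$, that the restricted diagram $\varphi^*(f)$ is a Reedy cofibration there because $L_{s\to r}(\varphi^*A)\cong L_s(A)$ by \cite[Lemma 4.4(i)]{bm}, and then invokes \cite[Corollary 1.7]{bm} for the colimit; note also that your outer induction on $d(r)$ is not actually used, since the attaching maps in your filtration are relative latching maps of $f$ (cofibrations by hypothesis) rather than the levelwise maps $f_s$.
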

\begin{proof}
A variation on \cite[Lemma 5.3]{bm} shows that, for each $r\in \Rr$, the map $L_rA \to L_rB$ is a cofibration in $\mathcal M$.
As it is a bit simpler than that lemma, let us give a sketch of the argument.
The category $\mathbb S = \Rr^+(r)$ is again generalized Reedy, with $\mathbb S = \mathbb S^+$ and the domain functor $\varphi : \mathbb S \to \Rr$ is a morphism of generalized Reedy categories.
For any $s\to r$ in $\mathbb S$, the diagram
\[ \begin{tikzcd}
L_{s\to r} (\varphi^*A) \rar{\cong} \dar  & L_s(A) \dar \\
(\varphi^*A)_{s\to r} \rar{=} & A_s
\end{tikzcd} \]
commutes where the top isomorphism is a consequence of \cite[Lemma 4.4(i)]{bm}.
Thus, $\varphi^*(f)$ is a cofibration in $\mathcal M^{\mathbb S}$.
Further, since $\colim_{\mathbb S} (\varphi^*A) = L_r(A)$, we have
\[
	\left(L_rA \xrightarrow {L_rf} L_rB\right) = \colim_{\mathbb S} \left(\varphi^* A \xrightarrow{\varphi^*(f)} \varphi^* B\right)
\]
which is a cofibration in $\mathcal M$ by \cite[Corollary 1.7]{bm}.

The map $A_r \to A_r \amalg_{L_rA} L_r B$
is a pushout of a cofibration 
\[ \begin{tikzcd}
L_rA \dar[tail] \rar & A_r \dar \\
L_rB \rar & A_r \amalg_{L_rA} L_r B
\end{tikzcd} \]
hence is a cofibration \cite[Proposition 7.2.12(a)]{hirschhorn}. 
Since $f$ is a Reedy cofibration,
\[ A_r \amalg_{L_rA} L_r B \to B_r\]
is a cofibration in $\mathcal M^{\Aut(r)}$.
By Remark~\ref{remark preservation of cofibrations}, it is also a cofibration in $\mathcal M$.
Now $f_r : A_r \to B_r$ is the composite of two cofibrations in $\mathcal M$
\[
	A_r \to A_r \amalg_{L_rA} L_r B \to B_r, 
\]
hence is also a cofibration in $\mathcal M$.
\end{proof}

\begin{proof}[Proof of Theorem \ref{left properness}]
Suppose we have a pushout diagram
\begin{equation}\label{left proper pushout}
\begin{tikzcd}
A \dar{\simeq} \rar[tail] & B \dar \\
X \rar & X \amalg_A B
\end{tikzcd} \end{equation}
in $\mathcal M^{\mathbb R}$; we wish to show that $B \to X \amalg_A B$ is a weak equivalence.
Evaluating \eqref{left proper pushout} at $r\in \mathbb R$ gives the pushout square
\[ \begin{tikzcd}
A_r \dar{\simeq} \rar[tail] & B_r \dar \\
X_r \rar & X_r \amalg_{A_r} B_r.
\end{tikzcd} \] 
By Lemma \ref{reedy levelwise lemma}, $A_r \to B_r$ is a cofibration in $\mathcal M$; since $\mathcal M$ is left proper, this implies that $B_r \to X_r \amalg_{A_r} B_r$ is a weak equivalence.
Since $r$ was arbitrary, $B \to X \amalg_A B$ is a weak equivalence.
\end{proof}

\begin{proposition}\label{cellularity}
	If $\mathcal M$ is cellular, then so is $\mathcal M^\Rr$.
\end{proposition}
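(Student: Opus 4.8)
The plan is to verify the three conditions which, together with cofibrant generation, constitute a cellular model category in the sense of \cite[Definition 12.1.1]{hirschhorn}: that the domains and codomains of the generating cofibrations are compact, that the domains of the generating acyclic cofibrations are small relative to the generating cofibrations, and that every cofibration is an effective monomorphism. Since a cellular model category is in particular cofibrantly generated, hence $\Rr$-projective, Theorem~\ref{bm_reedy_ms} guarantees that the Berger--Moerdijk Reedy model structure on $\mathcal M^\Rr$ exists; so the first task is to exhibit it as cofibrantly generated with a description of the generators explicit enough to control their domains and codomains.

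For each $r \in \Rr$ write $G_r = \Aut(r)$ and let $\mathbf F_r : \mathcal M^{G_r} \to \mathcal M^{\Rr}$ denote the left adjoint of the evaluation functor $\mathrm{ev}_r$. Since $\mathcal M$ is cofibrantly generated, $\mathcal M^{G_r}$ carries the projective model structure, cofibrantly generated by the $G_r$-free maps $G_r \cdot i$ built from the generating (acyclic) cofibrations $i$ of $\mathcal M$. Following the standard Reedy bookkeeping of \cite[\S 15.6]{hirschhorn}, adapted to the equivariant categories $\mathcal M^{G_r}$ appearing in \cite{bm}, one produces a generating set $I_\Rr$ by applying, for each $r$ and each generating cofibration $j$ of $\mathcal M^{G_r}$, the relative latching construction to $\mathbf F_r(j)$; the domain and codomain of the resulting map are small colimits (over the latching categories) of objects of the form $\mathbf F_s(A)$, where $A$ ranges over the domains and codomains of the $j$. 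One checks that a map has the right lifting property against $I_\Rr$ exactly when each relative matching map $X_r \to M_r X \times_{M_r Y} Y_r$ is an acyclic fibration in $\mathcal M$, i.e. exactly when it is a Reedy acyclic fibration. The generating acyclic cofibrations $J_\Rr$ are produced analogously from the generating acyclic cofibrations of the $\mathcal M^{G_r}$.

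To verify compactness and smallness, the key point is that $\mathbf F_r$ preserves both properties. Indeed, for any $s$ the composite $\mathrm{ev}_s \circ \mathbf F_r$ sends an object to a coproduct, indexed by the orbit set $\Rr(r,s)/G_r$, of quotients of copies of that object; since colimits in $\mathcal M^\Rr$ are computed levelwise and $\Rr$ is small, a set-indexed coproduct of compact (resp. small) objects is again compact (resp. small) after suitably enlarging the cardinal. As compact and small objects are closed under the small colimits assembling the generators, the domains and codomains of $I_\Rr$ are compact relative to $I_\Rr$, and the domains of $J_\Rr$ are small relative to $I_\Rr$. Pinning down the generators precisely enough — handling the relative latching adjustment and confirming the relevant closure of compactness and smallness under these colimits — is the main obstacle; once that is settled, these verifications are routine.

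The effective monomorphism condition, by contrast, is immediate. By Lemma~\ref{reedy levelwise lemma} every Reedy cofibration $f : A \to B$ is a levelwise cofibration, so each $f_r : A_r \to B_r$ is a cofibration in $\mathcal M$ and hence, by cellularity of $\mathcal M$, an effective monomorphism. Since limits and colimits in $\mathcal M^\Rr$ are computed objectwise, both the cokernel pair of $f$ and the equalizer of that cokernel pair are computed objectwise; therefore $f$ is an effective monomorphism because each $f_r$ is. This completes the verification of all three conditions and exhibits $\mathcal M^\Rr$ as cellular.
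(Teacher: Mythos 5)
Your proposal is correct and follows essentially the same route as the paper, which simply observes that Hirschhorn's argument \cite[\S 15.7]{hirschhorn} goes through verbatim once Lemma~\ref{reedy levelwise lemma} is substituted for \cite[15.3.11]{hirschhorn} in the effective-monomorphism step. The ``main obstacle'' you flag --- pinning down the generating (acyclic) cofibrations --- is handled in the paper by Remark~\ref{remark on generating cofibrations}, which notes that the first paragraph of the proof of \cite[Theorem 7.6]{bm} yields generators of the form \cite[Definition 15.6.23]{hirschhorn}; since these are built from \emph{free} $\Aut(r)$-objects, $\mathrm{ev}_s\circ\mathbf F_r$ applied to them produces plain coproducts, so the quotients you mention never actually arise and the compactness/smallness verifications reduce to closure under coproducts and the pushouts of \cite[10.8.8]{hirschhorn}.
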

\begin{proof}
	The proof given in \cite[15.7]{hirschhorn} goes through, with the caveat that in the proof of \cite[15.7.1]{hirschhorn}, one should use Lemma \ref{reedy levelwise lemma} in place of \cite[15.3.11]{hirschhorn}.
\end{proof}

\begin{remark}\label{remark on generating cofibrations}
	Implicit in the explanation for Proposition~\ref{cellularity} is the fact that the generating (acyclic) cofibrations of $\mathcal{M}^{\Rr}$ are described as in \cite[Definition 15.6.23]{hirschhorn}. Indeed, the proof of cofibrant generation in the first paragraph of \cite[Theorem 7.6]{bm} only relies on $\Rr$ being a dualizable generalized Reedy category and $\mathcal M$ being cofibrantly generated. We will need this explicit description of the generating (acyclic) cofibrations for the proof of Theorem~\ref{reedy type model structure for discrete} (in the special case when $\mathcal M = \SSet$).
\end{remark}

\subsection{Reduced presheaves in simplicial sets}\label{reduced presheaves}

Let $\Rr$ be a generalized dualizable Reedy category. 
We assume that $\Rr$ has a unique object $\eta$ of degree zero and, for any $r\in \Rr$, the set $\Rr(r,\eta)$ is either empty or has exactly one element.
Further, we assume that if $\Rr(r,\eta) = \{f\}$, then $\Rr(\eta, r) \neq \varnothing$ as well; any inhabitant of $\Rr(\eta, r)$ is automatically a section of $f$.
In symbols, this says 
\begin{equation}\label{bound on r eta}
	|\Rr(r,\eta)| \leq \min (1, |\Rr(\eta,r)|) \qquad \forall r\in \Rr
\end{equation}
which we take as a standing assumption for all that follows.
Examples of such categories include $\Delta$, $\Omega$, and $\Xi$.

Let $\SSet^{\Rr^{op}}_\ast$ be the full subcategory of $\SSet^{\Rr^{op}}$ consisting of those $X$ so that $X_\eta = \Delta[0]$.
As this category has been thoroughly analyzed in \cite{MR2342822} in the case of $\Rr = \Delta$ and \cite{bh1} in the case of $\Rr = \Omega$, many of the arguments and constructions in the remainder of this section should look familiar.
Write
\[
	\incl \colon \SSet^{\Rr^{op}}_\ast \hookrightarrow \SSet^{\Rr^{op}}
\]
for the inclusion.
This functor admits a left adjoint 
\[
	\red \colon \SSet^{\Rr^{op}} \to \SSet^{\Rr^{op}}_\ast
\]
which we now describe explicitly.

\begin{definition} We define the \emph{reduction} of an object $X\in \SSet^{\Rr^{op}}$.
\begin{itemize}
\item
A map $\Rr[\eta] \times X_\eta \to X$ is given in degree $r$ by either $X_\eta \xrightarrow{f^*} X_r$ if $\Rr(r,\eta) = \{f\}$ or else the unique map $\varnothing \to X_r$ if $\Rr(r,\eta) = \varnothing$.
Since such an $f$ admits a section, this map is a monomorphism, and we write $X^{(0)} \subseteq X$ for its image.
Notice that there is a unique map $X^{(0)} \to \Rr[\eta]$ and that 
\[
	X^{(0)}_r \cong \begin{cases}
		X_\eta & \text{if }\Rr(r,\eta) = * \\
		\varnothing & \text{if }\Rr(r,\eta) = \varnothing.
	\end{cases}
\]
\item
Define $\red(X)$ as the pushout
\[ \begin{tikzcd}
X^{(0)} \rar \dar & X \dar \\
\Rr[\eta] \rar & \red(X).
\end{tikzcd} \]
\end{itemize}
\end{definition}

Suppose $X \in \SSet^{\Rr^{op}}$.  
If there is not a map $r\to \eta$, then $\red(X)_r = X_r$. 
Suppose $Z \in \SSet^{\Rr^{op}}_\ast$. If there is a map from $r$ to $\eta$ then it is unique, $\Rr(r,\eta) = \{ f \}$; this implies that $Z_r$ has a natural basepoint given by $f^*(t)$, where $t\in Z_\eta$ is the unique element.

\begin{remark}
	Many results of this section hold with a weaker assumption than \eqref{bound on r eta}, namely one could just assume $|\Rr(r,\eta)| \leq 1$ for all $r\in \Rr$. 
	In this case, $\Rr[\eta] \times X_\eta \to X$ may not be a monomorphism, and we should define $X^{(0)}$ to be $\Rr[\eta] \times X_\eta$ instead of its image in $X$.
	We refrain from giving further details, as our main application is to the categories $\Xi$ and $\Omega$ (which do satisfy \eqref{bound on r eta}).
\end{remark}

The category $\SSet^{\Rr^{op}}_\ast$ is bicomplete:
Limits and directed colimits are computed in the larger category $\SSet^{\Rr^{op}}$, while finite coproducts are given by the formula
\begin{equation}\label{coproduct reduced}
	\left(X\amalg Y \right)_r  = \begin{cases}
		X_r \coprod Y_r & \Rr(r,\eta) = \varnothing \\
		X_r \bigvee Y_r & \Rr(r,\eta) = *.
	\end{cases}
\end{equation}
Equivalently, $X \amalg Y = \red(\incl(X) \amalg \incl(Y))$. Though $\incl$ does not preserve coproducts, it does preserve pushouts.

\begin{lemma}\label{pushout lemma}
	The inclusion functor $\incl : \SSet^{\Rr^{op}}_\ast \to \SSet^{\Rr^{op}}$ preserves connected colimits.
\end{lemma}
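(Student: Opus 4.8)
The plan is to exploit that $\incl$ is the right adjoint of the reflection $\red \dashv \incl$, so that $\SSet^{\Rr^{op}}_\ast$ is a reflective subcategory of $\SSet^{\Rr^{op}}$. Being reflective, $\incl$ is fully faithful and the counit $\red\,\incl \Rightarrow \id$ is a natural isomorphism; consequently a colimit of a diagram $D\colon J \to \SSet^{\Rr^{op}}_\ast$ is computed by $\colim_J D \cong \red\big(\colim_J \incl D\big)$. Therefore, to prove that $\incl$ preserves the colimit of a connected diagram, it is enough to show that the colimit $\colim_J \incl D$ formed in the ambient category $\SSet^{\Rr^{op}}$ already lies in $\SSet^{\Rr^{op}}_\ast$: on such an object $\red$ acts as the identity (up to the canonical isomorphism), so $\incl\big(\colim_J D\big) \cong \colim_J \incl D$.

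Next I would reduce this to a statement about the value at $\eta$. Colimits in the presheaf category $\SSet^{\Rr^{op}}$ are computed objectwise, so writing $C = \colim_J \incl D$ we have $C_r = \colim_{j\in J}(\incl D(j))_r$ for each $r \in \Ob(\Rr)$. By definition, $C$ belongs to $\SSet^{\Rr^{op}}_\ast$ precisely when $C_\eta = \Delta[0]$, so the entire lemma comes down to verifying this single identity whenever $J$ is connected.

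Finally, the crux: the diagram $J \to \SSet$ sending $j \mapsto (\incl D(j))_\eta = \Delta[0]$ is the \emph{constant} functor at $\Delta[0]$. Indeed, each $D(j)$ is reduced, so each value is $\Delta[0]$, and every transition map is a morphism $\Delta[0] \to \Delta[0]$, hence necessarily the identity. The colimit of a constant functor over a nonempty connected category is the constant value, so $C_\eta = \colim_J \Delta[0] \cong \Delta[0]$, as required. I expect this constant-diagram computation to be the only real content, and it is exactly where connectedness is used: for the empty diagram the colimit at $\eta$ would be the initial object $\varnothing \neq \Delta[0]$, and for a disconnected diagram such as a two-point discrete category it would be $\Delta[0]\amalg\Delta[0]\neq\Delta[0]$ — consistent with the already-noted failure of $\incl$ to preserve coproducts. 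Once $C_\eta = \Delta[0]$ is established, full faithfulness of $\incl$ transfers the universal property of $C$ as an ambient colimit to the universal property of $\colim_J D$ in $\SSet^{\Rr^{op}}_\ast$, completing the argument.
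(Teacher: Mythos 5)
Your proposal is correct and follows essentially the same route as the paper: compute the colimit objectwise in the ambient presheaf category, observe that the value at $\eta$ is the colimit of the constant diagram at $\Delta[0]$ over a (nonempty) connected category and hence is $\Delta[0]$, and then use full faithfulness of $\incl$ to conclude the ambient colimit is also the colimit in the reduced subcategory. The extra framing via reflectivity and the explicit failure analysis for empty or disconnected diagrams are fine elaborations but not a different argument.
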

\begin{proof}
Suppose $F : \mathcal{C} \to \SSet^{\Rr^{op}}_\ast$ is a functor from a connected category, and $A = \colim_{\mathcal{C}} \incl F$.
Then
\[
	A_\eta = (\colim_{c\in \mathcal{C}} \incl F(c))_\eta = \colim_{c\in \mathcal{C}} (\incl F(c)_\eta) = \colim_{c\in \mathcal{C}} \Delta[0] = \Delta[0].
\]
	Since $A$ is already in $\SSet^{\Rr^{op}}_\ast$ and $\incl$ is fully-faithful, $A$ is also the colimit of $F$ in $\SSet^{\Rr^{op}}_\ast$.
\end{proof}

Given an object $r\in \Rr$, recall that the \emph{boundary of $r$} is the subobject $\partial \Rr[r] \subseteq \Rr[r] = \Rr(-,r)$ so that $\partial \Rr[r]_s \subseteq \Rr(s,r)$ consists of those maps $s \to r$ which factor through an object of degree less than $d(r)$.
In particular, if $d(s) < d(r)$, $\partial \Rr[r]_s = \Rr[r]_s$. 

\begin{definition}\label{generating cofibrations}
We define two sets of maps in $\SSet^{\Rr^{op}}$.
The set $I$ consists of all inclusions
\[
	(\partial \Rr[r] \times \Delta[n]) \bigcup_{\partial \Rr[r] \times  \partial \Delta[n]} (\Rr[r] \times \partial \Delta[n]) \to \Rr[r] \times \Delta[n]
\]
for $n\geq 0$ and $r \in \Rr$.
The set $J$ consists of all inclusions 
\[
	(\partial \Rr[r] \times \Delta[n]) \bigcup_{\partial \Rr[r] \times  \Lambda^k[n]} (\Rr[r] \times \Lambda^k[n]) \to \Rr[r] \times \Delta[n].
\]
The set $I$ (resp.\ $J$) is a set of generating (acyclic) cofibrations for the Berger--Moerdijk Reedy model structure on $\SSet^{\Rr^{op}}$ (see Remark~\ref{remark on generating cofibrations}).
\end{definition}

We now make a further restriction on our generalized Reedy category $\Rr$, which will imply that all (co)domains of elements of $I\cup J$ are small relative to the whole category.

\begin{lemma}\label{smallness lemma}
All objects of $\SSet^{\Rr^{op}}$ and $\SSet^{\Rr^{op}}_\ast$ are small \cite[Definition 10.4.1]{hirschhorn}.
\end{lemma}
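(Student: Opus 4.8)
The plan is to reduce the entire statement to the classical fact that every object of a presheaf category on a small category is small, and then to transport smallness to the reduced subcategory $\SSet^{\Rr^{op}}_\ast$ using that $\incl$ is fully faithful and preserves filtered colimits.

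First I would record the identification $\SSet^{\Rr^{op}} \cong \Set^{(\Rr \times \Delta)^{op}}$, which exhibits $\SSet^{\Rr^{op}}$ as a category of set-valued presheaves on the small category $\Rr \times \Delta$. For such categories the smallness of every object is standard: a presheaf category on a small category is locally presentable, and in a locally presentable category every object is presentable, hence small in the sense of \cite[Definition 10.4.1]{hirschhorn}. (Here I am only using that presentable objects satisfy the $\lambda$-sequence condition, since a $\lambda$-sequence is a particular $\lambda$-filtered diagram.)

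To make the bound explicit I would argue directly. Colimits in $\SSet^{\Rr^{op}}$ are computed objectwise and dimensionwise, so for a $\lambda$-sequence $Y_0 \to Y_1 \to \cdots$ one has $(\colim_\beta Y_\beta)_r([n]) = \colim_\beta (Y_\beta)_r([n])$. Given $X$, let $\mu$ be the cardinality of $\coprod_{r,n} X_r([n])$ together with $|\mathrm{Mor}(\Rr)|$, and fix a regular $\lambda > \mu$. A map $X \to \colim_\beta Y_\beta$ amounts to a compatible family of functions $X_r([n]) \to \colim_\beta (Y_\beta)_r([n])$; since there are at most $\mu$-many simplices of $X$ and $\lambda$ is regular, every such element is represented at some stage $\beta < \lambda$, and a common stage $\beta$ can be chosen by $\lambda$-filteredness. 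Advancing past the (again at most $\mu$-many) identifications needed to make these representatives commute with the structure maps of $X$ and the connecting maps of the sequence, one obtains a genuine factorization through a single $Y_\beta$. This is precisely the assertion that
\[
	\colim_\beta \hom(X, Y_\beta) \to \hom(X, \colim_\beta Y_\beta)
\]
is a bijection, so $X$ is $\lambda$-small.

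For the reduced category I would deduce smallness from the unreduced case via Lemma~\ref{pushout lemma}, which tells us that $\incl$ preserves connected colimits, in particular $\lambda$-filtered ones; together with full faithfulness this gives, for $Z \in \SSet^{\Rr^{op}}_\ast$ and a $\lambda$-sequence $W_\bullet$ in $\SSet^{\Rr^{op}}_\ast$, natural identifications
\[
	\hom_{\SSet^{\Rr^{op}}_\ast}(Z, \colim_\beta W_\beta) \cong \hom_{\SSet^{\Rr^{op}}}(\incl Z, \colim_\beta \incl W_\beta) \cong \colim_\beta \hom_{\SSet^{\Rr^{op}}}(\incl Z, \incl W_\beta),
\]
where the last step uses smallness of $\incl Z$ from the first part for $\lambda$ large. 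The main obstacle is purely the cardinal bookkeeping in the third paragraph: isolating one $\kappa$ that works for a given $X$ when $\Rr$ may have infinitely many objects and infinite hom-sets, and checking naturality so that the factorization through one stage respects all structure maps $X_r \to X_{r'}$ at once. None of this is deep; everything else is either the formal identification of presheaf categories or a direct application of Lemma~\ref{pushout lemma}.
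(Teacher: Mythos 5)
Your proof is correct, and its first half is essentially the paper's argument: identify $\SSet^{\Rr^{op}}$ with $\Set^{(\Rr\times\Delta)^{op}}$, note that a presheaf category on a small category is locally presentable, and conclude that every object is presentable, hence small in the sense of \cite[Definition 10.4.1]{hirschhorn}; your explicit cardinal bookkeeping is a fine (if redundant) unwinding of that fact. Where you diverge is in the reduced case. The paper also leans on Lemma~\ref{pushout lemma}, but uses it to verify that $\SSet^{\Rr^{op}}_\ast$ is a full reflective subcategory of $\SSet^{\Rr^{op}}$ closed under $\lambda$-directed colimits, and then cites the Representation Theorem \cite[1.46]{AdamekRosicky:LPAC} to conclude that $\SSet^{\Rr^{op}}_\ast$ is itself locally presentable, whence all of its objects are small. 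You instead transport smallness object-by-object along $\incl$: full faithfulness identifies the hom-sets, and preservation of connected (hence $\lambda$-sequential) colimits by $\incl$ lets you compute $\hom_{\SSet^{\Rr^{op}}_\ast}(Z,\colim_\beta W_\beta)$ in the ambient category, where smallness of $\incl Z$ applies. Your route is more elementary (no appeal to the theory of reflective subcategories of locally presentable categories) but yields only the stated smallness claim, whereas the paper's route additionally establishes local presentability of $\SSet^{\Rr^{op}}_\ast$, which is the stronger structural fact. One small point worth making explicit in your version: you should note that $\incl$ applied to a $\lambda$-sequence in $\SSet^{\Rr^{op}}_\ast$ is again a $\lambda$-sequence (the colimit condition at limit ordinals is preserved), which again follows from Lemma~\ref{pushout lemma} since those colimits are connected.
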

\begin{proof}
In any locally presentable category, every object is small in this sense. 
Any diagram category $\Set^{\mathcal{C}}$ (with $\mathcal{C}$ small) is locally presentable \cite[Example 1.12]{AdamekRosicky:LPAC}, hence $\SSet^{\Rr^{op}} \cong \Set^{\Delta^{op} \times \Rr^{op}}$ is.
Finally, $\SSet^{\Rr^{op}}_\ast$ is a full, reflective subcategory of $\SSet^{\Rr^{op}}$ which is further closed under $\lambda$-directed colimits ($\lambda$ a regular cardinal) by Lemma~\ref{pushout lemma}, hence is locally presentable by \cite[Representation Theorem 1.46]{AdamekRosicky:LPAC}.
\end{proof}

\begin{lemma}\label{zero skeleta lemma}
Let $A$ be a domain or codomain of an element of $I\cup J$, with $A \subseteq \Rr[r] \times \Delta[n]$.
If $d(r) > 0$, then $A^{(0)} = \Rr[r]^{(0)} \times \Delta[n] \cong \coprod_{\Rr(\eta, r)} \Rr[\eta] \times \Delta[n]$. 
If $r= \eta$, then $A^{(0)} = A$, hence $\red(A) = \Rr[\eta]$.
\end{lemma}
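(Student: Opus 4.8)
The plan is to reduce everything to an analysis of the single object $\eta$, since the construction $X\mapsto X^{(0)}$ is entirely local to degree zero. First I would record the basic mechanism: for any $X\in\SSet^{\Rr^{op}}$ and any $s$ with $\Rr(s,\eta)=\{f\}$, the inclusion $X^{(0)}_s\hookrightarrow X_s$ is precisely the image of the split monomorphism $f^\ast\colon X_\eta\to X_s$ (split because any inhabitant of $\Rr(\eta,s)$ is a section of $f$, and $\Rr(\eta,\eta)=\{\id\}$ by \eqref{bound on r eta}), while $X^{(0)}_s=\varnothing$ when $\Rr(s,\eta)=\varnothing$. The immediate consequence I will exploit is a \emph{locality principle}: if $A\subseteq B$ is an inclusion of presheaves with $A_\eta=B_\eta$, then $A^{(0)}=B^{(0)}$ as subobjects of $B$, since in each relevant degree both are the image of $f^\ast$ out of the common object $A_\eta=B_\eta$, and that image already lies in $A_s$.

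Next I would treat the codomain $C=\Rr[r]\times\Delta[n]$ directly. Computing degreewise gives $C^{(0)}=\Rr[r]^{(0)}\times\Delta[n]$, and the comparison map $\coprod_{\Rr(\eta,r)}\Rr[\eta]\to\Rr[r]$ given by postcomposition is a monomorphism with image $\Rr[r]^{(0)}$; injectivity in each degree again uses that $f$ admits a section. This establishes $\Rr[r]^{(0)}\cong\coprod_{\Rr(\eta,r)}\Rr[\eta]$ and hence the asserted description of $C^{(0)}$.

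The crux is the domain $A=D$, where $D$ is the pushout $(\partial\Rr[r]\times\Delta[n])\cup_{\partial\Rr[r]\times K}(\Rr[r]\times K)$ with $K$ denoting $\partial\Delta[n]$ or $\Lambda^k[n]$. Here I would use the hypothesis $d(r)>0$ through one key observation: because $d(\eta)=0<d(r)$, every map $\eta\to r$ factors (trivially, through $\id_\eta$) through an object of degree $<d(r)$, so $\partial\Rr[r]_\eta=\Rr[r]_\eta$. Evaluating the pushout at $\eta$ (colimits being computed degreewise) then collapses it: the leg $\partial\Rr[r]_\eta\times K\to\Rr[r]_\eta\times K$ is induced by the inclusion $\partial\Rr[r]\hookrightarrow\Rr[r]$, which at $\eta$ is the identity, so the pushout reduces to its other leg and $D_\eta=\partial\Rr[r]_\eta\times\Delta[n]=\Rr[r]_\eta\times\Delta[n]=C_\eta$. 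The locality principle now yields $D^{(0)}=C^{(0)}=\Rr[r]^{(0)}\times\Delta[n]$, finishing the case $d(r)>0$.

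Finally, for $r=\eta$ I would note that $\partial\Rr[\eta]=\varnothing$ (there being no objects of negative degree), so both the domain $\Rr[\eta]\times K$ and the codomain $\Rr[\eta]\times\Delta[n]$ have the shape $\Rr[\eta]\times B'$ for a subcomplex $B'\subseteq\Delta[n]$. For such an object, over any $s$ with $\Rr(s,\eta)=\{f\}$ the map $f^\ast$ is the identity on the $B'$-factor, whence $A^{(0)}=A$; the defining pushout for $\red(A)$ is then the pushout of an isomorphism along $A^{(0)}\to\Rr[\eta]$, giving $\red(A)=\Rr[\eta]$. I expect the only genuinely nontrivial step to be the degree-zero coincidence $\partial\Rr[r]_\eta=\Rr[r]_\eta$ in the case $d(r)>0$; once that is in hand, the locality principle renders everything else bookkeeping.
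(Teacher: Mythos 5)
Your proof is correct and takes essentially the same route as the paper: the paper's ``important point'' is that $(\partial \Rr[r])^{(0)} = \Rr[r]^{(0)}$ when $d(r)>0$, which is exactly your observation that $\partial\Rr[r]_\eta = \Rr[r]_\eta$ (since $\eta$ itself has degree $0 < d(r)$) combined with the fact that $(-)^{(0)}$ only depends on the value at $\eta$. The paper then concludes by sandwiching $A^{(0)}$ between $(\partial\Rr[r]\times\Delta[n])^{(0)}$ and $(\Rr[r]\times\Delta[n])^{(0)}$ rather than by your locality principle, but this is only a cosmetic difference.
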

\begin{proof}
The important point is that 
\[
	(\partial \Rr[r])^{(0)} = \begin{cases}
		\Rr[r]^{(0)} & d(r) > 0 \\
		\varnothing & d(r) = 0.
	\end{cases}
\]
If $d(r) > 0$, then 
\[
	\Rr[r]^{(0)} \times \Delta[n] = (\partial \Rr[r] \times \Delta[n] )^{(0)} \subseteq A^{(0)} \subseteq (\Rr[r] \times \Delta[n])^{(0)} = \Rr[r]^{(0)} \times \Delta[n].
\]
If $r = \eta$, then $A$ is one of $\Rr[\eta] \times \partial \Delta[n]$, $\Rr[\eta]\times \Lambda^k[n]$, or $\Rr[\eta]\times \Delta[n]$, hence $A^{(0)} = A$.
\end{proof}

\begin{proposition}\label{reduction of generator one}
Suppose that $j : A \to \Rr[r] \times \Delta[n]$ is in $I$ or $J$ and $d(r) > 0$.
If $j\in I$, then $\red(j)$ is a cofibration in $\SSet^{\Rr^{op}}$, while if $j\in J$, then $\red(j)$ is an acyclic cofibration in $\SSet^{\Rr^{op}}$.
\end{proposition}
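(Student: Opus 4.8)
The plan is to show that, after applying the inclusion $\incl$, the map $\red(j)$ is a pushout of $j$ in $\SSet^{\Rr^{op}}$. Since the cofibrations and the acyclic cofibrations of any model category are stable under pushout, and $j\in I$ is a cofibration while $j\in J$ is an acyclic cofibration of the Berger--Moerdijk Reedy model structure (Definition~\ref{generating cofibrations}), both assertions will follow at once.

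Write $B = \Rr[r]\times\Delta[n]$ for the codomain of $j$, so that $A\subseteq B$. The essential input is Lemma~\ref{zero skeleta lemma}: because $d(r)>0$, the domain and the codomain of $j$ carry the \emph{same} zero-skeleton, namely
\[
	A^{(0)} = \Rr[r]^{(0)}\times\Delta[n] = B^{(0)}.
\]
As $j$ is a monomorphism of subobjects of $B$ and the formation of $X^{(0)}$ is natural in $X$, the composite $A^{(0)}\hookrightarrow A \xrightarrow{j} B$ is precisely the canonical inclusion $B^{(0)}\hookrightarrow B$. I would record this compatibility first, since it is the only place where the hypothesis $d(r)>0$ is used.

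With this in hand I would paste pushout squares. Place the defining pushout of $\red(A)$ (the square with corners $A^{(0)}, A, \Rr[\eta], \red(A)$) on the left, and on the right form the pushout $Q$ of $A\xrightarrow{j}B$ along the map $A\to\red(A)$. By the pasting lemma the outer rectangle is then a pushout whose top edge is the composite $A^{(0)}\hookrightarrow A\xrightarrow{j}B$; by the previous paragraph this edge is the inclusion $B^{(0)}\hookrightarrow B$, so the outer rectangle is exactly the pushout defining $\red(B)$. Hence $Q\cong\red(B)$, and the right-hand square exhibits
\[ \begin{tikzcd}
A \rar{j} \dar & B \dar \\
\red(A) \rar{\red(j)} & \red(B)
\end{tikzcd} \]
as a pushout in $\SSet^{\Rr^{op}}$ (all reductions are computed as pushouts in the ambient category, and $\incl$ preserves these connected colimits by Lemma~\ref{pushout lemma}).

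Finally, since $j$ is a cofibration when $j\in I$ and an acyclic cofibration when $j\in J$, and both classes are closed under pushout, the map $\red(j)$ is a cofibration (resp.\ acyclic cofibration) in $\SSet^{\Rr^{op}}$, as claimed. The only genuine content is the identification $A^{(0)}=B^{(0)}$ supplied by Lemma~\ref{zero skeleta lemma}; once that is available the argument is purely formal, and I expect the main (indeed only) obstacle to be bookkeeping the pushout pasting correctly rather than anything substantive.
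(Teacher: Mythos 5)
Your proof is correct and follows essentially the same route as the paper's: both use Lemma~\ref{zero skeleta lemma} to identify $A^{(0)}$ with $(\Rr[r]\times\Delta[n])^{(0)}$ when $d(r)>0$, then paste the defining pushout squares for $\red(A)$ and $\red(\Rr[r]\times\Delta[n])$ to exhibit $\red(j)$ as a pushout of $j$, and conclude by closure of (acyclic) cofibrations under pushout. The only cosmetic difference is the direction in which you apply the pasting lemma (constructing the pushout $Q$ and identifying it with $\red(\Rr[r]\times\Delta[n])$, rather than cancelling the left square from the outer rectangle), which is immaterial.
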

\begin{proof}
	The cube with front and back squares pushouts
\[ \begin{tikzcd}[row sep=small, column sep=small]
A^{(0)}
	\arrow[rrrr] 
	\arrow[dr, "=", "\ref{zero skeleta lemma}" swap] 
	\arrow[ddd] &&&& 
A
	\arrow[dr] 
	\arrow[ddd]
\\ & 
(\Rr[r] \times \Delta[n])^{(0)}
	\arrow[rrrr,  crossing over] &&&& 
\Rr[r] \times \Delta[n]
\\ \\
\Rr[\eta]	
	\arrow[rrrr] 
	\arrow[dr, "="] &&&& 
\red(A)
	\arrow[dr] 
\\ & 
\Rr[\eta]
	\arrow[rrrr] 
	\arrow[from=uuu, crossing over] 
	&&&& 
\red(\Rr[r] \times \Delta[n])
	\arrow[from=uuu, crossing over] 
\end{tikzcd} \]
reduces to a rectangle 
\[ \begin{tikzcd}
A^{(0)} \rar\dar  & A \rar\dar  & \Rr[r] \times \Delta[n]\dar \\
\Rr[\eta] \rar & \red(A) \rar & \red(\Rr[r] \times \Delta[n])
\end{tikzcd} \]
where the big rectangle and the left square are pushouts, hence so is the right square.

The result now follows since the class of (acyclic) cofibrations is closed under pushouts.
\end{proof}

\begin{proposition}\label{reduction of generator two}
Suppose that $j : A \to \Rr[\eta] \times \Delta[n]$ is in $I$ or $J$. Then $\red(j)$ is the identity map on $\Rr[\eta]$.
\end{proposition}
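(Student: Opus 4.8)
The plan is to leverage Lemma~\ref{zero skeleta lemma}, which already computes the reductions of both the source and the target of $j$. Since $d(\eta)=0$, that lemma applies to the domain $A$, giving $\red(A)=\Rr[\eta]$, and it applies equally to the codomain: $\Rr[\eta]\times\Delta[n]$ is itself a codomain of an element of $I\cup J$ and satisfies $(\Rr[\eta]\times\Delta[n])^{(0)}=\Rr[\eta]\times\Delta[n]$, so $\red(\Rr[\eta]\times\Delta[n])=\Rr[\eta]$ as well. Applying the functor $\red$ to $j$ therefore produces an endomorphism $\red(j)\colon\Rr[\eta]\to\Rr[\eta]$, and the whole proposition reduces to showing that $\Rr[\eta]$ admits no self-map other than the identity.

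For this second step I would compute the relevant hom-set directly. The presheaf $\Rr[\eta]$ is discrete in the simplicial direction, so by the Yoneda lemma $\hom_{\SSet^{\Rr^{op}}}(\Rr[\eta],\Rr[\eta])$ is the set of $0$-simplices of $\Rr[\eta]_\eta=\Rr(\eta,\eta)$, i.e.\ $\Rr(\eta,\eta)$ itself. The standing assumption \eqref{bound on r eta}, specialized to $r=\eta$, reads $|\Rr(\eta,\eta)|\leq\min(1,|\Rr(\eta,\eta)|)\leq 1$; since $\id_\eta$ is always present, this forces $\Rr(\eta,\eta)=\{\id_\eta\}$. Hence the endomorphism monoid of $\Rr[\eta]$ is trivial, and $\red(j)$ must equal $\id_{\Rr[\eta]}$.

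I expect the only real subtlety to be resisting the temptation to argue by naturality alone. One is tempted to note that the unit maps $A\to\red(A)=\Rr[\eta]$ and $\Rr[\eta]\times\Delta[n]\to\Rr[\eta]$ are the unique maps into $\Rr[\eta]$, whence $\red(j)$ post-composed with the unit of $A$ equals the unit of $A$; but to deduce $\red(j)=\id$ from this one would need the unit $A\to\Rr[\eta]$ to be an epimorphism. This fails precisely in the boundary case $j\in I$ with $n=0$, where $A=\partial\Rr[\eta]\times\Delta[0]=\varnothing$. The hom-set computation above sidesteps this issue and disposes of all $j\in I\cup J$ uniformly, so that is the route I would take.
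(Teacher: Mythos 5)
Your proof is correct and is essentially the paper's own argument: the paper likewise invokes the second part of Lemma~\ref{zero skeleta lemma} to identify both reductions with $\Rr[\eta]$ and then uses $\hom(\Rr[\eta],\Rr[\eta])=\Rr(\eta,\eta)=\ast$ to conclude. Your explicit justification that $\Rr(\eta,\eta)$ is a singleton via the standing assumption \eqref{bound on r eta}, and your caution about the degenerate case $A=\varnothing$, are just spelled-out versions of what the paper leaves implicit.
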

\begin{proof}
This is a direct consequence of the second part of Lemma \ref{zero skeleta lemma} and the fact that $\hom(\Rr[\eta], \Rr[\eta]) = \Rr(\eta, \eta) = *$.
\end{proof}

\begin{lemma}\label{levelwise acyclic cof}
If $j\in J$, then $\red(j)_s$ is an acyclic cofibration in $\SSet$ for every $s\in \Rr$.
If $j\in I$, then $\red(j)_s$ is a cofibration in $\SSet$ for every $s\in \Rr$.
\end{lemma}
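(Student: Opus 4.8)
The plan is to reduce everything to the two preceding propositions by exploiting the levelwise nature of the Berger--Moerdijk Reedy model structure on $\SSet^{\Rr^{op}}$. An element $j$ of $I\cup J$ comes with a distinguished object $r\in\Rr$, namely the one with $j\colon A\to \Rr[r]\times\Delta[n]$, and I would split the argument according to whether $d(r)>0$ or $r=\eta$.

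First suppose $d(r)>0$. Then Proposition~\ref{reduction of generator one} tells us that $\red(j)$ is a cofibration in $\SSet^{\Rr^{op}}$ when $j\in I$, and an acyclic cofibration when $j\in J$, where these terms refer to the Berger--Moerdijk Reedy model structure. Since $\SSet$ is cofibrantly generated it is $\Rr$-projective, so Lemma~\ref{reedy levelwise lemma} applies and guarantees that a Reedy cofibration is a cofibration in each level. Thus $\red(j)_s$ is a cofibration in $\SSet$ for every $s\in\Rr$, which settles the case $j\in I$. When $j\in J$, the map $\red(j)$ is in addition a weak equivalence in the Reedy structure; by the very definition of weak equivalence in Theorem~\ref{bm_reedy_ms} this means $\red(j)_s$ is a weak equivalence in $\SSet$ for each $s$. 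Combined with the levelwise cofibrancy just obtained, $\red(j)_s$ is an acyclic cofibration in $\SSet$.

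It remains to handle $r=\eta$. Here Proposition~\ref{reduction of generator two} identifies $\red(j)$ with the identity map of $\Rr[\eta]$. Consequently $\red(j)_s=\id_{\Rr[\eta]_s}$ is an identity morphism of $\SSet$ for every $s\in\Rr$, and identities are simultaneously cofibrations and acyclic cofibrations. This covers both $j\in I$ and $j\in J$ at once.

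There is no real obstacle here: the content has already been extracted into Propositions~\ref{reduction of generator one} and~\ref{reduction of generator two}, and the present lemma simply translates ``(acyclic) cofibration in the Reedy model structure'' into ``levelwise (acyclic) cofibration'' via Lemma~\ref{reedy levelwise lemma} together with the levelwise definition of Reedy weak equivalence. The only point requiring a moment's care is confirming that $\SSet$ is $\Rr$-projective so that Lemma~\ref{reedy levelwise lemma} is available, which holds because $\SSet$ is cofibrantly generated.
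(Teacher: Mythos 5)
Your proposal is correct and follows the same route as the paper: combine Propositions~\ref{reduction of generator one} and~\ref{reduction of generator two} to see that $\red(j)$ is an (acyclic) cofibration in the Berger--Moerdijk Reedy model structure, then pass to levels via Lemma~\ref{reedy levelwise lemma} (and the levelwise definition of Reedy weak equivalence for the acyclic part). Your explicit case split on $d(r)>0$ versus $r=\eta$ and the remark that $\SSet$ is $\Rr$-projective are just slightly more detailed versions of what the paper leaves implicit.
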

\begin{proof}
By the previous two propositions, we know that $\red(j)$ is an (acyclic) cofibration in the Berger--Moerdijk Reedy model structure on $\SSet^{\Rr^{op}}$.
The result then follows from Lemma \ref{reedy levelwise lemma}.
\end{proof}

\begin{theorem}\label{reedy type model structure for discrete}
	Suppose that $\Rr$ is a generalized Reedy category with unique element $\eta$ in degree zero and $|\Rr(r,\eta)| \leq \min (1, |\Rr(\eta,r)|)$ for all $r \in \Rr$.
	Then the Berger--Moerdijk Reedy model structure lifts along the adjunction \[ \red : \SSet^{\Rr^{op}} \rightleftarrows \SSet^{\Rr^{op}}_\ast  : \incl. \]
	In other words, $\SSet^{\Rr^{op}}_\ast$ admits a cofibrantly-generated model structure with 
	weak equivalences (resp.\ fibrations) those maps which are weak equivalences (resp.\ fibrations) in the larger category $\SSet^{\Rr^{op}}$.
\end{theorem}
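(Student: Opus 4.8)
The plan is to invoke a standard lifting (transfer) theorem for cofibrantly generated model structures along a left adjoint, derivable from the recognition theorem \cite[Theorem 11.3.2]{hirschhorn}. I would take the Berger--Moerdijk Reedy model structure on $\SSet^{\Rr^{op}}$, with generating cofibrations $I$ and generating acyclic cofibrations $J$ from Definition~\ref{generating cofibrations}, as the source, and propose $\red(I)$ and $\red(J)$ as generating cofibrations and generating acyclic cofibrations for $\SSet^{\Rr^{op}}_\ast$. I declare a map $f$ in $\SSet^{\Rr^{op}}_\ast$ to be a weak equivalence (resp.\ fibration) exactly when $\incl(f)$ is one in $\SSet^{\Rr^{op}}$. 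The target $\SSet^{\Rr^{op}}_\ast$ is bicomplete, as recorded above, so the two hypotheses of the transfer theorem that remain to be checked are: (1) the sets $\red(I)$ and $\red(J)$ permit the small object argument, and (2) $\incl$ sends every relative $\red(J)$-cell complex to a weak equivalence in $\SSet^{\Rr^{op}}$.

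Condition (1) is immediate from Lemma~\ref{smallness lemma}: every object of $\SSet^{\Rr^{op}}_\ast$ is small, so in particular the domains of the maps in $\red(I)\cup\red(J)$ are small relative to the whole category.

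Condition (2) is where the preceding lemmas do their work, and it is the step I expect to be the main obstacle. A relative $\red(J)$-cell complex is a transfinite composition of pushouts of maps in $\red(J)$, formed inside $\SSet^{\Rr^{op}}_\ast$. Since these are all connected colimits, Lemma~\ref{pushout lemma} guarantees that $\incl$ carries such a cell complex to the corresponding transfinite composite of pushouts, now formed in $\SSet^{\Rr^{op}}$, of the maps $\red(j)$ with $j\in J$. By Proposition~\ref{reduction of generator one} (for $d(r)>0$) and Proposition~\ref{reduction of generator two} (for $r=\eta$, where $\red(j)$ is an identity), each $\red(j)$ is an acyclic cofibration in the Berger--Moerdijk Reedy structure on $\SSet^{\Rr^{op}}$. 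As acyclic cofibrations are closed under pushout and transfinite composition, the image under $\incl$ of any relative $\red(J)$-cell complex is an acyclic cofibration, in particular a weak equivalence; by the definition of weak equivalences in $\SSet^{\Rr^{op}}_\ast$, the cell complex itself is a weak equivalence.

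With (1) and (2) established, the transfer theorem yields the desired cofibrantly generated model structure on $\SSet^{\Rr^{op}}_\ast$, in which a map $f$ is a fibration (resp.\ acyclic fibration) precisely when it has the right lifting property against $\red(J)$ (resp.\ $\red(I)$). By the adjunction $\red\dashv\incl$, this holds if and only if $\incl(f)$ has the right lifting property against $J$ (resp.\ $I$), that is, if and only if $\incl(f)$ is a fibration (resp.\ acyclic fibration) in $\SSet^{\Rr^{op}}$; this confirms that fibrations and weak equivalences are exactly those detected by $\incl$. The only genuinely delicate ingredient is the verification, inside condition (2), that $\red$ takes generating acyclic cofibrations to acyclic cofibrations---this is the content of Propositions~\ref{reduction of generator one} and \ref{reduction of generator two}, which in turn rely on the standing hypothesis $|\Rr(r,\eta)| \leq \min(1,|\Rr(\eta,r)|)$ through Lemma~\ref{zero skeleta lemma}. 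Everything else is formal bookkeeping.
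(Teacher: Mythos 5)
Your proposal is correct and follows essentially the same route as the paper: both invoke Kan's lifting theorem with generating sets $\red(I)$, $\red(J)$, use Lemma~\ref{smallness lemma} for the small object argument, and reduce condition (2) to Propositions~\ref{reduction of generator one} and \ref{reduction of generator two}. The only cosmetic difference is that the paper verifies condition (2) by passing to levelwise acyclic cofibrations of simplicial sets (via Lemma~\ref{levelwise acyclic cof}), whereas you stay in the Berger--Moerdijk Reedy structure on $\SSet^{\Rr^{op}}$ and use closure of acyclic cofibrations under pushout and transfinite composition together with the preservation of connected colimits by $\incl$ from Lemma~\ref{pushout lemma}; both are valid and rest on the same ingredients.
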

\begin{proof}
We apply Kan's lifting theorem \cite[11.3.2]{hirschhorn}.
We know that $\red I$ and $\red J$ satisfy the small object argument by Lemma \ref{smallness lemma}, hence condition (1) is satisfied.
For condition (2) we know that $\red(j)_s$ is an acyclic cofibration in $\SSet$ for any $s\in \Rr$ by Lemma \ref{levelwise acyclic cof}.
Then given any relative $\red J$-cell complex $X\to Y$ in $\SSet^{\Rr^{op}}_\ast$, we also have that $X_s \to Y_s$ is an acyclic cofibration for each $s\in \Rr$. Thus $\incl X \to \incl Y$ is a weak equivalence, and condition (2) is established.
\end{proof}

Notice that the inclusion functor does not admit a right adjoint (as it does not preserve finite coproducts), hence cannot be a left Quillen functor. Nevertheless, we have the following, which hinges on the fact that $\incl$ preserves pushouts and transfinite composition (Lemma~\ref{pushout lemma}).
\begin{proposition}\label{cofibration preservation}
	The inclusion functor $\incl : \SSet^{\Rr^{op}}_\ast \to \SSet^{\Rr^{op}}$ preserves (acyclic) cofibrations.
\end{proposition}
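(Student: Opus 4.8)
The plan is to exploit the cofibrant generation of $\SSet^{\Rr^{op}}_\ast$ established in Theorem~\ref{reedy type model structure for discrete}, together with the fact that $\incl$ preserves connected colimits (Lemma~\ref{pushout lemma}). By that theorem the generating cofibrations of $\SSet^{\Rr^{op}}_\ast$ are the maps $\red(j)$ with $j \in I$, and the generating acyclic cofibrations are the $\red(j)$ with $j \in J$. Propositions~\ref{reduction of generator one} and~\ref{reduction of generator two} already describe what $\incl$ does to these generators: for $j : A \to \Rr[r] \times \Delta[n]$ with $d(r) > 0$ the map $\red(j)$ is, as a morphism of $\SSet^{\Rr^{op}}$, a cofibration if $j\in I$ and an acyclic cofibration if $j\in J$; while for $r = \eta$ the map $\red(j)$ is an identity on $\Rr[\eta]$, hence trivially an (acyclic) cofibration. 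Thus $\incl$ carries each generator of $\SSet^{\Rr^{op}}_\ast$ to an (acyclic) cofibration of $\SSet^{\Rr^{op}}$.

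Next I would recall that in a cofibrantly generated model category every cofibration is a retract of a relative cell complex, i.e., of a transfinite composition of pushouts of coproducts of generators. The one genuine obstacle is that $\incl$ does \emph{not} preserve coproducts (in $\SSet^{\Rr^{op}}_\ast$ these are wedges, cf.~\eqref{coproduct reduced}). To avoid them entirely I would first refine such a cell complex into a transfinite composition in which each successive map $X_\beta \to X_{\beta+1}$ is a pushout of a \emph{single} generator $\red(j)$; this is the standard device of attaching cells one at a time, obtained by well-ordering the indexing sets of the coproducts. After this refinement only pushouts and transfinite compositions remain, and both of these are connected colimits.

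With that in hand the argument closes quickly. Since $\incl$ preserves connected colimits (Lemma~\ref{pushout lemma}), it sends each pushout of $\red(j)$ to a pushout of $\incl(\red(j))$, which is a pushout of an (acyclic) cofibration in $\SSet^{\Rr^{op}}$ and hence again an (acyclic) cofibration; it likewise sends the transfinite composition to a transfinite composition of (acyclic) cofibrations, which is again one. Because every functor preserves retracts and the (acyclic) cofibrations of $\SSet^{\Rr^{op}}$ are closed under retracts, pushouts, and transfinite composition, it follows that $\incl$ maps every (acyclic) cofibration of $\SSet^{\Rr^{op}}_\ast$ to an (acyclic) cofibration of $\SSet^{\Rr^{op}}$. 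The main point to watch throughout is exactly the coproduct issue above; once the cell complexes are expressed via single-cell attachments, everything reduces to the connected-colimit preservation of Lemma~\ref{pushout lemma}.
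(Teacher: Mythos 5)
Your proof is correct and takes essentially the same route as the paper's: reduce to the generators by writing an (acyclic) cofibration as a retract of a relative cell complex, apply Propositions~\ref{reduction of generator one} and~\ref{reduction of generator two} to the generators, and use Lemma~\ref{pushout lemma} to push the result through pushouts and transfinite compositions. The only cosmetic differences are that the paper disposes of the acyclic case by observing that $\incl$ preserves all weak equivalences rather than tracking $\red(J)$ separately, and the single-cell-attachment refinement you are careful about is already built into the definition of relative $I$-cell complex in \cite[10.5.8]{hirschhorn}, so the coproduct issue never actually arises.
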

\begin{proof}
The inclusion functor preserves all weak equivalences, so it is enough to show that it preserves cofibrations.
By Proposition \ref{reduction of generator one} and Proposition \ref{reduction of generator two}, we know that $\incl$ takes generating cofibrations in $\red I$ to cofibrations in $\SSet^{\Rr^{op}}$.
The inclusion functor preserves pushouts (Lemma \ref{pushout lemma}) and transfinite composition, so takes relative $\red I$-cell complexes to cofibrations.
Since every cofibration in $\SSet^{\Rr^{op}}_\ast$ is a retract of a relative $\red I$-cell complex, every cofibration in $\SSet^{\Rr^{op}}_\ast$ is again a cofibration in $\SSet^{\Rr^{op}}$.
\end{proof}

The following has precursors elsewhere in special cases, for instance in the proof of \cite[4.3]{bh1}.

\begin{proposition}\label{reedy type localizability}
With the hypotheses of Theorem \ref{reedy type model structure for discrete}, the model structure on $\SSet^{\Rr^{op}}_\ast$ is left proper and cellular.
\end{proposition}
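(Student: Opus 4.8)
The plan is to prove the two assertions separately, transferring each along the reflective adjunction $\red : \SSet^{\Rr^{op}} \rightleftarrows \SSet^{\Rr^{op}}_\ast : \incl$ and invoking the facts already in hand: that $\SSet^{\Rr^{op}}$ is left proper (Theorem~\ref{left properness}) and cellular (Proposition~\ref{cellularity}), that $\incl$ preserves connected colimits (Lemma~\ref{pushout lemma}) and preserves (acyclic) cofibrations (Proposition~\ref{cofibration preservation}), and that weak equivalences and fibrations in $\SSet^{\Rr^{op}}_\ast$ are detected in $\SSet^{\Rr^{op}}$.

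For left properness I would start from a pushout square in $\SSet^{\Rr^{op}}_\ast$ whose top map is a cofibration $A\to B$ and whose left map is a weak equivalence $A\to X$. Applying $\incl$ yields a pushout square in $\SSet^{\Rr^{op}}$, since $\incl$ preserves connected colimits (Lemma~\ref{pushout lemma}). In that image square $\incl A\to\incl X$ is a weak equivalence and $\incl A\to\incl B$ is a cofibration (Proposition~\ref{cofibration preservation}), so left properness of $\SSet^{\Rr^{op}}$ makes $\incl B\to\incl(X\amalg_A B)$ a weak equivalence; as weak equivalences are detected in $\SSet^{\Rr^{op}}$, the map $B\to X\amalg_A B$ is a weak equivalence as well. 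This mirrors the proof of Theorem~\ref{left properness} one level up.

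For cellularity I would verify the three conditions of \cite[Definition~12.1.1]{hirschhorn} for the generating sets $\red I$ and $\red J$. Smallness of the domains of $\red J$ is immediate from Lemma~\ref{smallness lemma}. To see that cofibrations are effective monomorphisms, let $f\colon A\to B$ be a cofibration and form its cokernel pair $B\rightrightarrows B\amalg_A B$. Since $\incl$ preserves this connected colimit (Lemma~\ref{pushout lemma}) and sends $f$ to a cofibration (Proposition~\ref{cofibration preservation}), cellularity of $\SSet^{\Rr^{op}}$ identifies $\incl f$ with the equalizer of $\incl B\rightrightarrows \incl(B\amalg_A B)$. Letting $E\to B$ be the equalizer of the cokernel pair computed in $\SSet^{\Rr^{op}}_\ast$, the fact that $\incl$ is a right adjoint (so preserves this limit) identifies $\incl E\to\incl B$ with that same equalizer, hence with $\incl f$; the canonical comparison $A\to E$ therefore becomes an isomorphism after $\incl$, and since $\incl$ is fully faithful it reflects isomorphisms, so $A\to E$ is already an isomorphism. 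Thus $f$ is the equalizer of its cokernel pair, i.e.\ an effective monomorphism.

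The compactness condition is the main technical point, and I expect it to be the crux. The domains and codomains of $\red I$ are the objects $\red(\Rr[r]\times\Delta[n])$ and $\red(A)$ with $A$ a domain of an element of $I$. By Lemma~\ref{zero skeleta lemma} together with the pushout rectangle in the proof of Proposition~\ref{reduction of generator one}, applying $\incl$ exhibits each such object as a pushout of (co)domains of $I$ along maps to $\Rr[\eta]$, and, more usefully, the same pushout descriptions show that $\incl$ carries any relative $\red I$-cell complex to a relative $I$-cell complex, the positive-degree cells corresponding bijectively with matching attaching maps (the degree-$\eta$ cells of $\red I$ being identities on $\Rr[\eta]$, hence trivial). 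Since $\SSet^{\Rr^{op}}$ is cellular, the image $\incl W$ of a (co)domain $W$ of $\red I$ is compact relative to $I$, so any map from $\incl W$ into the image cell complex factors through a subcomplex of bounded size; transporting that subcomplex back along the cell correspondence and using full faithfulness of $\incl$ produces a subcomplex of the original $\red I$-cell complex, of the same bounded size, through which the original map factors. This shows each such $W$ is compact relative to $\red I$, completing the verification. The delicate bookkeeping lies precisely in this matching of subcomplexes under $\incl$, which is why compactness is the hard part rather than the formal transfer of left properness and effective monomorphisms.
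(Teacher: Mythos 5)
Your proposal is correct in outline and matches the paper's strategy of transferring each of the three cellularity conditions (plus left properness) across the adjunction $\red \dashv \incl$, but it differs in execution at two points. For left properness you push out an arbitrary cofibration and invoke Theorem~\ref{left properness} directly; the paper instead reduces to generating cofibrations and works levelwise in $\SSet$ via Lemma~\ref{levelwise acyclic cof} --- both are fine, and your version avoids justifying the reduction to generators. Your treatment of effective monomorphisms actually fills in a step the paper elides: that $\incl$ preserves both the cokernel pair (a connected colimit) and the equalizer (a limit, as $\incl$ is a right adjoint) and reflects isomorphisms, so ``effective mono downstairs'' really does pull back. The substantive divergence is the compactness condition, precisely where you flag ``delicate bookkeeping'' and stop short: the paper sidesteps the by-hand matching of subcomplexes under $\incl$ entirely. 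It first shows that the domains and codomains of $\red(I)$ are compact relative to $I$ in $\SSet^{\Rr^{op}}$ --- using Lemma~\ref{zero skeleta lemma} for compactness of $A^{(0)}$ and the closure of compactness under pushouts along maps of compact objects \cite[10.8.8]{hirschhorn} applied to the pushout $\red(A) = \Rr[\eta]\amalg_{A^{(0)}}A$ --- then invokes \cite[11.4.9]{hirschhorn}, which says exactly that an object compact relative to $I$ is compact relative to any set of cofibrations with $I$-compact domains, hence relative to $\red(I)$. The only remaining observation is that presented relative $\red(I)$-cell complexes in $\SSet^{\Rr^{op}}$ starting from a reduced object coincide with presented relative $\red(I)$-cell complexes in $\SSet^{\Rr^{op}}_\ast$, since $\incl$ preserves pushouts and transfinite compositions (Lemma~\ref{pushout lemma}). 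So the subcomplex-transport argument you anticipate having to do by hand is packaged in a ready-made lemma of Hirschhorn; if you prefer your direct route, you would need to verify that an $I$-subcomplex of $\incl$ of a $\red(I)$-cell complex determines a $\red(I)$-subcomplex, which works because every subcomplex of a relative cell complex contains the base $X_0$ and hence the degree-zero part needed to extend an attaching map $A\to X_\beta$ over $\red(A)$.
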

\begin{proof}
For left properness, it is enough to show that the pushout of a weak equivalence along a \emph{generating} cofibration $\red(i) \in \red I$ is again a weak equivalence; this follows from Lemma \ref{levelwise acyclic cof}, left properness of $\SSet$, and the fact that weak equivalences are levelwise weak equivalences.

We now turn to cellularity, and aim to verify the three conditions from \cite[12.1.1]{hirschhorn}. We know that (2) holds by Lemma \ref{smallness lemma}.
Recall from Proposition \ref{cellularity} that $\SSet^{\Rr^{op}}$ is a cellular model category.
Cofibrations in $\SSet^{\Rr^{op}}_\ast$ are also cofibrations (Proposition \ref{cofibration preservation}) in the cellular model category $\SSet^{\Rr^{op}}$, hence are effective monomorphisms. Thus (3) holds.

It remains to show (1).
All elements of the form $\Rr[r] \times \Delta[n]$ are compact relative to $I$ since they are codomains of elements of $I$ and $\SSet^{\Rr^{op}}$ is cellular. 
If $A$ is a domain or a codomain of an element of $I$, then 
$A^{(0)}$ is compact by Lemma~\ref{zero skeleta lemma}. 
It follows that $\red(A)$ is compact (relative to $I$) by \cite[10.8.8]{hirschhorn}, since $A$, $A^{(0)}$ and $\Rr[\eta] = \Rr[\eta] \times \Delta[0]$ are compact.
This shows that the set $\red(I)$ is a set of cofibrations of $\SSet^{\Rr^{op}}$ with compact domains.
By \cite[11.4.9]{hirschhorn}, if $W\in \SSet^{\Rr^{op}}$ is compact relative to $I$, then $W$ is compact relative to $\red(I)$.
In particular, $\red(A)$ is compact relative to $\red(I)$.
Presented relative $\red(I)$-cell complexes in $\SSet^{\Rr^{op}}$ with $X_0 \in \SSet^{\Rr^{op}}_\ast$ are the same thing as presented relative $\red(I)$-cell complexes in $\SSet^{\Rr^{op}}_\ast$, hence all domains and codomains of elements in $\red(I) \cup \red(J)$ are compact in $\SSet^{\Rr^{op}}_\ast$.
Thus \cite[12.1.1(1)]{hirschhorn} holds, and we conclude that $\SSet^{\Rr^{op}}_\ast$ is a cellular model category.
\end{proof}

\subsection{Simplicial model structures}\label{simplicial model structures}

As in the case of an ordinary Reedy category, the Berger--Moerdijk Reedy model structure on $\SSet^{\Rr^{op}}$ is a \emph{simplicial model structure} (see \cite[Ch.~9]{hirschhorn}).
The structure is given as follows:
\begin{definition}
	Suppose that $X, Y \in \SSet^{\Rr^{op}}$ and $K \in \SSet$.
	\begin{itemize}
		\item The object $X\otimes K$ is defined on objects by
		\[
			(X\otimes K)_r = X_r \times K \in \SSet.
		\]
		\item The object $Y^K$ is defined on objects by
		\[
			(Y^K)_r = \map_{\SSet}(K, Y_r) \in \SSet
		\]
		where $\map_{\SSet}$ is the simplicial mapping space, that is, $\map_{\SSet}(A,B)_n = \hom_{\SSet} (A\times \Delta[n], B)$.
		\item The (simplicial) mapping spaces $\map(X,Y)$ are defined in degree $n$ by
		\[
			\map(X,Y)_n = \hom(X\otimes \Delta[n], Y).
		\]
	\end{itemize}
\end{definition}
The simplicially-enriched category $\SSet^{\Rr^{op}}$ satisfies the two conditions to be a simplicial model category \cite[9.1.6]{hirschhorn}, namely
\begin{description}
	\item[M6] For every two objects $X, Y$ and every $K\in \SSet$
	\[
		\map(X\otimes K, Y) \cong \map_{\SSet}(K, \map(X,Y)) \cong \map(X, Y^K)
	\]
	(with isomorphisms natural in the three variables).
	\item[M7] If $A\to B$ is a cofibration and $X\to Y$ is a fibration, then
	\begin{equation}\label{M7 map}
		\map(B,X) \to \map(A,X) \times_{\map(A,Y)} \map(B,Y)
	\end{equation}
	is a fibration of simplicial sets. 
	If either map is a weak equivalence, then \eqref{M7 map} is also a weak equivalence.
\end{description}

Assuming M6, condition M7 is equivalent (see, e.g. \cite[Remark A.3.1.6]{htt} or \cite[\S 3]{MR1852091}) to the following statement.
\begin{description}
	\item[M7$'$]\label{M7 prime map}
	If $X \to Y$ is a fibration in our model category and $K\to L$ is a cofibration of simplicial sets, then
\begin{equation}\label{eq pullback corner cotensor}
	X^L \to X^K \times_{Y^K} Y^L
\end{equation}
is a fibration. If either map is a weak equivalence, then so is \eqref{eq pullback corner cotensor}.
\end{description}

Any full subcategory of a simplicially-enriched category is simplicially-enriched; we will now work towards Theorem \ref{existence simplicial structure discrete}, where we show that $\SSet^{\Rr^{op}}_\ast$ is a simplicial model category.

\newcommand{\otimesM}{\otimes_{\mathcal M}}
\newcommand{\otimesN}{\otimes_{\mathcal N}}
\newcommand{\mapM}{\map_{\mathcal M}}
\newcommand{\mapN}{\map_{\mathcal N}}

\begin{notation}
	For the remainder of this section, we will write
	\begin{align*}
		\mathcal M &= \SSet^{\Rr^{op}} & \mathcal N &= \SSet^{\Rr^{op}}_\ast
	\end{align*}
	for these two model categories. 
	For $X, Y \in \mathcal M$, $K \in \SSet$, we will write $X\otimesM K := X \otimes K$ and $\mapM(X,Y) := \map(X,Y)$.	
\end{notation}

Recall that we have a Quillen adjunction
\[
	\red : \mathcal M \rightleftarrows \mathcal N : \incl.
\]

\begin{definition}\label{N simplicial structure}
	Suppose that $X, Y \in \mathcal N = \SSet^{\Rr^{op}}_\ast$ and $K \in \SSet$.
	\begin{itemize}
		\item The object $X\otimesN K $ is defined to be $X\otimesN K = \red(\incl(X)\otimesM K)$.
		\item The object $Y^K$ is defined as $Y^K = \red \left[ (\incl Y)^K \right]$.
		\item We define $\mapN(X,Y) = \mapM(\incl X,\incl Y)$.
	\end{itemize}
\end{definition}

\begin{remark}\label{exponent already discrete}
The object $Z = (\incl Y)^K$ already has $Z_\eta = \Delta[0]$, which explains why we've elected not to distinguish between the exponential in the two categories.
Indeed,
\[
	Z_\eta = ((\incl Y)^K)_\eta = \map_{\SSet}(K, Y_\eta) = \map_{\SSet}(K, \Delta[0]) = \Delta[0].
\]
\end{remark}

\begin{lemma}\label{enriched adjunction}
There is an isomorphism
\[
	\mapN(\red(-), -) \cong \mapM(-,\incl(-))
\]
of bifunctors $\mathcal N \times \mathcal M \to \SSet$.
\end{lemma}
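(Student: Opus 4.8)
The plan is to upgrade the ordinary reflection adjunction $\red \dashv \incl$ to an adjunction of simplicially enriched categories, the asserted isomorphism being precisely the resulting enriched hom-space bijection. First I would record the degreewise description of the mapping space in $\mathcal N$. Unwinding Definition~\ref{N simplicial structure}, then using the ordinary adjunction $\red\dashv\incl$ and the definition $A\otimesN K = \red(\incl A\otimesM K)$, one finds for $A,B\in\mathcal N$ that
\[
\mapN(A,B)_n = \mapM(\incl A,\incl B)_n = \hom_{\mathcal M}(\incl A\otimesM\Delta[n],\incl B) \cong \hom_{\mathcal N}(A\otimesN\Delta[n],B),
\]
so that mapping spaces in $\mathcal N$ are computed by tensoring inside $\mathcal N$, exactly as in $\mathcal M$.

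The crux is a single compatibility lemma: for $X\in\mathcal M$ and $K\in\SSet$, the comparison map induced by the unit $\eta_X : X\to\incl\red X$ is an isomorphism
\[
\red(X\otimesM K)\;\xrightarrow{\ \cong\ }\;\red(X)\otimesN K = \red(\incl\red X\otimesM K).
\]
Granting this (with $K=\Delta[n]$), the lemma follows from the chain
\[
\mapN(\red X,Y)_n \cong \hom_{\mathcal N}(\red X\otimesN\Delta[n],Y) \cong \hom_{\mathcal N}(\red(X\otimesM\Delta[n]),Y) \cong \hom_{\mathcal M}(X\otimesM\Delta[n],\incl Y) = \mapM(X,\incl Y)_n,
\]
where the middle step is the compatibility lemma and the third is $\red\dashv\incl$. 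Each isomorphism is natural in $X$, in $Y$, and in $[n]\in\Delta$, so they assemble into a natural isomorphism of simplicial sets, as claimed.

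To prove the compatibility lemma I would exploit that both $-\otimesM K$ (left adjoint to $(-)^K$ by \textbf{M6}) and $\red$ (left adjoint to $\incl$) preserve colimits, and that $\incl$ preserves pushouts (Lemma~\ref{pushout lemma}). Applying $-\otimesM K$ to the defining pushout $\Rr[\eta]\leftarrow X^{(0)}\to X$ of $\red X$, and then applying $\red$, yields a pushout square in $\mathcal N$ whose right-hand vertical arrow is exactly the comparison map above. It therefore suffices to show its left-hand vertical arrow $\red(X^{(0)}\otimesM K)\to\red(\Rr[\eta]\otimesM K)$ is an isomorphism, since a cobase change of an isomorphism is an isomorphism. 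Here the standing hypothesis $|\Rr(r,\eta)|\le\min(1,|\Rr(\eta,r)|)$ enters: using that the structure map $f^*$ for any $f\colon r\to\eta$ is an isomorphism onto $X^{(0)}_r$, one checks that both $X^{(0)}\otimesM K$ and $\Rr[\eta]\otimesM K$ are \emph{purely $0$-skeletal} (equal to their own $X^{(0)}$), so each has reduction $\Rr[\eta]$; as $\hom(\Rr[\eta],\Rr[\eta]) = \Rr(\eta,\eta) = \ast$, the comparison between them is forced to be the identity.

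The main obstacle is precisely this compatibility lemma, because $\incl\red X$ is genuinely not isomorphic to $X$, so one cannot simply substitute one for the other. The subtle point is that $\red$ does \emph{not} commute with finite products, so a naive levelwise verification would have to reconcile $(A\times K)/(B\times K)$ with $\bigl[(A/B)\times K\bigr]/(\{\ast\}\times K)$ at objects $r$ admitting a map to $\eta$; the pushout reformulation above is exactly the device that sidesteps that computation and reduces the whole lemma to the triviality that the reduction of a $0$-skeletal presheaf is $\Rr[\eta]$.
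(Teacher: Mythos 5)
Your argument is correct, but it travels in the opposite direction from the paper's. The paper's proof never touches the tensoring in $\mathcal N$ at all: it converts $\mapM(\incl\red Z\otimesM\Delta[n],\incl Y)$ to the cotensor side via M6, observes (Remark~\ref{exponent already discrete}) that $(\incl Y)^{\Delta[n]}$ is \emph{already} reduced because $\map_{\SSet}(\Delta[n],\Delta[0])=\Delta[0]$, and then slides $\red\dashv\incl$ across with no further computation. You instead work on the tensor side, and the entire weight of your proof rests on the projection-formula-type compatibility $\red(X\otimesM K)\cong\red(X)\otimesN K$, which you establish by pushing the defining pushout of $\red X$ through the two colimit-preserving left adjoints $-\otimesM K$ and $\red$ and checking that $\red(X^{(0)}\otimesM K)\to\red(\Rr[\eta]\otimesM K)$ is forced to be the identity of $\Rr[\eta]$. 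That lemma is true, your proof of it is sound (the standing hypothesis enters exactly where you say, in making $X^{(0)}\otimesM K$ purely $0$-skeletal), and the remaining chain of adjunction isomorphisms is routine and natural in all variables. The two routes are formally dual: your tensor compatibility is, by adjunction, equivalent to the statement that the $\mathcal N$-cotensor is computed in $\mathcal M$, which is precisely the content of Remark~\ref{exponent already discrete}. What the paper's approach buys is brevity — the cotensor-side fact is a one-line levelwise observation, so no pushout manipulation is needed. What your approach buys is an explicit and reusable statement that $\red$ is compatible with simplicial tensors, which the paper only ever has implicitly; it also makes visible why the naive levelwise comparison of $(A\times K)/(B\times K)$ with $((A/B)\times K)/(\ast\times K)$ is a non-issue.
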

\begin{proof}
	Let $Z \in \mathcal N$, $Y\in \mathcal M$, and $n\geq 0$.
We have
\begin{align*}
	\mapN(\red (Z),Y)_n &= \mapM(\incl \red (Z), \incl Y)_n \\
	&= \hom_{\mathcal M}(\incl \red (Z) \otimesM \Delta[n], \incl Y) \\
	&= \hom_{\mathcal M}(\incl \red (Z), (\incl Y)^{\Delta[n]}) \\
	&= \hom_{\mathcal M}(\incl \red (Z), \incl\red (\incl Y)^{\Delta[n]}) & \text{Remark \ref{exponent already discrete}} \\
	&= \hom_{\mathcal N}(\red (Z), \red (\incl Y)^{\Delta[n]}) \\
	&= \hom_{\mathcal M}(Z, \incl \red (\incl Y)^{\Delta[n]}) \\
	&= \hom_{\mathcal M}(Z, (\incl Y)^{\Delta[n]}) = \mapM(Z,\incl Y)_n.
\end{align*}
with all isomorphisms natural in $Z, Y$, and $n$.
\end{proof}

\begin{theorem}\label{existence simplicial structure discrete}
With the structure from Definition \ref{N simplicial structure}, $\SSet^{\Rr^{op}}_\ast$ is a simplicial model category.
\end{theorem}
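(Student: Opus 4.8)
The plan is to verify the two conditions M6 and M7 from \cite[9.1.6]{hirschhorn} for $\mathcal N = \SSet^{\Rr^{op}}_\ast$, equipped with the enrichment, tensor, and cotensor of Definition \ref{N simplicial structure}. Since $\mathcal N$ is a full subcategory of $\mathcal M = \SSet^{\Rr^{op}}$, it is already simplicially enriched via $\mapN(X,Y) = \mapM(\incl X, \incl Y)$, so the content lies entirely in the two axioms. The guiding idea throughout is to transport every statement about $\mathcal N$ into the ambient simplicial model category $\mathcal M$ by applying $\incl$, exploiting three facts: the enriched adjunction of Lemma \ref{enriched adjunction}, the observation (Remark \ref{exponent already discrete}) that $(\incl Y)^K$ is already reduced so that $\incl(Y^K) = (\incl Y)^K$, and the fact that fibrations and weak equivalences in $\mathcal N$ are by definition created by $\incl$ (Theorem \ref{reedy type model structure for discrete}).

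For M6 I would chain the available isomorphisms. For $X, Y \in \mathcal N$ and $K \in \SSet$, Lemma \ref{enriched adjunction} applied to $\incl X \otimesM K \in \mathcal M$ gives $\mapN(X \otimesN K, Y) = \mapN(\red(\incl X \otimesM K), Y) \cong \mapM(\incl X \otimesM K, \incl Y)$; then M6 in $\mathcal M$ rewrites this as $\map_{\SSet}(K, \mapM(\incl X, \incl Y)) = \map_{\SSet}(K, \mapN(X,Y))$, yielding the first isomorphism. For the second, M6 in $\mathcal M$ gives $\map_{\SSet}(K, \mapN(X,Y)) = \map_{\SSet}(K, \mapM(\incl X, \incl Y)) \cong \mapM(\incl X, (\incl Y)^K)$, and Remark \ref{exponent already discrete} identifies $(\incl Y)^K$ with $\incl(Y^K)$, so the right-hand side equals $\mapM(\incl X, \incl(Y^K)) = \mapN(X, Y^K)$. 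Naturality in all three variables is inherited from that of the constituent isomorphisms.

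For M7 I would instead check the equivalent statement M7$'$ (permissible since M6 is now established). Let $p : X \to Y$ be a fibration in $\mathcal N$ and $i : K \to L$ a cofibration in $\SSet$; I must show that the pullback-corner map $X^L \to X^K \times_{Y^K} Y^L$ is a fibration in $\mathcal N$, acyclic if $p$ or $i$ is a weak equivalence. The key step is that applying $\incl$ turns this into the corresponding map in $\mathcal M$: since $\incl$ is a right adjoint it preserves the pullback, and Remark \ref{exponent already discrete} gives $\incl(X^L) = (\incl X)^L$ and likewise for the other cotensors, so $\incl$ of the corner map is precisely $(\incl X)^L \to (\incl X)^K \times_{(\incl Y)^K} (\incl Y)^L$. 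Because $\incl p$ is a fibration in $\mathcal M$, the condition M7$'$ for the simplicial model category $\mathcal M$ shows this map is a fibration (acyclic under the stated hypotheses), and since fibrations and weak equivalences in $\mathcal N$ are detected by $\incl$, the original corner map has the required property. The main obstacle is exactly this bookkeeping — confirming that the $\mathcal N$-cotensor and the $\mathcal N$-pullback commute with $\incl$ so that the corner map descends correctly — but this is precisely what Remark \ref{exponent already discrete} and the right-adjointness of $\incl$ supply.
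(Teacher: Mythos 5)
Your proposal is correct and follows essentially the same route as the paper: M6 is verified by the same chain of identifications using Lemma \ref{enriched adjunction}, Remark \ref{exponent already discrete}, and M6 for $\mathcal M$, and M7$'$ is deduced from M7$'$ in $\mathcal M$ together with the facts that $\incl$ preserves the relevant cotensors and pullbacks and creates (acyclic) fibrations. The paper compresses the M7$'$ step into a single sentence, whereas you spell out the bookkeeping explicitly, but the argument is the same.
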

\begin{proof}
By Remark~\ref{exponent already discrete}, the fact that M7$'$ holds for $\mathcal M =  \SSet^{\Rr^{op}}$, and the fact that $\incl$ creates (acyclic) fibrations, M7$'$ holds for $\mathcal N = \SSet^{\Rr^{op}}_\ast$. Thus it is enough to check M6.

Let $X,Y \in \mathcal N$ and $K\in \SSet$.
First,
\[
	\mapN(X\otimesN K, Y) = \mapN(\red (\incl (X) \otimesM K), Y) = \mapM(\incl (X)\otimesM K, \incl Y)
\]
by Lemma \ref{enriched adjunction}.
Thus, using M6 for $\mathcal M$, the simplicial set $\mapN(X\otimesN K, Y)$ is isomorphic to, on the one hand, 
\[
	\mapM(\incl (X), (\incl Y)^K) = \mapM(\incl X, \incl \red (\incl Y)^K) = \mapN(X,Y^K)
\]
and on the other to
\[
	\map_{\SSet}(K, \mapM(\incl X, \incl Y)) = \map_{\SSet}(K, \mapN(X,Y)).
\]
\end{proof}

\begin{remark}
We are grateful to an anonymous referee for observing that the above results reflect a general pattern.
The adjunction $\incl \dashv \red$ is monadic since $\red$ is conservative and preserves connected colimits; thus $\SSet^{\Rr^{op}}_\ast$ is equivalent to the category of $\incl\red$-algebras. 
Remark~\ref{exponent already discrete} should come as no surprise, as the simplicial cotensorings are an enriched limit, hence should be computed in the ground category $\SSet^{\Rr^{op}}$.
The rest of the structure (simplicial tensorings and simplicial homs) are then forced by the two-variable adjunctions of M6.
Provided these exist, the simplicial structure is guaranteed since M7 is equivalent to M7$'$.
\end{remark}

\section{Segal cyclic operads}\label{section segal cyclic}

In this section we define Segal cyclic operads as certain fibrant objects in $\SSet^{\Xi^{op}}_\ast$ which satisfy a Segal condition (Definition \ref{def segal cyclic}). 
The Segal cyclic operads may be identified as the fibrant objects after we have localized the model structure on $\SSet^{\Xi^{op}}_\ast$ with respect to Segal core inclusions.

We begin this section by specializing the work of Section \ref{section bmr model structure} to the cases $\Rr = \Omega$ or $\Xi$.
We show that the Quillen adjunctions from Theorem~\ref{reedy type model structure for discrete} fit into a diagram \eqref{first diagram of adjunctions}.
Afterward, we discuss the left Bousfield localizations, and show that after localization we still have a diagram of Quillen adjunctions.
Finally, we check in Proposition \ref{not q equiv} that the homotopy theory for Segal cyclic operads is distinct from that for Segal operads, and speculate on the possibility of rigidification theorems.

\begin{proposition}\label{BMR adjunction}
Using the Berger--Moerdijk Reedy model structures, 
the adjunction
	\[
		\iota_! :  \SSet^{\Omega^{op}} \rightleftarrows \SSet^{\Xi^{op}}  : \iota^*
	\]
	is a Quillen adjunction.
\end{proposition}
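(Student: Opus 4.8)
The plan is to show that the right adjoint $\iota^*$ is a right Quillen functor, i.e.\ that it preserves fibrations and trivial fibrations. The first reduction is cheap: in either Berger--Moerdijk Reedy model structure the weak equivalences are detected objectwise (Theorem~\ref{bm_reedy_ms}), and by definition $(\iota^* X)_T = X_{\iota T}$, so $\iota^*$ preserves weak equivalences outright. Consequently $\iota^*$ will preserve trivial fibrations (fibrations that are also weak equivalences) as soon as it preserves fibrations, and the entire proposition reduces to the single claim that $\iota^*$ carries fibrations to fibrations.

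Recall that $f\colon X \to Y$ is a fibration in $\SSet^{\Xi^{op}}$ precisely when, for each tree $S$, the comparison map $X_S \to M_S X \times_{M_S Y} Y_S$ is a fibration of simplicial sets, where $M_S(-)$ is the matching object computed over the matching category $\Xi^-(S)$. What I must produce is, for each rooted tree $T\in\Omega$, a fibration $(\iota^*X)_T \to M_T(\iota^*X)\times_{M_T(\iota^*Y)}(\iota^*Y)_T$, where now the matching object is taken over $\Omega^-(T)$. The entire difficulty is therefore concentrated in comparing these two matching objects, and the key step I would isolate is the following: \emph{the functor $\Omega^-(T)\to\Xi^-(\iota T)$ sending $(\alpha\colon T\to T')$ to $(\iota\alpha\colon \iota T\to\iota T')$ is an equivalence of categories.} Granting this, reindexing a limit along an equivalence yields a natural isomorphism $M_T(\iota^*Z)\cong M_{\iota T}(Z)$ for every $Z\in\SSet^{\Xi^{op}}$, compatible with the canonical maps out of $Z_{\iota T}=(\iota^*Z)_T$; hence the comparison map for $\iota^*f$ at $T$ is isomorphic to the comparison map $X_{\iota T}\to M_{\iota T}X\times_{M_{\iota T}Y}Y_{\iota T}$ for $f$ at $\iota T$, which is a fibration because $f$ is. This finishes the argument.

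The equivalence of matching categories is where the real work lies, and I would establish it in two halves. For full faithfulness, note first that $\iota$ is a subcategory inclusion, hence faithful, so the functor is injective on the relevant hom-sets; for surjectivity on hom-sets, any connecting morphism is an arrow $\gamma\in\Xi^-$ with $\gamma\circ\iota\alpha=\iota\beta$ between the \emph{oriented} maps $\iota\alpha,\iota\beta$, and exactly as in the proof of Lemma~\ref{coface interchange} such a $\gamma$ is forced to be oriented by Proposition~\ref{functoriality of lifting}, whence $\gamma\in\Omega\cap\Xi^-=\Omega^-$. For essential surjectivity, I would use that every map in $\Xi^-$ is active and satisfies $|\vertex(\phi_1(v))|\le 1$ (from the proof of Proposition~\ref{proposition xi is reedy}, via Lemma~\ref{neighborhoods of vertices}), together with $\phi_0(\legs(R))=\legs(S)$ for active non-constant maps (Proposition~\ref{active characterization}); these facts say precisely that a non-invertible map $\phi\colon \iota T\to S$ in $\Xi^-$ is, up to a canonical isomorphism of its target, the contraction of a set $W$ of bivalent vertices of $\iota T$. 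Since the bivalent vertices of $\iota T$ are exactly those of $T$ and contracting them does not interact with the root data, contracting the same set $W$ of $T$ produces a rooted tree $T''$ and an oriented codegeneracy $\alpha\colon T\to T''$ in $\Omega^-$ (a composite of maps as in Definition~\ref{cofaces and codegens}) with $\iota\alpha\cong\phi$ in $\Xi^-(\iota T)$. The degenerate cases are trivial to check directly: when $T=\eta$, or more generally when $T$ has no bivalent vertices, both $\Omega^-(T)$ and $\Xi^-(\iota T)$ are empty and the matching objects are terminal.

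I expect the essential-surjectivity half to be the main obstacle, since it requires making precise the assertion that a $\Xi^-$-map out of $\iota T$ is determined, up to target isomorphism, by the set of bivalent vertices it contracts, and that this datum is orientation-independent; the bookkeeping of orderings and of the isomorphism $\gamma$ intertwining $\phi$ with $\iota\alpha$ is the delicate point. Everything else is formal: the reduction to preservation of fibrations, the limit-reindexing along the equivalence, and the compatibility of the induced isomorphism $M_T(\iota^*Z)\cong M_{\iota T}(Z)$ with the structure maps all follow once the matching categories are identified.
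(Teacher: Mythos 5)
Your overall strategy --- reduce to preservation of fibrations, then identify the matching objects of $\iota^*X$ with those of $X$ by comparing the indexing categories --- is the same as the paper's, but there is a genuine error in your identification of the matching category, and it derails the substantive part of your argument. For a presheaf $X \in \SSet^{\Xi^{op}}$, the matching object at $S$ is a limit indexed by $(\Xi^{op})^-(S) = (\Xi^+(S))^{op}$, i.e.\ by the non-invertible maps of $\Xi^+$ \emph{with codomain} $S$ (the faces of $S$: subtree inclusions and inner cofaces), not by $\Xi^-(S)$. The category $\Xi^-(S)$ of non-invertible degree-lowering maps \emph{out of} $S$ indexes the \emph{latching} object of the presheaf. (Compare the simplicial case: $M_nX \cong \hom(\partial\Delta[n],X)$ is a limit over the non-identity injections $[k]\hookrightarrow[n]$, not over the surjections out of $[n]$.) Consequently the functor you propose to analyze, $\Omega^-(T)\to\Xi^-(\iota T)$, is the wrong one, and your essential-surjectivity argument --- that a non-invertible $\Xi^-$-map out of $\iota T$ is, up to target isomorphism, a contraction of a set of bivalent vertices --- while plausibly correct, says nothing about matching objects. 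The error is not cosmetic: for $T=\cstar_3$ your category $\Xi^-(\cstar_3)$ is empty, so your recipe would make $M_{\cstar_3}X$ terminal, whereas the actual matching object is $X_\eta^{\times 3}$.

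The comparison actually needed is between $\Omega^+(T)$ and $\Xi^+(\iota T)$, and the paper shows that the inclusion $\Omega^+(T)\to\Xi^+(\iota T)$ is a \emph{final} functor (equivalently, the induced functor of matching categories is initial), which suffices to identify the limits. The work here is of a different character from what you describe: one must show that every $\phi\colon S\to\iota T$ in $\Xi^+$ is isomorphic over $\iota T$ to an oriented map, and that the comma categories $\phi\downarrow\iota_T$ are connected; this is exactly what the rooting machinery of Section~\ref{section rooting} ($\rootify$, $\findroot$, $\lifting$, and Proposition~\ref{functoriality of lifting}) provides, with the constant maps out of linear trees handled as a separate degenerate case. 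Your opening reduction (weak equivalences are levelwise, so it suffices to preserve fibrations) and the limit-reindexing step are fine; if you replace $\Xi^-$ by $\Xi^+$ throughout and carry out the cofinality argument using Lemma~\ref{amalgamate splitting} and Theorem~\ref{structure from rooting}, you recover the paper's proof.
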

\begin{proof}
The map $\iota^*$ preserves weak equivalences since those are defined levelwise, hence it is enough to show that $\iota^*$ preserves fibrations.
If $T$ is a rooted tree, 
we will show that
\begin{equation} \label{functor to show is initial} \begin{tikzcd}[row sep=small]
(\Omega^{op})^-(T) \rar \dar[equal] & (\Xi^{op})^-(\iota T) \dar[equal] \\
(\Omega^+(T))^{op} & (\Xi^+(\iota T))^{op}
\end{tikzcd} \end{equation}
is an initial functor (see \cite[\S IX.3]{maclane}). This implies that 
the natural map
\[
	M_{\iota T} X = \lim_{(\Xi^{op})^-(\iota T)} X_S \to \lim_{(\Omega^{op})^-(T)} (\iota^*X)_{T'} = M_T(\iota^*X)
\]
is an isomorphism. Hence if $X \to Y$ is a map in $\SSet^{\Xi^{op}}$ and $T\in \Omega$, the map on the right of the commutative diagram
\[ \begin{tikzcd}[row sep=small]
(\iota^* X)_T \rar \dar[equal] & M_T(\iota^* X) \times_{M_T(\iota^* Y)} (\iota^* Y)_T \\
X_{\iota T} \rar & M_{\iota T}(X) \times_{M_{\iota T}(Y)} Y_{\iota T} \uar
\end{tikzcd} \]
is an isomorphism.
In particular, if $X\to Y$ is a fibration, then so is $\iota^*(X\to Y)$.

As promised, we will now show that \eqref{functor to show is initial} is an initial functor;
this is equivalent to the induced functor
\begin{equation}\label{eq Omega T to Xi T}
\iota_T : \Omega^+(T) \to \Xi^+(\iota T)
\end{equation}
being final. 
Suppose that $S\overset\phi\to \iota T$ is an object of $\Xi^+(\iota T)$,
that is, $\phi$ is an element of $\Xi^+(S, \iota T)\setminus \Iso(\Xi)$.
Our goal is to show that $\phi \downarrow \iota_T$ is nonempty and connected.
We first explain the case when $\phi$ is not constant (that is, when $S \neq \eta$), and indicate later the changes for the simpler case when $S=\eta$.
Write $s_0 = \findroot_{t_0}(\phi)$, where $t_0$ is the root of $T$; we have a morphism
\[
	\lifting_{t_0}(\phi) \colon \rootify(S,s_0) \to \rootify(\iota T, t_0) = T.
\]
Using the structure map $f : \iota \rootify(S,s_0) \overset\cong\to S$ from Definition \ref{treeificiation}, the commutative diagram
\[ \begin{tikzcd}
S
\arrow{dr}[swap]{\phi} \arrow{rr}{f^{-1}} 
& & \arrow{dl}{\iota \lifting_{t_0}(\phi)} \iota \rootify(S,s_0) \\
& \iota T
\end{tikzcd} \]
in $\Xi$ constitutes an object
\[ \phi \xrightarrow{f^{-1}} \iota_T(\lifting_{t_0}(\phi))  \]
in $\phi \downarrow \iota_T$.
To show that this category is connected, suppose that we have an arbitrary object 
\[
	\phi \xrightarrow{\gamma} \iota_T\left( R\overset\alpha\to T \right) 
\]
of $\phi \downarrow \iota_T$, where $\alpha \in \Omega^+(T)$. 
Such an object corresponds to a commutative diagram
\[ \begin{tikzcd}
S
\arrow{dr}[swap]{\phi} \arrow{rr}{\gamma} 
& & \arrow{dl}{\iota \alpha} \iota R \\
& \iota T
\end{tikzcd} \]
with $\gamma \in \Xi^+$.
Lifting all maps to $\Omega$, we have the diagram
\[ \begin{tikzcd}
\rootify (S,s_0)
\arrow[dr, "\lifting_{t_0}(\phi)" swap] \arrow{rr}{\lifting_{r_0}(\gamma)} 
& & \arrow[dl, "\lifting_{t_0}(\iota \alpha)" description] \rootify(\iota R, r_0) \rar[equal] & R \arrow{dl}{\alpha}\\
& \rootify(\iota T, t_0) \rar[equal] & T
\end{tikzcd} \]
in $\Omega$, which commutes by Proposition \ref{functoriality of lifting}.
Commutativity of the diagram
\[ \begin{tikzcd}[row sep=large, column sep=large]
S \arrow{dr}[swap]{\phi} \arrow[rr, bend left, "\gamma" swap] 
\rar{f^{-1}}
& \iota \rootify(S,s_0) \dar["\iota(\lifting_{t_0}(\phi))" description] 
\rar{\iota(\lifting_{r_0} (\gamma))}& \iota R \arrow[dl, "\iota(\alpha)"] \\
& \iota T
\end{tikzcd} \]
in $\Xi$ shows that $\iota(\lifting_{r_0} (\gamma))$ constitutes a morphism $f^{-1} \to \gamma$ in $\phi \downarrow \iota_T$; thus this category is connected.

A similar proof holds when $\phi$ is constant, that is, when $S=\eta$. In this case, $\rootify (S,s_0)$ should be replaced by $\eta$, $f$ should be taken to be the identity map, and $\lifting_{t_0}(\phi)$ (resp.\ $\lifting_{r_0}(\gamma)$) should be replaced by the unique lift $\eta \to T$ of $\phi$ (resp.\ the unique lift $\eta \to R$ of $\gamma$).  
Since $\phi \downarrow \iota_T$ is connected for every object $\phi \in \Xi^+(\iota T)$, \eqref{eq Omega T to Xi T} is a final functor.
\end{proof}

\begin{corollary}\label{BMR adjunction corollary}
The adjunction
	\[
		\iota_! :  \SSet^{\Omega^{op}}_\ast \rightleftarrows \SSet^{\Xi^{op}}_\ast  : \iota^*
	\]
	is a Quillen adjunction.
\end{corollary}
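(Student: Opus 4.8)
The plan is to prove that the right adjoint $\iota^*$ is right Quillen, which suffices: an adjunction is Quillen exactly when the right adjoint preserves fibrations and acyclic fibrations. I work with the right adjoint because in both reduced model structures (Theorem~\ref{reedy type model structure for discrete}) fibrations and weak equivalences are, by definition, created by the inclusion functor into the ambient Berger--Moerdijk Reedy structure, so the relevant classes transport formally.

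First I would check that $\iota^*$ genuinely restricts to a functor $\SSet^{\Xi^{op}}_\ast \to \SSet^{\Omega^{op}}_\ast$. Since $\iota\colon \Omega \to \Xi$ fixes the edge $\eta$, for any $X\in\SSet^{\Xi^{op}}$ we have $(\iota^*X)_\eta = X_{\iota\eta} = X_\eta$; in particular, if $X$ is reduced then so is $\iota^*X$. Hence $\iota^*$ carries reduced presheaves to reduced presheaves, and the square of functors commutes, i.e.\ $\incl_\Omega \circ \iota^* = \iota^* \circ \incl_\Xi$. (A short adjunction computation then identifies the reduced left adjoint as $\red_\Xi \circ \iota_! \circ \incl_\Omega$, so that the corollary's ``$\iota_!$'' is this composite; but the argument below only uses the right adjoint, so this need not be spelled out.)

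The heart of the proof is then a two-step transport of (acyclic) fibrations through this commuting square. Let $f$ be a fibration (resp.\ acyclic fibration) in $\SSet^{\Xi^{op}}_\ast$. By Theorem~\ref{reedy type model structure for discrete}, $\incl_\Xi f$ is a fibration (resp.\ acyclic fibration) in $\SSet^{\Xi^{op}}$. By Proposition~\ref{BMR adjunction}, $\iota^*$ is right Quillen on the unreduced categories, so $\iota^*\incl_\Xi f = \incl_\Omega\,\iota^* f$ is again a fibration (resp.\ acyclic fibration), now in $\SSet^{\Omega^{op}}$. Applying Theorem~\ref{reedy type model structure for discrete} for $\Omega$, this says precisely that $\iota^* f$ is a fibration (resp.\ acyclic fibration) in $\SSet^{\Omega^{op}}_\ast$. (For the acyclic case one may equally transport fibrations and weak equivalences separately, since both classes are created by $\incl$.) Thus $\iota^*$ preserves fibrations and acyclic fibrations, and the adjunction is Quillen.

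The verification has no substantive obstacle; the only point requiring any care is the initial observation that $\iota^*$ respects reducedness, which is exactly where $\iota\eta = \eta$ is used. Everything else is a formal consequence of Proposition~\ref{BMR adjunction} together with the fact that the reduced model structures are lifted levelwise from the ambient ones.
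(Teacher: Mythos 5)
Your proof is correct and is essentially the same as the paper's: both arguments observe that $\incl\circ\iota^* = \iota^*\circ\incl$ on reduced presheaves and then transport (acyclic) fibrations through this square, using that the reduced model structures create fibrations and weak equivalences via $\incl$ together with Proposition~\ref{BMR adjunction} for the unreduced categories. Your extra remark identifying the reduced left adjoint as $\red\circ\iota_!\circ\incl$ is a harmless clarification the paper leaves implicit.
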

\begin{proof}
We have a commutative diagram
\[ \begin{tikzcd}
\SSet^{\Omega^{op}}  & \SSet^{\Xi^{op}} \lar{\iota^*}
\\
\SSet^{\Omega^{op}}_\ast  \uar{\incl} & \SSet^{\Xi^{op}}_\ast \lar{\iota^*} \uar{\incl}
\end{tikzcd} \]
of right adjoints
where all but the bottom map $\iota^*$ are known to be right Quillen functors.
Suppose that $X\to Y$ is an (acyclic) fibration in $\SSet^{\Xi^{op}}_\ast$, which implies $\incl \iota^* (X\to Y) = \iota^* \incl (X\to Y)$ is an (acyclic) fibration in $\SSet^{\Omega^{op}}$.
Since $\incl: \SSet^{\Omega^{op}}_\ast \to \SSet^{\Omega^{op}}$ detects fibrations and weak equivalences we known that $\iota^*(X\to Y$) is an (acyclic) fibration, implying $\iota^* \colon \SSet^{\Xi^{op}}_\ast \to \SSet^{\Omega^{op}}_\ast$ is a right Quillen functor.
\end{proof}

We now have a diagram of Quillen adjunctions
\begin{equation}\label{first diagram of adjunctions} \begin{tikzcd}
\SSet^{\Omega^{op}} \rar[shift left, "\iota_!"] \dar[shift right, "\red" swap] & \SSet^{\Xi^{op}} \dar[shift right, "\red" swap] \lar[shift left, "\iota^*"] \\
\SSet^{\Omega^{op}}_\ast \rar[shift left, "\iota_!"]  \uar[shift right, "\incl" swap] & \SSet^{\Xi^{op}}_\ast  \lar[shift left, "\iota^*"] \uar[shift right, "\incl" swap]
\end{tikzcd} \end{equation}
and we wish to localize each of these model structures.

\subsection{Localizations}
Roughly speaking, a left localization of a model category $\mathcal M$ at a set of maps $C$ is a left Quillen functor from $\mathcal M$ which is initial among all left Quillen functors which take elements of $C$ to weak equivalences.
Recall from \cite[3.1.4]{hirschhorn} that an object $W$ of $\mathcal M$ is called $C$-local if $W$ is fibrant and for each $f : A \to B$ which is an element of $C$, the map $\map^h(B, W) \to \map^h(A,W)$ is a weak equivalence of simplicial sets. A map $g: X \to Y$ is called a $C$-local equivalence if $\map^h(Y, W) \to \map^h(X, W)$ is a weak equivalence for every $C$-local $W$.
The left Bousfield localization of $\mathcal M$ with respect to $C$ \cite[3.3.1]{hirschhorn}, denoted $\mathcal L_C\mathcal M$, is then a model structure (which may or may not exist) on $\mathcal M$ with weak equivalences the $C$-local equivalences and cofibrations the ordinary cofibrations in $\mathcal M$.
The fibrant objects in this model structure (if it exists) are precisely the $C$-local objects, and the identity functor $\mathcal M \to \mathcal L_C\mathcal M$ is a left localization of $\mathcal M$.

In order to show that the diagram \eqref{first diagram of adjunctions} remains a diagram of Quillen adjunctions after localization, we will apply the following lemma several times.
\begin{lemma}\label{qa on localizations}
Let $L : \mathcal M \rightleftarrows \mathcal N : R$ be a Quillen adjunction.
Suppose that $C\subseteq \mathcal M$ and $D\subseteq \mathcal N$ are sets of maps with the domain and codomain of each element of $C$ cofibrant.
Suppose further that the left Bousfield localizations $\mathcal L_C \mathcal M$ and $\mathcal L_D \mathcal N$ exist.
If, for each $c \in C$, the map $L(c) \in \mathcal N$ is isomorphic to some $d\in D$, then 
\[
	L : \mathcal L_C \mathcal M \rightleftarrows \mathcal L_D \mathcal N : R
\]
is a Quillen adjunction.
\end{lemma}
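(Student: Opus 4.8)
The plan is to reduce the whole statement to the universal property of left Bousfield localization, exploiting the cofibrancy hypothesis on $C$ so that the total left derived functor of $L$ agrees with $L$ itself on the maps we care about. The one genuinely nontrivial input is the universal property theorem \cite[Theorem 3.3.20]{hirschhorn} (together with its companion \cite[Proposition 3.3.18]{hirschhorn}); everything else is bookkeeping.

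First I would record that the localization functor $\mathrm{id}\colon \mathcal N \to \mathcal L_D\mathcal N$ is a left Quillen functor: it is the identity on cofibrations, and since the $D$-local equivalences contain the ordinary weak equivalences of $\mathcal N$, it carries acyclic cofibrations of $\mathcal N$ to acyclic cofibrations of $\mathcal L_D\mathcal N$. Composing with the original Quillen pair gives a Quillen adjunction
\[
	L \colon \mathcal M \rightleftarrows \mathcal L_D\mathcal N \colon R,
\]
as a composite of left Quillen functors (the composite right adjoint being again $R$). Thus I would have reduced matters to extending this Quillen pair along the localization $\mathcal M \to \mathcal L_C\mathcal M$.

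Next I would invoke the universal property of $\mathcal L_C\mathcal M$: the pair $L \colon \mathcal M \rightleftarrows \mathcal L_D\mathcal N \colon R$ descends to a Quillen pair $L \colon \mathcal L_C\mathcal M \rightleftarrows \mathcal L_D\mathcal N \colon R$ as soon as the total left derived functor $\mathbf L L \colon \mathrm{Ho}(\mathcal M) \to \mathrm{Ho}(\mathcal L_D\mathcal N)$ sends the image of each $c \in C$ to an isomorphism. To verify this I would use that each $c\in C$ is a map between cofibrant objects of $\mathcal M$ (hence of $\mathcal L_C\mathcal M$, the cofibrations being unchanged), so no cofibrant replacement is needed and $\mathbf L L$ applied to the class of $c$ is represented by $L(c)$ itself; moreover $L(c)$ has cofibrant domain and codomain in $\mathcal N$ because $L$ is left Quillen. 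By hypothesis $L(c)$ is isomorphic to some $d \in D$, and every element of $D$ is a $D$-local equivalence, i.e.\ a weak equivalence in $\mathcal L_D\mathcal N$; since weak equivalences are closed under isomorphism, $L(c)$ is a weak equivalence in $\mathcal L_D\mathcal N$ and therefore an isomorphism in $\mathrm{Ho}(\mathcal L_D\mathcal N)$. This checks the derived condition and yields the desired Quillen adjunction.

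The only delicate point to get right is the identification of $\mathbf L L$ on $c$ with the underived $L(c)$: this is exactly where the assumption that the domain and codomain of each element of $C$ be cofibrant is used, and it is also what guarantees $L(c)$ lands among maps between cofibrant objects so that its being a weak equivalence in $\mathcal L_D\mathcal N$ is detected correctly. Beyond this, the argument is formal, and I do not anticipate a substantive obstacle.
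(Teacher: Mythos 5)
Your proof is correct and follows essentially the same route as the paper: factor $L$ through the localization functor $\mathcal N \to \mathcal L_D\mathcal N$, note that each $c \in C$ is its own cofibrant approximation so that the derived condition is checked on $L(c)$ itself, observe that $L(c) \cong d \in D$ is a $D$-local equivalence, and conclude via the universal property of $\mathcal L_C\mathcal M$ (Hirschhorn 3.3.18). The only cosmetic difference is that you phrase the descent criterion in terms of the total left derived functor while the paper phrases it as ``takes cofibrant approximations of elements of $C$ to weak equivalences,'' which are equivalent formulations of the same condition.
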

\begin{proof}
There is a left Quillen functor
\[
	F : \mathcal M \to \mathcal N \to \mathcal L_D \mathcal N.
\]
If $c\in C$, then $c$ is a cofibrant approximation to itself.
Further, $F(c) \cong d \in D$ is a weak equivalence, hence $F$ takes any cofibrant approximation of $c$ to a weak equivalence by \cite[8.1.24(1)]{hirschhorn}.
By \cite[3.3.18(1)]{hirschhorn}, the functor $F$ is then a left Quillen functor when regarded as a functor $\mathcal L_C \mathcal M \to \mathcal L_D \mathcal N.$
\end{proof}

Let $\Rr$ be either $\Omega$ or $\Xi$.
Define $\Seg_\Rr$ to be the set of Segal core inclusions
\[ \Seg_\Rr = \{ \Sc[r] \to \Rr[r] \}_{r\neq \eta} \]

\begin{remark}\label{homotopy function complex remark}
According to \S\ref{simplicial model structures},  $\SSet^{\Rr^{op}}$ and $\SSet^{\Rr^{op}}_\ast$ are \emph{simplicial model categories}.
Thus, if $A$ is cofibrant and $X$ is fibrant, we may use the simplicial mapping space $\map(A,X)$ as a model for the homotopy mapping space $\map^h(A,X)$ (by, for example, \cite[Example 17.2.4]{hirschhorn}).
Since $\Sc[r], \Rr[r], \red(\Sc[r])$, and $\red(\Rr[r])$ are all cofibrant,\footnote{To 
	establish cofibrancy of $\Xi[S]$, simply pick a rooted tree with $\iota T = S$. Then since $\Sc[T]$ is cofibrant \cite[Corollary 1.7]{cm-ho}, so is $\iota_!(\Sc[T]) \cong \Sc[\iota T] \cong \Sc[S]$ by Lemma~\ref{segal inclusion different category lemma} and Proposition~\ref{BMR adjunction}.
} 
it suffices to work with $\map$ rather than $\map^h$ when discussing $\Seg_\Rr$ or $\red(\Seg_\Rr)$ locality.
\end{remark}

Since $\SSet^{\Rr^{op}}$ is left proper and cellular by Theorem \ref{left properness} and Proposition \ref{cellularity}, the left Bousfield localization
\[
	\mathcal L_{\Seg_\Rr} \SSet^{\Rr^{op}}
\]
exists by \cite[4.1.1]{hirschhorn}.
Since $\SSet^{\Rr^{op}}_\ast$ is left proper and cellular by Proposition \ref{reedy type localizability}, we can take the left Bousfield localization 
with respect to the set of maps $\red(\Seg_\Rr)$.
We call the resulting model structure
\[
	\mathcal L_{\red(\Seg_\Rr)} \SSet^{\Rr^{op}}_\ast.
\]
For $\Rr = \Omega$, these two model structures appear in \cite[Definition 5.4]{cm-ds} and \cite[Proposition 4.3]{bh1}, respectively.

\begin{proposition}\label{same category adj}
Let $\Rr$ be either $\Omega$ or $\Xi$. Then 
the Quillen adjunction 
\[
	\red \colon \SSet^{\Rr^{op}} \rightleftarrows \SSet^{\Rr^{op}}_\ast : \incl
\]
induces a Quillen adjunction 
\[
		\red \colon \mathcal L_{\Seg_\Rr} \SSet^{\Rr^{op}} \rightleftarrows \mathcal L_{\red(\Seg_\Rr)} \SSet^{\Rr^{op}}_\ast : \incl
\]
after taking left Bousfield localization.
\end{proposition}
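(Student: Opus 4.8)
The plan is to invoke Lemma~\ref{qa on localizations} directly, taking $L = \red$, $R = \incl$, $\mathcal M = \SSet^{\Rr^{op}}$, $\mathcal N = \SSet^{\Rr^{op}}_\ast$, $C = \Seg_\Rr$, and $D = \red(\Seg_\Rr)$. The conclusion of that lemma is exactly the statement we want, so all of the work reduces to verifying its hypotheses: that $\red \dashv \incl$ is already a Quillen adjunction; that the domain and codomain of each element of $\Seg_\Rr$ are cofibrant; that both Bousfield localizations exist; and that $\red$ carries each element of $\Seg_\Rr$ to an element of $D$ (up to isomorphism).

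First I would note that the adjunction $\red \dashv \incl$ is Quillen because, by Theorem~\ref{reedy type model structure for discrete}, the model structure on $\SSet^{\Rr^{op}}_\ast$ is defined precisely so that $\incl$ creates (and hence preserves) fibrations and weak equivalences; thus $\incl$ is right Quillen and $\red$ is left Quillen. Next, the domain $\Sc[r]$ and codomain $\Rr[r]$ of a typical element of $\Seg_\Rr$ are both cofibrant in $\SSet^{\Rr^{op}}$, as recorded in Remark~\ref{homotopy function complex remark} (for $\Rr = \Xi$ this uses the cofibrancy of $\Sc[\iota T]$ explained in the footnote there). For the existence of the localizations, $\mathcal L_{\Seg_\Rr}\SSet^{\Rr^{op}}$ exists by left properness and cellularity of $\SSet^{\Rr^{op}}$ (Theorem~\ref{left properness} and Proposition~\ref{cellularity}) together with \cite[4.1.1]{hirschhorn}, while $\mathcal L_{\red(\Seg_\Rr)}\SSet^{\Rr^{op}}_\ast$ exists by left properness and cellularity of $\SSet^{\Rr^{op}}_\ast$ (Proposition~\ref{reedy type localizability}) and the same result of Hirschhorn.

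Finally, the compatibility hypothesis of Lemma~\ref{qa on localizations} is automatic: for each $c\in \Seg_\Rr$ the map $\red(c)$ lies in $D = \red(\Seg_\Rr)$ by the very definition of $D$, so it is (equal to, hence isomorphic to) an element of $D$. With all hypotheses in hand, the lemma produces the desired Quillen adjunction. I expect essentially no obstacle here: the substance of the proposition is already packaged in the preparatory lemma and in the earlier verifications of properness, cellularity, and cofibrancy, so the argument is a bookkeeping exercise. The only point requiring a moment's care is matching the cofibrancy hypothesis of Lemma~\ref{qa on localizations} against Remark~\ref{homotopy function complex remark}, which is precisely tailored to the objects $\Sc[r]$ and $\Rr[r]$ appearing here.
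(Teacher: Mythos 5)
Your proof is correct and follows exactly the same route as the paper: the paper's own proof simply observes that $\Sc[r]$ and $\Rr[r]$ are cofibrant for $r\neq\eta$ and then invokes Lemma~\ref{qa on localizations}, with the remaining hypotheses (the underlying Quillen adjunction, existence of the localizations, and the trivial compatibility $\red(\Seg_\Rr)=D$) left implicit. Your write-up just makes those checks explicit.
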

\begin{proof}
If $r \neq \eta$ is an object of $\Rr$, then both $\Sc[r]$ and $\Rr[r]$ are cofibrant.
Thus Lemma \ref{qa on localizations} applies.
\end{proof}

The following is a variation on Lemma \ref{segal inclusion different category lemma}. 

\begin{proposition}\label{reduced segal different category}
	If $T\in \Omega$ is a rooted tree, then
	\[
		\iota_!\Big(\red (\Sc[T]) \to \red (\Omega[T])\Big) \cong \Big(\red (\Sc[\iota T]) \to \red (\Xi[\iota T])\Big).
	\]
\end{proposition}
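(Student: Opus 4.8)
The plan is to reduce everything to the unreduced comparison already established in Lemma \ref{segal inclusion different category lemma}, so that the only new ingredient needed is that the reduction functor commutes with $\iota_!$. First I would record this commutation. In the proof of Corollary \ref{BMR adjunction corollary} it was verified that the square of right adjoints commutes, i.e.
\[
	\iota^* \circ \incl = \incl \circ \iota^* \colon \SSet^{\Xi^{op}}_\ast \to \SSet^{\Omega^{op}}.
\]
Passing to left adjoints, this yields a natural isomorphism $\red \circ \iota_! \cong \iota_! \circ \red$ of functors $\SSet^{\Omega^{op}} \to \SSet^{\Xi^{op}}_\ast$. I would prove this by a short Yoneda computation: for $X \in \SSet^{\Omega^{op}}$ and $W \in \SSet^{\Xi^{op}}_\ast$, the defining adjunction isomorphisms give
\[
	\hom(\red \iota_! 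X, W) \cong \hom(\iota_! X, \incl W) \cong \hom(X, \iota^* \incl W)
\]
and
\[
	\hom(\iota_! \red X, W) \cong \hom(\red X, \iota^* W) \cong \hom(X, \incl \iota^* W).
\]
The two right-hand sides coincide by the commuting square of right adjoints, so the covariant representables $\hom(\red \iota_! X, -)$ and $\hom(\iota_! \red X, -)$ agree naturally in $W$, whence $\red \iota_! X \cong \iota_! \red X$ naturally in $X$.

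With this in hand the proposition is immediate. I would apply the reduction functor $\red \colon \SSet^{\Xi^{op}} \to \SSet^{\Xi^{op}}_\ast$ to the isomorphism of arrows supplied by Lemma \ref{segal inclusion different category lemma},
\[
	\iota_!(\Sc[T] \to \Omega[T]) \cong (\Sc[\iota T] \to \Xi[\iota T]),
\]
to obtain
\[
	\red \iota_!(\Sc[T] \to \Omega[T]) \cong (\red \Sc[\iota T] \to \red \Xi[\iota T]),
\]
the right-hand side being exactly the target arrow of the statement. Rewriting the left-hand side by means of the natural isomorphism $\red \circ \iota_! \cong \iota_! \circ \red$ turns it into $\iota_!(\red \Sc[T] \to \red \Omega[T])$, which is the source arrow of the statement. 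Combining the two identifications gives the claimed isomorphism.

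I do not expect a serious obstacle: the argument is entirely formal once the commutation $\red \circ \iota_! \cong \iota_! \circ \red$ is established, and that commutation is itself a routine consequence of the already-proved commuting square of right adjoints. The only point requiring care is the bookkeeping between the two instances of $\iota_!$ (the unreduced functor on $\SSet^{\Xi^{op}}$ and the reduced functor on $\SSet^{\Xi^{op}}_\ast$) and the two instances of $\red$, together with the remark that each of these functors preserves isomorphisms of arrows simply by functoriality. A reader wishing to avoid the abstract mates correspondence can take the displayed Yoneda computation as the literal proof of the commutation.
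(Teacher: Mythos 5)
Your proposal is correct and follows essentially the same route as the paper: the paper's proof also rewrites $\iota_!\red$ as $\red\iota_!$ (citing the commuting square of adjunctions in \eqref{first diagram of adjunctions}) and then applies Lemma \ref{segal inclusion different category lemma}. Your Yoneda computation just makes explicit the passage from the commuting square of right adjoints to the commuting square of left adjoints, which the paper leaves implicit.
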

\begin{proof}
We have
\begin{align*}
	\iota_!\red(\Sc[T] \to \Omega[T]) &= \red\iota_!(\Sc[T] \to \Omega[T]) & \eqref{first diagram of adjunctions} \\
	&\cong \red(\Sc[\iota T] \to \Xi[\iota T]) & \text{Lemma \ref{segal inclusion different category lemma}.}
\end{align*}
\end{proof}

\begin{proposition}\label{adjunctions after localizing}
The diagram \eqref{first diagram of adjunctions} gives a diagram
\[ \begin{tikzcd}
\mathcal L_{\Seg_\Omega}\SSet^{\Omega^{op}} \rar[shift left, "\iota_!"] \dar[shift right, "\red" swap] & \mathcal L_{\Seg_\Xi}\SSet^{\Xi^{op}} \dar[shift right, "\red" swap] \lar[shift left, "\iota^*"] \\
\mathcal L_{\red(\Seg_\Omega)}\SSet^{\Omega^{op}}_\ast \rar[shift left, "\iota_!"]  \uar[shift right, "\incl" swap] & \mathcal L_{\red(\Seg_\Xi)}\SSet^{\Xi^{op}}_\ast  \lar[shift left, "\iota^*"] \uar[shift right, "\incl" swap]
\end{tikzcd} \]
of Quillen adjunctions after localizing.
\end{proposition}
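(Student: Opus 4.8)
The diagram consists of four Quillen adjunctions: the two vertical adjunctions $\red \dashv \incl$ and the two horizontal adjunctions $\iota_! \dashv \iota^*$. The plan is to observe that the vertical adjunctions require nothing new, and then to dispatch each horizontal adjunction by a direct application of Lemma~\ref{qa on localizations}. Indeed, the left-hand and right-hand vertical adjunctions are precisely Proposition~\ref{same category adj} applied to $\Rr = \Omega$ and $\Rr = \Xi$ respectively, so the only work remaining concerns the top and bottom rows.

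For the top horizontal adjunction, I would invoke Lemma~\ref{qa on localizations} with $L = \iota_!$, $\mathcal M = \SSet^{\Omega^{op}}$, $\mathcal N = \SSet^{\Xi^{op}}$, $C = \Seg_\Omega$, and $D = \Seg_\Xi$. The underlying adjunction is Quillen by Proposition~\ref{BMR adjunction}, and both localizations exist by the discussion preceding Proposition~\ref{same category adj}. The domains and codomains of the maps in $\Seg_\Omega$ are the objects $\Sc[T]$ and $\Omega[T]$ for $T \neq \eta$, which are cofibrant by Remark~\ref{homotopy function complex remark}. Finally, for each $c = (\Sc[T] \to \Omega[T]) \in \Seg_\Omega$, Lemma~\ref{segal inclusion different category lemma} furnishes an isomorphism $\iota_!(c) \cong (\Sc[\iota T] \to \Xi[\iota T])$; since $T \neq \eta$ forces $\iota T \neq \eta$, this target is a member of $\Seg_\Xi$, so every hypothesis of Lemma~\ref{qa on localizations} is satisfied.

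For the bottom horizontal adjunction, I would apply Lemma~\ref{qa on localizations} in exactly the same fashion, now taking $\mathcal M = \SSet^{\Omega^{op}}_\ast$, $\mathcal N = \SSet^{\Xi^{op}}_\ast$, $C = \red(\Seg_\Omega)$, and $D = \red(\Seg_\Xi)$. Here the underlying adjunction is Quillen by Corollary~\ref{BMR adjunction corollary}; cofibrancy of the domains and codomains $\red(\Sc[T])$ and $\red(\Omega[T])$ is again recorded in Remark~\ref{homotopy function complex remark}; and Proposition~\ref{reduced segal different category} supplies the isomorphism $\iota_!\bigl(\red(\Sc[T]) \to \red(\Omega[T])\bigr) \cong \bigl(\red(\Sc[\iota T]) \to \red(\Xi[\iota T])\bigr)$, identifying $\iota_!$ of a generator of $\red(\Seg_\Omega)$ with a generator of $\red(\Seg_\Xi)$.

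Because the preparatory results (Lemma~\ref{segal inclusion different category lemma}, Proposition~\ref{reduced segal different category}, and the cofibrancy assertions of Remark~\ref{homotopy function complex remark}) are all in place, I do not expect any step to present a genuine obstacle. The only point requiring care is bookkeeping: one must confirm that $\iota_!$ carries each localizing generator on the $\Omega$-side isomorphically onto a localizing generator on the $\Xi$-side, which in both rows rests entirely on the compatibility of $\iota_!$ with Segal cores and with reduction established earlier, together with the elementary fact that $\iota$ sends non-trivial rooted trees to non-trivial trees.
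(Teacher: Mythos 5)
Your proposal is correct and follows essentially the same route as the paper: the vertical adjunctions are exactly Proposition~\ref{same category adj}, and each horizontal adjunction is dispatched by Lemma~\ref{qa on localizing} applied with the cofibrancy of the (reduced) Segal cores and representables together with Lemma~\ref{segal inclusion different category lemma} and Proposition~\ref{reduced segal different category}. The only cosmetic difference is that the paper justifies cofibrancy of $\red(\Sc[T])$ and $\red(\Omega[T])$ by noting $\red$ is left Quillen rather than by citing Remark~\ref{homotopy function complex remark}, which amounts to the same thing.
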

\begin{proof}
In light of Proposition \ref{same category adj}, we only need to show that the horizontal adjunctions are Quillen adjunctions.
The objects $\Sc[T]$ and $\Omega[T]$ are cofibrant in $\SSet^{\Omega^{op}}$, so the top adjunction is a Quillen adjunction by Lemma \ref{qa on localizations} and Lemma \ref{segal inclusion different category lemma}.
Since $\red$ is a left Quillen functor, $\red(\Sc[T])$ and $\red(\Omega[T])$ are cofibrant in $\SSet^{\Omega^{op}}_\ast$.
Thus the bottom adjunction is a Quillen adjunction by Lemma \ref{qa on localizations} and Proposition \ref{reduced segal different category}.
\end{proof}

\begin{definition}\label{def segal cyclic}
A \emph{Segal cyclic operad} is a fibrant object in the model category $\mathcal L_{\red(\Seg_\Xi)}\SSet^{\Xi^{op}}_\ast$.
\end{definition}
Notice that every Segal cyclic operad has an underlying Segal operad via the functor $\iota^*$.

The following example gives two cyclic operad structures on the same underlying operad. In fact, this hints at a whole class of examples: if $A$ is an abelian group, then cyclic structures on the operad given by $A$ are in bijection with the order $1$ and $2$ elements of $\Aut(A)$.

\begin{example}\label{cyclic example}
Let $M$ be the group $\mathbb Z / 2 \times \mathbb Z /2 = \{0, 1\} \times \{0, 1\}$, whose elements are written as $00, 01, 10, 11$.
Then $M$ determines a (monochrome) operad $O$ with 
\[
	O (n) = \begin{cases}
		\{00, 01, 10, 11\} & n= 1 \\
		\varnothing & n \neq 1
	\end{cases}
\]
with the operadic multiplication given by the the group operation.

The operad $O$ admits distinct cyclic structures $C$ and $C'$. For the first, the action of $\Sigma^+_1 = \Sigma_2$ interchanges $01$ and $10$ and fixes $00$ and $11$, while in the second, the action fixes every element.
These two structures are not isomorphic as cyclic operads because the $\Sigma_2$-sets $C(1) = \Sigma_2 \amalg * \amalg *$ and $C'(1) = * \amalg * \amalg * \amalg *$ are not isomorphic.
\end{example}

\begin{proposition}\label{not q equiv}
	The Quillen adjunction
	\[ \iota_! : \mathcal L_{\red(\Seg_\Omega)}\SSet^{\Omega^{op}}_\ast \rightleftarrows \mathcal L_{\red(\Seg_\Xi)}\SSet^{\Xi^{op}}_\ast  : \iota^* \]
	does not induce an equivalence of homotopy categories.
	In particular, this adjunction is not a Quillen equivalence.
\end{proposition}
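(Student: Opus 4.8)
The plan is to exploit Example~\ref{cyclic example}, which supplies two \emph{non-isomorphic} cyclic operads $C$ and $C'$ sharing a common underlying operad $O = U(C) = U(C')$. First I would form their cyclic dendroidal nerves $N_c(C), N_c(C') \in \Set^{\Xi^{op}} \subseteq \SSet^{\Xi^{op}}$. Since $O$ is monochrome, $N_c(O)_\eta = \Cyc(C(\eta), O) = \colset(O) = \ast$, so both nerves lie in $\SSet^{\Xi^{op}}_\ast$. Being levelwise discrete they are Reedy fibrant (any map of discrete simplicial sets is a Kan fibration, so all matching maps are fibrations), and by Theorem~\ref{cyclic dendroidal nerve theorem} (the implication (i)$\Rightarrow$(iii)) the map $X_S \to \hom(\Sc[S], X)$ is a bijection for every $S$. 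Using the simplicial Yoneda identity $\map(\Xi[S], \incl X) = (\incl X)_S$ together with the adjunction isomorphism $\map_{\mathcal N}(\red(-),-) \cong \map_{\mathcal M}(-,\incl(-))$ of Lemma~\ref{enriched adjunction}, this bijection is exactly the statement that $N_c(C)$ and $N_c(C')$ are $\red(\Seg_\Xi)$-local. Hence both are Segal cyclic operads, i.e.\ fibrant objects of $\mathcal L_{\red(\Seg_\Xi)}\SSet^{\Xi^{op}}_\ast$.

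Next I would compute $\iota^*$ of these nerves. Since $C(\iota T) = F(\Omega(T))$ with $F$ left adjoint to the forgetful functor $U : \Cyc \to \Operad$, we obtain $\iota^* N_c(O)_T = N_c(O)_{\iota T} = \Cyc(F(\Omega(T)), O) = \Operad(\Omega(T), U O) = N_d(U O)_T$, so that $\iota^* N_c(C) = N_d(O) = \iota^* N_c(C')$ as objects of $\SSet^{\Omega^{op}}_\ast$. Because $N_c(C)$ and $N_c(C')$ are already fibrant in the localized source model category, the right derived functor is computed without replacement: $\mathbb{R}\iota^*\, N_c(C)$ and $\mathbb{R}\iota^*\, N_c(C')$ are both represented by the single object $N_d(O)$, and are therefore isomorphic in $\mathrm{Ho}(\mathcal L_{\red(\Seg_\Omega)}\SSet^{\Omega^{op}}_\ast)$.

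To finish, I would argue that $N_c(C)$ and $N_c(C')$ are nonetheless \emph{not} isomorphic in $\mathrm{Ho}(\mathcal L_{\red(\Seg_\Xi)}\SSet^{\Xi^{op}}_\ast)$. As both objects are local, an isomorphism between them in the localized homotopy category is the same as an isomorphism in $\mathrm{Ho}(\SSet^{\Xi^{op}}_\ast)$ for the unlocalized Berger--Moerdijk structure (a local equivalence between local objects is an ordinary weak equivalence), hence a zig-zag of levelwise weak equivalences. Applying the levelwise $\pi_0$ functor $\SSet^{\Xi^{op}}_\ast \to \Set^{\Xi^{op}}$ to such a zig-zag, and using that $\pi_0$ fixes discrete presheaves, produces a natural isomorphism $N_c(C) \cong N_c(C')$ of $\Xi^{op}$-sets. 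Since $N_c$ is fully faithful (Theorem~\ref{cyclic dendroidal nerve theorem}), this would force $C \cong C'$, contradicting Example~\ref{cyclic example}. Thus $\mathbb{R}\iota^*$ sends two non-isomorphic objects to isomorphic ones, so it fails to reflect isomorphisms and cannot be an equivalence of categories; consequently $\iota_! \dashv \iota^*$ is not a Quillen equivalence. The main obstacle I anticipate is the fibrancy/locality verification — upgrading the strict (bijective) Segal condition of the nerve theorem to genuine $\red(\Seg_\Xi)$-locality in the simplicial model structure — together with the care required to pass between isomorphism in the localized homotopy category and isomorphism of the underlying discrete presheaves; both of these, however, reduce to standard facts about discrete simplicial presheaves once Lemma~\ref{enriched adjunction} is in hand.
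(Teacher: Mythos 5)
Your proof is correct, and it follows the paper's overall strategy --- the pair $C, C'$ from Example~\ref{cyclic example} with $UC=UC'$, the resulting identity $\iota^*N_c(C) = N_d(UC) = \iota^*N_c(C')$, and fibrancy/locality of the nerves --- but it handles the crucial step, that $N_c(C)\not\cong N_c(C')$ in $\mathrm{Ho}(\mathcal L_{\red(\Seg_\Xi)}\SSet^{\Xi^{op}}_\ast)$, by a genuinely different mechanism. The paper introduces an auxiliary cyclic operad $A$, constructs a cofibrant resolution $\widetilde W = N_c(A)\circledast E\Sigma_2$ using the tensoring over $\Sigma_2$-simplicial sets developed in Appendix~\ref{appendix cotensorings flippy sets}, and separates the two nerves by counting homotopy classes of maps, $|\pi(\widetilde W, N_c(C))| = 2 \neq 4 = |\pi(\widetilde W, N_c(C'))|$. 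You instead note that an isomorphism in the localized homotopy category between local objects is already an isomorphism in the unlocalized one (the same appeal to \cite[3.5.2]{hirschhorn} that the paper makes), hence is realized by a zig-zag of levelwise weak equivalences; applying levelwise $\pi_0$, which fixes the discrete presheaves $N_c(C)$ and $N_c(C')$, produces an isomorphism in $\Set^{\Xi^{op}}$, and full faithfulness of $N_c$ (Theorem~\ref{cyclic dendroidal nerve theorem}) then forces $C\cong C'$, a contradiction. Your route is more elementary and renders the operad $A$, the resolution $\widetilde W$, and Appendix~\ref{appendix cotensorings flippy sets} unnecessary for this proposition; what the paper's computation buys is an explicit corepresenting object whose derived mapping spaces into $N_c(C)$ and $N_c(C')$ differ, i.e., a witness to the failure of full faithfulness of the derived right adjoint rather than merely of its failure to reflect isomorphisms. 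The two points you flag as needing care do reduce, as you say, to the discreteness of the nerves (so that every mapping space in sight is a constant simplicial set and the strict Segal bijections of Theorem~\ref{cyclic dendroidal nerve theorem}(iii) coincide with $\red(\Seg_\Xi)$-locality via Lemma~\ref{enriched adjunction} and Remark~\ref{homotopy function complex remark}); there is no gap.
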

\begin{proof}
Consider the two cyclic operads $C, C'$ in $\Set$ from Example \ref{cyclic example}; recall from that example that $UC = UC'$.
Let $X = N_c(C)$ and $X' = N_c(C')$, and note that $\iota^*X = \iota^* X'$.
Additionally, let $A$ be the operad with
\[
	A(n) = \begin{cases}
		\mathbb{Z}/2 = \{ e, x \mid x^2 = e \} & n= 1 \\
		\varnothing & n \neq 1.
	\end{cases}
\]
The operad $A$ admits a unique cyclic operad structure where the $\Sigma_2$ action on $A(1)$ fixes $x$.
There are exactly two maps of cyclic operads $A \to C$, while there are four maps $A \to C'$; we will show that this remains true once we pass to the homotopy category of $\mathcal L_{\red(\Seg_\Xi)}\SSet^{\Xi^{op}}_\ast$.
Write $W = N_c(A)$, and note that $W_S = \varnothing$ if $S$ is non-linear, while $W_{L_m}$ is the set of words of length $m$ in the alphabet $e,x$. 
The objects $W, X, X'$ are fibrant in $\mathcal L_{\red(\Seg_\Xi)}\SSet^{\Xi^{op}}_\ast$ by Theorem~\ref{reedy type model structure for discrete}, Theorem \ref{cyclic dendroidal nerve theorem}(iii), and the fact that all maps between discrete simplicial sets are Kan fibrations.

Let $E\Sigma_2 \to \Delta[0]$ be a cofibrant resolution of the terminal object of $\SSet^{\Sigma_2}$; there is a cofibrant resolution of $W$ in the unlocalized model structure $\SSet^{\Xi^{op}}_\ast$ that is isomorphic levelwise to the tensor product $W\otimes E\Sigma_2$ from Definition \ref{N simplicial structure}.
The object $W\otimes E\Sigma_2$ does not take into account the $\Sigma_2$-structure on $E\Sigma_2$, so we must modify the presheaf structure a bit.
As this seems interesting in its own right, we discuss for $K\in \SSet^{\Sigma_2}$ tensoring ($-\circledast K$) and cotensoring ($K \pitchfork -$) in detail in Appendix~\ref{appendix cotensorings flippy sets}. 
We now show that a cofibrant resolution of $W$ is given by $\widetilde W = W \circledast E\Sigma_2 \to W$.
We have $(W \circledast E\Sigma_2)_S = (W\otimes E\Sigma_2)_S$ for every tree $S$.
To see that $\widetilde W \simeq W$, notice that at $L_m$ we have
\[
	\widetilde W_{L_m} = \Big( (W_{L_m} \setminus \{e^{\times m}\}) \times E\Sigma_2 \Big)_+ \simeq \Big( (W_{L_m} \setminus \{e^{\times m}\}) \Big)_+ = W_{L_m}
\]
and $\widetilde W_S = \varnothing$ if $S$ is non-linear. 

To see that $\widetilde W$ is cofibrant, notice that $W$ and $\widetilde W$ admit a filtration with $W^{(k)}$ consisting of words which have $x$ appearing at most $k$ times and $\widetilde W^{(k)} = W^{(k)} \circledast E\Sigma_2$.
Then $\widetilde W^{(k)}$ is the pushout in $\SSet^{\Xi^{op}}_\ast$,
\[ \begin{tikzcd}
\red(\partial \Xi[L_k])\circledast E\Sigma_2 \rar \dar & \widetilde W^{(k-1)} \dar \\
\red(\Xi[L_k])\circledast E\Sigma_2 \rar & \widetilde W^{(k)}.
\end{tikzcd} \]
By Proposition~\ref{circledast preserves cof} and Proposition~\ref{creates cofibrations}, $\widetilde W^{(k-1)} \to \widetilde W^{(k)}$ is a cofibration in $\SSet^{\Xi^{op}}_\ast$.
Since $\widetilde W^{(0)} = W^{(0)} = \Xi[\eta]$ is the initial object of this category, it follows that $\widetilde W = \colim \widetilde W^{(k)}$ is cofibrant.

We now have
\[ \hom(\widetilde W, X) = \hom(N_cA \circledast E\Sigma_2, N_cC) = \hom(N_cA, E\Sigma_2 \pitchfork N_cC); \]
but $N_cC$ is levelwise discrete, so $E\Sigma_2 \pitchfork N_cC \cong N_cC$. 
Using Theorem \ref{cyclic dendroidal nerve theorem}, we see \[ \hom(\widetilde W, X) = \hom(N_cA, N_cC) = \hom(A,C) \]
and similarly $\hom(\widetilde W, X') = \hom(A, C')$.
But these sets are especially easy to understand.
Maps of cyclic operads from $A$ to any other cyclic operad are determined by where we send $x$, and we have
\begin{align*}
\hom(\widetilde W, X) & = \hom(A,C) = \{f_{00}, f_{11} \} \\
\hom(\widetilde W, X') & = \hom(A,C') = \{f_{00}, f_{01}, f_{10}, f_{11}\} 
\end{align*}
where $f_a(x) = a$.

Since $\widetilde W$ is cofibrant in the unlocalized model structure and the objects $X, X'$ are fibrant in the localized model structure, we can compute the homotopy classes of maps from $\widetilde W$ to $X$ (or $X'$) in either the unlocalized or localized model structure and will get the same answer by \cite[3.5.2]{hirschhorn}.
We now work in the unlocalized model structure $\SSet^{\Xi^{op}}_\ast$, which is a simplicial model category.
Since our objects are levelwise discrete, $\widetilde W$ is cofibrant, and $X$ is fibrant, we have 
\[
	\hom(\widetilde W, X) = \pi_0 \hom(\widetilde W, X) = \pi_0 \map(\widetilde W, X) = \pi_0 \map^h(\widetilde W, X) = \pi(\widetilde W, X)
\]
by Remark~\ref{homotopy function complex remark}, \cite[Proposition 9.5.3]{hirschhorn}, and \cite[Proposition 9.5.24]{hirschhorn} (see \cite[\S 7.5]{hirschhorn} for $\pi$-notation). 
Similarly, $\hom(\widetilde W, X') = \pi(\widetilde W, X')$.
We thus have $|\pi(\widetilde W, X')| = 4 > 2 = |\pi(\widetilde W, X)|$, which shows that $X$ and $X'$ are not isomorphic in the homotopy category of $\mathcal L_{\red(\Seg_\Xi)}\SSet^{\Xi^{op}}_\ast$.
\end{proof}

There is a model structure on the category of (monochrome) simplicial cyclic operads where the weak equivalences and fibrations are defined as those maps which are levelwise weak equivalences.
This follows by considering either the multi-sorted algebraic theory of cyclic operads or the colored operad controlling cyclic operads (for the latter, see \cite[\S 1.6.4]{lukacs}), and then applying \cite[Theorem 4.7]{MR2263055} or \cite[Theorem 2.1]{bmresolution}.

\begin{conjecture}
	The model structure for simplicial cyclic operads is Quillen equivalent to $\mathcal L_{\red(\Seg_\Xi)}\SSet^{\Xi^{op}}_\ast$.
\end{conjecture}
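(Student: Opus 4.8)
The plan is to adapt the homotopy-coherent nerve technology of Cisinski--Moerdijk \cite{cm-simpop} to the cyclic setting. First I would promote the functor $C \colon \Xi \to \Cyc$ of Definition \ref{functor xi to cyc} to a functor $w \colon \Xi \to s\Cyc$ into simplicial cyclic operads, sending a tree $S$ to a Boardman--Vogt-style cofibrant resolution $w(S) \xrightarrow{\simeq} C(S)$ of the free cyclic operad $C(S)$ (regarded as discrete), functorially in $S$; the relevant resolution exists by \cite{bmresolution}. This produces a homotopy-coherent cyclic dendroidal nerve
\[ N_c^w \colon s\Cyc \to \SSet^{\Xi^{op}}, \qquad N_c^w(P)_S = \map_{s\Cyc}(w(S), P), \]
whose left adjoint is the left Kan extension of $w$ along the Yoneda embedding, restricted to reduced presheaves via $\red \dashv \incl$. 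The strict nerve $N_c$ of Theorem \ref{cyclic dendroidal nerve theorem} is recovered from $N_c^w$ by replacing $w$ with $C$.

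Second, I would check that this is a Quillen adjunction for the localized reduced model structure $\mathcal L_{\red(\Seg_\Xi)}\SSet^{\Xi^{op}}_\ast$. The key point is fibrancy: for a fibrant simplicial cyclic operad $P$, the presheaf $N_c^w(P)$ should be $\red(\Seg_\Xi)$-local. This reduces, as in the dendroidal case, to the assertion that $w(S)$ is the homotopy colimit over the Segal core of the resolutions $w(\cstar_v)$, which in turn rests on the colimit decomposition $C(S) \cong \colim_{v} C(\cstar_v)$ of the free cyclic operad along the star subgraphs (Example \ref{star subgraph}), as encoded by the diagram of Lemma \ref{segal inclusion different category lemma}, together with the fact that the resolution preserves these homotopy colimits. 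The simplicial enrichment from Theorem \ref{existence simplicial structure discrete} and the computations of $\iota_!$ on Segal cores in Lemma \ref{segal inclusion different category lemma} and Proposition \ref{reduced segal different category} would be used to identify the homotopy mapping spaces against which locality is tested.

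Third, and this is where the work lies, I would prove the adjunction is a Quillen equivalence. For the derived counit, the Segal condition lets me reduce to the corollas $\cstar_{n+1}$: one must verify that for fibrant $P$ the space $N_c^w(P)_{\cstar_{n+1}} = \map_{s\Cyc}(w(\cstar_{n+1}), P)$ is weakly equivalent to the arity-$n$ space $P_n$, and crucially that this equivalence is compatible with the $\Sigma_n^+$-action carried by both sides, the action on the left coming from $\Aut_\Xi(\cstar_{n+1}) \cong \Sigma_{n+1}$ via Corollary \ref{iso decompositions}. For the derived unit I would need a rectification statement: every Segal cyclic operad is weakly equivalent in $\mathcal L_{\red(\Seg_\Xi)}\SSet^{\Xi^{op}}_\ast$ to $N_c^w(P)$ for some simplicial cyclic operad $P$. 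Forgetting the cyclic symmetry, the corresponding comparison for symmetric operads is available through the dendroidal formalism \cite{cm-simpop}, so the genuinely new content is the equivariant part of the argument.

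The main obstacle is precisely this equivariant rectification. Rigidifying a homotopy-coherent cyclic operad into a strict simplicial one requires coherently strictifying not only the operadic composition but also the $\Sigma_n^+$-symmetry subject to the van der Laan axiom \eqref{equation cyclic operad condition}; since the $\Sigma_n^+$-actions at each arity are generally non-free, the relevant equivariant homotopy theory is genuinely more subtle than in the symmetric-operad case, and indeed Proposition \ref{not q equiv} shows that the extra cyclic data is homotopically detectable (the two structures $C, C'$ differ only in their $\Sigma_2$-sets). The most delicate case is that of linear trees, where the two-to-one phenomenon of Corollary \ref{structure from rooting linear} and the constant maps $\Xi^0$ intervene; here the $\Sigma_2$-(co)tensorings developed in Appendix \ref{appendix cotensorings flippy sets} are exactly the tool needed to model the arity-one symmetry, and controlling how these interact with composition at higher arities is the crux of the argument.
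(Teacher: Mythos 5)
The statement you are trying to prove is stated in the paper only as a \emph{conjecture}; the authors give no proof, and in the introduction they explicitly record their skepticism that the full Cisinski--Moerdijk program can be carried out in the cyclic setting (citing the bad behaviour of the adjunction between categories and dagger categories with respect to equivalences). So there is no proof in the paper to compare yours against, and your proposal should be judged as a research program rather than checked line by line.

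As a program it is the natural one --- replace $C$ by a Boardman--Vogt-style resolution $w$, form the homotopy-coherent nerve, verify Segal locality of $N_c^w(P)$ via the decomposition of $C(S)$ along its star subgraphs, and then prove the derived unit and counit are equivalences --- but it contains a genuine gap that you yourself identify and do not close: the equivariant rectification needed for the derived unit. This is not a routine technicality to be deferred. Proposition \ref{not q equiv} shows that the cyclic structure carries homotopical information invisible to the underlying Segal operad (the two cyclic structures $C$, $C'$ on the same operad differ only in non-free $\Sigma_2$-sets), so one cannot bootstrap from the dendroidal rectification of \cite{cm-simpop}; one must show that a weak, Segal-style $\Sigma_n^+$-action satisfying \eqref{equation cyclic operad condition} up to coherent homotopy can be strictified, and for non-free actions the passage between homotopy fixed points and strict fixed points is exactly where such arguments break down (this is the same phenomenon behind the dagger-category difficulty the authors flag). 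Two further steps are asserted without justification and are not automatic: (i) that the resolution $w$ can be made strictly functorial on all of $\Xi$, including the rotation isomorphisms and the codegeneracies, with the comparison $w(S)\xrightarrow{\simeq} C(S)$ natural; and (ii) that $w(S)$ is the \emph{homotopy} colimit of the $w(\cstar_v)$ over the Segal core --- the strict colimit statement for $C(S)$ is the content of the nerve theorem, but its homotopy invariance after resolving is precisely what must be proved, not assumed. Until the rectification step and these two points are supplied, what you have is a plausible strategy for attacking the conjecture, not a proof of it.
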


Analogous results for simplicial monoids and for simplicial operads appear in \cite{MR2342822} and \cite{bh1}, respectively.

\appendix

\section{Tensoring and cotensoring with \texorpdfstring{$\Sigma_2$}{Σ₂} simplicial sets}\label{appendix cotensorings flippy sets}

As inspiration for this section, recall the cartesian closed structure on the category $\Set^{\Sigma_2}$ of involutive sets. The cartesian product $X\times Y$ has the diagonal $\Sigma_2$ action, and the internal hom is just the set of ordinary functions, together with the conjugation $\Sigma_2$ action $(\sigma \cdot f)(x) = \sigma \cdot f(\sigma \cdot x)$. 
The fixed points of the action on the internal hom are precisely the $\Sigma_2$-equivariant functions $X\to Y$.

Let $\nabla$ be the full subcategory of $\Xi$ spanned by the objects $L_n$ (Example~\ref{unrooted tree examples}), so that $\nabla$ is equivalent to $\Xi \downarrow \eta$.
If $n > 0$, write $\vartheta : L_n \to L_n$ for the unique non-oriented isomorphism, and if $n=0$ write $\vartheta = \id_{L_0}$.
Each morphism in $\nabla$ is of the form
\begin{equation}\label{nabla decomposition}
	L_n \overset{\vartheta^i}\to L_n \overset{\gamma}\to L_m
\end{equation}
where $\gamma$ is oriented and $i\in \{0,1\}$.
Moreover, the data $(i,\gamma)$ for the decomposition \eqref{nabla decomposition} of a map is unique if and only if that map is not constant (otherwise there are two such decompositions, see Corollary~\ref{structure from rooting linear}).
Left Kan extension along $\nabla \hookrightarrow \Xi$ gives a functor $\SSet^{\nabla^{op}}_\ast \to \SSet^{\Xi^{op}}_\ast$ which allows us to identify the former category as being equivalent to the full subcategory consisting of those $X\in \SSet^{\Xi^{op}}_\ast$  so that $X_R = \varnothing$ whenever $R$ is non-linear.
Notice that if $X \in \SSet^{\nabla^{op}}_\ast$, then $X_{L_n}$ may be considered as an object of $\SSet^{\Sigma_2}$ using the action of $\vartheta^*$.

The category $\SSet^{\nabla^{op}}$ is tensored and cotensored over $\SSet^{\Sigma_2}$.
For our purposes we are interested only in the reduced case, so we will reserve notation for that.
We have, for $X\in \SSet^{\nabla^{op}}_\ast$ and $K\in \SSet^{\Sigma_2}$ that $X\circledast K$ is given on objects as the pushout
\[ \begin{tikzcd}
 K = X_{L_0} \times K \rar \dar & X_{L_n} \times K \dar \\
\Delta[0] \rar &	(X \circledast K)_{L_n} 
\end{tikzcd} \]
in $\SSet^{\Sigma_2}$, where the top map is induced from the unique morphism $f_n : L_n \to L_0$.
The presheaf structure is given for a morphism \eqref{nabla decomposition} by the induced maps on pushouts coming from the commutative diagram
\[
	\begin{tikzcd}
		\Delta[0] \dar{=} & X_{L_0} \times K \lar \rar{f_m^* \times \id} \dar{\id \times \sigma^i} & X_{L_m} \times K  \dar{(\gamma\vartheta^i)^* \times \sigma^i}\\
		\Delta[0] & X_{L_0} \times K \lar \rar{f_n^* \times \id} & X_{L_n} \times K.
	\end{tikzcd}
\]
Note that this is well-defined on constant maps, though the decomposition \eqref{nabla decomposition} need not be unique in this case.

We write $K \pitchfork - $ for the right adjoint to $-\circledast K$.
If $Y\in \SSet^{\nabla^{op}}_\ast$ 
then \[ (K \pitchfork Y)_{L_n}  = \map_{\SSet}(K, Y_{L_n});\] in particular, if $n=0$ this is just $\Delta[0]$.
Given a morphism \eqref{nabla decomposition}, the map $(K \pitchfork Y)_{L_m} \to (K \pitchfork Y)_{L_n}$ is given by 
\[
	\map(\sigma^i , (\gamma\vartheta^i)^*) : \map(K, Y_{L_m}) \to \map(K, Y_{L_n}).
\]

As simplicial sets, we have (natural in $X,Y$)
\begin{align*}
	(X \circledast K)_{L_n} &= (X \otimes K)_{L_n} \\
	(K \pitchfork Y)_{L_n} &= (Y^K)_{L_n}
\end{align*}
where the objects on the right are those from Definition~\ref{N simplicial structure} after forgetting the $\Sigma_2$ action on $K$.
Note that if $\gamma : L_n \to L_m$ is an \emph{oriented} map, then the above definitions also give equalities of the maps $\gamma^*$:
\begin{align*}
	(X \circledast K)_{\gamma} &= (X \otimes K)_{\gamma} & 
	(K \pitchfork Y)_{\gamma} &= (Y^K)_{\gamma}.
\end{align*}
Where these functors differ is exactly on the $\vartheta^*$.
\begin{proposition}\label{circledast preserves cof}
If $K \in \SSet^{\Sigma_2}$, then the functor $- \circledast K : \SSet^{\nabla^{op}}_\ast \to \SSet^{\nabla^{op}}_\ast$ preserves Reedy cofibrations. 
\end{proposition}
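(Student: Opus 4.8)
The plan is to use the relative latching map characterization of Reedy cofibrations in $\SSet^{\nabla^{op}}_\ast$. By Theorem \ref{bm_reedy_ms} a map $f\colon A\to B$ is a Reedy cofibration precisely when, for each $L_n$, the relative latching map
\[
	\ell_n(f)\colon A_{L_n}\amalg_{L_{L_n}A} L_{L_n}B \longrightarrow B_{L_n}
\]
is a cofibration in $\SSet^{\Aut(L_n)}$. For $n=0$ we have $\Aut(L_0)$ trivial and $A_{L_0}=B_{L_0}=\Delta[0]$, so $\ell_0(f)$ is an isomorphism and there is nothing to check; for $n\geq 1$ we have $\Aut(L_n)=\langle\vartheta\rangle\cong\Sigma_2$ with the projective model structure, whose cofibrations are exactly the monomorphisms $C\to D$ for which $\Sigma_2$ acts freely on $D_k\smallsetminus C_k$ for every $k$. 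Thus it suffices to show that, for a Reedy cofibration $f$, each $\ell_n(f\circledast K)$ is a levelwise injection with free $\Sigma_2$-action on its complement.

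The key computation is to identify $\ell_n(f\circledast K)$ as a ``half-smash'' of $\ell_n(f)$ with $K$. Write $A\ltimes K=(A\times K)/(\ast\times K)$ for the quotient collapsing the basepoint section; this is a left adjoint (to $\map_{\SSet}(K,-)$ on based simplicial sets), hence preserves pushouts, and it clearly preserves monomorphisms. By construction $(X\circledast K)_{L_n}=X_{L_n}\ltimes K$, carrying the diagonal action of $\vartheta^\ast$ and the flip $\sigma$ of $K$. Next I would identify the latching object. Since $\nabla$ is an EZ-category (inherited from Theorem \ref{xi is ez category}), $L_{L_n}X$ is identified with the union inside $X_{L_n}$ of the images of the maps $X_\rho$ as $\rho$ ranges over the non-invertible degeneracies out of $L_n$. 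For such a $\rho=\gamma\vartheta^i$ the transition map $(X\circledast K)_\rho$ is induced by $X_\rho\times\sigma^i$, which has the same image as $X_\rho\times\id_K$ because $\sigma^i$ is a bijection of $K$; hence the flip-twists that distinguish $\circledast$ from $\otimes$ do not affect images, and
\[
	L_{L_n}(X\circledast K)=\Big(\bigcup_\rho \image(X_\rho)\Big)\ltimes K=(L_{L_n}X)\ltimes K
\]
as a subobject of $X_{L_n}\ltimes K$. As $-\ltimes K$ preserves pushouts, the relative latching object becomes $\big(A_{L_n}\amalg_{L_{L_n}A}L_{L_n}B\big)\ltimes K$, and therefore $\ell_n(f\circledast K)=\ell_n(f)\ltimes K$ equivariantly.

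Finally I would read off the freeness. The basepoint of $B_{L_n}$ lies in the latching object, hence in the domain of $\ell_n(f)$, so on $k$-simplices the complement of $\ell_n(f)\ltimes K$ is exactly $C_k\times K_k$, where $C_k=(B_{L_n})_k\smallsetminus \image(\ell_n(f))_k$ is the complement of $\ell_n(f)$. Because $f$ is a Reedy cofibration, $\ell_n(f)$ is a projective $\Sigma_2$-cofibration, so $\Sigma_2$ acts freely on each $C_k$; and a diagonal action on a product one of whose factors is free is again free, so $\Sigma_2$ acts freely on $C_k\times K_k$. Thus $\ell_n(f\circledast K)$ is injective with free action on its complement, i.e.\ a projective $\Sigma_2$-cofibration, and $f\circledast K$ is a Reedy cofibration. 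The main obstacle is the middle paragraph: one must take care that the non-oriented degeneracies introduce flip-twists into the presheaf structure of $X\circledast K$, and the argument hinges on the observation that these twists are bijections and hence leave the latching \emph{images} unchanged---after which the genuinely homotopical input is only the elementary fact that a diagonal action with a free factor is free.
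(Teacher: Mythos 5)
Your proof is correct, but it runs in the opposite direction from the one in the paper. The paper argues on the adjoint side: it shows that $K \pitchfork -$ preserves acyclic fibrations by observing that levelwise $K\pitchfork Y$ agrees with the cotensor $Y^K$ (which handles weak equivalences via M7$'$), and then proving that the \emph{matching} objects of $K\pitchfork Y$ and $Y^K$ coincide, because the subcategory $\Delta^+(m)$ of oriented maps is an equivalence onto $\nabla^+(L_m)$ and so the matching limit never sees the flip-twists. You do the exact dual: you compute \emph{latching} objects directly, identify $\ell_n(f\circledast K)\cong \ell_n(f)\ltimes K$ equivariantly (the point being, as you say, that the twists $\id\times\sigma^i$ are isomorphisms and hence do not change images), and then invoke the characterization of projective $\Sigma_2$-cofibrations as monomorphisms that are levelwise free away from the image. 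Your route is more explicit --- it pins down the relative latching map on the nose, and the ``a diagonal action with a free factor is free'' step is pleasantly elementary --- at the cost of two inputs the paper's proof avoids and which you should state: (1) the Eilenberg--Zilber identification of $L_{L_n}X$ with the union of degenerate images requires the remark that $\nabla$ inherits the EZ-structure from Theorem~\ref{xi is ez category} (maps in $\Xi^-$ out of a linear tree have linear image, so $\nabla^{\pm}=\nabla\cap\Xi^{\pm}$ and the relevant absolute pushouts stay inside $\nabla$); and (2) the latching-map criterion of Theorem~\ref{bm_reedy_ms} is a statement about the unreduced category, so one must note that $\incl$ creates cofibrations and preserves the (connected) latching colimits (Theorem~\ref{reedy type model structure for discrete}, Proposition~\ref{cofibration preservation}, Lemma~\ref{pushout lemma}) --- exactly the reduction made in the proof of Proposition~\ref{creates cofibrations}. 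Neither point is a gap, but both deserve a sentence.
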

\begin{proof}
To prove this statement, it is of course equivalent to prove that the right adjoint $K \pitchfork -$ preserves acyclic fibrations. 
Notice by M7$'$ and the fact that every simplicial set is cofibrant, we have that $(-)^K$ preserves (acyclic) fibrations.
Since $(-)^K$ preserves acyclic fibrations, $((-)^K)_{L_m} \cong(K\pitchfork -)_{L_m}$ for all $m\geq 0$, and Reedy weak equivalences are levelwise, we see that $K \pitchfork -$ sends acyclic fibrations to weak equivalences.

We will conclude by showing that that $K\pitchfork -$ preserves Reedy fibrations.
Recall the definition of fibration from Theorem~\ref{bm_reedy_ms}. 
The key point to check is that we have an isomorphism of matching objects
\[
	M_{L_m}(K \pitchfork Y) \cong M_{L_m}(Y^K)
\]
for every $m\geq 0$.
For an arbitrary presheaf $Z$, we have 
\[
	M_{L_m} Z = \lim_{
	\substack{
	(\nabla^{op})^-(L_m) \\
	\alpha \colon L_m \to L_n
	}}
	Z_{L_n} =
	\lim_{
	\substack{
	\nabla^+(L_m) \\
	\beta \colon L_n \to L_m
	}}
	Z_{L_n}.
\]
Let $\Delta^+(m) \subseteq \nabla^+(L_m)$ be the full subcategory whose objects are the \emph{oriented} maps $L_n \to L_m$ in $\nabla^+ \subseteq \Xi^+$.
Note that if we have a morphism
\[
	\begin{tikzcd}[column sep=tiny]
		L_n \arrow[rr, "\phi"] \arrow[dr, "\gamma"] & & L_p \arrow[dl, "\gamma'" swap]\\
		& L_m
	\end{tikzcd}
\]
with $\gamma, \gamma' \in \Delta^+(m)$, then $\phi$ is an oriented map.
Thus every morphism in $\Delta^+(m)$ is an oriented map as well, which explains the choice of notation.
(This argument is much like one appearing in the proof of Lemma~\ref{coface interchange}.)
Further, we have that $\Delta^+(m) \subseteq \nabla^+(L_m)$ is essentially surjective: a non-oriented map $\gamma \vartheta$ in $\nabla^+(L_m)$ is isomorphic to $\gamma$.
Thus the inclusion map is an equivalence of categories.

It follows that 
\[
	M_{L_m} Z \cong 
	\lim_{
	\substack{
	\Delta^+(m) \\
	\beta \colon L_n \to L_m
	}}
	Z_{L_n}.
\]
Since $K \pitchfork Y$ and $Y^K$ are equal when applied to oriented maps, it follows that we have, for each $K$, an isomorphism $M_{L_m}(K \pitchfork Y) \cong M_{L_m}(Y^K)$, natural in $Y$.
In fact, we have
\[
	\Big( (K\pitchfork Y)_{L_m} \to M_{L_m}(K \pitchfork Y) \Big) \cong \Big( (Y^K)_{L_m} \to M_{L_m}(Y^K) \Big)
\]
so the fact that $(-)^K$ preserves fibrations implies that $K\pitchfork -$ also preserves fibrations.
\end{proof}

\begin{proposition}\label{creates cofibrations}
Suppose that $\SSet^{\nabla^{op}}_\ast$ and $\SSet^{\Xi^{op}}_\ast$ are endowed with the model structures from Theorem~\ref{reedy type model structure for discrete}.
Consider the adjunction
\[ \SSet^{\nabla^{op}}_\ast \rightleftarrows \SSet^{\Xi^{op}}_\ast \]
induced from the full-subcategory inclusion $\nabla \to \Xi$. Then the left adjoint preserves and detects cofibrations.
\end{proposition}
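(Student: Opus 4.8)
The plan is to push everything down to the \emph{unreduced} Berger--Moerdijk Reedy model structures, where cofibrations are detected by relative latching maps, and there to compare the latching data of $\nabla$ and $\Xi$ directly. Write $j\colon \nabla \hookrightarrow \Xi$ for the inclusion and $j_! \dashv j^*$ for the unreduced left Kan extension/restriction adjunction between $\SSet^{\nabla^{op}}$ and $\SSet^{\Xi^{op}}$, and let $L$ denote the left adjoint under study, so that $L = \red \circ j_! \circ \incl$. The first ingredient is a general \emph{reduced cofibration lemma}: for $\Rr \in \{\nabla, \Xi\}$, a map $f$ of $\SSet^{\Rr^{op}}_\ast$ is a cofibration if and only if $\incl(f)$ is a cofibration in $\SSet^{\Rr^{op}}$. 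One direction is Proposition~\ref{cofibration preservation}. For the converse, given a lifting problem of $f$ against an acyclic fibration $p$ of $\SSet^{\Rr^{op}}_\ast$, I would apply $\incl$ to get a lifting problem of $\incl(f)$ against $\incl(p)$, which is an acyclic fibration in $\SSet^{\Rr^{op}}$ because the reduced model structure is created by $\incl$ (Theorem~\ref{reedy type model structure for discrete}); a lift exists there, and by full faithfulness of $\incl$ it is the image of a lift in the reduced category. This uses that $\nabla$ satisfies the hypotheses of Theorem~\ref{reedy type model structure for discrete}, which is immediate: $L_0 = \eta$ is its unique object of degree zero, $\nabla(L_n,\eta) \cong \edge(\eta)$ is a point, and $\nabla(\eta, L_n) \cong \edge(L_n) \neq \varnothing$.

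Next I would identify $\incl \circ L$ with $j_! \circ \incl$. The geometric input is that a non-linear tree admits no morphism to a linear tree in $\Xi$: if $\phi\colon S \to L_n$ and $v \in \vertex(S)$ has $|v| \geq 3$, then $\phi_0|_{\nbhd(v)}$ is injective by Definition~\ref{maps of xi}\eqref{maps of xi bivalent}, so $\legs(\phi_1(v)) = \phi_0(\nbhd(v))$ has at least three elements, which is impossible for a subgraph of $L_n$. Hence for $X \in \SSet^{\nabla^{op}}_\ast$ the comma category indexing the pointwise left Kan extension $(j_! \incl X)_S$ is empty whenever $S$ is non-linear, so $j_! \incl X$ vanishes on non-linear trees, and by full faithfulness of $j$ it restricts to $X$ on linear trees. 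In particular $(j_! \incl X)_\eta = X_\eta = \Delta[0]$, so $j_! \incl X$ is already reduced and $\red$ acts as the identity; thus $\incl \circ L = j_! \circ \incl$.

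The heart of the argument is to show that $j_!$ preserves and detects Berger--Moerdijk Reedy cofibrations, which I would do by comparing relative latching maps objectwise. Applying Theorem~\ref{bm_reedy_ms} with $\Rr = \Xi^{op}$, the latching object of a presheaf $Z$ at $S$ is a colimit of $Z_s$ over non-invertible maps $S \to s$ in $\Xi^-$. For $S$ non-linear, every such target $s$ is again non-linear: a $\Xi^-$-map is surjective on edges, so each $\phi_1(v)$ has at most one vertex, and a vertex of valence $\geq 3$ in $S$ forces one in $s$ by the same leg-counting as above; since $j_! Z'$ vanishes on non-linear trees, both $L_S(j_! Z')$ and $(j_! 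Z')_S$ are $\varnothing$ and the relative latching map at $S$ is an isomorphism. For $S = L_m$ linear, every $\Xi^-$-target is linear, so the latching category agrees with the one computed in $\SSet^{\nabla^{op}}$, and $\Aut_\Xi(L_m) = \Aut_\nabla(L_m)$ by fullness of $\nabla \hookrightarrow \Xi$; hence the relative latching map of $j_! f$ at $L_m$ coincides, as a map of $\Aut(L_m)$-objects, with that of $f$ at $L_m$. Consequently $j_! f$ is a Reedy cofibration if and only if its relative latching maps are $\Aut$-projective cofibrations at every object, if and only if this holds at the linear trees, if and only if $f$ is a Reedy cofibration. Chaining the equivalences, $Lf$ is a cofibration $\iff \incl L f = j_! \incl f$ is a cofibration (reduced lemma) $\iff \incl f$ is a cofibration ($j_!$ preserves and detects) $\iff f$ is a cofibration (reduced lemma), which is exactly preservation and detection.

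The main obstacle is the latching-object computation of the previous paragraph, and in particular getting the variance right: because we work with presheaves, the latching object at $S$ is assembled from degeneracy-type maps $S \to s$ in $\Xi^-$ rather than from subgraph inclusions \emph{into} $S$, and it is precisely this that makes the latching object vanish on non-linear trees (a naive reading would instead produce nonzero latching objects there and suggest, falsely, that $L$ fails to preserve cofibrations). I would also need to confirm that the generalized Reedy structure on $\nabla$ is the restriction of that on $\Xi$, i.e.\ that $\nabla^\pm = \nabla \cap \Xi^\pm$ with matching degree functions; this holds because the Reedy factorization of a morphism between linear trees stays among linear trees.
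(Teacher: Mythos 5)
Your proposal is correct and follows essentially the same route as the paper's proof: both reduce to unreduced Berger--Moerdijk Reedy cofibrations via the cofibration-creating functor $\incl$, identify the left adjoint with left Kan extension along $\nabla\hookrightarrow\Xi$ (which vanishes on non-linear trees because nothing non-linear maps to a linear tree), and then compare latching objects, which are empty at non-linear trees and agree with the $\nabla$-latching objects at the $L_m$. Your extra care about the detection direction of the reduced-cofibration lemma and about $\nabla$ inheriting the generalized Reedy structure fills in details the paper leaves implicit, but does not change the argument.
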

\begin{proof}
We identify $\SSet^{\nabla^{op}}_\ast$ as the full subcategory whose objects $A$ satisfy $A_R = \varnothing$ if $R$ is non-linear.
By Theorem~\ref{reedy type model structure for discrete}, Proposition~\ref{cofibration preservation}, and $\red\incl\cong\id$, the functor $\incl: \SSet^{\Rr^{op}}_\ast \to \SSet^{\Rr^{op}}$ creates cofibrations (where $\Rr = \nabla,\Xi$). Thus it is enough to work with Reedy cofibrations in the unreduced categories.

Consider the indexing category for the latching object of $A$.
If $R$ is any object of $\Xi$, then $(\Xi^{op})^+(R) = \Xi^-(R)$ has objects of the form $R\to S$ in $\Xi^- \setminus \Iso(\Xi)$.
Given a map $R \to S$, if $S$ is linear, so is $R$.
It follows that 
\[ L_R^\Xi(A) = \colim_{
	\substack{
	\Xi^-(R) \\
	R \to S
	}}
	A_S   =  \varnothing \] when $R$ is a non-linear tree. 
If $R = L_m$, then 
\[
 L_{L_m}^\Xi(A) = \colim_{
	\substack{
	\Xi^-(L_m) \\
	L_m \to S
	}}
	A_S  
	= \colim_{
	\substack{
	\nabla^-(L_m) \\
	L_m \to L_n
	}}
	A_{L_n} = L_{L_m}^\nabla(A) \]
since $A_S = \varnothing$ when $S$ is not linear.
Thus a map $A \to B$ in $\SSet^{\nabla^{op}}_\ast$ is a cofibration in $\SSet^{\nabla^{op}}_\ast$ if and only if it is a cofibration in $\SSet^{\Xi^{op}}_\ast$.
\end{proof}

\bibliographystyle{amsplain}
\bibliography{cyc}
\end{document}